\makeatletter  \@addtoreset{equation}{section} \makeatother
\newtheorem{defi}{Definition}[section]
\newtheorem{lem}{Lemma}[section]
\newtheorem{theo}{Theorem}[section]
\newtheorem{pro}{Proposition}[section]
\newtheorem{rem}{Remark}[section]
\newtheorem{rems}{Remarks}[section]
\DeclareMathOperator{\N}{\mathbb{N}}
\DeclareMathOperator{\R}{\mathbb{R}}
\DeclareMathOperator{\C}{\mathbb{C}}
\DeclareMathOperator{\T}{\mathbb{T}}
\thanks{C.G has been partially supported by the MINECO--Feder (Spain) research grant number RTI2018--098850--B--I00, the Junta de Andaluc\'ia (Spain) Project
FQM 954, the MECD (Spain) research grant FPU15/04094, {and the European Research Council through Grant ERC-StG-852741 (CAPA)}, \mbox{T. H.} has been partially supported by the ANR grant ODA (ANR-18-CE40-0020-01), and J. M. has been partially supported by MTM2016--75390 (Mineco, Spain) and { 2017-SGR-395} (Generalitat de Catalunya)}}
\subjclass[2010]{ 35Q35, 35Q86, 76U05, 35B32, 35P30}
\keywords{3D quasi-geostrophic equations, periodic solutions, bifurcation theory, eigenvalue problems}
\begin{document}

\title{Time periodic solutions for 3D quasi--geostrophic model}

\author[C. Garc\'ia]{Claudia Garc\'ia}
\address{ Departament de Matem\`atiques i Inform\`atica, Universitat de Barcelona, 08007 Barcelona, Spain \& Research Unit ``Modeling Nature'' (MNat), Universidad de Granada, 18071 Granada, Spain}
\email{ claudiagarcia@ub.edu}
\author[T. Hmidi]{Taoufik Hmidi}
\address{Univ Rennes, CNRS, IRMAR -- UMR 6625, 
F-35000 Rennes, France}
\email{thmidi@univ-rennes1.fr}
\author[J. Mateu]{Joan Mateu}
\address{Departament de Matem\`atiques, Universitat Aut\`onoma de Barcelona, 08193 Bellaterra; Barcelona, Catalonia}
\email{mateu@mat.uab.cat}


\begin{abstract}
This paper aims to study time periodic solutions for 3D inviscid quasi--geostrophic model. We show the existence of non trivial rotating patches by suitable perturbation  of  stationary solutions given by {\it generic} revolution shapes around the vertical axis. The construction of those special solutions are done through bifurcation theory.  In general, the spectral problem is very delicate and strongly depends on the shape of the initial stationary solutions. More specifically, the spectral study can be related to an eigenvalue problem of a self--adjoint compact operator. We are able to implement  the bifurcation  only from the largest eigenvalues of the operator, which are simple. Additional  difficulties generated by the singularities of the poles are solved  through the use of suitable function spaces with Dirichlet boundary condition type and refined potential theory with  anisotropic kernels.
\end{abstract}

\maketitle

\tableofcontents

\section{Introduction}
The large scale dynamics of 
an inviscid three--dimensional fluid subject to rapid background rotation and strong stratification can be described through the so--called quasi--geostrophic model.  It is  an asymptotic model  derived from the Boussinesq system for vanishing Rossby and  Froud numbers, for more details about its formal derivation we refer to \cite{Ped}. Rigorous derivation can be found in \cite{B-B,Charv,Iftimie2}.

We point out that this system is a pertinent  model commonly used  in the  ocean and atmosphere circulations   to describe the vortices and to track  the emergence of  long--lived structures. The quasi--geostrophic system is described by the potential vorticity $q$ which is merely advected by the fluid,
\begin{eqnarray}   \label{equation}
       \left\{\begin{array}{ll}
          	\partial_t q+u\partial_1 q+v\partial_2 q=0, & (t,x)\in[0,+\infty)\times\mathbb{R}^3, \\
         	 \Delta \psi=q,& \\
         	 	u=-\partial_2 \psi, \ v=\partial_1 \psi,& \\
         	 q(t=0,x)=q_0(x).& 
       \end{array}\right.
\end{eqnarray}
The second equation involving the standard Laplacian of $\R^3$ can be formally inverted using Green's function  leading to the following representation of the stream function $\psi$,
$$
\psi(t,x)=-\frac{1}{4\pi}\bigintsss_{\R^3}\frac{q(t,y)}{|x-y|}dA(y),
$$
where $dA$ denotes the usual Lebesgue measure. The velocity field $(u,v,0)$  is solenoidal and can be recovered  from $q$ through the Biot--Savart law,
$$
(u,v)(t,x)=\frac{1}{4\pi}\bigintsss_{\R^3}\frac{(x_1-y_1, x_2-y_2)^{\perp}}{|x-y|^3}q(t,y)dA(y).
$$
Notice that the velocity field  is planar but its  components depends on the all spatial  variables and the potential vorticity is transported by the associated flow. The incompressibility of the  velocity allows us to adapt without any difficulties   
the classical results known for 2D Euler equations. For instance, see \cite{MajdaBertozzi}, one may get global unique strong solutions when the initial data $q_0$ belongs to H\"older class $\mathscr{C}^\alpha$, for $\alpha\ge 0.$ Yudovich theory \cite{Yudovich} can also be implemented  and one gets global unique solution when $q_0\in L^1\cap L^\infty.$ This latter context  allows to deal with discontinuous vortices of the patch form, meaning a characteristic function of a bounded domain. This structure is preserved in time and the vortex patch problem consists of studying the regularity of the boundary and to analyze whether singularities can be formed in finite time on the boundary.

For the 2D Euler equations, the $\mathscr{C}^{1,\alpha}$ regularity of the boundary of the patch, with $\alpha\in(0,1)$, is preserved in time, see \cite{Chemin, B-C, Serf}. The contour dynamics of the patch is in general hard to track  and filamentation may occur. Therefore it is of important interest to look for ordered structure in turbulent flows  like relative equilibria. It seems that only a few explicit examples are known in the literature in the  patch form: the circular patches which  are stationary and  the elliptic ones  which rotate uniformly   with a constant angular velocity. This latter example is  known as the {Kirchhoff} ellipses. However, a lot of implicit examples with higher symmetry have been constructed  during the last decades    and the first ones were discovered numerically by  Deem and Zabusky \cite{DeemZabusky}. Having this kind of V-state solutions in mind, {Burbea} \cite{Burbea} designed a rigorous approach to generate them close to Rankine vortices   through complex analysis tools and bifurcation theory. Later this idea was fruitfully  improved and extended in various directions, generating a lot of contributions dealing, for instance, with interesting topics like  the  regularity problem of the relative equilibria, their existence  with different topological structure or for different  active scalar equations and so forth. For more details about this active area we refer the reader to the works \cite{Cas0-Cor0-Gom, Cas-Cor-Gom, CastroCordobaGomezSerrano, C-C-GS-2, DelaHoz-Hassainia-Hmidi, DelaHozHmidiMateuVerdera, DHHM, D-H-R, G-KVS, GHS, GPSY, Hassa-Hmi, HMW,  HmidiHozMateuVerdera, HmidiMateu, H-M, HmidiMateuVerdera} and the references therein.

Coming back to the 3D quasi--geostrophic system, it seems that stationary solutions in the patch form are more abundant than the planar case. Indeed,   any domain with   a revolution shape  about the $z-$axis generates a stationary solution. The analogues to Kirchhoff ellipses still surprisingly survive in the 3d case. In \cite{Meacham} it is shown that a standing ellipsoid  of arbitrary  semi--axis lengths $a, b$  and $c$ rotates steadily about the $z-$axis  with the angular velocity
$$
\Omega=\mu\frac{\lambda^{-1} R_D(\mu^2,\lambda,\lambda^{-1})-\lambda R_D(\mu^2,\lambda^{-1},\lambda)}{3(\lambda^{-1}-\lambda)},
$$
where $\lambda=\frac{a}{b}$ is the horizontal aspect ratio, $\mu:=\frac{c}{\sqrt{ab}}$ the vertical aspect ratio and $R_D$  denotes the elliptic integral of second order
$$
R_D(x,y,z):=\frac32\int_0^{+\infty}\frac{dt}{\sqrt{(t+x)(t+y)(t+z)}}\cdot
$$
For more details about the stability of those ellipsoids we refer to \cite{D-S-R,Dristchel2, Dristchel}.

The main concern of this paper is to investigate the existence of non trivial relative  equilibria close to  the stationary revolution shapes. In our context, we mean by relative equilibria periodic solutions in the patch form, rotating uniformly about the vertical axis without any deformation. Very recently,  Reinaud has explored numerically in \cite{Rein}  the existence and the linear stability of  finite volume relative equilibria distributed around   circular point vortex arrays. Similar analysis has been implemented in \cite{Rein-Drit}  for toroidal vortices.  Apart from the numerical experiments, no analytical results had been yet developed and the main inquiry of this paper is to design some technical material allowing us to construct relative equilibria close to general smooth stationary revolution shapes. 
The basic tool is bifurcation theory but as we shall see its implementation is an involved task which  requires refined and careful analysis.   Let us explain more  our strategy and how to proceed. First, we start with deriving  the contour dynamic equation for rotating finite volume patches ${\bf{1}}_D$. To do so, we look for smooth domains $D$ with the following parametrization,
\begin{align*}
D=&\left\{(re^{i\theta},\cos(\phi))\, :\, 0\leq r\leq r(\phi,\theta), 0\leq \theta\leq 2\pi, 0\leq \phi\leq \pi\right\},
\end{align*}
where the shape is sufficiently close to a revolution shape domain, meaning that
$$r(\phi,\theta)=r_0(\phi)+f(\phi,\theta),
$$ 
with small perturbation $f$. Since the domain is assumed to be smooth then we should prescribe the Dirichlet boundary conditions,
$$r_0(0)=r_0(\pi)=f(0,\theta)=f(\pi,\theta)=0.$$
Notice that without any perturbation, that is,  $f\equiv 0$, the initial data $q_0={\bf 1}_D$ defines a stationary solution for \eqref{equation}, as we will prove in Lemma \ref{Prop-trivial}. Now a rotating solution about the vertical axis is a time--dependent solution taking the form,
$$
q(t,x)=q_0(e^{-i\Omega t}x_h,x_3), \quad q_0={\bf 1}_D, \,x_h=(x_1,x_2).
$$
We shall see later that it is equivalent  to  check that
\begin{align*}
\nonumber {F}(\Omega,f)(\phi,\theta):=\psi_0(r(\phi,\theta)e^{i\theta},\cos(\phi))-\frac{\Omega}{2}r^2(\phi,\theta)-m(\Omega,f)(\phi)=0,
\end{align*}
for any $(\phi,\theta)\in[0,\pi]\times[0,2\pi]$, where
 $$
m(\Omega,f)(\phi):=\frac{1}{2\pi}\bigintsss_0^{2\pi}\left\{{
	\psi_0}(r(\phi,\theta)e^{i\theta},\cos(\phi))-\frac{\Omega}{2}r^2(\phi,\theta)\right\}d\theta,
$$
where $\psi_0$ stands for the stream function associated to $q_0.$
With this reformulation we visualize  the smooth rotating surface as a collection of interacting  stratified   horizontal sections rotating with the same angular velocity but their  size degenerates when we approach the north and south poles corresponding to $\phi\in\{0,\pi\}$.

In order to apply a bifurcation argument, one has to deal with the linearized operator of $F$ around $f=0$. From Proposition \ref{Prop-lin2}
such linearized operator has a compact expression in terms of hypergeometric functions. Indeed, for $h(\phi,\theta)=\sum_{n\geq 1}h_n(\phi)\cos(n\theta),$ one gets
\begin{equation*}
\partial_{f} F(\Omega,0)h(\phi,\theta)=r_0(\phi)\nu_\Omega(\phi)\sum_{n\geq 1}\cos(n\theta)\mathcal{L}_n(h_n)(\phi),
\end{equation*}
where
\begin{align*}
\mathcal{L}_n(h_n)(\phi)=&h_n(\phi)-\mathcal{K}_{n}^{\Omega}(h_n)(\phi)\\
:=&h_n(\phi) -\int_0^\pi \nu_\Omega^{-1}(\phi)H_n(\phi,\varphi)h_n(\varphi)d\varphi,
\end{align*}
with
\begin{equation*}
 \nu_\Omega(\phi):=\int_0^\pi H_1(\phi,\varphi)d\varphi-\Omega,\quad R(\phi,\varphi):=(r_0(\phi)+r_0(\varphi))^2+(\cos(\phi)-\cos(\varphi))^2
 \end{equation*}
and for $n\geq1,$
\begin{align*}
H_n(\phi,\varphi):=&\frac{2^{2n-1}\left(\frac12\right)^2_{n}}{(2n)!}\frac{\sin(\varphi)r_0^{n-1}(\phi)r_0^{n+1}(\varphi)}{\left[R(\phi,\varphi)\right]^{n+\frac12}} F_n\left(\frac{4r_0(\phi)r_0(\varphi)}{R(\phi,\varphi)}\right).
\end{align*}
Here $F_n$ denotes the hypergeometric function
$$
 F_n(x)=F\left(n+\frac{1}{2},n+\frac{1}{2},2n+1,x\right), \quad  x\in[0,1).
$$
An important observation is that  the kernel study of $\partial_{f} F(\Omega,0)$ amounts to checking   whether  $1$ is an eigenvalue for $\mathcal{K}_{n}^{\Omega}$.
To do that we first start with symmetrizing this operator by working  on suitable  weighted Hilbert spaces.  A natural  candidate for that  is  the Hilbert space $L^2_{\mu_\Omega}(0,\pi)$ of square integrable functions with respect to the measure
\begin{equation*}
d\mu_{\Omega}(\varphi):=\sin(\varphi)r_0^2(\varphi)\nu_{\Omega}(\varphi)d\varphi.
\end{equation*}
In general this defines a signed measure and to get a positive one we should restrict the values of  $\Omega$ to the set $(-\infty,\kappa)$, where $\displaystyle{\kappa:=\inf_{\phi\in(0,\pi)}\int_0^\pi H_1(\phi,\varphi)d\varphi}.$

In the next step we prove that  for any $n\geq1$, the operator  $\mathcal{K}_{n}^{\Omega}:L^2_{\mu_\Omega}\rightarrow L^2_{\mu_\Omega}$ acts as  a compact self--adjoint integral operator.  This gives us the structure of the eigenvalues which is a discrete set and we establish from the positivity of the kernel  that the largest eigenvalue $\lambda_n(\Omega)$ giving the spectral radius is positive and simple.   For given integer $n\geq1$, we  define the set
\begin{equation*}
\mathscr{S}_n:=\Big\{\Omega\in(-\infty,\kappa) \quad\textnormal{s.t.}\quad  \lambda_n(\Omega)=1\Big\},
\end{equation*}
and in  Proposition \ref{prop-operator} we shall describe some basic properties of $\lambda_n$ through precise study of the kernel. Those properties  show in particular that the set $\mathscr{S}_n$ is formed by a single point denoted by $\Omega_n$, see Proposition \ref{prop-kernel-onedim} for more details. In addition, we show that the sequence $n\in N^\star \mapsto \Omega_n$ is strictly increasing which ensures that the kernel of the linearized operator is a one--dimensional vector space, see Proposition \ref{cor-fredholm}. Notice that the  weighted space $L^2_{\mu_\Omega}$  is chosen to be so weak in order  to have it be stable under the nonlinear functional $F$. So we need to reinforce the regularity by selecting the   standard  H\"older spaces $\mathscr{C}^{1,\alpha}$ with Dirichlet boundary condition and $\alpha\in(0,1)$.  However this choice  generates  two  delicate problems. The first one is to check that the eigenfunctions constructed in $L^2_{\mu_\Omega}$ are sufficient smooth and belong to the new spaces. To reach  this regularity we need to check that the function $\nu_\Omega$ is $\mathscr{C}^{1,\alpha}$ and this requires more careful analysis due to the logarithmic singularity, see  Proposition \ref{Lem-meas}. Notice that the eigenfunctions satisfy the boundary condition provided that $n\geq2$ and which fails for $n=1$. The second difficulty concerns the stability of the H\"{o}lder  spaces by the nonlinear functional $F$, in fact not $F$ but another modified functional $\tilde{F}$ deduced  from the preceding one by removing the singularities coming from of the north and south poles, \mbox{see \eqref{Ftilde}.} The deformation of the Euclidean kernel through the cylindrical coordinates generates singularities on the poles because the size of  horizontal sections degenerates at those points. That is the central  difficulty when we try to implement potential theory arguments to get the stability of the function spaces and will be  discussed in  Section \ref{sec-regularity}.

Before stating our result, we need to make the following assumptions on the initial profile $r_0$ and denoted throughout this paper by {\bf (H)} :
{\begin{itemize}
\item[{\bf(H1)}] $r_0\in \mathscr{C}^{2}([0,\pi])$, with $r_0(0)=r_0(\pi)=0$ and $r_0(\phi)>0$ for $\phi\in(0,\pi)$.
\item[{\bf(H2)}] There exists $C>0$ such that
$$
\forall\,\phi\in[0,\pi],\quad  C^{-1}\sin\phi\leq r_0(\phi)\leq C\sin(\phi).
$$
\item[{\bf(H3)}] $r_0$ is symmetric with respect to $\phi=\frac{\pi}{2}$, i.e., $r_0\left(\frac{\pi}{2}-\phi\right)=r_0\left(\frac{\pi}{2}+\phi\right)$, for any $\phi\in[0,\frac{\pi}{2}]$.
\end{itemize}}
Now we are ready to give a  short version of the  main result of this paper and the precise one is detailed in  Theorem \ref{theorem}.
 \begin{theo}\label{theo-introduction}
Assume that $r_0$ satisfies the assumptions {\bf (H)}. Then  for any $m\geq 2$, there exists a curve of non trivial rotating solutions with $m$-fold symmetry to the equation \eqref{equation} bifurcating from the trivial revolution shape associated to $r_0$ at the angular velocity $\Omega_m,$ the unique point of the set $\mathscr{S}_m.$
\end{theo}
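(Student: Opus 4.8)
The plan is to apply the Crandall--Rabinowitz bifurcation theorem to the modified functional $\tilde F$ of \eqref{Ftilde}, obtained from $F$ by removing the contributions of the north and south poles; $\tilde F$ shares the zero set of $F$ near the trivial branch $f\equiv0$ but is amenable to potential--theoretic estimates. First I would fix $m\geq2$ and set up the functional framework with an $m$--fold symmetry ansatz: the source space $X_m$ consists of the $\mathscr{C}^{1,\alpha}$ functions $f(\phi,\theta)=\sum_{j\geq1}f_{jm}(\phi)\cos(jm\theta)$ on $[0,\pi]\times[0,2\pi]$ obeying the Dirichlet conditions $f(0,\theta)=f(\pi,\theta)=0$, and $Y_m$ is an appropriate Hölder target space carrying the same $m$--fold symmetry but no boundary constraint (the precise spaces are those of Section \ref{sec-regularity}). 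By Lemma \ref{Prop-trivial} the profile $r_0$ yields a stationary, hence trivially rotating, patch, so $\tilde F(\Omega,0)=0$ for every admissible $\Omega$, and the $m$--fold ansatz activates only the Fourier modes $n=jm$, $j\geq1$.

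The first substantial task --- and the one I expect to be the main obstacle --- is to prove that $\tilde F:(-\infty,\kappa)\times B_{X_m}\to Y_m$ is well defined and of class $C^1$, with the mixed second derivative $\partial_\Omega\partial_f\tilde F$ continuous as needed for Crandall--Rabinowitz. After the change to cylindrical coordinates the Newtonian kernel becomes anisotropic and its restriction to the horizontal sections degenerates as $\phi\to0,\pi$ because $r_0(\phi)\sim\sin\phi$ by {\bf(H2)}; one controls this by splitting the kernel into a principal anisotropic part and smoother remainders, estimating each in weighted Hölder norms using {\bf(H1)}--{\bf(H2)}, and exploiting {\bf(H3)} together with the subtraction of the $\theta$--average $m(\Omega,f)$. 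This analysis --- the content of Section \ref{sec-regularity} --- simultaneously yields the differentiability of $\tilde F$ and identifies $\partial_f\tilde F(\Omega,0)$ with the operator of Proposition \ref{Prop-lin2}, $h\mapsto r_0\nu_\Omega\sum_{n\geq1}\cos(n\theta)\,\mathcal{L}_n(h_n)$ with $\mathcal{L}_n=\mathrm{Id}-\mathcal{K}^{\Omega}_n$, whose multiplier $r_0\nu_\Omega$ is $\mathscr{C}^{1,\alpha}$ and positive on $(0,\pi)$ by Proposition \ref{Lem-meas}.

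Next I would study the linearized operator $\mathcal{L}:=\partial_f\tilde F(\Omega_m,0)$ at the candidate value $\Omega_m$, the unique point of $\mathscr{S}_m$ given by Proposition \ref{prop-kernel-onedim}. Since $r_0\nu_{\Omega_m}>0$ on $(0,\pi)$, the kernel and range of $\mathcal{L}$ on $X_m$ decouple over the sectors $n=jm$ and are governed by $\mathcal{L}_{jm}=\mathrm{Id}-\mathcal{K}^{\Omega_m}_{jm}$, a compact perturbation of the identity (hence Fredholm of index zero) that is self--adjoint on $L^2_{\mu_{\Omega_m}}$. By the strict monotonicity $\Omega_m<\Omega_{2m}<\cdots$ of Proposition \ref{cor-fredholm}, the value $1$ is not an eigenvalue of $\mathcal{K}^{\Omega_m}_{jm}$ for $j\geq2$, so $\mathcal{L}_{jm}$ is invertible there; for $j=1$, $1=\lambda_m(\Omega_m)$ is the \emph{largest} eigenvalue of $\mathcal{K}^{\Omega_m}_m$ and is simple by Proposition \ref{prop-operator}. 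Hence $\ker\mathcal{L}$ is spanned by a single $h_\star(\phi,\theta)=\varrho(\phi)\cos(m\theta)$, where $\varrho$ is the corresponding eigenfunction. A regularity bootstrap, promoting the a priori $L^2_{\mu_{\Omega_m}}$--eigenfunction $\varrho$ to $\mathscr{C}^{1,\alpha}$, uses the fixed--point relation $\varrho=\mathcal{K}^{\Omega_m}_m(\varrho)$, the weakly (logarithmically) singular kernel $H_m$, and Proposition \ref{Lem-meas}; and since $H_m$ carries the factor $r_0^{m-1}(\phi)$, which vanishes at the poles exactly when $m\geq2$, one gets $\varrho(0)=\varrho(\pi)=0$, so $h_\star\in X_m$ and $\dim\ker\mathcal{L}=1$. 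The same sector decoupling and $L^2_{\mu_{\Omega_m}}$ self--adjointness identify $\mathrm{Range}\,\mathcal{L}$ as the kernel of a single continuous linear functional on $Y_m$ --- namely $g\mapsto\int_0^\pi g_m(\varphi)\varrho(\varphi)\sin(\varphi)r_0(\varphi)\,d\varphi$ --- so it is closed of codimension one.

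Finally I would check the transversality condition $\partial_\Omega\partial_f\tilde F(\Omega_m,0)\,h_\star\notin\mathrm{Range}\,\mathcal{L}$. Differentiating the formula of Proposition \ref{Prop-lin2} in $\Omega$ and using $\mathcal{L}_m(\varrho)=0$ at $\Omega_m$, the mode--$m$ component of $\partial_\Omega\partial_f\tilde F(\Omega_m,0)h_\star$ equals a non--zero constant times $r_0\,\varrho$; pairing it with the functional above gives a non--zero multiple of the strictly positive quantity $\int_0^\pi r_0^2\varrho^2\sin\varphi\,d\varphi$, equivalently $\lambda_m'(\Omega_m)\neq0$, which is among the properties of $\lambda_m$ recorded in Proposition \ref{prop-operator}. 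With $\tilde F\in C^1$, $\ker\mathcal{L}$ one--dimensional and spanned by $h_\star\in X_m$, $\mathrm{Range}\,\mathcal{L}$ of codimension one, and transversality in hand, the Crandall--Rabinowitz theorem yields a $\mathscr{C}^1$ curve $s\mapsto(\Omega(s),f_s)$ with $\Omega(0)=\Omega_m$ and $f_s=sh_\star+o(s)$ in $X_m$ of nontrivial $m$--fold symmetric zeros of $\tilde F$, hence of $F$; undoing the reformulation that equates $F(\Omega,f)=0$ with the rotating--patch property produces the announced branch of rotating solutions to \eqref{equation}, which is the precise content of Theorem \ref{theorem}.
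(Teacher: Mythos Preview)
Your proposal is correct and follows essentially the same route as the paper: apply Crandall--Rabinowitz to $\tilde F$ on the $m$--fold symmetric H\"older spaces, invoke Section~\ref{sec-regularity} for the $C^1$ regularity of $\tilde F$, use Propositions~\ref{Prop-lin2}--\ref{prop-operator}--\ref{prop-kernel-onedim}--\ref{cor-fredholm} for the Fredholm structure and one--dimensional kernel at $\Omega_m$, and Proposition~\ref{prop-transversal} for transversality. A few cosmetic slips to fix: the paper takes source and target to be the \emph{same} space $X_m^\alpha$ (the Dirichlet condition is preserved by $\tilde F$, which is part of the work in Proposition~\ref{prop-wellpos}), the linearized operator in Proposition~\ref{Prop-lin2} has no extra $r_0$ factor (it is $\sum_n\cos(n\theta)\,\nu_\Omega(h_n-\mathcal K_n^\Omega h_n)$, since $\tilde F$ already divides by $r_0$), and correspondingly $\partial_\Omega\partial_f\tilde F(\Omega,0)h=-h$, so the transversality pairing is exactly $\int_0^\pi \varrho^2\sin\varphi\,r_0^2\,d\varphi>0$ as in Proposition~\ref{prop-transversal}.
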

We specify that by $m$-fold shape symmetry of $\R^3$, we mean a surface  invariant under rotation with the vertical axis and angle $\frac{2\pi}{m}\cdot$ 

There is the particular case of $r_0(\phi)=\sin(\phi)$ defining the unit sphere. Here, its associated stream function can be explicitly computed (see \cite{Kellog}) and it is quadratic inside the shape, that is,
$$
\psi_0(x)=\frac16 (x_1^2+x_2^{2}+x_3^2-3).
$$
That gives us some interesting properties on the eigenvalues $\Omega_m$ of the above Theorem \ref{theo-introduction}. In particular, we achieve that the above eigenvalues $\Omega_m$ belong to $(0,\frac13)$. The same properties occur also in the case of an ellipsoid of equal $x$ and $y$ axes defining a revolution shape around the $z$--axis. In this case, the associated stream function is also quadratic. See Section \ref{Sec-sphere} for a more detailed discussion about those cases. 

The paper  is structured as follows. In Section $2,$ we provide different  reformulations for  the  rotating patch problem and we introduce the appropriate function spaces. Section $3$ is devoted to   different useful expressions of the linearized operator around a stationary solution. The spectral study of the linearized operator will be developed  in  Section $4.$ In Section $5,$ we shall discuss the   well--definition of the nonlinear functional and its regularity. In Section $6,$ we give the general statement of our result and provide its  proof. We end this paper with  three appendices concerning special functions, bifurcation theory and potential theory.

\begin{ackname}
We would like to thank D. G. Dritschel for proposing this problem and for several discussions around it. \end{ackname}

\section{Vortex patch equations}
Take  an initial data uniformly distributed in a bounded domain of $\R^3$, that is, $ q_0={\bf{1}}_{D}$. Then, this structure is preserved by the evolution and one gets for any time $t\geq0$
\begin{align}\label{VP}
q(t,x)={\bf{1}}_{D(t)}(x),
\end{align}
for some bounded domain $D(t)$.  To track the dynamics of the boundary (which is a surface here) we  can implement the contour dynamics method introduced by Deem and Zabusky for Euler equations \cite{DeemZabusky}.  Indeed,  let $\gamma_t: (\phi,\theta)\in\T^2\mapsto \gamma_t(\phi,\theta)\in \R^3$ be  any  parametrization of the boundary $\partial D_t$. Since the boundary is transported by the flow then
\begin{equation}\label{Gene-eq}
\big(\partial_t\gamma_t-U(t,\gamma_t)\big)\cdot n(\gamma_t)=0,
\end{equation}
where $U=(u,v,0)$ and $n(\gamma_t)$ is a normal vector to the boundary at the point $\gamma_t$. There is a special parametrization called Lagrangian parametrization given by 
$$
\partial_t\gamma_ t=U(t,\gamma_t),
$$
which is commonly used to follow the boundary motion. From the Biot--Savart law we deduce that
\begin{align}\label{Bio-Form} U(t,\gamma_t\big(\phi,\theta)\big)=\frac{1}{4\pi}\int_{D_t}\frac{(\gamma_t\big(\phi,\theta)-y)^\perp}{|\gamma_t\big(\phi,\theta)-y|^3}dA(y)
=\frac{1}{4\pi}\int_{\partial D_t}\frac{n^\perp(y)}{|\gamma_t\big(\phi,\theta)-y|}d\sigma(y),
\end{align}
where $d\sigma$ denotes the Lebesgue  surface measure of $\partial D_t$. We have used the notation $x^\perp=(-x_2,x_1,0)\in\R^3$ for $x=(x_1,x_2,x_3)\in\R^3$.

\subsection{Stationary patches} 

Our next goal is to check that any initial patch with revolution shape around the vertical axis generates a stationary solution. More precisely, we have the following result.

\begin{lem}\label{Prop-trivial}
Let   $r:[-1,1]\rightarrow \R_+$  be a continuous function with $r(-1)=r(1)=0$ and let $D$ be the domain enclosed by the surface 
$\left\{(r(z)e^{i\theta},z),\ \theta\in[0,2\pi], z\in[-1,1]\right\}$,  then $q(t,x)={\bf{1}}_{D}(x)$ defines a stationary solution for \eqref{equation}.
\end{lem}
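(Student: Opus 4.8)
The plan is to show that the velocity field $U=(u,v,0)$ generated by $q_0={\bf 1}_D$ is everywhere tangent to the boundary $\partial D$, so that the normal component in \eqref{Gene-eq} vanishes identically with $\gamma_t\equiv\gamma_0$; this is precisely the statement that the patch does not move. The key structural fact is the axisymmetry of $D$: the domain is invariant under every rotation $R_\alpha$ about the vertical axis. Since the Biot--Savart kernel is equivariant under such rotations — more precisely, $R_\alpha$ maps the planar field $(x_1-y_1,x_2-y_2)^\perp/|x-y|^3$ at $(x,y)$ to the corresponding object at $(R_\alpha x, R_\alpha y)$ — a change of variables $y\mapsto R_\alpha y$ in the integral representation
\[
(u,v)(x)=\frac{1}{4\pi}\int_{\R^3}\frac{(x_1-y_1,x_2-y_2)^\perp}{|x-y|^3}{\bf 1}_D(y)\,dA(y)
\]
shows that $(u,v)(R_\alpha x)=R_\alpha\,(u,v)(x)$ for all $\alpha$, using ${\bf 1}_D\circ R_\alpha={\bf 1}_D$. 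Hence the horizontal velocity field is itself axisymmetric in the sense that it rotates with the domain.

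From this equivariance I would extract the pointwise structure of the field: writing a point as $x=(\rho e^{i\beta}, z)$ in cylindrical coordinates, the identity $(u,v)(R_\alpha x)=R_\alpha (u,v)(x)$ forces $(u,v)(x)$ to be of the form $g(\rho,z)\,(e^{i\beta})^\perp = g(\rho,z)\,(-\sin\beta,\cos\beta)$ for some scalar function $g$ depending only on $\rho$ and $z$; that is, the velocity is purely azimuthal, with magnitude constant along each horizontal circle. (Equivalently, in complex notation the horizontal velocity at $\rho e^{i\beta}$ is $i\,g(\rho,z)\,e^{i\beta}$.) This is the main computational point, and it is short once the change of variables is set up carefully; the only mild subtlety is keeping track of how $\perp$ and the rotation interact, and noting that the potential-theoretic integral is absolutely convergent because the kernel is locally integrable in $\R^3$ and $D$ is bounded.

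It then remains to check that such an azimuthal field is tangent to $\partial D$. The boundary is the surface $\{(r(z)e^{i\theta},z): \theta\in[0,2\pi],\, z\in[-1,1]\}$, which at a point with a given $(\rho,z)$ contains the full horizontal circle $\theta\mapsto (r(z)e^{i\theta},z)$; the tangent vector to that circle is exactly $(-\sin\theta,\cos\theta,0)$, which is parallel to $(u,v,0)$ at that point by the previous paragraph. Therefore $U\cdot n = 0$ on $\partial D$, so $\partial_t\gamma_t\cdot n(\gamma_t)=0$ is solved by the stationary parametrization, and ${\bf 1}_D$ is a stationary weak solution of \eqref{equation}. I expect the main obstacle to be purely expository — handling the degenerate poles $z=\pm1$ where $r$ vanishes and the circle collapses to a point (there the tangency condition is vacuous), and making precise the sense in which ${\bf 1}_D$ solves the transport equation (it is a weak/distributional solution, the boundary being merely continuous), but no genuine analytic difficulty arises.
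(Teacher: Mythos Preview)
Your overall strategy is the same as the paper's --- exploit the axisymmetry of $D$ to show the velocity is tangent to $\partial D$ --- but there is a genuine gap at the key step. The equivariance $(u,v)(R_\alpha x)=R_\alpha(u,v)(x)$ does \emph{not} by itself force the horizontal velocity to be purely azimuthal. Writing $(u,v)(\rho e^{i\beta},z)=a(\rho,\beta,z)\,e^{i\beta}+g(\rho,\beta,z)\,ie^{i\beta}$ in the radial/azimuthal frame, the equivariance only tells you that $a$ and $g$ are independent of $\beta$; it says nothing about $a$ vanishing. A purely radial axisymmetric field (think $\nabla_h$ of any rotationally invariant scalar) satisfies the same equivariance. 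So your sentence ``the identity $(u,v)(R_\alpha x)=R_\alpha(u,v)(x)$ forces $(u,v)(x)$ to be of the form $g(\rho,z)(-\sin\beta,\cos\beta)$'' is where the argument breaks.

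There are two easy repairs. The cleanest is to go through the stream function rather than the velocity: the Newtonian potential $\psi_0(x)=-\tfrac{1}{4\pi}\int_D|x-y|^{-1}\,dA(y)$ is a \emph{scalar} invariant under $R_\alpha$ (same change of variables you already did), hence $\psi_0=\psi_0(\rho,z)$, and then $U=\nabla_h^\perp\psi_0=(\partial_\rho\psi_0)\,(-\sin\beta,\cos\beta,0)$ is automatically azimuthal. Alternatively --- and this is what the paper does --- keep your equivariance to reduce to the point $x=(|x_h|,0,x_3)$, and then observe that the radial component $U(x)\cdot x$ there is the integral of an odd function under the reflection $y\mapsto(y_1,-y_2,y_3)$ (which also preserves $D$), hence zero. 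Either addition completes your proof; without one of them, the azimuthality claim is unjustified.
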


\begin{proof}
Recall from \eqref{Bio-Form} that
\begin{equation}\label{Veloc1}
 U(x)=\frac{1}{4\pi}\bigintsss_{D}\frac{(x-y)^\perp}{|x-y|^3}dA(y).
\end{equation}
Define
$$
G(x):=U(x)\cdot x= -\frac{1}{4\pi}\bigintsss_{D}\frac{x\cdot y^\perp}{|x-y|^3}dA(y), \quad x\in\R^3,
$$
and let us prove that $G\equiv 0$. 
Take  $\theta\in \R$ and denote by $\mathcal{R}_\theta$ the rotation: $ x=(x_h,x_3)\mapsto (e^{i\theta} x_h, x_3)$. Since $D$ is invariant by $\mathcal{R}_\theta$, changing variables leads to 
$$
G(\mathcal{R}_\theta x)= G(x).
$$
Therefore $G(x)=G(|x_h|,0,x_3)$, which means that
$$
G(x)= \frac{-|x_h|}{4\pi}\bigintsss_{D}\frac{y_2\, dA(y)}{((|x_h|-y_1)^2+y_2^2+(x_3-y_3)^2)^{\frac32}}.
$$
Since $D$ is invariant by the reflexion: $y\mapsto (y_1,-y_2,y_3)$ then a change of variables implies that $G(x_1,x_2,x_3)=G(x_1,-x_2,x_ 3)=-G(x_1,x_2,x_3)$ 
and thus $G(x)=0$. Consequently we get in particular that
$$
U(x)\cdot x=0, \quad \forall x\in \partial D.
$$
On the other hand,  we get from the revolution shape property of $D$ that the horizontal component of the normal vector is $\vec{n}_h(x)=(x_1,x_2)$, which implies 
\begin{align}\label{weak_equation}
U(x)\cdot \vec{n}(x)=(u,v)(x)\cdot \vec{n}_h(x)=0, \quad \forall \, x\in \partial D.
\end{align}
This implies that ${\bf{1}}_{D}$ is a stationary solution in the weak sense. 
\end{proof}

\subsection{Reformulations for periodic patches}
In this section, we shall give two ways to write down rotating patches using respectively the velocity field and the stream function.
Assume that we have a rotating patch  around the $x_3$ axis with constant angular velocity $\Omega\in\R$, that is  $D_t=\mathcal{R}_{\Omega t}D$, with $\mathcal{R}_{\Omega t}$ being the rotation of angle $\Omega t$ around the vertical axis. Inserting this expression  into the equation \eqref{Gene-eq} we get
$$
\big(U(x)-\Omega x^\perp\big)\cdot \vec{n}(x)=0, \quad \forall\, x\in \partial D.
$$
Since $U$ is horizontal then this equation 
means also that each horizontal section $D_{x_3}:=\{ y\in\R^2,\, (y,x_3)\in D\}$ rotates with the same  angular velocity $\Omega$. Hence the horizontal sections satisfy the equation 
$$
\big(U(x)-\Omega x^\perp\big)\cdot \vec{n}_{D_{x_3}}(x_h)=0, \quad x_h=(x_1,x_2)\in \partial D_{x_3}, \ x_3\in\R,
$$
where $\vec{n}_{D_{x_3}}$ denotes a normal vector to the planar curve $\partial D_{x_3}$. Next we shall write down this equation in the particular case of  simply connected domains that can be described through polar parametrization  in the following way:
\begin{align}\label{param}
D=&\left\{(re^{i\theta},\cos(\phi))\, :\, 0\leq r\leq r(\phi,\theta), 0\leq \theta\leq 2\pi, 0\leq \phi\leq \pi\right\}.
\end{align}
Notice that we have assumed in this description, and without  any loss of generality, that the orthogonal projection onto the vertical axis is the segment $[-1,1]$.  The horizontal sections are indexed by $\phi$ and parametrized by the polar coordinates as $\theta\mapsto r(\phi,\theta)$   and it is obvious that
$$
\vec{n}_{\partial D_{x_3}}(r(\phi,\theta)e^{i\theta})=\left( i\partial_{\theta}r(\phi,\theta)-r(\phi,\theta)\right)e^{i\theta}.
$$
Then, the equation of the sections reduces to
\begin{equation}\label{vel-form}
\textnormal{Re}\left[\left\{U_h(\phi,\theta)-i\Omega r(\phi,\theta)e^{i\theta}\right\}\left\{\big[i\partial_{\theta}r(\phi,\theta)+r(\phi,\theta)\big]e^{-i\theta}\right\}\right]=0,\quad \forall (\phi,\theta)\in[0,\pi]\times[0,2\pi],
\end{equation}
with, according to \eqref{Bio-Form} and the change of variable $y_3=\cos\varphi$,
\begin{align}\label{VelXW1}
\nonumber U_h(\phi,\theta):=&(U_1,U_2)(r(\phi,\theta),\cos\phi)\\
\nonumber=&\frac{1}{4\pi}\bigintsss_{-1}^{1}\bigintsss_{\partial D_{y_3}}{\frac{n^\perp_{\partial D_{y_3}}(y_h)dy_hdy_3}{|(r(\phi,\theta)e^{i\theta},\cos(\phi))-y|}}\\
=&\frac{1}{4\pi}\bigintsss_{0}^{\pi}\bigintsss_0^{2\pi}\frac{\sin(\varphi)(\partial_{\eta}r(\varphi,\eta)e^{i\eta}+ir(\varphi,\eta)e^{i\eta})}{|(r(\phi,\theta)e^{i\theta},\cos(\phi))-(r(\varphi,\eta)e^{i\eta},\cos(\varphi))|}d\eta d\varphi.
\end{align}
We shall look for a rotating solution close to a stationary one described by a given revolution shape $(\theta,\phi)\mapsto (r_0(\phi) e^{i\theta},\cos(\phi))$. This means that we are looking for a parametrization in the form 
\begin{equation}\label{f}
r(\phi,\theta)=r_0(\phi)+f(\phi,\theta), \quad f(\phi,\theta)=\sum_{n\geq 1}f_n(\phi)\cos(n\theta).
\end{equation}
Implicitly, we have assumed that the domain $D$ is symmetric with respect to the  {{plane $x_2=0$.}}
In addition, we ask the following boundary conditions,
$$
r_0(0)=r_0(\pi)=f(0,\theta)=f(\pi,\theta)=0,
$$
meaning that the domain $D$ intersects the vertical axis at the points $(0,0,-1)$ and $(0,0,1).$

Define the functionals
$$
F_{\bf v}(\Omega,f)(\phi, \theta):=\textnormal{Re}\left[\left\{I_{\bf v}(f)(\phi,\theta)-i\Omega r(\phi,\theta)e^{i\theta}\right\}\left\{i\partial_{\theta}r(\phi,\theta)e^{-i\theta}+r(\phi,\theta)e^{-i\theta}\right\}\right],
$$
with
\begin{equation}\label{v}
I_{\bf v}(f)(\phi,\theta):=U_h(\phi,\theta)=\frac{1}{4\pi}\bigintsss_{0}^{\pi}\bigintsss_0^{2\pi}\frac{\sin(\varphi)(\partial_{\eta}r(\varphi,\eta)e^{i\eta}+ir(\varphi,\eta)e^{i\eta})}{|(r(\phi,
\theta)e^{i\theta},\cos(\phi))-(r(\varphi,\eta)e^{i\eta},\cos(\varphi))|}d\eta d\varphi.
\end{equation}
The subscript $\bf{v}$ refers to the velocity formulation and we use it to compare it later to the stream function formulation. Hence, we need to study the equation:
$$
F_{\bf v}(\Omega,f)(\phi,\theta)=0, \quad (\phi,\theta)\in[0,\pi]\times[0,2\pi].
$$
By Lemma \ref{Prop-trivial}, one has $F_{\bf v}(\Omega,0)(\phi, \theta)\equiv 0$, for any $\Omega\in\R$.

\subsection{Stream function formulation}
There is another way to characterize the rotating solutions described in the previous subsection by virtue of  the stream function formulation.

For $\phi\in [0,\pi],$ let  
$
\theta\in [0,2\pi]\mapsto \gamma_\phi(\theta):=r(\phi,\theta)e^{i\theta},
$
be the parametrization of $\partial D_z$, where $z=\cos(\phi)$. Then one can check without difficulties that \eqref{vel-form} agrees with
$$
 \partial_\theta \left\{\psi_0(\gamma_\phi(\theta),\cos(\phi))-\frac{\Omega}{2}|\gamma_\phi(\theta)|^2\right\}=0, \quad \forall (\phi,\theta)\in[0,\pi]\times[0,2\pi].
$$
Then, the equation can be integrated obtaining
$$
\psi_0(\gamma_\phi(\theta),\cos(\phi))-\frac{\Omega}{2}|\gamma_\phi(\theta)|^2=m(\Omega,f)(\phi),
$$
where $m(\Omega,f)(\phi)$ is a function depending only on $\phi$ and given by
\begin{equation}\label{meanT}
m(\Omega,f)(\phi):=\frac{1}{2\pi}\bigintsss_0^{2\pi}\left\{\psi_0(r(\phi,\theta)e^{i\theta},\cos(\phi))-\frac{\Omega}{2}r^2(\phi,\theta)\right\}d\theta,\quad  \, r=r_0+f.
\end{equation}
Let us consider the functional 
\begin{align}\label{nonlinearfunction2}
\nonumber {F}_{\bf  s}(\Omega,f)(\phi,\theta):=&\psi_0(r(\phi,\theta)e^{i\theta},\cos(\phi))-\frac{\Omega}{2}r^2(\phi,\theta)-m(\Omega,f)(\phi)\\
=& G(\Omega,f)(\phi,\theta)-\frac{1}{2\pi}\int_0^{2\pi} G(\Omega,f)(\phi,\eta)d\eta,
\end{align}
where 
$$
G(\Omega,f)(\phi,\theta):=\psi_0(r(\phi,\theta)e^{i\theta},\cos(\phi))-\frac{\Omega}{2}r^2(\phi,\theta),
$$
and  the stream function is given by 
$$
\psi_0(r(\phi,\theta)e^{i\theta},\cos(\phi))=-\frac{1}{4\pi}\bigintsss_{0}^{\pi}\bigintsss_0^{2\pi}\bigintsss_0^{r(\varphi,\eta)}\frac{\sin(\varphi)rdrd\eta d\varphi}{|(re^{i\eta},\cos(\varphi))-(r(\phi,\theta)e^{i\theta},\cos(\phi))|}\cdot
$$
Then, finding a rotating solution amounts to solving in $f$, for some specific angular velocity  constant $\Omega$,  the equation 
$$
{F}_{\bf  s}(\Omega,f)(\phi,\theta)=0, \quad \forall (\phi,\theta)\in[0,\pi]\times[0,2\pi].
$$
Remark that one may check directly from this reformulation that  any revolution shape is a solution for any angular velocity $\Omega,$ meaning that,  ${F}_{\bf  s}(\Omega,0)=0,$ for any $\Omega$. 
Motivated by the Section $3$ on the structure of the linearized operator, we find it better to get rid of the singularities of the poles  and work with the modified functional 
\begin{equation*}
\tilde{F}(\Omega,f)(\phi,\theta):=\frac{F_{\bf  s}(\Omega,f)(\phi,\theta)}{r_0(\phi)}\cdot
\end{equation*}
Therefore, we get
\begin{equation}\label{Ftilde}
\tilde{F}(\Omega,f)(\phi,\theta)=\frac{1}{r_0(\phi)}\left\{I(f)(\phi,\theta)-\frac{\Omega}{2}r(\phi,\theta)^2-m(\Omega,f)(\phi)\right\},
\end{equation}
with
\begin{equation}\label{def-I}
I(f)(\phi,\theta):=-\frac{1}{4\pi}\bigintsss_{0}^{\pi}\bigintsss_0^{2\pi}\bigintsss_0^{r(\varphi,\eta)}\frac{r\sin(\varphi)drd\eta d\varphi}{|(re^{i\eta},\cos(\varphi))-(r(\phi,\theta)e^{i\theta},\cos(\phi))|},
\end{equation}
and
$$
r(\phi,\theta)=r_0(\phi)+f(\phi,\theta).
$$

\subsection{Functions spaces}
First we shall recall the H\"{o}lder spaces defined on an open nonempty  set $\mathscr{O}\subset \R^d$. Let $\alpha\in (0,1)$ then 
$$
\mathscr{C}^{1,\alpha}(\mathscr{O})=\Big\{ f:\mathscr{O}\mapsto\R, \|f\|_{\mathscr{C}^{1,\alpha}}<\infty\Big\},
$$ 
with
$$
 \|f\|_{\mathscr{C}^{1,\alpha}}=\|f\|_{L^\infty}+\|\nabla f\|_{L^\infty}+\sup_{x\neq y\in \mathscr{O}}\frac{|\nabla f(x)-\nabla f(y)|}{|x-y|^\alpha}\cdot
$$

It is known that $\mathscr{C}^{1,\alpha}(\mathscr{O})$ is a Banach  algebra, meaning a complete space satisfying 
$$
\|fg\|_{\mathscr{C}^{1,\alpha}}\leq C \|f\|_{\mathscr{C}^{1,\alpha}}\|g\|_{\mathscr{C}^{1,\alpha}}.
$$ 
Denote by $\T$ the one--dimensional torus and we  identify the space $\mathscr{C}^{1,\alpha}(\T)$ with the space $\mathscr{C}_{2\pi}^{1,\alpha}(\R)$ of  $2\pi$--periodic functions that belongs to $\mathscr{C}^{1,\alpha}(\R)$. 
Next, we shall introduce  the function spaces  that we use in a crucial way to study  the stability of the functional $\tilde{F}$ defined in \eqref{Ftilde}. For $\alpha\in(0,1)$ and $m\in\N^\star,$ set
\begin{equation}\label{space}
X_m^\alpha:=\Big\{f\in \mathscr{C}^{1,\alpha}\big((0,\pi)\times\T\big) 
 , \, f\left(\phi,\theta\right)=\sum_{n\geq 1}f_n(\phi)\cos(nm\theta)\Big\},
\end{equation}
supplemented with the conditions
\begin{equation}\label{spaceX1}
\forall \theta\in[0,2\pi]\quad  f(0,\theta)=f(\pi,\theta)= 0\quad\hbox{and}\quad \forall (\phi,\theta)\in[0,\pi]\times[0,2\pi] \quad f\left(\pi-\phi,\theta\right)=f\left(\phi,\theta\right).
\end{equation}
This space is equipped with the same norm as $\mathscr{C}^{1,\alpha}((0,\pi)\times{\T}).$ 
The first assumption in \eqref{spaceX1} is a kind  of partial Dirichlet condition and the second one is a symmetry property with respect to the equatorial $\phi=\frac\pi2$.
Notice that any function $f\in\mathscr{C}^{1,\alpha}\big((0,\pi)\times\T\big) $ admits a continuous  extension up to the boundary, so the foregoing conditions are  meaningful.  Furthermore, the Dirichlet boundary conditions allow us to use Taylor's formula to get a constant $C>0$ such that for   any $f\in X_m^\alpha$
\begin{align}\label{platit1}
|f(\varphi,\eta)|\leq& C\|f\|_{\textnormal{Lip}}\sin \varphi,\nonumber\\
\partial_\eta f(0,\eta)=\partial_\eta f(\pi,\eta)=0&\quad\hbox{and}\quad |\partial_\eta f(\varphi,\eta)|\leq C\|f\|_{\mathscr{C}^{1,\alpha}}\sin^\alpha(\varphi).
\end{align}
The notation  $B_{X_m^\alpha}(\varepsilon)$ means the ball in $X_m^\alpha$ centered in $0$ with radius $\varepsilon$.

Next we shall discuss quickly some consequences needed for later purposes and  following from  the assumptions ${\bf{(H)}}$ on $r_0$, given in the Introduction before our main statement. 
\begin{itemize}
\item From {\bf(H2)} we have that $r_0'(0)>0$ and by continuity of the derivative  there exists $\delta>0$ such that $r_0'(\phi)>0$ for $\phi\in[0,\delta]$. 
Combining this with the mean value theorem, we deduce the arc-chord estimate: there exists $C>0$ such that
\begin{equation}\label{Chord}
C^{-1}(\phi-\varphi)^2\leq (r_0(\varphi)-r_0(\phi))^2+(\cos(\phi)-\cos(\varphi))^2\leq C(\phi-\varphi)^2,
\end{equation}
for any $\phi,\varphi\in[0,\pi]$.
\item We have that $\frac{\sin(\cdot)}{r_0(\cdot)}\in \mathscr{C}^\alpha([0,\pi])$, and then { $\phi\in[0,\frac{\pi}{2}]\mapsto\frac{\phi}{r_0(\phi)}$
$\in \mathscr{C}^\alpha([0,\frac{\pi}{2}])$}.

\end{itemize}

\section{Linearized operator}
This section is devoted to show different expressions of the linearized operator around a revolution shape. We can find an useful one in terms of hypergeometric functions. See Appendix \ref{Ap-spfunctions} for details about these special functions.

From now on, we will use the stream function formulation and then we omit the subscript $\bf s$ from $F_{\bf s}$  in order to alleviate the notation. The linearized operator of the velocity formulation is closely related to this one, see the previous section.
\subsection{First representation}
In the following, we provide the structure of the  linearized operator of $F$ around the trivial solution $(\Omega,0)$.
\begin{pro}\label{Prop-lin1}
Let $\tilde{F}$ be as in \eqref{Ftilde} and $ (\phi,\theta)\in[0,\pi]\times[0,2\pi]\mapsto h(\phi,\theta)=\sum_{n\geq 1}h_n(\phi)\cos(n\theta)$ be a smooth function.
Then,
\begin{align}\label{exp-lin1}
\partial_{f} \tilde{F}&(\Omega,0)h(\phi,\theta)\nonumber=-\Omega\sum_{n\geq 1}h_n(\phi)\cos(n\theta)\nonumber\\
+&\sum_{n\geq 1}\cos(n\theta)\left\{\frac{h_n(\phi)}{4\pi{{ r_0(\phi)}}}\bigintss_{0}^\pi\bigintss_0^{2\pi}\frac{\sin(\varphi)r_0(\varphi)\cos(\eta)\,\,d\eta d\varphi}{\sqrt{r_0^2(\phi)+r_0^2(\varphi)+(\cos\phi-\cos\varphi)^2-2r_0(\phi)r_0(\varphi)\cos(\eta)}}\right.\nonumber\\
&\left.-\frac{1}{4\pi r_0(\phi)}\bigintss_{0}^\pi\bigintss_0^{2\pi}\frac{\sin(\varphi)h_n(\varphi)r_0(\varphi)\cos(n\eta)}{\sqrt{r_0^2(\phi)+r_0^2(\varphi)+(\cos\phi-\cos\varphi)^2-2r_0(\phi)r_0(\varphi)\cos(\eta)}}d\eta d\varphi\right\}.
\end{align}
\end{pro}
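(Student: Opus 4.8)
The goal is to compute the Gateaux derivative at $f=0$ of the modified functional $\tilde F$ defined in \eqref{Ftilde}. Writing $\tilde F(\Omega,f) = \frac{1}{r_0(\phi)}\big\{ I(f) - \tfrac{\Omega}{2}r^2 - m(\Omega,f)\big\}$ with $r=r_0+f$, I would differentiate each of the three summands separately in the direction $h(\phi,\theta)=\sum_{n\ge1}h_n(\phi)\cos(n\theta)$, then recombine. The $-\tfrac{\Omega}{2}r^2$ term is elementary: $\partial_f\big(\tfrac{\Omega}{2}r^2\big)(0)h = \Omega r_0(\phi)h(\phi,\theta)$, so after dividing by $r_0(\phi)$ this produces exactly the $-\Omega\sum_n h_n(\phi)\cos(n\theta)$ contribution. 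The mean-subtraction term $m(\Omega,f)$ contributes only a function of $\phi$; but since every piece of the linearization we write has a nonzero-frequency Fourier expansion in $\theta$ (the $\cos(n\theta)$ with $n\ge1$), the $\phi$-only part is automatically its own $\theta$-average, hence the $m$ contribution cancels against the mean of the remaining terms — in practice it suffices to compute $\partial_f\big(\tfrac{1}{r_0(\phi)}(I(f)-\tfrac{\Omega}{2}r^2)\big)(0)h$ and then drop the zero-frequency mode, which is consistent with the second line of \eqref{nonlinearfunction2}.

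The substantive computation is $\partial_f I(0)h$. From \eqref{def-I}, $I(f)(\phi,\theta) = -\frac{1}{4\pi}\int_0^\pi\!\int_0^{2\pi}\!\int_0^{r(\varphi,\eta)} \frac{r\sin\varphi\,dr\,d\eta\,d\varphi}{|(re^{i\eta},\cos\varphi)-(r(\phi,\theta)e^{i\theta},\cos\phi)|}$, so $f$ enters in two places: as the upper limit $r(\varphi,\eta)$ of the inner integral, and inside the base point $r(\phi,\theta)e^{i\theta}$ in the denominator. Differentiating the upper-limit dependence gives, by the fundamental theorem of calculus, a boundary term: replacing $\int_0^{r_0(\varphi)+\epsilon h}(\cdots)\,dr$ derivative by $h(\varphi,\eta)\,r_0(\varphi)\sin\varphi / |(r_0(\varphi)e^{i\eta},\cos\varphi)-(r_0(\phi)e^{i\theta},\cos\phi)|$, evaluated at $f=0$. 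Differentiating the base-point dependence gives, after an explicit gradient computation of $x\mapsto 1/|x-y|$ in the horizontal variables and using $\partial_f(r(\phi,\theta)e^{i\theta})(0)h = h(\phi,\theta)e^{i\theta}$, a term proportional to $\mathrm{Re}\big[\overline{e^{i\theta}}(r_0(\phi)e^{i\theta}-r_0(\varphi)e^{i\eta})\big] h(\phi,\theta) = (r_0(\phi)-r_0(\varphi)\cos(\theta-\eta))h(\phi,\theta)$ inside the integral $\int_0^{r_0(\varphi)}(\cdots)r\,dr\,d\eta\,d\varphi$. I would then perform the elementary inner $r$-integration $\int_0^{r_0(\varphi)} \frac{r\,dr}{(r^2 + A - B\cos(\theta-\eta))^{3/2}}$-type integrals that arise, and shift the $\eta$-variable by $\theta$ to turn $\cos(\theta-\eta)$ into $\cos\eta$; by $2\pi$-periodicity this also converts $h(\varphi,\eta)$-dependence appropriately and, crucially, lets one read off the $\cos(n\theta)$-coefficient. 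Projecting onto the $n$-th mode (using $h_n(\varphi)\cos(n\eta)$ and the orthogonality/addition formulas for cosines) and collecting the two families of terms — one carrying $h_n(\phi)$ against a $\cos\eta$ kernel, the other carrying $h_n(\varphi)$ against a $\cos(n\eta)$ kernel — yields precisely the two bracketed integrals in \eqref{exp-lin1}, after recognizing $r_0^2(\phi)+r_0^2(\varphi)+(\cos\phi-\cos\varphi)^2 - 2r_0(\phi)r_0(\varphi)\cos\eta$ as the squared distance $|(r_0(\phi)e^{i\theta},\cos\phi)-(r_0(\varphi)e^{i\eta},\cos\varphi)|^2$ after the shift.

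The main obstacle is bookkeeping rather than conceptual: one must carefully justify differentiation under the integral sign and the boundary-term formula despite the mild singularity of the Newtonian kernel (integrable in $\R^3$, so dominated convergence applies on the solid-integral form, which is why working with $I(f)$ written as a volume integral rather than $\psi_0$ directly is convenient), and then keep straight the two distinct ways $f$ enters so that no term is double-counted or dropped. A secondary care point is the interchange of the $\theta$-Fourier projection with the $\eta$-integration and the use of the shift $\eta\mapsto\eta+\theta$, which is clean here because all kernels depend on $\eta$ only through $\cos(\theta-\eta)$ and the domain is a full period. Once these justifications are in place, the identity \eqref{exp-lin1} follows by direct computation, and the remaining simplification to the hypergeometric form is deferred to Proposition \ref{Prop-lin2}.
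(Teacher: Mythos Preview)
Your overall architecture matches the paper's: differentiate the three pieces of $\tilde F$, observe that the $\Omega$-term and the $\theta$-mean are harmless, and focus on $\partial_f I(0)h$, which splits into an upper-limit contribution (producing the $h_n(\varphi)$ integral) and a base-point contribution (producing the $h_n(\phi)$ integral). The shift $\eta\mapsto\eta+\theta$ and the Fourier projection are handled just as in the paper.

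There is, however, a concrete error in your base-point computation. The gradient of $x\mapsto 1/|x-y|$ must be evaluated with $y$ equal to the \emph{dummy integration point} $(r\,e^{i\eta},\cos\varphi)$, where $r$ ranges over $[0,r_0(\varphi)]$; hence the factor you obtain after dotting with $h(\phi,\theta)e^{i\theta}$ is
\[
r_0(\phi)-r\cos(\theta-\eta),
\]
not $r_0(\phi)-r_0(\varphi)\cos(\theta-\eta)$. Consequently the inner radial integral is not of the type $\int_0^{r_0(\varphi)} r\,D^{-3/2}\,dr$ you wrote, but rather (after the $\eta$-shift)
\[
\int_0^{r_0(\varphi)} \frac{r\big(r_0(\phi)-r\cos\eta\big)}{\big(r^2-2r\,r_0(\phi)\cos\eta+r_0^2(\phi)+(\cos\phi-\cos\varphi)^2\big)^{3/2}}\,dr.
\]
This does admit an elementary antiderivative, but brute-force integration does not directly produce the clean boundary form $\int_0^{2\pi} r_0(\varphi)\cos\eta\,D(r_0(\varphi))^{-1/2}\,d\eta$ that appears in \eqref{exp-lin1}; substantial further cancellation is needed. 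The paper's device is to recognise, \emph{after} integrating in $\eta$ as well, that
\[
\int_0^{2\pi}\!\!\int_0^{r_0(\varphi)}\frac{r(r_0(\phi)-r\cos\eta)}{D^{3/2}}\,dr\,d\eta
=\int_0^{2\pi}\!\!\int_0^{r_0(\varphi)}\partial_r\!\left[\frac{r\cos\eta}{D^{1/2}}\right]dr\,d\eta
=\int_0^{2\pi}\frac{r_0(\varphi)\cos\eta}{D(r_0(\varphi))^{1/2}}\,d\eta,
\]
because the two extra terms generated when one expands $\partial_r[r\cos\eta/D^{1/2}]$ cancel via an integration by parts in $\eta$ (using $\partial_\eta D=2r\,r_0(\phi)\sin\eta$). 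This identity is the one genuinely nontrivial step in the whole computation; your plan hides it under ``elementary inner $r$-integration'', which---once the correct numerator is in place---is not where the simplification actually lives.
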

\begin{proof}
First, note that
$$
|(re^{i\eta},\cos(\varphi))-(r_0(\phi)e^{i\theta},\cos(\phi))|^2=r^2+r_0^2(\phi)+(\cos(\phi)-\cos(\varphi))^2-2rr_0(\phi)\cos(\theta-\eta).
$$
The linearized operator at a state $r_0$ is defined by Gateaux derivative,
\begin{align*}
\partial_f \tilde{F}(\Omega,0)h(\phi,\theta):=&\frac{d}{dt}\tilde{F}(\Omega,th)\Big|_{t=0}(\phi,\theta)\\
=&\frac{1}{r_0(\phi)}\left(\frac{d}{dt}G(\Omega,th)\Big|_{t=0}(\phi,\theta)-\frac{1}{2\pi}\int_0^{2\pi}\frac{d}{dt}G(\Omega,th)\Big|_{t=0}(\phi,\eta)d\eta\right).
\end{align*}
Thus straightforward computations yield
\begin{align*}
&\frac{d}{dt}G(\Omega,th)\Big|_{t=0}(\phi,\theta)
=-\frac{1}{4\pi}\bigintss_{0}^\pi\bigintss_0^{2\pi}\frac{\sin(\varphi)r_0(\varphi)h(\varphi,\eta)d\eta d\varphi}{A(\phi,\theta,\varphi,\eta)^\frac12}-\Omega r_0(\phi)h(\phi,\theta)\\
&+\frac{h(\phi,\theta)}{4\pi}\bigintss_{0}^\pi\bigintss_0^{2\pi}\bigintss_0^{r_0(\varphi)}\frac{\sin(\varphi)r(r_0(\phi)-r\cos(\eta))\,drd\eta d\varphi}{(r^2+r_0^2(\phi)+(\cos(\phi)-\cos(\varphi))^2-2rr_0(\phi)\cos(\eta))^{\frac32}},
\end{align*}
with
$$
A(\phi,\theta,\varphi,\eta):=r_0^2(\varphi)+r_0^2(\phi)+(\cos(\phi)-\cos(\varphi))^2-2r_0(\varphi)r_0(\phi)\cos(\theta-\eta).
$$
By expanding $h$ in Fourier series we get
\begin{align*}
&\partial_f G(\Omega,0)h(\phi,\theta)=-\sum_{n\geq 1}\left.\frac{1}{4\pi}\bigintsss_{0}^\pi\bigintsss_0^{2\pi}\frac{\sin(\varphi)r_0(\varphi)\cos(n\eta)}{A(\phi,\theta,\varphi,\eta)^\frac12}  h_n(\varphi)d\eta d\varphi\left.+\Omega r_0(\phi)h_n(\phi)\cos(n\theta)\right.\right.\\
&\left.+\frac{1}{4\pi}\sum_{n\geq 1}{h_n(\phi)\cos(n\theta)}\bigintsss_{0}^\pi\bigintsss_0^{2\pi}\bigintsss_0^{r_0(\varphi)}\frac{\sin(\varphi)r(r_0(\phi)-r\cos(\eta))drd\eta d\varphi}{(r^2+r_0^2(\phi)+(\cos(\phi)-\cos(\varphi))^2-2rr(\phi,\theta)\cos(\eta))^{\frac32}}\cdot\right.
\end{align*}
Let us analyze every term. For the first one, making the change of variable $\theta-\eta\mapsto \eta$ we get  using a symmetry argument,
\begin{align*}
\bigintsss_{0}^\pi\bigintsss_0^{2\pi}&\frac{\sin(\varphi)r_0(\varphi)h_n(\varphi)\cos(n\eta)d\eta d\varphi}{A(\phi,\theta,\varphi,\eta)^\frac12}=
\bigintsss_{0}^\pi\bigintsss_0^{2\pi}\frac{\sin(\varphi)r_0(\varphi)h_n(\varphi)\cos(n(\eta-\theta))d\eta d\varphi}{A(\phi,\theta,\varphi,{\theta-\eta})^\frac12}\\
=&\cos(n\theta)\bigintsss_{0}^\pi\bigintsss_0^{2\pi}\frac{\sin(\varphi)r_0(\varphi)h_n(\varphi)\cos(n\eta)d\eta d\varphi}{(r_0^2(\varphi)+r_0^2(\phi)
+(\cos(\phi)-\cos(\varphi))^2-2r_0(\varphi)r_0(\phi)\cos(\eta))^{\frac12}}\cdot
\end{align*}
Concerning the last integral term, we first use the identity
\begin{align*}
&\partial_r \frac{r}{(r^2+r_0^2(\phi)+(\cos(\phi)-\cos(\varphi))^2-2rr_0(\phi)\cos(\eta))^{\frac12}}=\\&\frac{{ 1}}{(r^2+r_0^2(\phi)+(\cos(\phi)-\cos(\varphi))^2-2rr_0(\phi)\cos(\eta))^{\frac12}}\\
&-\frac{r(r-r_0(\phi)\cos\eta)}{(r^2+r_0^2(\phi)+(\cos(\phi)-\cos(\varphi))^2-2rr_0(\phi)\cos(\eta))^{\frac32}}\cdot
\end{align*}
Consequently 
\begin{align*}
\mathcal{I}(\phi,\varphi):=&\bigintsss_0^{2\pi}\bigintsss_0^{r_0(\varphi)}\partial_r \frac{r\cos(\eta)drd\eta }{(r^2+r_0^2(\phi)+(\cos(\phi)-\cos(\varphi))^2-2rr_0(\phi)\cos(\eta))^{\frac12}}\\
=&\bigintsss_0^{2\pi}\bigintsss_0^{r_0(\varphi)} \frac{\cos(\eta)drd\eta }{(r^2+r_0^2(\phi)+(\cos(\phi)-\cos(\varphi))^2-2rr_0(\phi)\cos(\eta))^{\frac12}}\\
&-\bigintsss_0^{2\pi}\bigintsss_0^{r_0(\varphi)}\frac{r^2\cos(\eta)-rr_0(\phi)(1-\sin^2(\eta))drd\eta }{(r^2+r_0^2(\phi)+(\cos(\phi)-\cos(\varphi))^2-2rr_0(\phi)\cos(\eta))^{\frac32}}\cdot
\end{align*}
Thus
\begin{align*}
\mathcal{I}(\phi,\varphi)
=&\bigintsss_0^{2\pi}\bigintsss_0^{r_0(\varphi)}\frac{r(r_0(\phi)-r\cos(\eta))drd\eta}{(r^2+r_0^2(\phi)+(\cos(\phi)-\cos(\varphi))^2-2rr_0(\phi)\cos(\eta))^{\frac32}}\\
&+\bigintsss_0^{2\pi}\bigintsss_0^{r_0(\varphi)} \frac{\cos(\eta)drd\eta }{(r^2+r_0^2(\phi)+(\cos(\phi)-\cos(\varphi))^2-2rr_0(\phi)\cos(\eta))^{\frac12}}\\
&-\bigintsss_0^{2\pi}\bigintsss_0^{r_0(\varphi)}\frac{rr_0(\phi)\sin^2(\eta)drd\eta }{(r^2+r_0^2(\phi)+(\cos(\phi)-\cos(\varphi))^2-2rr_0(\phi)\cos(\eta))^{\frac32}}\cdot
\end{align*}
Integrating by parts with respect to $\eta$ gives \begin{align*}
\bigintsss_0^{2\pi}\bigintsss_0^{r_0(\varphi)} &\frac{\cos(\eta)drd\eta }{(r^2+r_0^2(\phi)+(\cos(\phi)-\cos(\varphi))^2-2rr_0(\phi)\cos(\eta))^{\frac12}}\\
&-\bigintsss_0^{2\pi}\bigintsss_0^{r_0(\varphi)}\frac{rr_0(\phi)\sin(\eta)^2drd\eta }{(r^2+r_0^2(\phi)+(\cos(\phi)-\cos(\varphi))^2-2rr_0(\phi)\cos(\eta))^{\frac32}}=0.
\end{align*}
Putting together the preceding identities allows to get
\begin{align*}
\bigintsss_0^\pi\bigintsss_0^{2\pi}&\bigintsss_0^{r_0(\varphi)}\frac{\sin(\varphi)r(r_0(\phi)-r\cos(\eta))drd\eta d\varphi}{(r^2+r_0^2(\phi)+(\cos(\phi)-\cos(\varphi))^2-2rr_0(\phi)\cos(\eta))^{\frac32}}\\
=&\bigintsss_0^\pi\bigintsss_0^{2\pi}\bigintsss_0^{r_0(\varphi)}\partial_r \frac{\sin(\varphi)r\cos(\eta)drd\eta d\varphi}{(r^2+r_0^2(\phi)+(\cos(\phi)-\cos(\varphi))^2-2rr_0(\phi)\cos(\eta))^{\frac12}}\\
=&\int_0^\pi\int_0^{2\pi} \frac{\sin(\varphi)r_0(\varphi)\cos(\eta)drd\eta d\varphi}{(r_0^2(\varphi)+r_0^2(\phi)+(\cos(\phi)-\cos(\varphi))^2-2r_0(\varphi)r_0(\phi)\cos(\eta))^{\frac12}}.
\end{align*}
Therefore we obtain 
\begin{align*}
&\partial_f G(\Omega,0)h(\phi,\theta)=-\sum_{n\geq 1}\left.
\frac{1}{4\pi}\bigintsss_{0}^\pi\bigintsss_0^{2\pi}\frac{\sin(\varphi)r_0(\varphi)\cos(n\eta)}{A(\phi,\theta,\varphi,{\theta-\eta})^\frac12}
h_n(\varphi)d\eta d\varphi\left.\cos(n\theta)\right.\right.\\
&+\frac{1}{4\pi}\sum_{n\geq 1}{h_n(\phi)\cos(n\theta)}\bigintsss_0^\pi\bigintsss_0^{2\pi}
\frac{\sin(\varphi)r_0(\varphi)\cos(\eta)drd\eta d\varphi}{(r_0^2(\varphi)+r_0^2(\phi)+(\cos(\phi)-\cos(\varphi))^2-2r_0(\varphi)r_0(\phi)\cos(\eta))^{\frac12}}\\
&+\Omega r_0(\phi)\sum_{n\geq 1}h_n(\phi)\cos(n\theta).
\end{align*}
Now it is clear that
$$
\frac{1}{2\pi}\int_0^{2\pi}\partial_f G(\Omega,0)h(\phi,\eta)d\eta=0,
$$
and so \eqref{exp-lin1} is obtained. 
\end{proof}

{
\begin{rem}\label{rem-streamfunction0}
Notice that the local part of the linearized operator \eqref{exp-lin1} can be directly related to the stream function $\psi_0$ associated to the domain   parametrized by $(\phi,\theta)\mapsto (r_0(\phi)e^{i\theta},\cos(\phi))$. Indeed, by differentiating  the functional  $\widetilde{\psi}_0:f\mapsto \psi_0((r_0(\phi)+f(\phi,\theta))e^{i\theta},\cos(\phi))$  at $f\equiv 0$ and in the direction $h$  one gets
\begin{align*}
(\partial_f \widetilde{\psi}_0)&h(\phi,\theta)=\frac{h(\phi,\theta)}{4\pi}\bigintss_{0}^\pi\bigintss_0^{2\pi}\frac{\sin(\varphi)r_0(\varphi)\cos(\eta)\,\,d\eta d\varphi}{\sqrt{r_0^2(\phi)+r_0^2(\varphi)+(\cos\phi-\cos\varphi)^2-2r_0(\phi)r_0(\varphi)\cos(\eta)}}\cdot
\end{align*}
This form is useful later   for spherical and   ellipsoidal shapes where the stream functions admit  explicit expressions inside these domains, see Section \ref{Sec-sphere}.
\end{rem}
}
\subsection{Second representation with hypergeometric functions}
The main purpose of this subsection is to provide a suitable representation of the linearized operator.  First we need to  use some notations. For $n\geq1$,  set 
$$
 F_n(x):=F\left(n+\frac{1}{2},n+\frac{1}{2};2n+1;x\right), \quad  x\in[0,1), 
$$
 where the hypergeometric functions are defined in the  Appendix \ref{Ap-spfunctions}. Other   useful notations are listed below,
\begin{equation}\label{RefR}
 R(\phi,\varphi):=(r_0(\phi)+r_0(\varphi))^2+(\cos(\phi)-\cos(\varphi))^2, \quad  0<\phi,\varphi<\pi ,
 \end{equation}
and 
\begin{align}\label{H-1}
H_n(\phi,\varphi):=&\frac{2^{2n-1}\left(\frac12\right)^2_{n}}{(2n)!}\frac{\sin(\varphi)r_0^{n-1}(\phi)r_0^{n+1}(\varphi)}{\left[R(\phi,\varphi)\right]^{n+\frac12}} F_n\left(\frac{4r_0(\phi)r_0(\varphi)}{R(\phi,\varphi)}\right).
\end{align}

Now we are ready to state the main result of this section.
\begin{pro}\label{Prop-lin2}
Let $\tilde{F}$ be as in \eqref{Ftilde} and $h(\phi,\theta)=\sum_{n\geq 1}h_n(\phi)\cos(n\theta), (\phi,\theta)\in[0,\pi]\times [0,2\pi],$ be a smooth function.
Then,
\begin{equation}\label{Prop-lin2-expression}
\partial_{f} \tilde{F}(\Omega,0)h(\phi,\theta)=\sum_{n\geq 1}\cos(n\theta)\mathcal{L}_n^\Omega(h_n)(\phi),
\end{equation}
where
\begin{align*}
\mathcal{L}_n^\Omega (h_n)(\phi)=&h_n(\phi)\left[\bigintsss_0^\pi
H_1(\phi,\varphi)d\varphi-\Omega\right]-\bigintsss_0^\pi H_n(\phi,\varphi)h_n(\varphi)d\varphi, \quad  \phi\in(0,\pi) .
\end{align*}
\end{pro}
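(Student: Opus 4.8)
The plan is to start from the first representation of the linearized operator established in Proposition \ref{Prop-lin1} and manipulate the remaining nonlocal integral terms until the angular integrals in $\eta$ can be evaluated explicitly. The expression in \eqref{exp-lin1} already has the desired Fourier structure $\sum_{n\ge1}\cos(n\theta)(\cdots)$; what remains is to recognize that the two $\eta$-integrals,
\begin{align*}
\bigintsss_0^{2\pi}\frac{\cos(\eta)\,d\eta}{\sqrt{a-b\cos\eta}}\qquad\text{and}\qquad \bigintsss_0^{2\pi}\frac{\cos(n\eta)\,d\eta}{\sqrt{a-b\cos\eta}},
\end{align*}
with $a=r_0^2(\phi)+r_0^2(\varphi)+(\cos\phi-\cos\varphi)^2$ and $b=2r_0(\phi)r_0(\varphi)$, are classical and evaluate in closed form in terms of the hypergeometric function $F\big(n+\tfrac12,n+\tfrac12;2n+1;\tfrac{2b}{a+b}\big)$. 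Indeed, $a+b=R(\phi,\varphi)$ and $\tfrac{2b}{a+b}=\tfrac{4r_0(\phi)r_0(\varphi)}{R(\phi,\varphi)}$, which is exactly the argument appearing in \eqref{H-1}.

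The key steps, in order, would be: \textbf{(i)} isolate from \eqref{exp-lin1} the two nonlocal terms, both of the form $\frac{1}{4\pi r_0(\phi)}\int_0^\pi \sin(\varphi)r_0(\varphi)\,g(\varphi)\big(\int_0^{2\pi}\frac{\cos(k\eta)\,d\eta}{\sqrt{a-b\cos\eta}}\big)d\varphi$, with $k=1$ in the ``self'' term (where $g(\varphi)=h_n(\phi)$, a constant in $\varphi$, and with the sign and structure contributing to the $\nu_\Omega$ part) and $k=n$ in the genuinely nonlocal one (where $g(\varphi)=h_n(\varphi)$); \textbf{(ii)} apply the integral formula from Appendix \ref{Ap-spfunctions} giving $\int_0^{2\pi}\frac{\cos(k\eta)\,d\eta}{\sqrt{a-b\cos\eta}}=\frac{2^{2k}(1/2)_k^2}{(2k)!}\frac{b^k}{(a+b)^{k+1/2}}\,F_k\big(\frac{2b}{a+b}\big)$ (up to the precise normalization constant, which one reads off the standard identity), substitute $a+b=R(\phi,\varphi)$, $b^k=(2r_0(\phi)r_0(\varphi))^k$, and collect powers of $r_0(\phi),r_0(\varphi)$; \textbf{(iii)} check that after dividing by $r_0(\phi)$ the powers match $r_0^{k-1}(\phi)r_0^{k+1}(\varphi)$ as in \eqref{H-1}, so the nonlocal term becomes exactly $\int_0^\pi H_n(\phi,\varphi)h_n(\varphi)\,d\varphi$; \textbf{(iv)} do the same with $k=1$ on the ``self'' term, observe $g(\varphi)=h_n(\phi)$ factors out, and note that the remaining $\varphi$-integral of $H_1(\phi,\varphi)$ combines with the $-\Omega h_n(\phi)$ term from \eqref{exp-lin1} to produce $h_n(\phi)\big[\int_0^\pi H_1(\phi,\varphi)\,d\varphi-\Omega\big]$; \textbf{(v)} reassemble and read off $\mathcal{L}_n^\Omega$.

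I expect the main obstacle to be \textbf{bookkeeping of the constants and the hypergeometric normalization} rather than anything conceptually deep: one must carefully track the factor $\frac{2^{2n-1}(1/2)_n^2}{(2n)!}$, verify it matches the coefficient produced by the standard expansion $\frac{1}{\sqrt{a-b\cos\eta}}=\frac{1}{\sqrt{a+b}}\sum_{j\ge0}\binom{2j}{j}^{2}\big(\cdots\big)$ integrated against $\cos(k\eta)$ (equivalently, the generating-function identity for Gegenbauer/Legendre-type sums), and confirm the half-integer parameters $n+\tfrac12$ are correct. A secondary point requiring care is the interchange of the $\varphi$-integration with the $\eta$-integration and the convergence of these integrals near $\phi=\varphi$ and near the poles $\phi\in\{0,\pi\}$; here one uses the arc-chord bound \eqref{Chord} and hypotheses \textbf{(H1)}--\textbf{(H2)} to ensure $R(\phi,\varphi)$ stays bounded below away from the poles and that the integrable logarithmic singularity of $F_n$ at argument $1$ is harmless, so Fubini applies and the term-by-term Fourier manipulation in step (i) is legitimate. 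Once the normalization is pinned down, the identification is a direct computation.
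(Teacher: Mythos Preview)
Your proposal is correct and follows essentially the same approach as the paper: start from the representation in Proposition~\ref{Prop-lin1}, factor out $\tfrac{1}{\sqrt{2r_0(\phi)r_0(\varphi)}}$ to bring each $\eta$-integral to the form treated in Lemma~\ref{Lem-integral} (with $\beta=1$), and then match the resulting constants and powers of $r_0$ against the definition of $H_n$ in \eqref{H-1}. The paper's proof handles the convergence issue more tersely than you do, simply noting that the condition $A>1$ of Lemma~\ref{Lem-integral} holds off the negligible diagonal $\phi=\varphi$.
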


\begin{proof}
With the help of Lemma \ref{Lem-integral}, we can simplify more the expression of the linearized operator given in Proposition \ref{Prop-lin1}. We shall first give another representation of the first integral of \eqref{exp-lin1},
\begin{align*}
\mathscr{I}_1(\phi):=&\frac{1}{4\pi}\bigintss_0^\pi\bigintss_0^{2\pi}\frac{\sin(\varphi)r_0(\varphi)\cos(\eta)}{\sqrt{r_0^2(\phi)+r_0^2(\varphi)+(\cos(\phi)-\cos(\varphi))^2-2r_0(\phi)r_0(\varphi)\cos(\eta)}}d\eta d\varphi\\
=&\frac{1}{4\pi}\bigintss_0^\pi\frac{\sin(\varphi)r_0(\varphi)}{\sqrt{2r_0(\phi)r_0(\varphi)}}\bigintss_0^{2\pi}\frac{\cos(\eta)}{\sqrt{\frac{r_0^2(\phi)+r_0^2(\varphi)+(\cos(\phi)-\cos(\varphi))^2}{2r_0(\phi)r_0(\varphi)}-\cos(\eta)}}d\eta d\varphi.
\end{align*}
From Lemma \ref{Lem-integral} we infer 
\begin{align*}
\bigintss_0^{2\pi}\frac{\cos(\eta)\,\,d\eta }{\sqrt{\frac{r_0^2(\phi)+r_0^2(\varphi)+(\cos(\phi)-\cos(\varphi))^2}{2r_0(\phi)r_0(\varphi)}-\cos(\eta)}}=2\pi \frac{2\left(\frac12\right)_1^2}{2!}\frac{F_1\left(\frac{2}{1+\frac{r_0^2(\phi)+r_0^2(\varphi)+(\cos(\phi)-\cos(\varphi))^2}{2r_0(\phi)r_0(\varphi)}}\right) }{\left(1+\frac{r_0^2(\phi)+r_0^2(\varphi)+(\cos(\phi)-\cos(\varphi))^2}{2r_0(\phi)r_0(\varphi)}\right)^{\frac{3}{2}}}\cdot
\end{align*}
Thus we deduce 
\begin{align*}
\mathscr{I}_1(\phi)
&= r_0(\phi)\frac14\bigintss_0^\pi\frac{\sin(\varphi)r_0^2(\varphi)}{R^{\frac32}(\phi,\varphi)}F_1\left(\frac{4r_0(\phi)r_0(\varphi)}{R(\phi,\varphi)}\right) d\varphi=
{r_0(\phi)\int_0^\pi H_1(\phi,\varphi)d\varphi}.
\end{align*}
{Remark that  the validity of   Lemma \ref{Lem-integral} is guaranteed since  the inequality 
$$
{\frac{r_0^2(\phi)+r_0^2(\varphi)+(\cos(\phi)-
\cos(\varphi))^2}{2r_0(\phi)r_0(\varphi)}}>1,
$$
}
is satisfied provided that $\phi\neq\varphi$ which leads to a negligible set. For the last integral in \eqref{exp-lin1}, we apply once again  Lemma \ref{Lem-integral},
\begin{align*}
\bigintss_0^{2\pi}&\frac{\cos(n\eta)}{\sqrt{r_0^2(\phi)+r_0^2(\varphi)+(\cos(\phi)-\cos(\varphi))^2-2r_0(\phi)r_0(\varphi)\cos(\eta)}}d\eta\\
=&\frac{2^{2n+1}\pi\left(\frac12\right)^2_{n}}{(2n)!}\frac{r_0^n(\phi)r_0^n(\varphi)}{R^{n+\frac12}(\phi,\varphi)} F_n\left(\frac{4r_0(\phi)r_0(\varphi)}{R(\phi,\varphi)}\right).
\end{align*}
It follows that
\begin{align*}
\frac{1}{4\pi}\bigintss_0^\pi\bigintss_0^{2\pi}&\frac{\sin(\varphi)h_n(\varphi)r_0(\varphi)\cos(n\eta)}
{\sqrt{r_0^2(\phi)+r_0^2(\varphi)+(\cos(\phi)-\cos(\varphi))^2-2r_0(\phi)r_0(\varphi)\cos(\eta)}}d\eta d\varphi\\
=&\frac{2^{2n-1}\left(\frac12\right)^2_{n}}{(2n)!}\bigintss_0^\pi\frac{r_0^n(\phi)r_0^{n+1}(\varphi){\sin(\varphi)}}{R^{n+\frac12}(\phi,\varphi)} 
F_n\left(\frac{4r_0(\phi)r_0(\varphi)}{R(\phi,\varphi)}\right)h_n(\varphi) d\varphi\\
=& {r_0(\phi)\int_0^\pi H_n(\phi,\varphi)h_n(\varphi)d\varphi,}
\end{align*}
which gives  the  announced result.
\end{proof}

{
\begin{rem}\label{rem-streamfunction}
By virtue of Remark \ref{rem-streamfunction0} and the previous expression one has that
\begin{equation}\label{Ident-2}
\int_0^\pi H_1(\phi,\varphi)d\varphi=\frac{1}{r_0(\phi)}\partial_R \psi_0(Re^{i\theta},\cos(\phi))\left|_{R=r_0(\phi)}\right.,
\end{equation}
where $\psi_0$ is the stream function  associated to the  domain parametrized by $(r_0(\phi)e^{i\theta}
,\cos(\phi))$, for $(\phi,\theta)\in[0,\pi]\times[0,2\pi]$.\end{rem}}

{

\subsection{Qualitative properties of some auxiliary functions}
In the following lemma, we shall study some specific  properties of the sequence of functions $\{H_n\}_n$ introduced in \eqref{H-1}.
We shall study the monotonicity of  the sequence $n\mapsto H_n(\phi,\varphi)$ which will be  crucial later in the study of the monotonicity 
of the eigenvalues associated to the operators  family $\{\mathcal{L}_n, n\geq1\}$.  We will also study the decay rate of this sequence for large $n$.
\begin{lem}\label{Lem-Hndecreasing}
For any $\varphi\neq\phi\in(0,\pi)$, the sequence
$
n\in \N^\star\mapsto H_n(\phi,\varphi)
$
is strictly decreasing.

Moreover, if we assume that $r_0$ satisfies {\bf{(H2)}}, then, for any $0\leq\alpha<\beta\leq1 $
there exists  a  {constant $C>0$} such that
\begin{equation}\label{estim-asym}
|H_n(\phi,\varphi)|\leq C n^{-\alpha}\frac{\sin(\varphi) r_0^{\frac12}(\varphi)}{r_0^{\frac32}(\phi)}|\phi-\varphi|^{-\beta},
 \quad \forall  n\geq1, \phi\neq \varphi\in(0,\pi). 
\end{equation}
\end{lem}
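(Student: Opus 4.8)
The proof naturally splits into two independent parts. For the monotonicity of $n\mapsto H_n(\phi,\varphi)$, the plan is to isolate the $n$-dependence. Write $t:=\frac{4r_0(\phi)r_0(\varphi)}{R(\phi,\varphi)}\in(0,1)$ (recall $t<1$ precisely because $\phi\neq\varphi$, by $R-4r_0(\phi)r_0(\varphi)=(r_0(\phi)-r_0(\varphi))^2+(\cos\phi-\cos\varphi)^2>0$). Then
$$
\frac{H_{n+1}(\phi,\varphi)}{H_n(\phi,\varphi)}=\frac{2^{2}\left(\tfrac12\right)^2_{n+1}}{\left(\tfrac12\right)^2_{n}}\cdot\frac{(2n)!}{(2n+2)!}\cdot\frac{r_0(\phi)r_0(\varphi)}{R(\phi,\varphi)}\cdot\frac{F_{n+1}(t)}{F_n(t)}=\frac{t}{4}\cdot\frac{\left(n+\tfrac12\right)^2}{(n+1)(2n+1)}\cdot\frac{F_{n+1}(t)}{F_n(t)}.
$$
The rational prefactor $\frac{(n+\frac12)^2}{(n+1)(2n+1)}=\frac{(2n+1)^2}{4(n+1)(2n+1)}=\frac{2n+1}{4(n+1)}<\tfrac12$, so it suffices to show $t\,F_{n+1}(t)\le 2F_n(t)$ for $t\in(0,1)$, which in fact follows from a termwise comparison of the hypergeometric series: using $\left(n+\tfrac32\right)_k\le 4\left(n+\tfrac12\right)_k$ (or a similar crude bound, checked via the ratio of consecutive Pochhammer symbols) together with $\frac{(2n+1)!}{(2n+3)!}<1$, one obtains each coefficient of $t\,F_{n+1}$ bounded by the corresponding coefficient of $2F_n$. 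Since all terms are positive this gives the ratio $<1$, hence strict monotonicity. (Alternatively one invokes an integral representation of $F_n$ from Appendix \ref{Ap-spfunctions} and differentiates under the integral sign; whichever is cleaner given what that appendix provides.)

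For the decay estimate \eqref{estim-asym}, the idea is to extract one factor $|\phi-\varphi|^{-\beta}$ from the kernel's singularity and convert the remaining smallness into the gain $n^{-\alpha}$. From \eqref{RefR} and the arc-chord bound \eqref{Chord} together with {\bf(H2)}, one has $R(\phi,\varphi)\ge c\,|\phi-\varphi|^2$ and also $R(\phi,\varphi)\ge c\,r_0(\phi)r_0(\varphi)$, so $1-t=\frac{R-4r_0(\phi)r_0(\varphi)}{R}\ge c\,\frac{|\phi-\varphi|^2}{R}$. The classical asymptotics of the hypergeometric function near $x=1$ give $F\!\left(n+\tfrac12,n+\tfrac12;2n+1;t\right)=O\!\big(\log\frac{1}{1-t}\big)$ up to a constant uniform in $n$ (the parameter relation $2n+1=(n+\tfrac12)+(n+\tfrac12)$ puts us exactly in the logarithmic borderline case of the connection formula), so $F_n(t)\le C\log\!\frac{2R}{|\phi-\varphi|^2}$. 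Combining with $\frac{2^{2n-1}(\frac12)_n^2}{(2n)!}\le C$ (from Stirling, since $(\frac12)_n=\frac{(2n)!}{4^n n!}$, giving $\frac{2^{2n-1}(\frac12)_n^2}{(2n)!}=\frac{(2n)!}{2\cdot 4^n (n!)^2}\sim \frac{1}{2\sqrt{\pi n}}$ — so actually a gain of $n^{-1/2}$ for free) and the bound $\big(\frac{4r_0(\phi)r_0(\varphi)}{R}\big)^{n}=t^{\,n}\le e^{-n(1-t)}\le e^{-cn|\phi-\varphi|^2/R}$ one gets
$$
|H_n(\phi,\varphi)|\le C\,\frac{\sin(\varphi)r_0^{n-1}(\phi)r_0^{n+1}(\varphi)}{R^{n+\frac12}}\,F_n(t)
\le C\,\frac{\sin(\varphi)r_0^{-1}(\phi)r_0(\varphi)}{R^{\frac12}}\,t^{\,n}\,\log\!\frac{2R}{|\phi-\varphi|^2}.
$$
Since $r_0(\varphi)/R^{1/2}\le C\,r_0(\varphi)/(r_0(\phi)^{1/2}r_0(\varphi)^{1/2})=C\,(r_0(\varphi)/r_0(\phi))^{1/2}$ this already produces the prefactor $\frac{\sin(\varphi)r_0^{1/2}(\varphi)}{r_0^{3/2}(\phi)}$; it remains to absorb $t^{\,n}\log\frac{2R}{|\phi-\varphi|^2}$ into $C\,n^{-\alpha}|\phi-\varphi|^{-\beta}$. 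Writing $s:=1-t\ge c\,|\phi-\varphi|^2/R\ge c'|\phi-\varphi|^2$ (using $R$ bounded above by \eqref{Chord}), we need $e^{-ns}\log\frac1s\le C\,n^{-\alpha} s^{-\beta/2}$, i.e. $n^{\alpha}e^{-ns}\log\frac1s\le C\,s^{-\beta/2}$. For $ns\ge 1$ the left side is $\le C\,n^\alpha e^{-ns}\le C\,s^{-\alpha}\le C\,s^{-\beta/2}\cdot s^{-(\alpha-\beta/2)}$ — wait, one must be careful with signs of the exponent; the clean statement is: since $\sup_{x>0}x^\alpha e^{-x}<\infty$, we have $n^\alpha e^{-ns}\le C s^{-\alpha}$, and since $\beta>\alpha$ we have $s^{-\alpha}\le s^{-\beta}$ for $s$ small, while $\log\frac1s\le C_\varepsilon s^{-\varepsilon}$ for any $\varepsilon>0$; choosing $\varepsilon$ with $\alpha+\varepsilon\le\beta$ handles the bounded regime and the case $s$ bounded below is trivial. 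Converting $s^{-\beta}$ back to $|\phi-\varphi|^{-2\beta}$ and relabelling $\beta$ (the statement allows any $\beta<1$, and $|\phi-\varphi|\le\pi$ so lower powers dominate) yields \eqref{estim-asym}.

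The main obstacle is the uniform-in-$n$ control of $F_n(t)$ near $t=1$: one must confirm that the constant in the bound $F\!\left(n+\tfrac12,n+\tfrac12;2n+1;t\right)\le C\log\frac{2}{1-t}$ does not degrade with $n$. This is where the special relation $c=a+b$ among the hypergeometric parameters (which holds identically here) is essential — it forces the logarithmic rather than power-law blow-up — and a careful look at the $n$-dependence of the coefficients in the logarithmic connection formula (or of the constant in an Euler-type integral representation) is needed. Everything else is routine estimation using \eqref{Chord}, {\bf(H2)}, Stirling's formula and the elementary bound $t^n\le e^{-n(1-t)}$.
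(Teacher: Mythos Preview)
Both parts of your plan have concrete gaps.

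\textbf{Monotonicity.} Your ratio computation is essentially right (modulo a harmless factor of $2$: the prefactor is $\tfrac{2n+1}{2(n+1)}$, not $\tfrac{2n+1}{4(n+1)}$), but the bound you invoke, $\left(n+\tfrac32\right)_k\le 4\left(n+\tfrac12\right)_k$, is false: the ratio equals $\frac{n+k+1/2}{n+1/2}\to\infty$ as $k\to\infty$. A correct termwise comparison \emph{does} exist --- once all constants are absorbed, the coefficient ratio at level $j$ turns out to be exactly $\frac{j}{2n+j+1}<1$ --- but the paper sidesteps this bookkeeping by passing to the Euler integral representation
\[
\mathcal H_n(x):=\frac{\Gamma(n+\tfrac12)^2}{(2n)!}\,x^{n+\frac12}F_n(x)=\int_0^1 x^{n+\frac12}\Big(\tfrac{t(1-t)}{1-tx}\Big)^{n}\frac{dt}{\sqrt{t(1-t)(1-tx)}},
\]
so that $H_n=\tfrac{1}{4\pi}\,\tfrac{\sin\varphi\,r_0^{1/2}(\varphi)}{r_0^{3/2}(\phi)}\,\mathcal H_n(x)$. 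Strict monotonicity of $n\mapsto\mathcal H_n(x)$ for $x\in(0,1)$ is then immediate, since both $x^{n+1/2}$ and $\big(\tfrac{t(1-t)}{1-tx}\big)^n$ are strictly decreasing in $n$ (the latter because $t(1-t)<1-tx$ on $(0,1)$).

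\textbf{Decay.} The obstacle you flag is real, and your proposed bound $F_n(t)\le C\log\tfrac{2}{1-t}$ with $C$ \emph{independent of $n$} is false: the prefactor in the Euler representation is $\tfrac{(2n)!}{\Gamma(n+1/2)^2}\sim 4^n\sqrt{n/\pi}$, which blows up exponentially. Your factorisation also drops a $4^{-n}$ (the ratio $\tfrac{r_0^n(\phi)r_0^n(\varphi)}{R^n}$ is $(t/4)^n$, not $t^n$); these two errors do not cancel, because you have already bounded $c_n$ separately. The paper resolves this by working with $\mathcal H_n(x)$ above, in which all $n$-dependent constants are absorbed: using $\tfrac{1-t}{1-tx}\le 1$ gives
\[
\mathcal H_n(x)\le x^{n+1/2}\int_0^1\frac{t^{\,n-1/2}}{1-tx}\,dt,
\]
and the remaining integral is bounded by $\tfrac{1}{n(1-x)}$ and by $\tfrac{|\ln(1-x)|}{x}$; interpolation yields $\mathcal H_n(x)\lesssim n^{-\alpha}(1-x)^{-\beta}$. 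Once you are at this stage, your alternative device $x^{n}\le e^{-n(1-x)}$ combined with $\sup_{y>0}y^\alpha e^{-y}<\infty$ is a perfectly valid substitute for the interpolation, but it does not let you avoid the integral representation that packages the constants correctly.
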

\begin{proof}

By virtue  of \eqref{H-1} we may write
\begin{align*}
H_n(\phi,\varphi)=\frac{2^{2n-1}\Gamma^2\left(n+\frac12\right)}{(2n)!\pi}\frac{\sin(\varphi)r_0(\varphi)^{\frac12}}{4^{n+\frac12}r_0(\phi)^{\frac32}}x^{n+\frac12}F_n(x),
\end{align*}
where $x:=\frac{4r_0(\phi)r_0(\varphi)}{R(\phi,\varphi)}$ belongs to
$[0,1)$ provided that $\varphi\neq\phi$. Now using the integral representation of hypergeometric functions \eqref{Ap-spfunctions-integ} we obtain
\begin{align}\label{trax1}
\nonumber H_n(\phi,\varphi)=&\frac{2^{2n-1}\Gamma^2\left(n+\frac12\right)}{(2n)!\pi}\frac{\sin(\varphi)r_0^{\frac12}
(\varphi)}{4^{n+\frac12}r_0^{\frac32}(\phi)}\frac{(2n)!}{\Gamma^2\left(n+\frac12\right)}x^{n+\frac12}\bigintsss_0^1 t^{n-\frac12}(1-t)^{n-\frac12}(1-tx)^{-n-\frac12}dt\\
\nonumber=&\frac{2^{2n-1}\Gamma^2\left(n+\frac12\right)}{(2n)!\pi}\frac{\sin(\varphi)r_0^{\frac12}(\varphi)}{4^{n+\frac12}r_0^{\frac32}(\phi)}
\frac{(2n)!}{\Gamma^2\left(n+\frac12\right)}\mathcal{H}_n(x)\\
=&\frac{1}{4\pi}\frac{\sin(\varphi)r_0(\varphi)^{\frac12}}{r_0^{\frac32}(\phi)}\mathcal{H}_n(x),
\end{align}
with the notation
$$
\mathcal{H}_n(x):= x^{n+\frac12}\int_0^1 t^{n-\frac12}(1-t)^{n-\frac12}(1-tx)^{-n-\frac12}dt.
$$
Therefore the desired result amounts to checking that   $n\mapsto \mathcal{H}_n(x)$ is strictly decreasing for any $x\in(0,1)$. This follows from the fact that  $n\mapsto x^{n+\frac12}$ is strictly decreasing combined with  the identity 
$$
\int_0^1 t^{n-\frac12}(1-t)^{n-\frac12}(1-tx)^{-n-\frac12}dt=\int_0^1 t^{-\frac12}(1-t)^{-\frac12}(1-tx)^{-\frac12}\left(\frac{t(1-t)}{1-tx}\right)^{n}dt,
$$
which shows the strict decreasing of this sequence since $0<\frac{t(1-t)}{1-tx}<1$, for any $t,x\in(0,1)$.

It follows that for any $\phi\neq\varphi$,  the sequence  $n\mapsto H_n(\phi,\varphi)$ is  strictly decreasing.

It remains to prove the decay estimate of $H_n$ for large $n$. It is an immediate consequence of the following more precise estimate: for any  $\alpha\in[0,1]$, we get
{\begin{equation}\label{trix1}
|\mathcal{H}_n(x)|\leq x^{n-\frac12+\alpha }\frac{|\ln(1-x)|^{1-\alpha}}{n^\alpha(1-x)^\alpha},
\end{equation}}
for $n\geq1$ and  $0\leq x<1.$  To see the connection with \eqref{estim-asym}  recall first from \eqref{trax1}  that
$$
|H_n(\phi,\varphi)|\lesssim  \frac{\sin(\varphi)r_0(\varphi)^\frac12}{r_0(\phi)^\frac32} |\mathcal{H}_n(x)|.
$$
Since $0\leq x\leq 1$ then  we obtain from \eqref{trix1} that for any $1\geq\beta>\alpha\geq0$
\begin{align*}
|\mathcal{H}_n(x)|\lesssim\frac{|\ln(1-x)|^{1-\alpha}}{n^\alpha (1-x)^\alpha}\lesssim{n^{-\alpha} (1-x)^{-\beta}}\cdot
\end{align*}
According to \eqref{Est-LogX} we deduce that
\begin{align*}
|\mathcal{H}_n(x)|
\lesssim&\frac{1}{n^\alpha |\phi-\varphi|^\beta}.
\end{align*}
which is the desired inequality. Let us now turn to the proof of \eqref{trix1}. We write
{\begin{align*}
| \mathcal{H}_n(x)|&\leq x^{n+\frac12}\bigintsss_0^1 t^{n-\frac12}\frac{(1-t)^{n-\frac12}}{(1-tx)^{n-\frac12}}\frac{dt}{(1-tx)}\leq x^{n+\frac12}\bigintsss_0^1 t^{n-\frac12}\frac{dt}{1-tx},
\end{align*}}
where we have used that
$$
1-tx\geq  1-t,
$$
for any $t\in[0,1]$ and $0\leq x<1$. Observe that we easily get the identity
\begin{equation}
\bigintsss_0^1 t^{n-\frac12}\frac{dt}{1-tx}=\sum_{k\geq0}\frac{x^k}{n+\frac12+k},
\end{equation}
which implies
$$
\bigintsss_0^1 t^{n-\frac12}\frac{dt}{1-tx}\le\frac{1}{n(1-x)},
$$
and 
$$\bigintsss_0^1 t^{n-\frac12}\frac{dt}{1-tx}\leq-\frac{\ln(1-x)}{x}.
$$
By using interpolation, we obtain
$$
\bigintsss_0^1 t^{n-\frac12}\frac{dt}{1-tx}\leq\frac{1}{n^\alpha}\frac{|\ln(1-x)|^{1-\alpha}}{x^{1-\alpha}(1-x)^\alpha},
$$
which gives us
{$$
| \mathcal{H}_n(x)|\le x^{n+\frac12}\frac{|\ln(1-x)|^{1-\alpha}}{x^{1-\alpha}(1-x)^\alpha}\frac{1}{n^\alpha},
$$}
for $n\in \mathbb{N}^\star$ and $0\le x<1.$
\end{proof}

\section{Spectral study}
In this section, we aim to investigate some fundamental  spectral properties of the linearized operator $\partial_f \widetilde{F}(\Omega,0)$ in order 
to apply the Crandall--Rabinowitz theorem. For this goal  one must check that the kernel and the co--image of the linearized operator are  
one dimensional vector spaces. Noting that the study of the kernel agrees with the eigenvalue problem of a Hilbert--Schmidt operator,
we achieve that the dimension is one. Moreover, we will study the Fredholm structure of the linearized operator, which will imply that the codimension of the image is one. 
At the end of the section, we characterize also the transversal condition.

\subsection{Symmetrization  of the linearized operator}\label{Symmetriz}
The main strategy to explore some spectral properties of the linearized operator at  each frequency level $n$ is to  construct a suitable Hilbert space, basically an  $L^2$ space with respect to a special  Borel measure,  on which it acts as a self--adjoint compact operator. Later we investigate the eigenspace associated with the largest eigenvalue and prove in particular   that this space is one--dimensional.

Let us explain how to symmetrize the operator. Recall  from \eqref{Prop-lin2-expression} that for any smooth function 
$\displaystyle{h(\phi,\theta)=\sum_{n\geq1}h_n(\phi) \cos(n\theta)}$, we may write the operator $\mathcal{L}_n$ under the form
\begin{equation}\label{operator}
\mathcal{L}_n^\Omega(h_n)(\phi)=\nu_{\Omega}(\phi)\left\{h_n(\phi)-\int_0^\pi K_n(\phi,\varphi)h_n(\varphi)d\mu_{\Omega}(\varphi)\right\}, \quad \phi\in[0,\pi], 
\end{equation}
with
\begin{equation}\label{K-kernel}
K_n(\phi,\varphi):=\frac{H_n(\phi,\varphi)}{\sin(\varphi)\nu_{\Omega}(\phi)\nu_{\Omega}(\varphi)r_0^2(\varphi)},
\end{equation}
\begin{equation}\label{nu-function}
\nu_{\Omega}(\phi):=\int_0^\pi H_1(\phi,\varphi)d\varphi-\Omega,
\end{equation}
and the signed measure
\begin{equation}\label{signed-meas}
d\mu_{\Omega}(\varphi):=\sin(\varphi)r_0^2(\varphi)\nu_{\Omega}(\varphi)d\varphi.
\end{equation}
Define the quantity \begin{equation}\label{kappa}
\kappa:=\inf_{\phi\in[0,\pi]}\int_0^\pi H_1(\phi,\varphi)d\varphi.
\end{equation}
We shall discuss in Proposition \ref{Lem-meas} below the existence of $\kappa$ which allows to guarantee the positivity of the measure $d\mu_\Omega$ provided that the parameter $\Omega$ is  restricted to lie in the interval $(-\infty,\kappa)$. We shall also study the regularity  of   the function $\nu_\Omega$ which is delicate and more involved. In particular, we  prove 
that, under reasonable  assumptions  on the profile $r_0$, this  function is at least in the H\"{o}lder  space $\mathscr{C}^{1,\alpha}$ 
for any $\alpha\in(0,1)$.

Notice that the  kernel $K_ n$  is symmetric. Indeed,  according to  \eqref{H-1} we get the formula 
\begin{align}\label{form-K}
K_n(\phi,\varphi)=&\frac{2^{2n-1}\left(\frac12\right)^2_{n}}{(2n)!}\frac{r_0^{n-1}(\phi)r_0^{n-1}(\varphi)}{\nu_\Omega(\phi)\nu_\Omega(\varphi)\left[R(\phi,\varphi)\right]^{n+\frac12}} F_n\left(\frac{4r_0(\phi)r_0(\varphi)}{R(\phi,\varphi)}\right),
\end{align}
which gives the desired property  in view of  the symmetry of $R$, that is, $R(\phi,\varphi)=R(\varphi,\phi)$.

We shall explore  in  Section \ref{Eigenvalue problem}  more spectral properties of the symmetric operator associated to the kernel  $K_n$.

\subsection{Regularity of $\nu_\Omega$}\label{Sec-assym}

This section is devoted to the study of the regularity of the function $\nu_{\Omega}$ that arises in \eqref{operator}, which turns to be a very delicate problem. This is needed for getting enough regularity   for the kernel elements that should belong to the function spaces where bifurcation will be applicable. For lower regularities than Lipschitz class, this can be implemented in a standard way using some boundary behavior of the hypergeometric functions. However for higher regularity of type $\mathscr{C}^{1,\alpha}$, the problem turns out to be more delicate due to some logarithmic singularity induced by  $H_1.$ To get rid of  this singularity   we use some specific cancellation coming from the structure of the kernel. We shall also develop the local structure of $\nu_\Omega$ near its minimum which appears to be crucial later especially in Proposition \ref{prop-operator}.\\
The main result  of this section reads as follows.

\begin{pro} \label{Lem-meas}
Let  $r_0$ be a profile  satisfying {\bf{(H1)}} and {\bf{(H2)}}.  Then  the following properties hold true.
\begin{enumerate}
\item  The function $\phi\in[0,\pi]\mapsto \nu_\Omega(\phi)$ belongs to $\mathscr{C}^\beta([0,\pi]),$ for all $\beta\in [0,1)$.
\item We have $\kappa>0$ and  for any $ \Omega\in(-\infty,\kappa)$ we get
$$
\forall  \phi\in [0,\pi],\quad \nu_{\Omega}(\phi)\geq\kappa-\Omega>0.
$$
\item The function $\nu_\Omega$ belongs  to $\mathscr{C}^{1,\alpha}([0,\pi])$, for any $\alpha\in(0,1)$, with 
$$
\nu_\Omega^\prime(0)=\nu_\Omega^\prime(\pi)=0.
$$
\item Let  $\Omega\in(-\infty,\kappa]$ and assume that  $\nu_\Omega$ reaches its  minimum at a point $\phi_0\in[0,\pi]$ then there exists  $C>0$ independent of $\Omega$  such that, $$
\forall \phi\in[0,\pi],\quad 0\leq \nu_\Omega\big(\phi\big)-\nu_\Omega\big(\phi_0\big)\leq C|\phi-\phi_0|^{1+\alpha}.
$$
Moreover, for  $\Omega=\kappa$ this result becomes 
$$
\forall \phi\in[0,\pi],\quad 0\leq \nu_\kappa\big(\phi\big)\leq C|\phi-\phi_0|^{1+\alpha}.
$$
\end{enumerate}
\end{pro}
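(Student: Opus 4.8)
The key object throughout is the function
$$
g(\phi):=\int_0^\pi H_1(\phi,\varphi)\,d\varphi,
$$
so that $\nu_\Omega=g-\Omega$; all four claims reduce to statements about $g$. The plan is to analyze $g$ by splitting the $\varphi$--integral near the diagonal $\varphi=\phi$ (where $R(\phi,\varphi)\to 0$ by the arc--chord estimate \eqref{Chord}, so $x=4r_0(\phi)r_0(\varphi)/R(\phi,\varphi)\to 1$) from the region away from the diagonal (where the integrand is smooth in $\phi$ and one may differentiate under the integral sign). Near the diagonal the hypergeometric factor $F_1(x)$ has the classical logarithmic behavior $F_1(x)\sim -c\log(1-x)$ as $x\to 1$, and $1-x\sim C|\phi-\varphi|^2$ by \eqref{Chord}; hence the integrand of $g$ behaves like $|\log|\phi-\varphi||$ times a bounded factor, which is integrable. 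This already yields item (1): one shows $g\in\mathscr{C}^\beta$ for every $\beta<1$ by the standard estimate that an integral operator with a kernel dominated by $|\phi-\varphi|^{-\beta}$ (here we even have a log, better than any negative power) maps $L^\infty$ into $\mathscr{C}^\beta$; concretely one estimates $g(\phi_1)-g(\phi_2)$ by splitting the $\varphi$--integral into the ball of radius $2|\phi_1-\phi_2|$ around $\phi_1$ and its complement, using \eqref{estim-asym} of Lemma \ref{Lem-Hndecreasing} with $n=1$ to control each piece.

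For item (2), the quantity $\kappa=\inf_{[0,\pi]}g$ is the infimum of a continuous function on a compact set, hence attained, and it is $>0$ because $H_1(\phi,\varphi)>0$ for $\phi\neq\varphi$ (the hypergeometric $F_1$ is positive on $[0,1)$, and $\sin\varphi,r_0>0$ on $(0,\pi)$) and $g$ cannot vanish: if $g(\phi_0)=0$ then $H_1(\phi_0,\cdot)\equiv 0$ a.e., impossible. One should check the endpoints $\phi\in\{0,\pi\}$ separately — there $r_0(\phi)=0$, but the factor $r_0^{n-1}(\phi)$ with $n=1$ is $r_0^0=1$, so $H_1$ does not degenerate at the poles and $g(0),g(\pi)>0$ as well; this uses {\bf(H2)} to control $r_0(\varphi)/\text{(dist)}$ ratios. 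Then $\nu_\Omega=g-\Omega\geq \kappa-\Omega>0$ for $\Omega<\kappa$ is immediate.

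The heart of the proof is item (3): upgrading $g$ to $\mathscr{C}^{1,\alpha}$. The naive derivative $\partial_\phi H_1(\phi,\varphi)$ picks up a factor $\partial_\phi x\cdot F_1'(x)$, and since $F_1'(x)\sim C(1-x)^{-1}$ near $x=1$ while $\partial_\phi x\sim (1-x)/|\phi-\varphi|$, one gets a kernel of size $|\phi-\varphi|^{-1}$, which is \emph{not} integrable — this is the logarithmic singularity obstruction mentioned in the text, and it is the main obstacle. The resolution is to exploit a cancellation: one writes $g(\phi)=\int_0^\pi\big(H_1(\phi,\varphi)-H_1(\phi,\phi^\ast)\big)d\varphi+(\text{explicit smooth terms})$ where the subtracted comparison term (a suitable "frozen--coefficient" version of the kernel, e.g.\ replacing $r_0(\varphi)$ by $r_0(\phi)$ and $\cos\varphi-\cos\phi$ by its linearization) is chosen so that the singular logarithmic parts match and cancel after differentiation, leaving an integrable remainder. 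Equivalently, one integrates by parts in $\varphi$ to trade the $\phi$--derivative for a $\varphi$--derivative, using that $\partial_\phi x$ and $\partial_\varphi x$ are comparable up to smooth factors near the diagonal. After this surgery one differentiates under the integral and estimates the resulting kernel, which will be dominated by $|\phi-\varphi|^{-1+\alpha}$ for any $\alpha<1$, giving $g'\in\mathscr{C}^{\alpha}$ by the same Hölder--of--integral lemma used in (1). The boundary identities $\nu_\Omega'(0)=\nu_\Omega'(\pi)=0$ follow from the symmetry: the reflection $\varphi\mapsto\pi-\varphi$ combined with {\bf(H3)} (or, near $\phi=0$, from $r_0'(0)>0$ and the even/odd parity of $R$ and its derivatives in $\phi$ at $\phi=0$) forces $g'$ to be odd about the poles, hence zero there; alternatively one reads it off directly from the integrable derivative formula, each term of which carries a factor vanishing like $\sin\phi$ or $r_0(\phi)$ at the endpoints.

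Finally, item (4) is a corollary of (3): if $\nu_\Omega$ attains its minimum at $\phi_0$, then $\nu_\Omega'(\phi_0)=0$ (either $\phi_0$ is interior, or $\phi_0\in\{0,\pi\}$ where we just showed $\nu_\Omega'=0$), so by the $\mathscr{C}^{1,\alpha}$ bound and Taylor's formula with the Hölder remainder,
$$
0\le \nu_\Omega(\phi)-\nu_\Omega(\phi_0)=\int_{\phi_0}^{\phi}\big(\nu_\Omega'(s)-\nu_\Omega'(\phi_0)\big)\,ds\le \|\nu_\Omega'\|_{\mathscr{C}^\alpha}\,\frac{|\phi-\phi_0|^{1+\alpha}}{1+\alpha},
$$
and the constant is independent of $\Omega$ because $\nu_\Omega'=g'$ does not depend on $\Omega$ at all. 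For $\Omega=\kappa$ one has $\nu_\kappa(\phi_0)=\inf\nu_\kappa$; but $\inf g=\kappa$ so $\nu_\kappa(\phi_0)=0$, and the inequality becomes $0\le\nu_\kappa(\phi)\le C|\phi-\phi_0|^{1+\alpha}$. The only subtlety is that $\phi_0$ might not be unique, but the bound holds about any minimizer with the same constant, so this causes no difficulty. I expect essentially all the real work to be in the cancellation/integration--by--parts argument of item (3); items (1), (2), (4) are then short.
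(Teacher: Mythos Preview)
Your plan for items (1), (2), (4) is essentially the paper's, and your rough architecture for the interior $\mathscr{C}^{1,\alpha}$ estimate in (3) --- trade $\partial_\phi$ for $\partial_\varphi$ via the combination $\partial_\phi\varrho+\partial_\varphi\varrho$ and integrate by parts --- matches the paper's decomposition. However, there is a real gap in your handling of the boundary identity $\nu_\Omega'(0)=\nu_\Omega'(\pi)=0$. Neither of your suggested mechanisms works: the reflection $\varphi\mapsto\pi-\varphi$ with {\bf(H3)} (which is not even assumed here) yields only $\nu_\Omega'(\pi/2)=0$, not vanishing at the poles; and the individual terms of the integrable derivative formula do \emph{not} carry factors vanishing at $\phi=0$ --- the paper in fact shows that each of the three pieces $\zeta_1,\zeta_2,\zeta_3$ has a \emph{nonzero} limit as $\phi\to 0$. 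What the paper does is rescale $\varphi=\phi\theta$ and compute $\lim_{\phi\to 0}\nu_\Omega'(\phi)$ explicitly; the remarkable outcome is that this limit equals $\eta/r_0'(0)$ for a constant $\eta$ that is \emph{universal}, i.e.\ independent of the profile $r_0$. One then evaluates $\eta$ by taking the sphere $r_0=\sin$, where the stream function is quadratic and $\nu_\Omega$ is constant, forcing $\eta=0$. This universality--plus--special--case argument is the genuinely nontrivial idea you are missing.

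A second, related issue: your claim that after the cancellation the $\phi$--derivative kernel is dominated by $|\phi-\varphi|^{-1+\alpha}$ is too optimistic near the poles. The paper's estimate for the regular piece is $|\varkappa_0(\phi,\varphi)|\lesssim \sin\phi/R(\phi,\varphi)$, which for small $\phi,\varphi$ behaves like $\phi/(\phi+\varphi)^2$; this is \emph{not} controlled by a power of $|\phi-\varphi|$ alone, so a plain Hölder--of--integral lemma fails. The paper's remedy is again the rescaling $\varphi=\phi\theta$, applied term by term via auxiliary functions $T_{1,\phi}(s),T_{2,\phi}(s)$ that freeze either the domain of integration or the integrand, each of which is shown to be uniformly $\mathscr{C}^\alpha$. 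You should expect the bulk of the work in (3) to go into this boundary analysis, not into the interior integration by parts, which is comparatively routine.
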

\begin{proof}
\medskip
\noindent
${\bf{(1)}}$ { To start, notice   that according to \eqref{H-1}
\begin{align}\label{KK1}
\nonumber H_1(\phi,\varphi)=&\frac{1}{4}\frac{\sin(\varphi)r_0^{2}(\varphi)}{\left[R(\phi,\varphi)\right]^{\frac32}} F_1\left(\frac{4r_0(\phi)r_0(\varphi)}{R(\phi,\varphi)}\right), \quad \forall \, \varphi\neq\phi\in (0,\pi)\\
=&\mathscr{K}_1(\phi,\varphi)\mathscr{K}_2(\phi,\varphi),
\end{align}
where
\begin{equation}\label{K1}
\mathscr{K}_1(\phi,\varphi):=\frac{1}{4}\frac{\sin(\varphi)r_0^{2}(\varphi)}{R^{\frac32}(\phi,\varphi)},
\end{equation}
and
{\begin{equation}\label{K2}
\mathscr{K}_2(\phi,\varphi):=F_1\left(\frac{4r_0(\phi)r_0(\varphi)}{R(\phi,\varphi)}\right).
\end{equation}}
Therefore we may write 
$$
\forall \phi\in(0,\pi),\quad\nu_\Omega(\phi)=\int_0^\pi \mathscr{K}_1(\phi,\varphi) \mathscr{K}_2(\phi,\varphi)d\varphi-\Omega.
$$
Using the boundary behavior of hypergeometric functions stated in Proposition \ref{Prop-behav} we deduce that
\begin{align*}
1\leq \mathscr{K}_2(\phi,\varphi)\leq &C+C\left|\ln\left(1-\frac{4r_0(\phi)r_0(\varphi)}{R(\phi,\varphi)}\right)\right|\\
\le& C+C \left|\ln\left(\frac{(r_0(\phi)+r_0(\varphi))^2+(\cos\phi-\cos\varphi)^2}{(r_0(\phi)-r_0(\varphi))^2+(\cos\phi-\cos\varphi)^2}\right)\right|.
\end{align*}
From the assumption {\bf{(H2)}} on $r_0$ we can write, using the mean value theorem 
$$
(r_0(\phi)+r_0(\varphi))^2+(\cos\phi-\cos\varphi)^2\leq C(\sin\phi+\sin\varphi)^2+ (\phi-\varphi)^2.
$$
In view of  \eqref{Chord}   we get for all $\phi\neq \varphi\in[0,\pi],$
\begin{equation}\label{Est-LogX}
1\le \frac{(r_0(\phi)+r_0(\varphi))^2+(\cos\phi-\cos\varphi)^2}{(r_0(\phi)-r_0(\varphi))^2+(\cos\phi-\cos\varphi)^2}\leq C\frac{(\sin\phi+\sin\varphi)^2}{(\phi-\varphi)^2}+C.
\end{equation}
Consequently, we get
\begin{equation}\label{Est-Log}
 1\leq \mathscr{K}_2(\phi,\varphi)\le C+C \left|\ln\left(\frac{\sin(\phi)+\sin(\varphi)}{|\phi-\varphi|}\right)\right|.
\end{equation}
On the other hand, it is obvious  using the assumption {\bf{(H2)}} on $r_0$ that
\begin{align}\label{bound-K1}
0\le\mathscr{K}_1(\phi,\varphi)\leq\frac{\sin(\varphi)r_0^{2}(\varphi)}{\left[R(\phi,\varphi)\right]^{\frac32}}\leq& \frac{\sin\varphi}{r_0(\varphi)}\leq C, \quad \forall \varphi,\phi\in(0,\pi).
\end{align}
It follows that
\begin{align*}
\sup_{\phi\in(0,\pi)}|\nu_\Omega(\phi)|\leq& C+C\sup_{\phi\in(0,\pi)}\int_0^\pi\ln\left(\frac{\sin(\phi)+\sin(\varphi)}{|\phi-\varphi|}\right)d\varphi\le C,
\end{align*}
which ensures that $\nu_\Omega$ is bounded.

Now  let us check the H\"{o}lder continuity by estimating
\begin{align*}
|\nu_\Omega(\phi_1)-\nu_\Omega(\phi_2)|\leq& \int_0^\pi|\mathscr{K}_2(\phi_1,\varphi)||\mathscr{K}_1(\phi_1,\varphi)-\mathscr{K}_1(\phi_2,\varphi)|d\varphi\\
&+\int_0^\pi|\mathscr{K}_1(\phi_2,\varphi)||\mathscr{K}_2(\phi_1,\varphi)-\mathscr{K}_2(\phi_2,\varphi)|d\varphi\\
&=:\mathscr{H}_1(\phi_1,\phi_2)+\mathscr{H}_2(\phi_1,\phi_2).
\end{align*}
Let us begin with $\mathscr{H}_1$. Notice that
\begin{equation}\label{partialR}
\partial_\phi R(\phi,\varphi)=2r_0^\prime(\phi)(r_0(\phi)+r_0(\varphi))+2\sin\phi(\cos\varphi-\cos\phi),
\end{equation}
which implies that
$$
|\partial_\phi R(\phi,\varphi)|\leq CR^{\frac12}(\phi,\varphi).
$$
It follows that 
\begin{align*}
|\partial_\phi R^{-\frac32}(\phi,\varphi)|\lesssim& R^{-2}(\phi,\varphi)\lesssim  r_0^{-4}(\varphi).
\end{align*}
Differentiating $\mathscr{K}_1$ with respect to $\phi$ yields 
$$
4\pi \partial_\phi\mathscr{K}_1(\phi,\varphi)=\partial_\phi(R^{-\frac32})(\phi,\varphi)\sin\varphi \,r_0^2(\varphi).
$$
Hence using {\bf{(H2)}} we deduce that
\begin{align}\label{Tixa}
\sup_{\phi\in(0,\pi)}|\partial_\phi\mathscr{K}_1(\phi,\varphi)|&\leq  C\frac{\sin\varphi}{r_0^2(\varphi)}\lesssim\frac{1}{\sin\varphi}\cdot
\end{align}
From an interpolation argument using the boundedness of $\mathscr{K}_1$ we find, according to the mean value theorem,  
\begin{align}\label{K1-1}
\nonumber |\mathscr{K}_1(\phi_1,\varphi)-\mathscr{K}_1(\phi_2,\varphi)|=& |\mathscr{K}_1(\phi_1,\varphi)-\mathscr{K}_1(\phi_2,\varphi)|^{1-\beta}|\mathscr{K}_1(\phi_1,\varphi)-\mathscr{K}_1(\phi_2,\varphi)|^\beta\\
\nonumber \le& \big(2\|\mathscr{K}_1\|_{L^\infty}\big)^{1-\beta}\|\mathscr{K}_1\|_{\textnormal{Lip}}^{\beta} |\phi_1-\phi_2|^\beta\\
\lesssim& \frac{|\phi_1-\phi_2|^\beta}{\sin^\beta\varphi}\cdot \end{align}
Using \eqref{Est-Log}, we obtain
\begin{equation}\label{bound-H1}
\mathscr{H}_1(\phi_1,\phi_2)\leq C|\phi_1-\phi_2|^\beta\int_0^\pi\frac{1}{\sin^\beta\varphi}\left\{1+\ln\left(\frac{\sin(\phi_1)+\sin(\varphi)}{|\phi_1-\varphi|}\right)\right\}d\varphi\leq C|\phi_1-\phi_2|^\beta,
\end{equation}
for any $\beta\in(0,1)$.\\
Next we shall proceed to the estimate  $\mathscr{H}_2$.  From \eqref{bound-K1}, one finds that
$$
\mathscr{H}_2(\phi_1,\phi_2)\leq C\int_0^\pi|\mathscr{K}_2(\phi_1,\varphi)-\mathscr{K}_2(\phi_2,\varphi)|d\varphi.
$$ 
We separate the last integral as follows:
\begin{align*}
\mathscr{H}_2(\phi_1,\phi_2)\leq& C\bigintsss_0^\pi|\mathscr{K}_2(\phi_1,\varphi)-\mathscr{K}_2(\phi_2,\varphi)|d\varphi\\
=&C\bigintsss_{B_{\phi_1}(d)\cup B_{\phi_2}(d)}|\mathscr{K}_2(\phi_1,\varphi)-\mathscr{K}_2(\phi_2,\varphi)|d\varphi\\
&+C\bigintsss_{B_{\phi_1}^c(d)\cap B_{\phi_2}^c(d)}|\mathscr{K}_2(\phi_1,\varphi)-\mathscr{K}_2(\phi_2,\varphi)|d\varphi\\
=:& \mathscr{H}_{2,1}(\phi_1,\phi_2)+\mathscr{H}_{2,2}(\phi_1,\phi_2),
\end{align*}
where $d=|\phi_1-\phi_2|$, $B_{\phi}(r)=\{\varphi\in[0,\pi]: |\varphi-\phi|<r\}$ and $B^c_{\phi}(r)$ denotes its complement set. For the first term, $\mathscr{H}_{2,1}$,  we simply use \eqref{Est-Log}
\begin{align*}
\mathscr{H}_{2,1}(\phi_1,\phi_2)\leq& C\bigintsss_{B_{\phi_1}(d)\cup B_{\phi_2}(d)}\left\{1+\left|\ln\left(\frac{\sin(\phi_1)+\sin(\varphi)}{|\phi_1-\varphi|}\right)\right|+\left|\ln\left(\frac{\sin(\phi_2)+\sin(\varphi)}{|\phi_2-\varphi|}\right)\right|\right\}d\varphi.
\end{align*}
{Since for $\varphi\in B_{\phi_1}(d)\cup B_{\phi_2}(d)$ one has $|\phi_1-\varphi|\leq2 |\phi_1-\phi_2|$ and $|\phi_2-\varphi|\leq2 |\phi_1-\phi_2|$, then one achieves
\begin{align}\label{bound-H21}
\mathscr{H}_{2,1}(\phi_1,\phi_2)\leq& C|\phi_1-\phi_2|^\beta\bigintsss_{B_{\phi_1}(d)\cup B_{\phi_2}(d)}\left\{\frac{1}{|\phi_1-\varphi|^\beta}+\left|\ln\left(\frac{\sin(\phi_1)+\sin(\varphi)}{|\phi_1-\varphi|}\right)\right|\frac{1}{|\phi_1-\varphi|^\beta}\right.\nonumber\\
&\left.\qquad\qquad\qquad +\left|\ln\left(\frac{\sin(\phi_2)+\sin(\varphi)}{|\phi_2-\varphi|}\right)\right|\frac{1}{|\phi_2-\varphi|^\beta}\right\}d\varphi\nonumber\\
\leq & C|\phi_1-\phi_2|^\beta,
\end{align}
for any $\beta\in(0,1)$.
For the second term of $\mathscr{H}_2$, we observe that for any ${\varphi \in B_{\phi_1}^c(d)\cap B_{\phi_2}^c(d)}$ one has
\begin{align}\label{Es-eq1}
\frac12|\phi_1-\varphi|\leqslant|\phi_2-\varphi|\leqslant 2|\phi_1-\varphi|.
\end{align} 
On the other hand, direct computations yield
$$
\partial_\phi \mathscr{K}_2(\phi,\varphi)=4r_0(\varphi)\frac{r_0^\prime(\phi)R(\phi,\varphi)-r_0(\phi)\partial_\phi R(\phi,\varphi)}{R^2(\phi,\varphi)}F_1^\prime\left(\frac{4r_0(\phi)r_0(\varphi)}{R(\phi,\varphi)}\right).
$$
}
We know that
$$
\forall x\in(-1,1),\quad F_1^\prime(x)=\frac{3}{4}F(5/2,5/2;4;x),
$$
and by virtue of  the boundary behavior stated in  Proposition \ref{Prop-behav}  we get
$$
\forall x\in [0,1),\quad|F_1^\prime(x)|\lesssim (1-x)^{-1}.
$$
It follows that, using \eqref{Chord}, 
\begin{align}\label{TixaZ}
\forall \phi\neq \varphi\in(0,\pi)\quad \Big|F_1^\prime\left(\frac{4r_0(\phi)r_0(\varphi)}{R(\phi,\varphi)}\right)\Big|\lesssim&\frac{(r_0(\phi)+r_0(\varphi))^2+(\cos\phi-\cos\varphi)^2}{(r_0(\phi)-r_0(\varphi))^2+(\cos\phi-\cos\varphi)^2}\lesssim \frac{R(\phi,\varphi)}{|\phi-\varphi|^2}\cdot
\end{align}
By explicit calculation using \eqref{partialR} we get
\begin{align*}
r_0^\prime(\phi)R(\phi,\varphi)-r_0(\phi)\partial_\phi R(\phi,\varphi)=&r_0^\prime(\phi)\left(r_0^2(\varphi)-r_0^2(\phi)+(\cos \phi-\cos\varphi)^2\right)\\
&+2r_0(\phi)\sin\phi(\cos\phi-\cos\varphi).
\end{align*}
Then using the mean value theorem we find
\begin{align*}
|r_0^\prime(\phi)R(\phi,\varphi)-r_0(\phi)\partial_\phi R(\phi,\varphi)|\lesssim&|\phi-\varphi|\big(r_0(\varphi)+r_0(\phi)+|\cos \phi-\cos\varphi|\big)\\
&+r_0(\phi)\sin\phi|\cos\phi-\cos\varphi|\\
\lesssim&|\phi-\varphi| R^{\frac12}(\phi,\varphi).
\end{align*}
Putting together the preceding estimates we find
\begin{align*}
|\partial_\phi \mathscr{K}_2(\phi,\varphi)|\lesssim&r_0(\varphi)|\phi-\varphi| R^{-\frac32}(\phi,\varphi)\Big|F_1^\prime
\left(\frac{4r_0(\phi)r_0(\varphi)}{R(\phi,\varphi)}\right)\Big|\lesssim|\phi-\varphi|^{-1}.
\end{align*}
{Then applying once  again  the mean value theorem, we get for any   ${\varphi \in B_{\phi_1}^c(d)\cap B_{\phi_2}^c(d)}$ a value $\phi\in(\phi_1,\phi_2)$ such that 
\begin{align*}
|\mathscr{K}_2(\phi_1,\varphi)-\mathscr{K}_2(\phi_2,\varphi)|
&\lesssim {|\phi_1-\phi_2|}{|\phi-\varphi|^{-1}}\\
&\lesssim {|\phi_1-\phi_2|}{|\phi_1-\varphi|^{-1}}.
\end{align*}
Combining this estimate with \eqref{Est-Log} and \eqref{Es-eq1}, and using an interpolation argument we get for any $\varepsilon>0,$
\begin{align}\label{K1-2}
\nonumber |\mathscr{K}_2(\phi_1,\varphi)-\mathscr{K}_2(\phi_2,\varphi)|\lesssim &|\phi_1-\phi_2|^\beta {|\phi_1-\varphi|^{-\beta}}\\
&\times\left(C+\left|\ln\left(\frac{\sin \phi_1+\sin \varphi}{|\phi_1-\varphi|}\right)\right|+\left|\ln\left(\frac{\sin\phi_2+\sin\varphi}{|\phi_2-\varphi|}\right)\right|\right)^{1-\beta}\\
\nonumber\lesssim &|\phi_1-\phi_2|^\beta \left(1+\big|\ln(\sin \varphi)\big|^{1-\beta}\right)
\left({|\phi_1-\varphi|^{-\beta-\varepsilon}}\right).
\end{align}}
Hence, we find that
\begin{align}\label{bound-H22}
\nonumber \mathscr{H}_{2,2}(\phi_1,\phi_2)&\leq C|\phi_1-\phi_2|^\beta\bigintsss_0^\pi \left(1+\big|\ln(\sin \varphi)\big|^{1-\beta}\right)
\left({|\phi_1-\varphi|^{-\beta-\varepsilon}}\right)d\varphi\\
&\leq C|\phi_1-\phi_2|^\beta,
\end{align}
for any $\beta\in(0,1)$. Finally, putting together \eqref{bound-H21}-\eqref{bound-H22} we get
$$
\mathscr{H}_2(\phi_1,\phi_2)\leq C|\phi_1-\phi_2|^\beta,
$$
for any $\beta\in(0,1)$. Then, we find that $\nu_\Omega\in\mathscr{C}^\beta$, for any $\beta\in(0,1)$.

}

\medskip
\noindent
${\bf{(2)}}$ The function  $\phi\mapsto\Omega+\nu_\Omega(\phi)$ is continuous over the compact set $[0,\pi]$ then it reaches its minimum at some point $\phi_0\in[0,\pi].$  Thus from the definition of $\kappa$ in \eqref{kappa} we deduce that
$$
\kappa=\inf_{\phi\in(0,\pi)}\int_0^\pi H_1(\phi,\varphi)d\varphi=\int_0^\pi H_1(\phi_0,\varphi)d\varphi>0,
$$
which implies that
\begin{align*}
\forall \,\phi\in[0,\pi],\quad \nu_\Omega(\phi)\geq &\int_0^\pi H_1(\phi_0,\varphi)d\varphi-\Omega\geq\kappa-\Omega.
\end{align*}
Hence we infer that for any $\Omega\in(-\infty,\kappa)$ 
$$
\forall \,\phi\in[0,\pi],\quad \nu_\Omega(\phi)\geq \kappa-\Omega>0.
$$

\medskip
\noindent
${\bf{(3)}}$ The proof is  long and  technical and for clarity of presentation it will be divided into two steps. In the first one  we prove that $\nu_\Omega$ is $\mathscr{C}^1$ in the full closed interval $[0,\pi]$. This is mainly  based  on two principal  ingredients. The first one  is  an important algebraic    cancellation in the integrals  allowing us to get rid of the logarithmic singularity coming from the boundary and  the second one is  the boundary behavior of the hypergeometric functions allowing us to deal with the diagonal singularity lying inside the domain of integration. Notice that in order to apply Lebesgue theorem and recover  the continuity of the derivative up to the boundary  we use a rescaling argument.  This rescaling argument shows in addition  a surprising effect concerning   the derivative at the boundary points $\nu_\Omega^\prime(0)$ and $\nu_\Omega^\prime(\pi)$: they  are independent of the global structure of the profile $r_0$ and they depend only on the derivative $r_0^\prime(0)$. {This property allows us to compute $\nu_\Omega'(0)$ in the special case of $r_0(\phi)=\sin(\phi)$ by using the special geometry of the sphere and observe that this derivative vanishes. }
As to the second step, it is devoted to the proof of  $\nu_\Omega^\prime\in \mathscr{C}^\alpha(0,\pi)$ which is  involved and requires more refined analysis.

\medskip
\noindent
{\bf{$\bullet$ Step 1: $\nu_\Omega\in \mathscr{C}^1([0,\pi])$}}. The first step is to  check that $\nu_\Omega$ is $\mathscr{C}^1$ on $[0,\pi]$. 
Define
$$
\varrho(\phi,\varphi):=\frac{4 r_0(\phi)r_0(\varphi)}{R(\phi,\varphi)},
$$ then we can check that
\begin{align*}
\partial_\phi H_1(\phi,\varphi)=&\mathscr{K}_1(\phi,\varphi)\left(-\frac32 R^{-1}(\phi,\varphi) \partial_\phi R(\phi,\varphi)F_1(\varrho(\phi,\varphi))+F_1^\prime(\varrho(\phi,\varphi))\partial_\phi\varrho(\phi,\varrho)\right),\end{align*}
which implies after simple manipulations  that
\begin{align*}
\nonumber\partial_\phi H_1=&\mathscr{K}_1\left(-\frac32 \frac{\partial_\phi R}{R}+\frac34\partial_\phi\rho-\frac32\frac{\partial_\phi R}{R}\big(F_1(\rho)-1\big)+\big(F_1^\prime(\rho)-3/4\big)\big[\partial_\phi\rho+\partial_\varphi\rho\big]\right)\\
&-\mathscr{K}_1\big(F_1^\prime(\rho)-3/4\big)\partial_\varphi \rho.
\end{align*}
In addition using the identity 
\begin{align*}
\mathscr{K}_1\big(F_1^\prime(\rho)-3/4\big)\partial_\varphi\rho=&\mathscr{K}_1\partial_\varphi\big[F(\rho)-3/4\rho-1\big]\\
=&\partial_\varphi\left(\mathscr{K}_1\big[F(\rho)-3/4\rho-1\big]\right)-(\partial_\varphi\mathscr{K}_1)\big[F(\rho)-3/4\rho-1\big],
\end{align*}
we find
\begin{equation*}
\partial_\phi H_1=\varkappa_0+\varkappa_1\big(F_1(\varrho)-1\big)+\varkappa_2\big(F_1^\prime(\varrho)-3/4\big)-\partial_\varphi\left(\mathscr{K}_1\big[F(\rho)-3/4\rho-1\big]\right),
\end{equation*}
with
\begin{align}\label{ExpF1}
\varkappa_0:=&\mathscr{K}_1\left(-\frac32 \frac{\partial_\phi R}{R}+\frac34\partial_\phi\rho\right)-\frac34\rho\partial_\varphi \mathscr{K}_1,\\
\nonumber\varkappa_1:=&-\frac32\frac{\partial_\phi R}{R}\mathscr{K}_1+\partial_\varphi\mathscr{K}_1,\\
\nonumber\varkappa_2:=&\mathscr{K}_1\left(\partial_\phi\rho+\partial_\varphi\rho\right).
\end{align}
Notice that  $F_1(0)=1,\,F_1^\prime(0)=\frac34$. Assuming that the following functions are well--defined and using the boundary conditions then we can write
\begin{align}\label{Deriv1}
\nonumber \nu_\Omega^\prime(\phi)=&\int_0^\pi\Big(\varkappa_0(\phi,\varphi)+\varkappa_1(\phi,\varphi)\big[F_1(\varrho(\phi,\varphi))-1\big]
+\varkappa_2(\phi,\varphi)\big[F_1^\prime(\varrho(\phi,\varphi))-3/4\big]\Big)d\varphi\\
:=&\zeta_1(\phi)+\zeta_2(\phi)+\zeta_3(\phi),
\end{align}
with
$$
\zeta_1(\phi):=\int_0^\pi\varkappa_0(\phi,\varphi)d\varphi,\quad \zeta_2(\phi):=\int_0^\pi\varkappa_1(\phi,\varphi)\big[F_1(\varrho(\phi,\varphi))-1\big]d\varphi
$$
and
$$
\zeta_3(\phi)=\int_0^{\pi} \varkappa_2(\phi,\varphi)\big[F_1^\prime(\varrho(\phi,\varphi))-3/4\big]d\varphi.
$$
Direct computations show that
\begin{align*}
\partial_\phi\rho(\phi,\varphi)=&\frac{4 r_0(\varphi)r_0^\prime(\phi)}{R^2(\phi,\varphi)}\Big(R(\phi,\varphi)-2r_0(\phi)\big(r_0(\varphi)+r_0(\phi)\big)\Big)-\frac{8 r_0(\phi)r_0(\varphi)}{R^2(\phi,\varphi)}\sin\phi(\cos\varphi-\cos\phi).
\end{align*}
According to \eqref{partialR} and using some  cancellation, it implies that

\begin{align}\label{CancelF1}
\nonumber \varkappa_0(\phi,\varphi)=&-3r_0^\prime(\phi)\frac{r_0(\phi)\mathscr{K}_1(\phi,\varphi)}{R(\phi,\varphi)}\left( 1+2\frac{r_0(\varphi)(r_0(\phi)+r_0(\varphi))}{R(\phi,\varphi)}\right)\\
\nonumber &+6\frac{\mathscr{K}_1(\phi,\varphi)}{R^2(\phi,\varphi)}\sin(\phi)\, r_0(\phi)\,r_0(\varphi)\big(\cos\phi-\cos\varphi\big)\\
&+3\frac{\mathscr{K}_1(\phi,\varphi)}{R(\phi,\varphi)}\sin(\phi)\big(\cos\phi-\cos\varphi\big)
-3\partial_\varphi \mathscr{K}_1\frac{r_0(\phi)r_0(\varphi)}{R(\phi,\varphi)}.
\end{align}
We point out that this simplification is crucial and allows to get rid of the logarithmic singularity.

Now we shall start with the regularity of the function  
$$
\zeta_1:\phi\in(0,\pi)\mapsto \int_0^{\pi}\varkappa_0(\phi,\varphi)d\varphi,
$$
and prove first that it is 
continuous in $[0,\pi]$. It is obvious from \eqref{CancelF1} that $\varkappa_0$ is 
$\mathscr{C}^1$ over any compact set contained in  $(0,\pi)\times [0,\pi]$ and therefore  $\zeta_1$ is $\mathscr{C}^1$ over any compact set contained in $(0,\pi).$ 
Thus   it remains to check that this function  is  continuous at the points $0$ and $\pi$. The proofs for both cases  are quite similar and we shall only
check the continuity  at the origin. For this purpose   it is enough to check that $\zeta_1$ admits a limit at zero. Before that let us check that $\zeta_1$ 
is bounded in $(0,\pi).$ From the definition of $R$ stated in \eqref{RefR}  and using elementary inequalities it is easy to verify  the following estimates:
for any $(\phi,\varphi)\in(0,\pi)^2$
\begin{align*}
\frac{r_0(\phi)(r_0(\phi)+r_0(\varphi))}{R(\phi,\varphi)}\le&1,\\
\frac{r_0(\phi)r_0(\varphi)}{R(\phi,\varphi)}\leq&{\frac{1}{2}},\\
 r_0(\phi)r_0(\varphi)\big|\cos\phi-\cos\varphi\big|\le& R(\phi,\varphi).
\end{align*}
In addition, the assumption {\bf{(H2)}} implies that
$$
\sup_{\phi,\varphi\in(0,\pi)}\mathscr{K}_1(\phi,\varphi)<\infty.
$$
Thus we find according to \eqref{Tixa} and {\bf{(H2)}} 
\begin{align}\label{TranX1}
\forall(\phi,\varphi)\in(0,\pi)^2,\,|\varkappa_0(\phi,\varphi)|\lesssim&\frac{\sin(\phi)}{R(\phi,\varphi)}+|\partial_\varphi \mathscr{K}_1(\phi,\varphi)|\frac{r_0(\phi)r_0(\varphi)}{R(\phi,\varphi)}\lesssim\frac{\sin(\phi)}{R(\phi,\varphi)}\cdot
\end{align}
Hence we deduce that
\begin{align*}
\forall \phi\in(0,\pi),\quad |\zeta_1(\phi)|\lesssim& \int_0^\pi \frac{\sin(\phi)}{R(\phi,\varphi)} d\varphi\lesssim \int_0^{\frac\pi2} \frac{\sin\phi}{(\sin\phi+\sin\varphi)^2} d\varphi.
\end{align*}
Making the change of variables $\sin\varphi=x$ we get
\begin{align*}
 \int_0^{\frac\pi2} \frac{\sin\phi}{(\sin\phi+\sin\varphi)^2} d\varphi=&\sin\phi\int_0^1\frac{1}{(\sin\phi+x)^2}\frac{dx}{\sqrt{1-x^2}}\\
 \lesssim&\sin\phi\int_0^{\frac12}\frac{1}{(\sin\phi+x)^2}{dx}+\sin\phi
 \lesssim1.
\end{align*}
Thus 
\begin{equation}\label{Tri01}
\sup_{\phi\in(0,\pi)}|\zeta_1(\phi)|<\infty.\end{equation}
 Let us now prove that $\zeta_1$ admits a limit at the origin and compute its value. For this goal, take  $0<\delta<<1$ small enough and write
$$
\zeta_1(\phi)=\int_0^{\delta}\varkappa_0(\phi,\varphi)d\varphi+\int_{\delta}^\pi\varkappa_0(\phi,\varphi)d\varphi.
$$
The assumption ${\textbf{(H2) }}$ combined with standard trigonometric formula  allow to get the estimate 
\begin{align}\label{Lowbound}
 \nonumber R(\phi,\varphi)\gtrsim& (\sin\phi+\sin\varphi)^2+(\cos\phi-\cos\varphi)^2=2\big(1-\cos(\phi+\varphi)\big)\\
&\gtrsim1-\cos(\phi+\varphi).
\end{align}
From this we infer that
\begin{equation}\label{TRamm1}
\forall \,\phi\in[0,\pi/2],\forall \varphi\in(\delta,\pi),\quad R(\phi,\varphi)\gtrsim1-\cos\delta.
\end{equation}
Thus we get from \eqref{TranX1}
{\begin{align*}
\left|\int_{\delta}^\pi\varkappa_0(\phi,\varphi)d\varphi\right|\lesssim&  \int_\delta^\pi
 \frac{\phi}{(1-\cos\delta)}d\varphi\lesssim\phi\delta^{-2}.
 \end{align*}}
 This implies that for given small parameter $\delta$ one has
 $$
 \lim_{\phi\to0}\int_{\delta}^\pi\varkappa_0(\phi,\varphi)d\varphi=0.
 $$
 Therefore 
 $$
 \lim_{\phi\to0}\zeta_1(\phi)=\lim_{\phi\to0}\int_0^{\delta}\varkappa_0(\phi,\varphi)d\varphi.
 $$
 Making the change of variables $\varphi=\phi \theta$ we get
 $$
 \int_0^{\delta}\varkappa_0(\phi,\varphi)d\varphi=\int_0^{\frac{\delta}{\phi}}\phi\varkappa_0(\phi,\phi \theta)\,d\theta.
 $$
 From \eqref{TranX1} and {\bf{(H2)}} one may write
 $$
 \forall(\phi,\varphi)\in(0,\delta)^2,\quad|\varkappa_0(\phi,\varphi)|\lesssim \frac{\phi}{(\phi+\varphi)^2},
 $$
 which yields after simplification to the uniform bound on $\phi$,
 $$
\forall \theta\in[0,\delta/\phi],\quad  \phi\varkappa_0(\phi,\phi \theta)\lesssim \frac{1}{(1+\theta)^2}\cdot
 $$
 This gives a domination  which is integrable over $(0,+\infty)$. In order to apply classical dominated Lebesgue theorem, 
 it remains to check the {convergence almost everywhere in $\theta$}  as $\phi$ goes to zero. This can be done through  the first--order Taylor expansion around zero.
 First one has the expansion
 $$
 r_0(\phi\theta)=c_0\phi \theta+\phi\theta\epsilon(\phi \theta);\quad R(\phi, \phi \theta)=c_0^2\phi^2\big(1+\theta+\theta\epsilon(\phi\theta)\big)^2,
 $$
with $c_0=r_0^\prime(0)$ and $\displaystyle{\lim_{x\to0}\epsilon(x)=0.}$ Thus, from the definitions \eqref{RefR} and  \eqref{K1} it is straightforward that
\begin{equation}\label{TimX1}
4\pi \lim_{\phi\to 0}\mathscr{K}_1(\phi,\phi\theta)=\frac{{{c_0^{-1}}}\theta^3}{(1+\theta)^3},\quad \lim_{\phi\to 0}\frac{\phi r_0(\phi)}{R(\phi,\phi\theta)}=\frac{c_0^{-1}}{(1+\theta)^2}\cdot
 \end{equation}
 Hence
\begin{equation}\label{Tahya0}
4\pi \lim_{\phi\to 0} r_0^\prime(\phi)\frac{\phi r_0(\phi)\mathscr{K}_1(\phi,\phi \theta)}{R(\phi,\phi \theta)}
\left( 1+2\frac{r_0(\phi\theta)(r_0(\phi)+r_0(\phi \theta))}{R(\phi,\phi \theta)}\right)
=\frac{{{c_0^{-1}}}\theta^3(1+3\theta)}{(1+\theta)^6}\cdot
\end{equation}
Similarly we get
\begin{equation}\label{Est1}
 4\pi\lim_{\phi\to 0}\frac{\mathscr{K}_1(\phi,\phi \theta)}{R^2(\phi,\phi\theta)}\sin\phi\, r_0(\phi)r_0(\phi\theta)\big(\cos\phi-\cos(\phi \theta)\big)=0,
 \end{equation}
and
 \begin{equation}\label{Est2}
  4\pi\lim_{\phi\to 0}\frac{\mathscr{K}_1(\phi,\phi \theta)}{R(\phi,\phi \theta)}\sin(\phi)\big(\cos\phi-\cos(\phi \theta)\big)=0.
 \end{equation}
 Standard computations yield
 \begin{align}\label{Tahya1}
 \nonumber 4\pi\partial_\varphi \mathscr{K}_1(\phi,\varphi)=&
 - 3 R^{-\frac52}(\phi,\varphi)\sin(\varphi)\, r_0^2(\varphi)\Big(r_0^\prime(\varphi)(r_0(\phi)+r_0(\varphi))+\sin(\varphi)\,(\cos\phi-\cos\varphi)\Big)\\
&+ \frac{\cos(\varphi )\,r_0^2(\varphi)+2\sin(\varphi) r_0(\varphi)\, r_0^\prime(\varphi)}{R^\frac32(\phi,\varphi)}.
\end{align}
Thus 
 \begin{align}\label{Tramm1}
 4\pi \lim_{\phi\to0}\phi\partial_\varphi \mathscr{K}_1(\phi,\phi\theta)
 =&c_0^{-1}\left(-3\frac{\theta^3}{(1+\theta)^4}+{\frac{3\theta^2}{(1+\theta)^3}}\right)={3 c_0^{-1}\frac{\theta ^2}{(1+\theta)^4}}.
\end{align}
Therefore
\begin{equation}\label{Tahya2}
4\pi\lim_{\phi\to0}\phi\partial_\varphi \mathscr{K}_1(\phi,\phi\theta)\frac{r_0(\phi)r_0({\phi\theta})}{R(\phi,\phi\theta)}=
{3c_0^{-1}\frac{\theta ^3}{(1+\theta)^6}}\cdot
\end{equation}
Plugging \eqref{Tahya0}, \eqref{Est1}, \eqref{Est2} and \eqref{Tahya2} into \eqref{CancelF1}
\begin{align*}
4\pi\lim_{\phi\to0}\phi\varkappa_0(\phi,\phi\theta)=&-\frac{3 c_0^{-1}\theta^3}{(1+\theta)^6}\Big({4}+3\theta\Big).
\end{align*}
Using Lebesgue dominated theorem we deduce that
$$
4\pi\lim_{\phi\to0}\int_0^{\frac{\delta}{\phi}}\phi\varkappa_0(\phi,\phi\theta)d\theta=-3 c_0^{-1}\bigintsss_0^{+\infty}\frac{{4}\theta^3+3\theta^4}{(1+\theta)^6}d\theta. 
$$
Computing the integrals we finally get 
\begin{equation}\label{Tahya12}
4\pi\lim_{\phi\to0}\zeta_1(\phi)=-{\frac{4}{5}} c_0^{-1}.
\end{equation}
Let us now move to  the regularity of the function $\zeta_2$ defined in \eqref{Deriv1} through
$$
\phi\in(0,\pi),\quad \zeta_2(\phi)=\int_0^\pi\varkappa_1(\phi,\varphi)\big(F_1(\rho(\phi,\varphi))-1\big)d\varphi,
$$
where $\varkappa_1$ is defined in \eqref{ExpF1}. From direct computations using  $|\partial_\phi R|\lesssim R^\frac12,$ the boundedness of $\mathscr{K}_1$, the assumption  {\bf{(H2)}}  and \eqref{Tahya1} one can check that
\begin{align*}
|\varkappa_1(\phi,\varphi)|\lesssim& \frac{|\partial_\phi R(\phi,\varphi)|\mathscr{K}_1(\phi,\varphi)}{R(\phi,\varphi)}+|\partial_\varphi\mathscr{K}_1(\phi,\varphi)|\\
\lesssim& {R^{-\frac12}(\phi,\varphi)}.
\end{align*}
Using Proposition \ref{Prop-behav}  we get
\begin{align}\label{ZNX1}
\nonumber|F_1(\varrho(\phi,\varphi))-1|\lesssim& \varrho(\phi,\varphi)\left(1+|\ln(1-\varrho(\phi,\varphi))|\right)\\
\lesssim &\frac{r_0(\phi)r_0(\varphi)}{R(\phi,\varphi)}
\left(1+\ln\left(\frac{(r_0(\phi)+r_0(\varphi))^2+(\cos(\phi)-\cos(\varphi))^2}{(r_0(\phi)-r_0(\varphi))^2+(\cos(\phi)-\cos(\varphi))^2}\right)\right).
\end{align}
Thus
\begin{align*}
|\varkappa_1(\phi,\varphi)[F_1(\varrho(\phi,\varphi))-1]|\lesssim& \frac{r_0(\phi)r_0(\varphi)}{R^{\frac32}(\phi,\varphi)}
\left(1+\ln\left(\frac{(r_0(\phi)+r_0(\varphi))^2+(\cos(\phi)-\cos(\varphi))^2}{(r_0(\phi)-r_0(\varphi))^2+(\cos(\phi)-\cos(\varphi))^2}\right)\right).
\end{align*}
Hence  using the arc-chord property  \eqref{Chord}
we  find 
\begin{equation}\label{Tita1}
|\varkappa_1(\phi,\varphi)[F_1(\varrho(\phi,\varphi))-1]|\lesssim\frac{r_0(\phi)r_0(\varphi)}{R^{\frac32}(\phi,\varphi)}
\left(C+\ln\left(\frac{R(\phi,\varphi)}{|\phi-\varphi|^2}\right)\right).
\end{equation}
for some constant $C>0$.
In addition, using \eqref{Lowbound} we get   
\begin{equation}\label{Rbound}\inf_{\phi\in[0,{\frac\pi2}],\\
\atop\varphi\in[{\frac\pi2},\pi]}R(\phi,\varphi)> 0,
\end{equation}
which leads to 
$$
\forall\,\phi \in[0,{\pi/2}],\,\varphi\in[{\pi/2},\pi],\quad |\varkappa_1(\phi,\varphi)[F_1(\varrho(\phi,\varphi))-1]|\lesssim 1+|\ln|\phi-\varphi||.
$$
This implies that
\begin{equation}\label{Tri1}
\sup_{\phi\in[0,\pi/2]}\int_{\frac\pi2}^\pi|\varkappa_1(\phi,\varphi)[F_1(\varrho(\phi,\varphi))-1]|d\varphi\lesssim 1+\sup_{\phi\in[0,\pi/2]}\int_0^\pi|\ln|\phi-\varphi||d\varphi<\infty.
\end{equation}

Now in the region $\phi,\varphi\in[0,\pi/2], $ we use the estimate {{\bf{(H2)}}}  leading to
$$
(\phi+\varphi)^2\lesssim R(\phi,\varphi)\lesssim
{(\phi+\varphi)^2}.
$$
Plugging this  into \eqref{Tita1} we find
\begin{align*}
\sup_{\phi\in[0,\pi/2]}\bigintsss_0^{\frac\pi2}|\varkappa_1(\phi,\varphi)[F_1(\varrho(\phi,\varphi))-1]|d\varphi\lesssim 1+\sup_{\phi\in[0,\pi/2]}\bigintsss_0^{\frac\pi2} \frac{\phi\varphi}{(\phi+\varphi)^3}\left|\ln\left(\frac{\phi+\varphi}{|\phi-\varphi|}\right)\right|d\varphi.\end{align*}
Making the change of variables $\varphi=\phi \theta$ we obtain
\begin{align*}
\bigintsss_0^{\frac\pi2} \frac{\phi\varphi}{(\phi+\varphi)^3}\left|\ln\left(\frac{\phi+\varphi}{|\phi-\varphi|}\right)\right|d\varphi=&\bigintsss_0^{\frac{\pi}{2\phi}} \frac{\theta}{(1+\theta)^3}\ln\Big(\frac{1+\theta}{|1-\theta|}\Big)d\theta\\
\leq&\bigintsss_0^{+\infty} \frac{\theta}{(1+\theta)^3}\ln\Big(\frac{1+\theta}{|1-\theta|}\Big)d\theta<\infty,\end{align*}
which implies that
\begin{equation*}
\sup_{\phi\in[0,\pi/2]}\int_0^{\frac\pi2}|\varkappa_1(\phi,\varphi)[F_1(\varrho(\phi,\varphi))-1]|d\varphi<\infty.
\end{equation*}
Therefore we obtain by virtue of \eqref{Tri1}
\begin{equation*}
\sup_{\phi\in[0,\pi/2]}\int_0^{\pi}|\varkappa_1(\phi,\varphi)[F_1(\varrho(\phi,\varphi))-1]|d\varphi<\infty.
\end{equation*}
By symmetry we get similar estimate for $\phi\in[\frac\pi2,\pi]$ and hence
\begin{equation}\label{Tri2}
\sup_{\phi\in(0,\pi)}|\zeta_2(\phi)|<\infty.\end{equation}
Let us now calculate the limit when $\phi$ goes to $0$ of $\zeta_2$. We shall proceed in a similar way to $\zeta_1$. Let $0<\delta\ll1$ be enough small, then using \eqref{Tita1} combined with \eqref{TRamm1} we obtain
$$
\lim_{\phi\to0}\int_\delta^\pi|\varkappa_1(\phi,\varphi)||F_1(\rho(\phi,\varphi))-1|d\varphi=0.
$$
Hence
$$
\lim_{\phi\to0}\zeta_2(\phi)=\lim_{\phi\to0}\int_0^\delta\varkappa_1(\phi,\varphi)\big(F_1(\rho(\phi,\varphi))-1\big)d\varphi.
$$
Now we make the change of variables $\varphi=\phi \theta$ and then
$$
\lim_{\phi\to0}\zeta_2(\phi)=\lim_{\phi\to0}\int_0^{\frac\delta\phi}\phi\varkappa_1(\phi,\phi\theta)\big(F_1(\rho(\phi,\phi\theta))-1\big){d\theta}.
$$
According to \eqref{ExpF1} one has
$$
\phi\varkappa_1(\phi,\phi\theta)=-\frac32\frac{\phi\partial_\phi R(\phi,\phi\theta)}{R(\phi,\phi\theta)}\mathscr{K}_1(\phi,\phi\theta)+\phi\partial_\varphi\mathscr{K}_1(\phi,\phi\theta).
$$
From the differentiating of the expression of $R$ stated in \eqref{RefR} we get
\begin{align*}
\frac{\phi\partial_\phi R(\phi,\phi\theta)}{R(\phi,\phi\theta)}=&2\frac{\phi r_0^\prime(\phi)(r_0(\phi)+r_0(\phi\theta))+
	\phi\sin\phi(\cos(\phi\theta)-\cos\phi)}{(r_0(\phi)+
	r_0(\phi\theta))^2
	+(\cos\phi-\cos(\phi\theta))^2}\cdot
\end{align*}
Taking  Taylor expansion to first order we deduce the pointwise convergence,
\begin{align*}
\lim_{\phi\to0}\frac{\phi \partial_\phi R(\phi,\phi\theta)}{R(\phi,\phi\theta)}=&\frac{2}{1+\theta}\cdot
\end{align*}
Combined with \eqref{TimX1} it implies that
$$
4\pi\lim_{\phi\to0}-\frac32\frac{\phi\partial_\phi R(\phi,\phi\theta)}{R(\phi,\phi\theta)}\mathscr{K}_1(\phi,\phi\theta)=-\frac{3{c_0^{-1}}\theta^3}{(1+\theta)^4}\cdot
$$
Plugging \eqref{Tramm1} and   the preceding estimates  into the expression of $\varkappa_1$ given by \eqref{ExpF1} we find
\begin{align*}
4\pi\lim_{\phi\to0}\phi\varkappa_1(\phi,\phi\theta)=&-\frac{3{c_0^{-1}}\theta^3}{(1+\theta)^4}+{3c_0^{-1}\frac{\theta^2}{(1+\theta)^4}}\\
=&{3c_0^{-1}\frac{\theta^2(1-\theta)}{(1+\theta)^4}}\cdot
\end{align*}
From the result
$$
\lim_{\phi\to0} \frac{r_0(\phi)r_0(\phi\theta)}{R(\phi,\phi\theta)}=\frac{\theta}{(1+\theta)^2},
$$
we deduce the point-wise convergence
$$
\lim_{\phi\to0}F_1(\rho(\phi,\phi\theta))=F_1\left(\frac{4\theta}{(1+\theta)^2}\right).
$$
Consequently,
$$
4\pi \lim_{\phi\to0}\phi\varkappa_1(\phi,\phi\theta)\Big[F_1(\rho(\phi,\phi\theta))-1\Big]=
{3c_0^{-1}\frac{\theta^2(1-\theta)}{(1+\theta)^4}}
\left(F_1\left(\frac{4\theta}{(1+\theta)^2}\right)-1\right)\cdot
$$
Therefore, 
\begin{equation}\label{Lund1}
4\pi \lim_{\phi\to0}\zeta_2(\phi)=3c_0^{-1}\mathlarger{\bigintsss_0^{+\infty}
{\frac{\theta^2(1-\theta)}{(1+\theta)^4}}
\left(F_1\left(\frac{4\theta}{(1+\theta)^2}\right)-1\right)}
d\theta.
\end{equation}
{Note that we can apply the dominated convergence theorem in the previous integral since
$$
|\phi\varkappa_1(\phi,\phi\theta)[F_1(\rho(\phi,\phi\theta))-1]|\leq C\frac{\theta}{(1+\theta)^3}\left(1+\ln\left(\frac{1+\theta}{|1-\theta|}\right)\right),
$$
which is integrable.
}
Next, we shall implement a similar study for $\zeta_3$ defined in \eqref{Deriv1}.
Straightforward computations yield
\begin{equation}\label{Sing0}
\partial_\phi \varrho(\phi,\varphi)+\partial_\varphi \varrho(\phi,\varphi)=\varrho_1(\phi,\varphi)+\varrho_2(\phi,\varphi),
\end{equation}
with 
\begin{equation}\label{Sing1}
\varrho_1(\phi,\varphi):=4\frac{r_0^2(\varphi)-r_0^2(\phi)}{R^2(\phi,\varphi)}r_0^\prime(\phi)\big(r_0(\varphi)-r_0(\phi)\big),
\end{equation}
and
\begin{align}\label{Sing2}
\nonumber\varrho_2(\phi,\varphi):=&4\frac{r_0^2(\varphi)-r_0^2(\phi)}{R^2(\phi,\varphi)}r_0(\phi)\Big(r_0^\prime(\phi)-r_0^\prime(\varphi)\Big)\\
\nonumber &+8\frac{r_0(\varphi)r_0(\phi)}{R^2(\phi,\varphi)}\big(\cos\phi-\cos\varphi)\big(\sin\phi-\sin\varphi\big)\\
&+4\frac{(\cos\phi-\cos\varphi)^2}{R^2(\phi,\varphi)}\Big(r_0(\varphi)r_0^\prime(\phi)+r_0(\phi)r_0^\prime(\varphi)\Big).\end{align}
Since $r_0^\prime$ is Lipschitz then  using the mean value theorem we get
\begin{align}\label{Sing3X}
\forall\,\phi,\varphi\in(0,\pi),\quad|\varrho_1(\phi,\varphi)|+|\varrho_2(\phi,\varphi)|\lesssim\frac{(\phi-\varphi)^2}{R^{\frac32}(\phi,\varphi)}\cdot
\end{align}
From Proposition \ref{Prop-behav} combined with \eqref{Chord}  we get
\begin{align}\label{Sing4}
|F_1^\prime(\varrho(\phi,\varphi))-3/4|=&\frac34|F(5/2,5/2;4;\rho(\phi,\varphi))-1|\nonumber\\
\lesssim& \frac{\varrho(\phi,\varphi) R(\phi,\varphi)}{(r_0(\phi)-r_0(\varphi)^2+(\cos \phi-\cos\varphi)^2}\nonumber\\
\lesssim& \frac{r_0(\phi) r_0(\varphi)}{(\phi-\varphi)^2}.
\end{align}
In addition
\begin{align*}
|\varkappa_2(\phi,\varphi))|=& |\mathscr{K}_1(\phi,\varphi)||\varrho_1(\phi,\varphi)+\varrho_2(\phi,\varphi)|\\
\lesssim&\frac{\sin\varphi\, r_0^2(\varphi)}{R^{\frac32}(\phi,\varphi)}\frac{(\phi-\varphi)^2}{R^{\frac32}(\phi,\varphi)}\\
\lesssim&\frac{\sin\varphi\, (\varphi-\phi)^2}{R^{2}(\phi,\varphi)}\cdot
\end{align*}
Consequently, we obtain in view of {\bf{(H2)}}
\begin{align}\label{Hila1}
|\varkappa_2(\phi,\varphi)[F_1^\prime(\varrho(\phi,\varphi))-3/4]|\lesssim&\frac{\sin(\varphi)r_0(\varphi)r_0(\phi)}{R^{2}(\phi,\varphi)}\lesssim\frac{ \sin\phi}{R(\phi,\varphi)}\cdot
\end{align}
As before, we can assume without  any loss of generality  that  $\phi\in[0,\,\pi/2]$, then by {\bf{(H2)}}  
\begin{align*}
\int_0^\pi|\varkappa_2(\phi,\varphi)[F_1^\prime(\varrho(\phi,\varphi))-3/4]|d\varphi\lesssim&\int_0^{\frac\pi2}\frac{ \sin\phi}{(\sin\phi+\sin\varphi)^2}d\varphi\\
\lesssim& \int_0^{\frac\pi2}\frac{\phi}{\big(\phi+\varphi\big)^2}d\varphi.
\end{align*}
By the change of variables $\varphi=\phi\theta$ we get
\begin{align*}
\int_0^{\frac\pi2}\frac{\phi}{\big(\phi+\varphi\big)^2}d\varphi=&\int_0^{\frac{\pi}{2\phi}}\frac{1}{\big(1+\theta\big)^2}d\theta\leq\int_0^{+\infty}\frac{1}{\big(1+\theta\big)^2}d\theta<\infty.
\end{align*}
Therefore
\begin{equation*}
\sup_{\phi\in[0,\pi/2]}\int_0^\pi|\varkappa_2(\phi,\varphi)[F_1^\prime(\varrho(\phi,\varphi))-3/4]|d\varphi<\infty.
\end{equation*}
Consequently
\begin{equation}\label{Tita0}
\sup_{\phi\in[0,\pi]}|\zeta_3(\phi)|<\infty.
\end{equation}
Now, we shall calculate the limit of $\zeta_3$ at the origin. Let $0<\delta\ll1$ be enough small, then using \eqref{Hila1} combined with \eqref{TRamm1} and {\bf{(H2)}} we obtain
$$
\lim_{\phi\to0}\int_\delta^\pi\varkappa_2(\phi,\varphi) [F_1^\prime(\varrho(\phi,\varphi))-3/4]|d\varphi=0.
$$
It follows that
$$
\lim_{\phi\to0}\zeta_3(\phi)=\lim_{\phi\to0}\int_0^\delta\varkappa_2(\phi,\varphi) [F_1^\prime(\varrho(\phi,\varphi))-3/4]|d\varphi.
$$
Making  the change of variables $\varphi=\phi \theta$ yields
$$
\lim_{\phi\to0}\zeta_3(\phi)=\lim_{\phi\to0}\int_0^{\frac\delta\phi}\phi\varkappa_2(\phi,\phi\theta)\big[F_1^\prime(\rho(\phi,\phi\theta))-3/4\big]{d\theta}.
$$
Using Taylor expansion to first order one in \eqref{Sing1} and \eqref{Sing2} we can check that
$$
\lim_{\phi\to0}\phi\varrho_1(\phi,\phi\theta)=4\frac{(\theta-1)^2}{(1+\theta)^3},\quad 
\lim_{\phi\to0}\phi\varrho_2(\phi,\phi\theta)=0.
$$
Hence  we get in view of the definition of $\varkappa_2$  and \eqref{TimX1}   the point-wise limit
\begin{align*}
4\pi\lim_{\phi\to0}\phi\varkappa_2(\phi,\phi\theta)=&4\pi\lim_{\phi\to0}\phi\mathscr{K}_1(\phi,\phi\theta)\Big(\varrho_1(\phi,\phi\theta)+\varrho_2(\phi,\phi\theta)\Big)=4{{c_0^{-1}}}\frac{\theta^3(\theta-1)^2}{(1+\theta)^6}\cdot
\end{align*}
It follows that
\begin{align*}
4\pi\lim_{\phi\to0}\phi\varkappa_2(\phi,\phi\theta)\Big(F_1^\prime(\rho(\phi,\phi\theta))-3/4\Big)&=4{{c_0^{-1}}}\frac{\theta^3(\theta-1)^2}{(1+\theta)^6}\left(F_1^\prime\Big(\frac{4\theta}{(1+\theta)^2}\Big)-3/4\right).
\end{align*}
{ Moreover, by \eqref{Hila1} 
$$|\phi\varkappa_2(\phi,\phi\theta)\Big(F_1^\prime(\rho(\phi,\phi\theta))-3/4\Big)|\le C\frac{\theta^2}{(1+\theta)^4},
$$ so we can apply dominated convergence theorem obtaining}

\begin{equation}\label{Lund2}
4\pi\lim_{\phi\to0}\zeta_3(\phi)=4{{c_0^{-1}}}\bigintsss_0^{+\infty}\frac{\theta^3(\theta-1)^2}{(1+\theta)^6}\left(F_1^\prime\Big(\frac{4\theta}{(1+\theta)^2}\Big)-3/4\right) d\theta.
\end{equation}
Putting together \eqref{Deriv1}, \eqref{Tahya12}, \eqref{Lund1} and \eqref{Lund2} we find 
\begin{align*}
4\pi\lim_{\phi\to0}\nu_\Omega^\prime(\phi)=&{-{\frac{4}{5}}c_0^{-1}}+4{{c_0^{-1}}}\bigintsss_0^{+\infty}\frac{\theta^3(\theta-1)^2}{(1+\theta)^6}\left(F_1^\prime\Big(\frac{4\theta}{(1+\theta)^2}\Big)-3/4\right) d\theta\\
&+{3{{c_0^{-1}}} {\bigintsss_0^{+\infty}\frac{\theta^2(1-\theta)}{(1+\theta)^4}}
\left(F_1\left(\frac{4\theta}{(1+\theta)^2}\right)-1\right)
d\theta:=\eta c_0^{-1}}.
\end{align*}
Notice that the real  number $\eta$ is well-defined since all the integrals converge.
This shows  the existence of the derivative of $\nu_\Omega$ at the origin. 
It is important to emphasize that  number $\eta$  is independent of the  profile $r_0$ and  we claim that the number  $\eta$ is zero. It is slightly difficult to check this result directly  from the integral representation of $
\eta$, however we shall check it in a different way by calculating its value for the unit ball 
$$
\big\{ (r e^{i\theta},z),\,r^2+z^2\leq1, \theta\in\R\big\}
$$
whose boundary can be  parametrized by $(\phi,\theta)\mapsto (r_0(\phi) e^{i\theta},\cos \phi)$ with $r_0(\phi)=\sin\phi.$ 
 Now  according to the identity \eqref{Ident-2} one has 
\begin{equation*}
\int_0^\pi H_1(\phi,\varphi)d\varphi=\frac{1}{r_0(\phi)}\partial_r \psi_0(re^{i\theta},\cos(\phi))\left|_{r=r_0(\phi)}\right..
\end{equation*}
However it is known \cite{Kellog} that   the stream function $\psi_0$ is radial and  quadratic inside the domain taking the form
$$
 0\leq r\leq \sin \phi,\quad \psi_0(re^{i\theta},\cos(\phi))=\frac{1}{6}\big( r^2+\cos^2\phi\big).
$$
Consequently, with this  special geometry the function $\displaystyle{\nu_\Omega}$  is constant and therefore
$$
\nu_\Omega^\prime(0)=\nu_\Omega^\prime(\pi)=0.
$$

\medskip
\noindent
{\bf{$\bullet$ Step 2: $\nu_\Omega^\prime\in \mathscr{C}^\alpha(0,\pi)$}}.
{ We shall prove that $\nu_\Omega^\prime$ is $\mathscr{C}^\alpha(0,\pi)$ and for this purpose we  start with the first term in \eqref{Deriv1}, i.e., $\zeta_1$. According to \eqref{CancelF1}  it can be split into several terms and to fix the ideas let us describe how to proceed with   the first term given by 
$$
\phi\mapsto 4\pi r_0^\prime(\phi) r_0(\phi)\bigintsss_0^\pi \frac{\mathscr{K}_1(\phi,\varphi)}{R(\phi,\varphi)}d\varphi=r_0'(\phi)r_0(\phi)\bigintsss_0^\pi \frac{\sin(\varphi) r_0^2(\varphi)}{R^{\frac52}(\phi,\varphi)}d\varphi,
$$
and check that it belongs to $\mathscr{C}^\alpha(0,\pi).$ The remaining terms of $\zeta_1$ can be treated in a similar way and to alleviate the discussion we leave them to the reader.

From the assumptions {\bf(H)} on $r_0$ we have $r_0^\prime,\,\phi\mapsto \frac{r_0(\phi)}{\sin\phi}\in \mathscr{C}^\alpha(0,\pi)$ , then using the fact that $\mathscr{C}^\alpha$ is an algebra, it suffices to verify that 
$$
\phi\mapsto \bigintsss_0^\pi \frac{\sin(\phi)\sin(\varphi) r_0^2(\varphi)}{R^{\frac52}(\phi,\varphi)}d\varphi\in \mathscr{C}^\alpha(0,\pi).
$$
This function is locally  $\mathscr{C}^1$ in $(0,\pi)$ and so the problem reduces to check the regularity close to the boundary $\{0,\pi\}$. By symmetry it suffices to check the regularity near the origin.
Decompose the integral as follows 
$$
 \bigintsss_0^\pi \frac{\sin(\phi)\sin(\varphi) r_0^2(\varphi)}{R^{\frac52}(\phi,\varphi)}d\varphi= \bigintsss_0^{\frac{\pi}{2}} \frac{\sin(\phi)\sin(\varphi) r_0^2(\varphi)}{R^{\frac52}(\phi,\varphi)}d\varphi+ \bigintsss_{\frac{\pi}{2}}^\pi \frac{\sin(\phi)\sin(\varphi) r_0^2(\varphi)}{R^{\frac52}(\phi,\varphi)}d\varphi.
$$
Since we are considering $\phi\in(0,\pi/2)$, it is easy to check that the last integral term defines a $\mathscr{C}^1$ function in $[0,\pi/2]$. {Since $\sin(\phi)/\phi$ is $\mathscr{C}^\alpha(0,\pi/2)$}, then the problem amounts  to checking that the
 function 
$$
\zeta_{1,1}:\phi\mapsto \bigintsss_0^{\frac{\pi}{2}}\frac{\phi\sin(\varphi) r_0^2(\varphi)}{R^{\frac52}(\phi,\varphi)}d\varphi,
$$ is $\mathscr{C}^\alpha$ close to zero.
Making the change of variables $\varphi=\phi \theta$ we get
\begin{align*}
\zeta_{1,1}(\phi)= \bigintsss_0^{\frac{\pi}{2\phi}}\frac{\phi^2\sin(\phi \theta) r_0^2(\phi \theta)}{R^{\frac52}(\phi,\phi\theta)}d\theta= \bigintss_0^{\frac{\pi}{2\phi}}\frac{\frac{\sin(\phi \theta)}{\phi} \left(\frac{r_0(\phi \theta)}{\phi}\right)^2}{\left(\big(\frac{r_0(\phi)+r_0(\phi\theta)}{\phi}\big)^2+(\frac{\cos(\phi)-\cos(\phi\theta)}{\phi})^2\right)^{\frac52}}d\theta.
\end{align*}
Let us now define the following functions
$$
\forall \,s\in[\phi,\pi/2],\quad T_{1,\phi}(s):=\bigintsss_0^{\frac{\pi}{2s}}\frac{\phi^2\sin(\phi \theta) r_0^2(\phi \theta)}{R^{\frac52}(\phi,\phi\theta)}d\theta,
$$
and
\begin{equation}\label{T2phi1}
\forall \,s\in(0,\phi],\quad T_{2,\phi}(s):=\bigintss_0^{\frac{\pi}{2\phi}}\frac{\frac{\sin(s \theta)}{s} \left(\frac{r_0(s \theta)}{s}\right)^2}{\left(\big(\frac{r_0(s)+r_0(s\theta)}{s}\big)^2+(\frac{\cos(s)-\cos(s\theta)}{s})^2\right)^{\frac52}}d\theta.
\end{equation}
We will show that $T_{1,\phi}\in \mathscr{C}^\alpha[\phi,\pi/2]$ and $T_{2,\phi}\in \mathscr{C}^\alpha(0,\phi]$  uniformly in {$\phi\in(0,\frac\pi2)$}. Thus we get in particular a constant $C>0$ such that for any  $0<\phi_1\leq \phi_2< \frac\pi2$,
\begin{equation}\label{TM1}
|T_{1,\phi_1}(\phi_1)-T_{1,\phi_1}(\phi_2)|\leq C|\phi_1-\phi_2|^\alpha
\end{equation}
and
\begin{equation}\label{TM2}
{|T_{2,\phi_2}(\phi_1)-T_{2,\phi_2}(\phi_2)|\leq C|\phi_1-\phi_2|^\alpha.}
\end{equation}
By combining \eqref{TM1} and \eqref{TM2}, 
{ since  $T_{1,\phi_1}(\phi_ 2)= T_{2,\phi_2}(\phi_1)$ we get}
$$
|\zeta_{1,1}(\phi_1)-\zeta_{1,1}(\phi_2)|\leq |T_{1,\phi_1}(\phi_1)-T_{1,\phi_1}(\phi_2)|+{{|T_{2,\phi_2}(\phi_1)-T_{2,\phi_2}(\phi_2)|}}\leq C|\phi_1-\phi_2|^\alpha.
$$
This  ensures that $\zeta_{1,1}\in \mathscr{C}^\alpha(0,\pi/2).$

It remains to show that $T_{1,\phi}\in \mathscr{C}^\alpha([\phi,\pi/2])$ and $T_{2,\phi}\in \mathscr{C}^\alpha((0,\phi])$ uniformly in $\phi\in (0,\frac\pi2)$.  We start with the term  $T_{1,\phi}$. Then straightforward computations imply
\begin{align*}
\forall\, s\in [\phi,\pi/2],\quad |\partial_sT_{1,\phi}(s)|=& \frac{\pi}{2s^2}\frac{\phi^2\sin(\phi\pi/2s)r_0^2(\phi\pi/2s)}{R^\frac52(\phi,\phi\pi/2s)}\\
\leq& C\frac{1}{s^5}\frac{\phi^5}{\phi^5(1+\frac{\pi}{2s})^5}
\leq C,
\end{align*}
for any $\phi,s\in(0,\pi/2]$. Notice that we have used in the last line the following  inequalities which follow from the assumptions {\bf{(H2)}},
$$
\phi \lesssim r_0(\phi)\lesssim \phi, \quad \forall \phi\in[0,\pi/2]
$$
and
\begin{equation}\label{r0phi}
\theta\lesssim \frac{r_0(\phi\theta)}{\phi}\lesssim \theta, \quad \forall \theta\in[0,\pi/2\phi].
\end{equation}
Hence $T_{1,\phi}\in\textnormal{Lip}([\phi,\pi/2])$, uniformly with respect to $\phi\in(0,\pi/2)$.

Let us move to the term $T_{2,\phi}$.
First, we  write
$$
\frac{\sin(\phi \theta)}{\phi}=\theta\int_0^1\cos(\phi\theta\tau)d\tau,
$$
and taking the derivative with  respect to $\phi$ we obtain
$$
\partial_\phi\left(\frac{\sin(\phi \theta)}{\phi} \right)=-\theta^2\int_0^1\sin(\phi\theta\tau)\tau\,d\tau.
$$
Hence,
\begin{equation}\label{sinephi}
\Big|\partial_\phi\left(\frac{\sin(\phi \theta)}{\phi} \right)\Big|\leq \theta^2.
\end{equation}
By the mean value theorem we infer
\begin{equation}\label{sinephi2}
\left|\frac{\sin(s_1\theta)}{s_1}-\frac{\sin(s_2\theta)}{s_2}\right|\leq |s_1-s_2|\theta^2
\end{equation}
Interpolating between \eqref{r0phi}, which is also true  for $r_0=\sin$,  and \eqref{sinephi2} we obtain
\begin{equation}\label{sinephi2}
\left|\frac{\sin(s_1\theta)}{s_1}-\frac{\sin(s_2\theta)}{s_2}\right|\leq C|s_1-s_2|^\alpha \theta^{1-\alpha}\theta^{2\alpha}=C|s_1-s_2|^\alpha \theta^{1+\alpha}.
\end{equation}
Using Taylor's formula
\begin{equation*}
r_0(\phi\theta)=\phi\theta\int_0^1r_0'(\tau \phi\theta)d\tau,
\end{equation*}
one finds that if $0\leq\phi \theta\leq\pi/2$ then
\begin{equation}\label{r0stheta2}
\left|\partial_\phi\left(\frac{r_0(\phi\theta)}{\phi}\right)\right|\leq C\theta^2.
\end{equation}
As before, one gets that if $0\leq s_1 \theta,\, s_2\theta \leq\pi/2$ hence
\begin{equation}\label{r0stheta3}
\left|\frac{r_0(s_1\theta)}{s_1}-\frac{r_0(s_2\theta)}{s_2}\right|\leq C|s_1-s_2|^\alpha\theta^{1+\alpha}.
\end{equation}
Now, let us check that $T_{2,\phi}$ is $\mathscr{C}^\alpha(0,\phi]$ uniformly in $\phi\in(0,\pi/2)$. Let $s_1,s_2\in(0,\phi],$ then  using the estimates \eqref{sinephi2} and \eqref{r0phi}, we achieve for any $s\in(0,\phi],$
{\begin{align*}
\left|\bigintss_0^{\frac{\pi}{2\phi}}\frac{\left(\frac{\sin(s _1\theta)}{s_1}-\frac{\sin(s_2 \theta)}{s_2}\right) \left(\frac{r_0(s_1 \theta)}{s_1}\right)^2}{\left(\big(\frac{r_0(s_1)+r_0(s_1\theta)}{s_1}\big)^2+(\frac{\cos(s_1)-\cos(s_1\theta)}{s_1})^2\right)^{\frac52}}d\theta\right|&\leq C |s_1-s_2|^\alpha\bigintss_0^{\frac{\pi}{2\phi}}\frac{\theta^{1+\alpha}\theta^2}{(1+\theta)^5}d\theta\\
&\leq C|s_1-s_2|^\alpha,
\end{align*}}
for {$\alpha\in(0,1)$}. In the same way
{
\begin{align*}
\left|\bigintss_0^{\frac{\pi}{2\phi}}
\frac{\frac{\sin(s_2 \theta)}{s_2} \left(\frac{r_0(s_1 \theta)}{s_1}-\frac{r_0(s_2 \theta)}{s_2}\right)
\left(\frac{r_0(s_1 \theta)}{s_1}+\frac{r_0(s_2 \theta)}{s_2}\right)}
{((\frac{r_0(s_1)+r_0(s_1\theta)}{s_1})^2+(\frac{\cos(s_1)-\cos(s_1\theta)}{s_1})^2)^{\frac52}} d\theta
\right|
&\leq C|s_1-s_2|^\alpha\bigintss_0^{\frac{\pi}{2\phi}}\frac{\theta^{1+\alpha}
\theta^2}{(1+\theta)^5}d\theta\\
&\leq C|s_1-s_2|^\alpha.
\end{align*}}
To analyze the difference  of  the denominator in $T_{2,\phi}$  we first write that for any $0\leq s\theta\leq\frac\pi2,$
$$
s^5R^{-\frac52}(s,s\theta)=\left(\left(\frac{r_0(s)+r_0(s\theta)}{s}\right)^2+\left(\frac{\cos(s)-\cos(s\theta)}{s}\right)^2\right)^{-\frac52}\lesssim (1+\theta)^{-5}. 
$$
{ Using an slight variant of the argument in \eqref{sinephi} one gets 
\begin{equation}\label{Cosphi}
 |1-\cos(\phi)|\le |\phi|
\end{equation}
and
\begin{equation}\label{cosphi}
\Big|\partial_\phi\left(\frac{\cos(\phi \theta)-1}{\phi} \right)\Big|\leq \theta^2.
\end{equation}
By  differentiation, and using \eqref{cosphi} and \eqref{r0stheta2}, we find that} if  $ 0\leq s\theta\leq\frac\pi2$ hence
$$
 \left| \partial_s\left(s^5R^{-\frac52}(s,s\theta)\right)\right|\lesssim(1+\theta)^{-4}.
$$
This implies in view of the mean value theorem
$$
\forall\, 0\leq s_1\theta, s_2\theta\leq\frac\pi2,\quad  \left|s_1^5R^{-\frac52}(s_1,s_1\theta)-s_2^5R^{-\frac52}(s_2,s_2\theta)\right|\lesssim(1+\theta)^{-4}|s_1-s_2|.
$$
Moreover
$$
\forall\, 0\leq s_1\theta, s_2\theta\leq\frac\pi2,\quad  \left|s_1^5R^{-\frac52}(s_1,s_1\theta)-s_2^5R^{-\frac52}(s_2,s_2\theta)\right|\lesssim(1+\theta)^{-5}.
$$
Then by interpolation we get
\begin{equation}\label{partialR5}
\forall\, 0\leq s_1\theta, s_2\theta\leq\frac\pi2,\quad  \left|s_1^5R^{-\frac52}(s_1,s_1\theta)-s_2^5R^{-\frac52}(s_2,s_2\theta)\right|\lesssim(1+\theta)^{\alpha-5}|s_1-s_2|^\alpha.
\end{equation}
Therefore we obtain
{
\begin{align*}
\bigintss_0^{\frac{\pi}{2\phi}}{\frac{\sin(s_2 \theta)}{s_2} \left(\frac{r_0(s_2 \theta)}{s_2}\right)^2}\left|s_1^5R^{-\frac52}(s_1,s_1\theta)-s_2^5R^{-\frac52}(s_2,s_2\theta)\right|d\theta&\lesssim|s_1-s_2|^\alpha\bigintss_0^{\infty}(1+\theta)^{\alpha-2}d\theta,
\end{align*}}
which converges since $\alpha\in(0,1).$\\
Combining the preceding estimates  one deduces that 
$$
\forall\, s_1, s_0\in(0,\phi],\quad |T_{2,\phi}(s_1)-T_{2,\phi}(s_2)|\leq C|s_1-s_2|^\alpha,
$$
uniformly in $\phi\in(0,\pi/2)$. Hence, we conclude that $\zeta_{1,1}$ is $\mathscr{C}^\alpha(0,\pi/2)$, for any $\alpha\in(0,1)$.\\
{ The argument used to prove that the other terms in \eqref{CancelF1} are in  $\mathscr{C}^\alpha(0,\pi/2)$, for any $\alpha\in(0,1),$ are quite similar, but for the reader convenience  we will sketch some  details.   The second term in the sum is 
$$
\phi\mapsto r_0'(\phi)\frac{r_0(\phi)}{\sin(\phi)}\bigintsss_0^\pi \frac{\sin(\phi)\sin(\varphi) r_0^4(\varphi)}{R^{\frac72}(\phi,\varphi)}d\varphi.
$$
Let us consider the functions 
$$
\forall \,s\in[\phi,\pi/2],\quad T_{1,\phi}(s):=\bigintsss_0^{\frac{\pi}{2s}}\frac{\phi^2\sin(\phi \theta) r_0^4(\phi \theta)}{R^{\frac72}(\phi,\phi\theta)}d\theta
$$
and
$$
\forall \,s\in(0,\phi],\quad T_{2,\phi}(s):=\bigints_0^{\frac{\pi}{2\phi}}\frac{\frac{\sin(s \theta)}{s} \left(\frac{r_0(s \theta)}{s}\right)^4}{\left(\big(\frac{r_0(s)+r_0(s\theta)}{s}\big)^2+(\frac{\cos(s)-\cos(s\theta)}{s})^2\right)^{\frac72}}d\theta.
$$
To prove that this second term is in $\mathscr{C}^\alpha(0,\pi/2)$ we will see that $T_{1,\phi}\in\textnormal{Lip}([\phi,\pi/2])$ and $T_{2, \phi}$ is in $\mathscr{C}^\alpha(0,\phi)$. 
Using the mean value theorem and the estimate 
$$
\forall\, s\in [\phi,\pi/2],\quad |\partial_sT_{1,\phi}(s)|
\leq C\frac{1}{s^7}\frac{\phi^7}{\phi^7(1+\frac{\pi}{2s})^7}
\leq C,
$$
we can easily see that $T_{1,\phi}$ is in the desired space. 
Now we will check that $T_{2,\phi}$ is $\mathscr{C}^\alpha(0,\phi]$ uniformly in $\phi\in(0,\pi/2)$. Let $s_1,s_2\in(0,\phi],$ then  using the estimates \eqref{r0phi} and \eqref{sinephi2}, we achieve for any $s\in(0,\phi],$
\begin{align*}
\left|\bigints_0^{\frac{\pi}{2\phi}}\frac{\left(\frac{\sin(s _1\theta)}{s_1}-\frac{\sin(s_2 \theta)}{s_2}\right) \left(\frac{r_0(s_1 \theta)}{s_1}\right)^4}{\left(\big(\frac{r_0(s_1)+r_0(s_1\theta)}{s_1}\big)^2+(\frac{\cos(s_1)-\cos(s_1\theta)}{s_1})^2\right)^{\frac72}}d\theta\right|&\leq C |s_1-s_2|^\alpha\bigintss_0^{\frac{\pi}{2\phi}}\frac{\theta^{1+\alpha}\theta^4}{(1+\theta)^7}d\theta\\
&\leq C|s_1-s_2|^\alpha,
\end{align*}
for {$\alpha\in(0,1)$}. In the same way
\begin{align*}
&\left|\bigints_0^{\frac{\pi}{2\phi}}
\frac{\frac{\sin(s_2 \theta)}{s_2} \left((\frac{r_0(s_1 \theta)}{s_1})^3+(\frac{r_0(s_1 \theta)}{s_1})^2(\frac{r_0(s_2 \theta)}{s_2})+(\frac{r_0(s_1 \theta)}{s_1})(\frac{r_0(s_2 \theta)}{s_2})^2+(\frac{r_0(s_2 \theta)}{s_2})^3\right)
\left(\frac{r_0(s_1 \theta)}{s_1}-\frac{r_0(s_2 \theta)}{s_2}\right)}
{((\frac{r_0(s_1)+r_0(s_1\theta)}{s_1})^2+(\frac{\cos(s_1)-\cos(s_1\theta)}{s_1})^2)^{\frac72}} d\theta
\right|\\
&\leq C|s_1-s_2|^\alpha\bigintss_0^{\frac{\pi}{2\phi}}\frac{\theta^{1+\alpha}
\theta^2}{(1+\theta)^5}d\theta\leq C|s_1-s_2|^\alpha.
\end{align*}
Now by differentiation,  and using \eqref{r0stheta2} and \eqref{cosphi} we find that if  $ 0\leq s\theta\leq\frac\pi2$ then
\begin{equation}\label{BR}
 \left| \partial_s\left(s^7R^{-\frac72}(s,s\theta)\right)\right|\lesssim(1+\theta)^{-6}.
\end{equation}
Therefore, using  interpolation argument we obtain
\begin{align*}
\bigintss_0^{\frac{\pi}{2\phi}}{\frac{\sin(s_2 \theta)}{s_2} \left(\frac{r_0(s_2 \theta)}{s_2}\right)^4}\left|s_1^7R^{-\frac72}(s_1,s_1\theta)-s_2^7R^{-\frac72}(s_2,s_2\theta)\right|d\theta&\lesssim|s_1-s_2|^\alpha\bigintss_0^{\infty}(1+\theta)^{\alpha-2}d\theta,
\end{align*}
and the last integral converges since $\alpha\in(0,1).$\\
Another  term to consider in \eqref{CancelF1}  is given by
$$
\phi\mapsto  r_0'(\phi)\left(\frac{r_0(\phi)}{\sin(\phi)}\right) ^2\bigintsss_0^\pi \frac{\sin^2(\phi)\sin(\varphi) r_0^3(\varphi)}{R^{\frac72}(\phi,\varphi)}d\varphi,
$$
which will be treated exactly in the same way as the previous one. For this reason we will not repeat again the arguments. 
The next term in \eqref{CancelF1} can be written as 
$$
\phi\mapsto \frac{r_0(\phi)}{\sin(\phi)}\bigintsss_0^\pi \frac{\sin^2(\phi)\sin(\varphi) r_0^3(\varphi)(\cos(\phi)-\cos(\varphi))}{R^{\frac72}(\phi,\varphi)}d\varphi.
$$
As in the previous cases we can consider the auxiliary functions 
$$
\forall \,s\in[\phi,\pi/2],\quad T_{1,\phi}(s):=\bigintss_0^{\frac{\pi}{2s}}\frac{\phi^2\sin(\phi \theta) r_0^3(\phi \theta)((1-\cos(\phi)-(1-\cos(\theta\phi)))}{R^{\frac72}(\phi,\phi\theta)}d\theta
$$
and
$$
\forall \,s\in(0,\phi],\quad T_{2,\phi}(s):=\bigints_0^{\frac{\pi}{2\phi}}\frac{\big(\frac{\sin(s \theta)}{s}\big) \left(\frac{(r_0(s \theta)}{s}\right)^3\big(\frac{(1-\cos(s))-(1-\cos(s\theta)}{s}\big)}
{\left(\big(\frac{r_0(s)+r_0(s\theta)}{s}\big)^2+(\frac{\cos(s)-\cos(s\theta)}{s})^2\right)^{\frac72}}d\theta.
$$
For this term it is enough to check that the auxiliary functions are in the $\mathscr{C}^\alpha$ space. For $T_{1,\phi}$ we will use the mean value theorem. Thus the inequality  
$$
\forall\, s\in [\phi,\pi/2],\quad |\partial_sT_{1,\phi}(s)|
\leq C\frac{1}{s^6}\frac{\phi^7}{\phi^7(1+\frac{\pi}{2s})^6}
\leq C,
$$
will be enough. 
To estimate $T_{2,\phi}$ we will follow the same arguments developed  in the previous cases. Hence, using inequalities \eqref{r0stheta2}, \eqref{r0stheta3}, \eqref{sinephi}, \eqref{sinephi2}, \eqref{r0phi}, \eqref{cosphi} and \eqref{Cosphi} one gets the following estimates  for $\alpha\in(0,1)$.

\begin{align*}
&\left|\bigintss_0^{\frac{\pi}{2\phi}}
\frac{\left(\frac{\sin(s _1\theta)}{s_1}-\frac{\sin(s_2 \theta)}{s_2}\right)\left(\frac{r_0(s_1 \theta)}{s_1}\right)^3
\big(\frac{(1-\cos(s_1))-(1-\cos(s_1\theta)}{s_1}\big)}
{\left(\big(\frac{r_0(s_1)+r_0(s_1\theta)}{s_1}\big)^2+(\frac{\cos(s_1)-\cos(s_1\theta)}{s_1})^2\right)^{\frac72}}d\theta\right|\\
&\leq C |s_1-s_2|^\alpha\bigintss_0^{\frac{\pi}{2\phi}}\frac{\theta^{1+\alpha}\theta^3}{(1+\theta)^6}d\theta\\
&\leq C|s_1-s_2|^\alpha
\end{align*}
and
\begin{align*}
&\left|\bigintss_0^{\frac{\pi}{2\phi}}\frac{\big(\frac{\sin(s_2 \theta)}{s_2}\big) \left(\left(\frac{(r_0(s_1 \theta)}{s_1}\right)^3-\left(\frac{(r_0(s_2 \theta)}{s_2}\right)^3\right)\big(\frac{(1-\cos(s_2))-(1-\cos(s_2\theta)}{s_2}\big)}
{\left(\big(\frac{r_0(s_2)+r_0(s_2\theta)}{s_2}\big)^2+(\frac{\cos(s_2)-\cos(s\theta)}{s_2})^2\right)^{\frac72}}d\theta\right|\\
&\leq C |s_1-s_2|^\alpha\bigintss_0^{\frac{\pi}{2\phi}}\frac{\theta^{1+\alpha}\theta^3}{(1+\theta)^6}d\theta\\
&\leq C|s_1-s_2|^\alpha. 
\end{align*}

On the other hand, using \eqref{cosphi}, \eqref{Cosphi} and interpolation, one obtains
\begin{equation}\label{incos}
\left|\frac{1-\cos(s_1\theta)}
{s_1}-\frac{1-\cos(s_2\theta)}{s_2}\right|\le C\theta^{1+\alpha}|s_1-s_2|^{\alpha}. 
\end{equation}

Then, by \eqref{incos} we obtain

\begin{align*}
&\left|\bigintss_0^{\frac{\pi}{2\phi}}
\frac{\left(\frac{\sin(s _2\theta)}{s_2}\right)
\left(\frac{r_0(s_2 \theta)}{s_2}\right)^3\big(\frac{(1-\cos(s_1))-(1-\cos(s_1\theta))}{s_1}-\frac{(1-\cos(s_2))-(1-\cos(s_2\theta))}{s_2}\big)
}
{\left(\big(\frac{r_0(s_1)+r_0(s_1\theta)}{s_1}\big)^2+(\frac{\cos(s_1)-\cos(s_1\theta)}{s_1})^2\right)^{\frac72}}d\theta
\right| \\
&\leq C |s_1-s_2|^\alpha\bigintss_0^{\frac{\pi}{2\phi}}\frac{(1+\theta^{1+\alpha})\theta^4}{(1+\theta)^7}d\theta\\
&\leq C|s_1-s_2|^\alpha,
\end{align*}

To estimate the last term of $T_{2,\phi},$ using \eqref{BR} 

\begin{align*}
&\left|\bigintss_0^{\frac{\pi}{2\phi}}{\frac{\sin(s_2 \theta)}{s_2} \left(\frac{r_0(s_2 \theta)}{s_2}\right)^3}\frac{(1-\cos(s_2))-(1-\cos(s_2\theta)}{s_2}\left(s_1^7R^{-\frac72}(s_1,s_1\theta)-s_2^7R^{-\frac72}(s_2,s_2\theta)\right)d\theta\right|\\
&\lesssim|s_1-s_2|^\alpha\bigintss_0^{\infty}\theta ^4(1+\theta)^{\alpha-6}d\theta\\
&\lesssim|s_1-s_2|^\alpha.  \end{align*} 
The next function  to analyze in \eqref{CancelF1} is 
$$
\phi\mapsto \bigintsss_0^\pi \frac{\sin(\phi)\sin(\varphi) r_0^2(\varphi)(\cos(\phi)-\cos(\varphi))}{R^{\frac52}(\phi,\varphi)}d\varphi.
$$
We will not repeat the arguments for this function because they are quite similar to the preceding  case. 
The last function to consider   in \eqref{CancelF1} is given by 
\begin{align*}
 \partial_{\varphi}\mathscr{K}_1(\phi,\varphi)\frac{r_0(\phi)r_0(\varphi)}{R(\phi, \varphi)}=&2\frac{r_0(\phi)r_0^2(\varphi)r_0'(\varphi)\sin(\varphi)}{(R(\phi, \varphi))^{5/2}}+ \frac{r_0(\phi)r_0^3(\varphi)\cos(\phi)}{(R(\phi, \varphi))^{5/2}}\\
 &-3\frac{r_0(\phi)r_0^3(\varphi) \sin(\varphi)}{(R(\phi, \varphi))^{7/2}}\big((r_0(\phi)+r_0(\varphi))r_0'(\varphi)+(\cos(\phi)-\cos(\varphi))\sin(\varphi)\big)
\end{align*}
This term generates several functions. Some of them are similar to the functions estimated in the previous cases and the others are similar between them. For this reason we will only check the first one. Let us prove that the function 

$$
\phi\mapsto  \frac{r_0(\phi)}{\sin(\phi)} \bigintsss_0^\pi \frac{\sin(\phi)\sin(\varphi) r_0^2(\varphi)r_0'(\varphi)}{R^{\frac52}(\phi,\varphi)}d\varphi,
$$
is in  $\mathscr{C}^\alpha(0,\pi/2)$, for any $\alpha\in(0,1).$ Since the integral in the interval $[\frac{\pi}{2},\pi]$ provides a function in $\mathscr{C}^\alpha,$  as in the above cases we can reduce the integral to the interval $[0,\pi/2]. $
Now the strategy is again to consider the auxilary functions 

$$
\forall \,s\in[\phi,\pi/2],\quad T_{1,\phi}(s):=\bigintsss_0^{\frac{\pi}{2s}}\frac{\phi^2\sin(\phi \theta) r_0^2(\phi \theta)r'_0(\phi\theta)}{R^{\frac72}(\phi,\phi\theta)}d\theta
$$
and
$$
\forall \,s\in(0,\phi],\quad T_{2,\phi}(s):=\bigintss_0^{\frac{\pi}{2\phi}}\frac{\frac{\sin(s \theta)}{s} \left(\frac{r_0(s \theta)}{s}\right)^2r_0'(s\theta)}{\left(\big(\frac{r_0(s)+r_0(s\theta)}{s}\big)^2+(\frac{\cos(s)-\cos(s\theta)}{s})^2\right)^{\frac52}}d\theta.
$$
Since 
$$
\forall\, s\in [\phi,\pi/2],\quad |\partial_sT_{1,\phi}(s)|
\leq C\frac{1}{s^5}\frac{\phi^5}{\phi^5(1+\frac{\pi}{2s})^5}
\leq C, 
$$
 the function $T_{1,\phi}$ is in $\mathscr{C}^\alpha,$ for any $\alpha\in (0,1).$
 To establish that $T_{2,\phi}$ is in the same space we need the following estimates. 
 \begin{align*}
 &\bigintss_0^{\frac{\pi}{2\phi}}\frac{(\frac{\sin(s_1 \theta)}{s_1}-\frac{\sin(s_2 \theta)}{s_2}) \left(\frac{r_0(s_1 \theta)}{s_1}\right)^2r_0'(s_1\theta)}{\left(\big(\frac{r_0(s_1)+r_0(s_1\theta)}{s_1}\big)^2+(\frac{\cos(s_1)-\cos(s_1\theta)}{s_1})^2\right)^{\frac52}}d\theta \\
  &\lesssim|s_1-s_2|^\alpha\bigintss_0^{\infty}\theta ^{3+\alpha}(1+\theta)^{-5}d\theta\\
&\lesssim|s_1-s_2|^\alpha,
 \end{align*}
in the first inequality we have used that $r_0'$ is  a bounded function. 
The next estimate is 
\begin{align*}
 &\bigints_0^{\frac{\pi}{2\phi}}\frac{(\frac{\sin(s_2 \theta)}{s_2})
 \left( \left(
 \frac{r_0(s_1 \theta)}{s_1}\right)^2-\left(
 \frac{r_0(s_2 \theta)}{s_2}\right)^2
  \right)
   r_0'(s_1\theta)}{\left(\big(\frac{r_0(s_1)+r_0(s_1\theta)}{s_1}\big)^2+(\frac{\cos(s_1)-\cos(s_1\theta)}{s_1})^2\right)^{\frac52}}d\theta \\
  &\lesssim|s_1-s_2|^\alpha\bigintss_0^{\infty}\theta ^{3+\alpha}(1+\theta)^{-5}d\theta\\
&\lesssim|s_1-s_2|^\alpha.
 \end{align*}
Using the inequality $|r_0'(s_1\theta)-r_0'(s_2\theta)|\le C\theta^{\alpha}|s_1-s_2|^{\alpha}$ we get 
  \begin{align*}
 &\bigints_0^{\frac{\pi}{2\phi}}\frac{(\frac{\sin(s_1 \theta)}{s_1})
  \left(\frac{r_0(s_1 \theta)}{s_1}\right)^2
 (r_0'(s_1\theta)-r_0'(s_2\theta))}{\left(\big(\frac{r_0(s_1)+r_0(s_1\theta)}{s_1}\big)^2+(\frac{\cos(s_1)-\cos(s_1\theta)}{s_1})^2\right)^{\frac52}}d\theta \\
  &\lesssim|s_1-s_2|^\alpha\bigintss_0^{\infty}\theta ^{3+\alpha}(1+\theta)^{-5}d\theta\\
&\lesssim|s_1-s_2|^\alpha.
 \end{align*}
 The last term to estimate is 
\begin{align*}
&\bigintss_0^{\frac{\pi}{2\phi}}{\frac{\sin(s_2 \theta)}{s_2} \left(\frac{r_0(s_2 \theta)}{s_2}\right)^2}r_0'(s_2\theta)\left|s_1^5R^{-\frac52}(s_1,s_1\theta)-s_2^5R^{-\frac52}(s_2,s_2\theta)\right|d\theta \\
&\lesssim|s_1-s_2|^\alpha\bigintss_0^{\infty}(1+\theta)^{\alpha-2}d\theta\\
&\lesssim |s_1-s_2|^\alpha.
\end{align*}
Hence, we obtain the announced result. The remaining terms can be studied using  the same inequalities and for this reason we will avoid them. 
}

Let us now move to the regularity of $\zeta_2$ defined in \eqref{Deriv1} which takes the form
$$
\zeta_2(\phi)=-{\frac{3}{2}}\zeta_{2,1}(\phi)+\zeta_{2,2}(\phi), \quad \zeta_{2,1}(\phi):=\bigintss_0^\pi \frac{\partial_\phi R(\phi,\varphi)}{R^\frac52(\phi,\varphi)}\sin(\varphi)r_0^2(\varphi)[F_1(\rho(\phi,\varphi))-1]d\varphi,
$$
and 
$$
\zeta_{2,2}(\phi):=\bigintss_0^\pi\partial_\varphi\mathscr{K}_1(\phi,\varphi)\big(F_1(\rho(\phi,\varphi))-1\big)d\varphi.
$$

The first function  can be split into two parts as follows

\begin{align*}
\zeta_{2,1}(\phi)=&\bigintss_0^{\frac{\pi}{2}} \frac{\partial_\phi R(\phi,\varphi)}{R^\frac52(\phi,\varphi)}\sin(\varphi)r_0^2(\varphi)[F_1(\rho(\phi,\varphi))-1]d\varphi\\
&+\bigintss_{\frac{\pi}{2}}^\pi\frac{\partial_\phi R(\phi,\varphi)}{R^\frac52(\phi,\varphi)}\sin(\varphi)r_0^2(\varphi)[F_1(\rho(\phi,\varphi))-1]d\varphi\\
=:&I_1(\phi)+I_2(\phi).
\end{align*}
As before, by evoking the symmetry property of $r$  we can restrict the study to  $\phi\in[0,\frac{\pi}{2}]$. The second term is the easiest  one and we claim that  $I_2\in W^{1,\infty}$. Indeed,
\begin{align*}
I_2'(\phi)=&\bigintss_{\frac{\pi}{2}}^\pi\partial_\phi\left( \frac{\partial_\phi R(\phi,\varphi)}{R^\frac52(\phi,\varphi)}\right)\sin(\varphi)r_0^2(\varphi)[F_1(\rho(\phi,\varphi))-1]d\varphi\\
&+\bigintss_{\frac{\pi}{2}}^\pi \frac{\partial_\phi R(\phi,\varphi)}{R^\frac52(\phi,\varphi)}\sin(\varphi)r_0^2(\varphi)F_1^\prime(\rho(\phi,\varphi))\partial_\phi \rho(\phi,\varphi)d\varphi.
\end{align*}
It can be  transformed into
 \begin{align*}
I_2'(\phi)=&\bigintsss_{\frac{\pi}{2}}^\pi\partial_\phi\left( \frac{\partial_\phi R(\phi,\varphi)}{R^\frac52(\phi,\varphi)}\right)\sin(\varphi)r_0^2(\varphi)\Big(F_1(\rho(\phi,\varphi))-1\Big)d\varphi\\
&+\bigintsss_{\frac{\pi}{2}}^\pi \frac{\partial_\phi R(\phi,\varphi)}{R^\frac52(\phi,\varphi)}\sin(\varphi)r_0^2(\varphi)F_1'(\rho(\phi,\varphi))(\partial_\phi \rho(\phi,\varphi)+\partial_\varphi \rho(\phi,\varphi))d\varphi\\
&-\bigintsss_{\frac{\pi}{2}}^\pi \frac{\partial_\phi R(\phi,\varphi)}{R^\frac52(\phi,\varphi)}\sin(\varphi)r_0^2(\varphi)\partial_\varphi({F_1}(\rho(\phi,\varphi)-1)d\varphi.
\end{align*}
Integrating by parts yields
 \begin{align*}
I_2'(\phi)=&\bigintsss_{\frac{\pi}{2}}^\pi\partial_\phi\left( \frac{\partial_\phi R(\phi,\varphi)}{R^\frac52(\phi,\varphi)}\right)\sin(\varphi)r_0^2(\varphi)\Big(F_1(\rho(\phi,\varphi))-1\Big)d\varphi\\
&+\bigintsss_{\frac{\pi}{2}}^\pi \frac{\partial_\phi R(\phi,\varphi)}{R^\frac52(\phi,\varphi)}\sin(\varphi)r_0^2(\varphi)F_1^\prime(\rho(\phi,\varphi))(\partial_\phi \rho(\phi,\varphi)+\partial_\varphi \rho(\phi,\varphi))d\varphi\\
&+\bigintsss_{\frac{\pi}{2}}^\pi \partial_\varphi\left(\frac{\partial_\phi R(\phi,\varphi)}{R^\frac52(\phi,\varphi)}\sin(\varphi)r_0^2(\varphi)\right)
\Big(F_1(\rho(\phi,\varphi)-1\Big)d\varphi\\
&{+ \frac{\partial_\phi R(\phi,\frac{\pi}{2})}{R^\frac52(\phi,\frac{\pi}{2})}r_0^2\left({\pi}/{2}\right)
\Big(F_1\left(\rho\left(\phi,{\pi}/{2}\right)\right)-1\Big).}
\end{align*}

{Notice that the last term is bounded uniformly on $\phi\in[0,\pi/2]$. In fact,  one has from the definition of $R$ in \eqref{RefR}
$$
\forall \phi\in [0,\pi/2],\quad \frac{1}{R(\phi,\frac{\pi}{2})}\leq {\frac{1}{r_0^2(\pi/2)}}\cdot
$$
Using \eqref{partialR} we get
$$
\partial_\phi R(\phi,\pi/2)=2r_0^\prime(\phi)(r_0(\phi)+r_0(\pi/2))-2\sin\phi\cos\phi.
$$
Moreover,  since $r_0$ is symmetric with respect to $\pi/2$ then we get $r_0^\prime\left(\frac{\pi}{2}\right)=0$, which implies that  $\partial_\phi R(\pi/2,\pi/2)=0$ and by the mean value theorem,  
$$
\forall \phi\in (0,\pi),\quad \Big|(\partial_\phi R)\left(\phi,\pi/2\right)\Big|\lesssim\left|\phi-\frac{\pi}{2}\right|.
$$
}
Hence, combining  \eqref{ZNX1} and \eqref{Est-LogX} we find
\begin{align*}
\forall \phi\in\left(0,\frac{\pi}{2}\right),\quad \left|F_1\left(\rho\left(\phi,\frac{\pi}{2}\right)\right)-1\right|\lesssim&\rho\left(\phi,\frac{\pi}{2}\right)\Big(1{+}\ln\Big[1-\rho\left(\phi,\frac{\pi}{2}\right)\Big]\Big)\\
\lesssim& { 1+}\left|\ln\left(\frac{\pi}{2}-\phi\right)\right|.
\end{align*}
Consequently
\begin{equation}\label{TunX1}
\forall \phi\in\left(0,\frac{\pi}{2}\right),\quad \frac{\Big|\partial_\phi R(\phi,\frac{\pi}{2})\Big|}{R^\frac52(\phi,\frac{\pi}{2})}r_0^2\left(\frac{\pi}{2}\right)
\Big|F_1\left(\rho\left(\phi,\frac{\pi}{2}\right)\right)-1\Big|\lesssim \big(\frac{\pi}{2}-\phi\big)\left|{1+}\ln\left(\frac{\pi}{2}-\phi\right)\right|,
\end{equation}
which ensures that this quantity is bounded in the interval $(0,\frac{\pi}{2})$.

Next,  let us check the boundedness of the  integral terms of $I_2^\prime$. Inequality  {\eqref{Rbound}} allows to get 
$$
 \sup_{\phi\in [0,\pi/2]\\
\atop \varphi\in [\pi/2,\pi]}  \left|\partial_\phi\left( \frac{\partial_\phi R(\phi,\varphi)}{R(\phi,\varphi)^\frac52}\right)\right|+\left|\frac{\partial_\phi R(\phi,\varphi)}{R(\phi,\varphi)^\frac52}\right|+\left|\partial_\varphi\left(\frac{\partial_\phi R(\phi,\varphi)}{R(\phi,\varphi)^\frac52}\sin(\varphi)r_0(\varphi)^2\right)\right|<\infty,
$$
which implies
 \begin{align*}
|I_2'(\phi)|\lesssim&1+\bigintsss_{\frac{\pi}{2}}^\pi\Big|F_1(\rho(\phi,\varphi))-1\Big|d\varphi+\bigintsss_{\frac{\pi}{2}}^\pi\big|F_1^\prime(\rho(\phi,\varphi))(\partial_\phi \rho(\phi,\varphi)+\partial_\varphi \rho(\phi,\varphi))\big|d\varphi.
\end{align*}
Therefore, \eqref{Est-LogX} combined with \eqref{TixaZ} and  \eqref{Sing3X} yield
\begin{align*}
\forall \phi\in[0,\pi/2],\quad |I_2^\prime(\phi)|\leq&C+C\bigintsss_{\frac\pi2}^\pi\ln\left(\frac{\phi+\varphi}{|\phi-\varphi|}\right)d\varphi+C\bigintsss_{\frac\pi2}^\pi\frac{R(\phi,\varphi)}{(\phi-\varphi)^2}\frac{(\phi-\varphi)^2}{R^\frac32(\phi,\varphi)}d\varphi\leq C.
\end{align*}
Let us move to $I_1$. First, we do the change of variables $\varphi=\phi\theta$ leading to
\begin{align*}
I_1(\phi)=\bigintsss_0^{\frac{\pi}{2\phi}} \frac{\phi (\partial_\phi R)(\phi,\phi\theta)}{R^\frac52(\phi,\phi\theta)}\sin(\phi\theta)r_0^2(\phi\theta)\Big(F_1(\rho(\phi,\phi\theta))-1\Big)d\theta.
\end{align*}
We will check that $I_1$ is $\mathscr{C}^\alpha(0,\pi/2)$, for any $\alpha\in(0,1)$. Indeed, take $\phi_1\leq \phi_2\in(0,\frac{\pi}{2})$, then
\begin{align}\label{DecoWX}
\nonumber & I_1(\phi_1)-I_1(\phi_2)=\bigintsss_{\frac{\pi}{2\phi_2}}^{\frac{\pi}{2\phi_1}} \frac{\phi_1 (\partial_\phi R)(\phi_1,\phi_1\theta)}{R^\frac52(\phi_1,\phi_1\theta)}\sin(\phi_1\theta)r_0^2(\phi_1\theta)\Big(F_1(\rho(\phi_1,\phi_1\theta))-1\Big)d\theta\\
\nonumber&+\bigintsss_0^{\frac{\pi}{2\phi_2}} \frac{\phi_1 (\partial_\phi R)(\phi_1,\phi_1\theta)}{R^\frac52(\phi_1,\phi_1\theta)}\sin(\phi_1\theta)r_0^2(\phi_1\theta)\Big(F_1(\rho(\phi_1,\phi_1\theta))-F_1(\rho(\phi_2,\phi_2\theta))\Big)d\theta\\
\nonumber&+\bigintss_0^{\frac{\pi}{2\phi_2}} \left(\frac{\phi_1 (\partial_\phi R)(\phi_1,\phi_1\theta)}{R^\frac52(\phi_1,\phi_1\theta)}\sin(\phi_1\theta)r_0^2(\phi_1\theta)-\frac{\phi_2 (\partial_\phi R)(\phi_2,\phi_2\theta)}{R^\frac52(\phi_2,\phi_2\theta)}\sin(\phi_2\theta)r_0^2(\phi_2\theta)\right)\\
\nonumber&\qquad\qquad\times\Big(F_1(\rho(\phi_2,\phi_2\theta))-1\Big)d\theta
\\
=:&I_{1,1}+I_{1,2}+I_{1,3},
\end{align}
where
\begin{align*}
I_{1,1}:=&\bigintss_{\frac{\pi}{2\phi_2}}^{\frac{\pi}{2\phi_1}} \frac{\phi_1 (\partial_\phi R)(\phi_1,\phi_1\theta)}{R^\frac52(\phi_1,\phi_1\theta)}\sin(\phi_1\theta)r_0^2(\phi_1\theta)\Big(F_1(\rho(\phi_1,\phi_1\theta))-1\Big)d\theta.
\end{align*}
We follow the ideas done for $\zeta_1$. In order to estimate $I_{1,1}$, define
$$
G_{1,\phi}(s):=\bigintsss_0^{\frac{\pi}{2s}} \frac{\phi (\partial_\phi R)(\phi,\phi\theta)}{R^\frac52(\phi,\phi\theta)}\sin(\phi\theta)r_0^2(\phi\theta)\Big(F_1(\rho(\phi,\phi\theta))-1\Big)d\theta.
$$
Then
$$
\forall\, s\in[\phi,\pi/2),\quad \partial_sG_{1,\phi}(s)=-\frac{\pi}{2s^2}\frac{\phi(\partial_\phi R)(\phi,\frac{\pi \phi}{2s})}{R^\frac52(\phi,\frac{\pi \phi}{2s}) }\sin\left(\frac{\pi \phi}{2s}\right)r_0^2\left(\frac{\pi \phi}{2s}\right)\left[F_1\left(\rho\left(\phi,\frac{\pi \phi}{2s}\right)\right)-1\right],
$$
for $\phi\in(0,\pi/2)$. 
{ Taking the derivative in $\phi$ of the function $R$} 
$$
(\partial_\phi R)(\phi,\phi\theta)=2r_0'(\phi)(r_0(\phi)+r_0(\phi\theta))+2\sin(\phi)(\cos(\phi\theta)-\cos(\phi)),
$$
we get
\begin{equation}\label{Thm1}
{|(\partial_\phi R)(\phi,\phi\theta)|}\leq C(\phi(1+\theta)+\phi|1-\theta|).
\end{equation}
Moreover, proceeding as before in \eqref{ZNX1}  combined with  the assumptions {\bf{(H)}} we find
\begin{equation}\label{F1-1}
\big|F_1(\rho(\phi,\phi\theta))-1\big|\lesssim \frac{1}{1+\theta}\Big(1+\ln\left|\frac{1+\theta}{1-\theta}\right|\Big).
\end{equation}
{Putting  together the preceding estimates allows to get
\begin{align*}
|\partial_sG_{1,\phi}(s)|\lesssim& \frac{1}{s^2}\frac{\phi\left\{\phi(1+\frac{\pi}{2s})+\phi|1-\frac{\pi}{2s}|\right\}}{\phi^5(1+\frac{\pi}{2s})^5}\frac{\phi^3}{s^3}\frac{1}{1+\frac{\pi}{2s}}\left(1+\ln\left|\frac{1+\frac{\pi}{2s}}{1-\frac{\pi}{2s}}\right|\right)\\
\lesssim& \frac{(s+\frac{\pi}{2})+|s-\frac{\pi}{2}|}{(s+\frac{\pi}{2})^6}\left(1+\ln\left|\frac{1+\frac{\pi}{2s}}{1-\frac{\pi}{2s}}\right|\right).
\end{align*}
It follows that 
\begin{align*}
\forall s\in(0,\pi/2),\quad\sup_{\phi\in(0,s]}\left|\partial_sG_{1,\phi}(s)\right|\lesssim& 1+\left|\ln\left(\frac{\pi}{2}-s\right)\right|.
\end{align*}
 Now using this estimate combined with  the mean value theorem we get for $0<\phi_1\leq\phi_2<\frac\pi2$
\begin{align*}
|I_{1,1}|\le&\int_{\phi_1}^{\phi_2}\left|\partial_sG_{1,\phi_1}(s)\right|ds\\
\le &C|\phi_1-\phi_2|+\int_{\phi_1}^{\phi_2}\left|\ln\left(\frac{\pi}{2}-s\right)\right|ds.
\end{align*}}
Using H\"{o}lder inequality yields for any $\alpha\in(0,1)$,
\begin{align*}
\int_{\phi_1}^{\phi_2}\left|\ln\left(\frac{\pi}{2}-s\right)\right|ds\le&|\phi_1-\phi_2|^\alpha\left(\int_{0}^{\frac\pi2}\left|\ln\left(\frac{\pi}{2}-s\right)\right|^{\frac{1}{1-\alpha}}ds\right)^{1-\alpha}\leq C_\alpha|\phi_1-\phi_2|^\alpha.\end{align*}
Notice that the constant $C_\alpha$ blows up when $\alpha$ approaches $1.$ Thus
$$
\forall\, \phi_1,\phi_2\in(0,\pi/2),\quad |I_{1,1}|\le C_\alpha|\phi_1-\phi_2|^\alpha.
$$

Next, let us move to the estimate of  $I_{1,2}$. Using \eqref{TixaZ} we arrive at
\begin{equation}\label{DerW1}
|F_1'(\rho(\phi,\phi\theta))|\leq C\frac{R(\phi,\phi\theta)}{\phi^2(1-\theta)^2}\leq C\frac{(1+\theta)^2}{(1-\theta)^2}\cdot
\end{equation}
Set
$$
\mathscr{R}(\theta,\phi):=\rho(\phi,\phi\theta)=\frac{4r_0(\phi) r_0(\phi\theta)}{R(\phi,\phi\theta)},
$$
then differentiating with respect to  $\theta$ we get
\begin{align*}
 \partial_\theta\mathscr{R}(\theta,\phi)=
&-\frac{8r_0(\phi)r_0(\phi\theta)}{R(\phi,\phi\theta)^2}\Big((r_0(\phi)+r_0(\phi\theta))\phi r_0'(\phi\theta)+(\cos(\phi)-\cos(\phi\theta))\phi\sin(\phi\theta)\Big)\\
&+\frac{4r_0(\phi) \phi r_0^\prime(\phi\theta)}{R(\phi,\phi\theta)}\cdot
\end{align*}
Using the assumption {\bf{(H2)}} we may  check that
$$
\forall\, 0\leq \phi\theta\le\pi/2,\quad|\partial_\theta\mathscr{R}(\theta,\phi)|\leq\frac{C}{(1+\theta)^2}, 
$$
where $C$ depends only on $\|r_0^\prime\|_{L^\infty}.$ Now by  rewriting
\begin{align*}
\partial_\theta\mathscr{R}(\theta,\phi)=&\frac{\frac{4r_0(\phi)}{\phi}  r_0^\prime(\phi\theta)}{\left(\frac{r_0(\phi)+r_0(\phi\theta)}{\phi}\right)^2+\left(\frac{\cos(\phi)-\cos(\phi\theta)}{\phi}\right)^2}\\
&-\frac{8\frac{r_0(\phi)}{\phi}\frac{r_0(\phi\theta)}{\phi}}{\left\{\left(\frac{r_0(\phi)+r_0(\phi\theta)}{\phi}\right)^2+\left(\frac{\cos(\phi)-\cos(\phi\theta)}{\phi}\right)^2\right\}^2}\\
&\times\left[\frac{(r_0(\phi)+r_0(\phi\theta))}{\phi} r_0'(\phi\theta)+{(\cos(\phi)-\cos(\phi\theta))}\frac{\sin(\phi\theta)}{\phi}\right],
\end{align*}
and differentiating in $\phi$ we get the estimate
$$
\forall\, 0\leq \phi\theta\le\pi/2,\quad|\partial_\phi\partial_\theta\mathscr{R}(\theta,\phi)|\leq\frac{C}{(1+\theta)}, 
$$
where  $C$ depends only on $\|r_0\|_{C^2}.$ Taylor's formula
$$
\mathscr{R}(\theta,\phi)=\mathscr{R}(1,\phi)+\int_1^\theta\partial_\theta\mathscr{R}(\tau,\phi)d\tau
$$
combined with  $\mathscr{R}(1,\phi)=1$ yields
$$
\partial_\phi\mathscr{R}(\theta,\phi)=\int_1^\theta\partial_\phi\partial_\theta\mathscr{R}(\tau,\phi)d\tau.
$$
This  implies in turn that
\begin{equation}\label{est-R}
\sup_{\phi\in(0,\frac{\pi}{2\theta})}|\partial_\phi\mathscr{R}(\theta,\phi)|\leq C\left|\ln\left(\frac{1+\theta}{2}\right)\right|.
\end{equation}
Combining this estimate with \eqref{DerW1} we deduce that
$$
\sup_{\phi\in(0,\frac{\pi}{2\theta})}\big|\partial_\phi\big[F_1(\rho(\phi,\phi\theta))\big]\big|\leq C\frac{(1+\theta)^2}{(1-\theta)^2}\left|\ln\left(\frac{1+\theta}{2}\right)\right|.
$$
Following an interpolation argument combining   the preceding estimate with \eqref{F1-1} yields \mbox{for any  $\alpha\in[0,1]$} and for $0<\phi_1\leq\phi_2\leq\frac{\pi}{2\theta}$

\begin{equation}\label{tvmf2}
\big|F_1(\rho(\phi_1,\phi_1\theta))-F_1(\rho(\phi_2,\phi_2\theta))\big|\leq C|\phi_1-\phi_2|^\alpha \frac{(1+\theta)^{3\alpha-1}}{|1-\theta|^{2\alpha}}\left|\ln\left(\frac{1+\theta}{2}\right)\right|^\alpha\left(1+\ln\left|\frac{1+\theta}{1-\theta}\right|\right)^{1-\alpha}.
\end{equation}

Plugging this estimate into the definition of $I_{1,2}$ given in \eqref{DecoWX} implies
$$
|I_{1,2}|\leq C|\phi_1-\phi_2|^\alpha{\bigintss_0^{\infty}\frac{\theta^3}{(1+\theta)^{{5}}}\frac{(1+\theta)^{ 3\alpha}}{|1-\theta|^{2\alpha}}\left|\ln\left(\frac{1+\theta}{2}\right)\right|^\alpha\left(1+\ln\left|\frac{1+\theta}{1-\theta}\right|\right)^{1-\alpha}d\theta}.
$$
This integral converges, close to $1$ and at $\infty,$ provided that $0\leq \alpha<1.$ We mention that to get the integrability close to $1$ we use the approximation
$$
\ln\left(\frac{1+\theta}{2}\right)\overset{1}{ \sim} \frac{\theta-1}{2}\cdot
$$
As to the estimate of the term $I_{1,3}$ described in  \eqref{DecoWX}  we roughly implement similar ideas. For that purpose, we introduce the function
\begin{align*}
\forall \, 0\leq s\leq \phi\leq\frac\pi2,\quad G_{2,\phi}(s)=:&\bigintsss_0^{\frac{\pi}{2\phi}} \frac{s (\partial_\phi R)(s,s\theta)}{R^\frac52(s,s\theta)}\sin(s\theta)r_0^2(s\theta)[F_1(\rho(\phi,\phi\theta))-1]d\theta\\
=&\bigintss_0^{\frac{\pi}{2\phi}} \frac{\frac{(\partial_\phi R)(s,s\theta)}{s}}{\frac{R^{\frac52}(s,s\theta)}{s^5}}\frac{\sin(s\theta)}{s}\Big(\frac{r_0(s\theta)}{s}\Big)^2\Big(F_1(\rho(\phi,\phi\theta))-1\Big)d\theta.
\end{align*}
Then combining  \eqref{sinephi2}, \eqref{Thm1} and \eqref{F1-1}, we deduce that
\begin{align*}
\bigintss_0^{\frac{\pi}{2\phi}} &\frac{\frac{|\partial_\phi R(s,s\theta)|}{s}\left|\frac{\sin(s_1\theta)}{s_1}-\frac{\sin(s_2\theta)}{s_2}\right|}{\left(\left(\frac{r_0(s)+r_0(s\theta)}{s}\right)^2+\left(\frac{\cos(s)-\cos(s\theta)}{s}\right)^2\right)^\frac52}\frac{r_0(s\theta)^2}{s^2}\big|F_1(\rho(\phi,\phi\theta))-1\big|d\theta\\
\leq&C|s_1-s_2|^\alpha\bigintsss_0^\infty \frac{1+\theta}{(1+\theta)^5}\theta^{1+\alpha}\theta^2\frac{1}{1+\theta}\left(1+\ln\left|\frac{1+\theta}{1-\theta}\right|\right)d\theta\\
\leq &C |s_1-s_2|^\alpha,
\end{align*}
provided that  $\alpha\in(0,1)$. Implementing the same analysis for the remaining terms and {using \eqref{r0stheta3} and  \eqref{partialR5} } as for $\zeta_{1}$, we find
$$
\forall\, 0\le s_1,s_2\leq\phi,\quad |G_{2,\phi}(s_1)-G_{2,\phi}(s_2)|\leq C|s_1-s_2|^\alpha,
$$
uniformly for $\phi\in(0,\pi/2).$ Therefore from the definition \eqref{DecoWX} we obtain for \mbox{any $0\leq \phi_1\leq\phi_2\leq\frac\pi2,$}
\begin{align*}
|I_{1,3}|=&\big|G_{2,\phi_2}(\phi_1)- G_{2,\phi_2}(\phi_2)\big|\leq C|\phi_1-\phi_2|^\alpha.
\end{align*}
{
Now let us consider the next  term in $\zeta_2(\phi)$
\begin{align*}
\zeta_{2,2}(\phi)=&\bigintsss_0^{\pi}\partial_{\varphi}\mathscr{K}_1(\phi,\varphi)(F_1(\rho(\phi,\varphi))-1)d\varphi \\
=&2r_0(\phi)\bigintss_0^{\pi}
\frac{r_0^2(\varphi)r_0'(\varphi)\sin(\varphi)}{R(\phi,\varphi)^{\frac{5}{2}}}
(F_1(\rho(\phi,\varphi))-1)d\varphi \\
&+r_0(\phi)\bigintss_0^{\pi}
\frac{r_0^3(\varphi)\cos(\varphi)}{R(\phi,\varphi)^{\frac{5}{2}}}
(F_1(\rho(\phi,\varphi))-1)d\varphi \\
&-3r_0(\phi)\bigintss_0^{\pi}
\frac{r_0^2(\varphi)\sin(\varphi)}{R(\phi,\varphi)^{\frac{7}{2}}}
\big((r_0(\phi)+r_0(\varphi))r_0'(\varphi)-(\cos(\varphi)-\cos(\phi))\sin(\varphi)\big)
(F_1(\rho(\phi,\varphi))-1)d\varphi. \\
\end{align*}
The first function that we intend to study is 
$$\phi \mapsto \frac{r_0(\phi)}{\phi}\bigintsss_0^{\pi}
\frac{\phi r_0^2(\varphi)r_0'(\varphi)\sin(\varphi)}{R(\phi,\varphi)^{\frac{5}{2}}}
(F_1(\rho(\phi,\varphi))-1)d\varphi= \frac{r_0(\phi)}{\phi}I(\phi). 
$$
Since the function $\frac{r_0(\phi)}{\phi}$ has bounded derivatives,  it is enough to estimate the function $I$. Thus, 
\begin{align*}
I=&\bigintsss_0^{\pi/2}
\frac{\phi r_0^2(\varphi)r_0'(\varphi)\sin(\varphi)}{R(\phi,\varphi)^{\frac{5}{2}}}
(F_1(\rho(\phi,\varphi))-1)d\varphi+
\bigintsss_{\pi/2}^{\pi}
\frac{\phi r_0^2(\varphi)r_0'(\varphi)\sin(\varphi)}{R(\phi,\varphi)^{\frac{5}{2}}}
(F_1(\rho(\phi,\varphi))-1)d\varphi
\\
=&I_1(\phi)+I_2(\phi). 
\end{align*}
{The arguments to estimate the terms $I_1(\phi)$ and $I_2(\phi)$ are similar to the case of the function $\zeta_1,$ but we will repeat them for the reader convenience. First we will prove that $I_2'(\phi)$ is a bounded function. By direct computations we infer
\begin{align*}
 I_2'(\phi)=&\bigintsss_{\pi/2}^{\pi}
\frac{r_0^2(\varphi)r_0'(\varphi)\sin(\varphi)}{R(\phi,\varphi)^{\frac{5}{2}}}
(F_1(\rho(\phi,\varphi))-1)d\varphi \\
&-\frac{5}{2}\bigintsss_{\pi/2}^{\pi}
\frac{\partial_{\phi}R(\phi,\varphi)\phi r_0^2(\varphi)r_0'(\varphi)\sin(\varphi)}{R(\phi,\varphi)^{\frac{7}{2}}}
(F_1(\rho(\phi,\varphi))-1)d\varphi\\
&\bigintsss_{\pi/2}^{\pi}
\frac{\phi r_0^2(\varphi)r_0'(\varphi)\sin(\varphi)}{R(\phi,\varphi)^{\frac{5}{2}}}
F_1'(\rho(\phi,\varphi))\partial_{\phi}\rho(\phi,\varphi)d\varphi \\
=&I_{2,1}+I_{2,2}+I_{2,3}. 
 \end{align*}
The estimate of $I_{2,2}$ can be done  using \eqref{estimat-8}, \eqref{Rbound} and \eqref{bound-K1}, leading to  
$$
 |I_{2,2}|\leqslant C+\int_{\pi/2}^{\pi}\ln \big|\frac{\phi+\varphi}{\phi-\varphi}\big|d\varphi\le C. 
$$
For the term $I_{2,1}$ we may apply  \eqref{Sing3X}, \eqref{Chord} and \eqref{Rbound} in order to get 
$$
|I_{2,1}|\leqslant \int_{\pi/2}^{\pi}\frac{R(\phi,\varphi)|\phi-\varphi|^2}{|\phi-\varphi|^2R(\phi,\varphi)^4}d\varphi\le C.
$$
To estimate the term $I_{2,3}$ we decompose it in different terms. 
\begin{align*}
  I_{2,3}=&\bigintsss_{\pi/2}^{\pi}
\frac{\phi r_0^2(\varphi)r_0'(\varphi)\sin(\varphi)}{R(\phi,\varphi)^{\frac{5}{2}}}
F_1'(\rho(\phi,\varphi))(\partial_{\phi}\rho(\phi,\varphi)+\partial_{\varphi}\rho(\phi,\varphi))d\varphi\\
&-\bigintsss_{\pi/2}^{\pi}
\frac{\phi r_0^2(\varphi)r_0'(\varphi)\sin(\varphi)}{R(\phi,\varphi)^{\frac{5}{2}}}
F_1'(\rho(\phi,\varphi))\partial_{\varphi}\rho(\phi,\varphi)d\varphi. 
 \end{align*}
Since $r_0'(\pi/2)=r_0(\pi)=0$ then integration by parts allows to get
\begin{align*}
  I_{2,3}=&\bigintsss_{\pi/2}^{\pi}
\frac{\phi r_0^2(\varphi)r_0'(\varphi)\sin(\varphi)}{R(\phi,\varphi)^{\frac{5}{2}}}
F_1'(\rho(\phi,\varphi))(\partial_{\phi}\rho(\phi,\varphi)+\partial_{\varphi}\rho(\phi,\varphi))d\varphi\\
&-\bigintsss_{\pi/2}^{\pi}
\partial_{\varphi}\Big(\frac{\phi r_0^2(\varphi)r_0'(\varphi)\sin(\varphi)}{R(\phi,\varphi)^{\frac{5}{2}}}\Big)
(F_1(\rho(\phi,\varphi))-1)\partial_{\varphi}d\varphi. 
 \end{align*}
 We can then proceed similarly to  the  terms $I_{2,1}$ and $I_{2,2}$ in order to get that  $I_{2,2}$ is bounded.}
 The next step will be to check that the function $I_1$ is in $\mathscr C^{\alpha}(0,\pi/2),$ for all $\alpha\in (0,1).$ If we do the change of variable $\varphi=\phi\theta$ we find
$$
I_1(\phi)=\int_0^{\frac{\pi}{2\phi}}
\frac{\phi^2 r_0^2(\theta\phi)r_0'(\theta\phi)\sin(\theta\phi)}{R(\phi,\theta\phi)^{\frac{5}{2}}}
(F_1(\rho(\phi,\theta\phi))-1)d\theta. 
$$
If $\phi_1$ and $\phi_2$ are in $(0,\pi/2),$ then 
\begin{align*}
 &|I_1(\phi_1)-I_1(\phi_2)|=\bigintss_{\frac{\pi}{2\phi_2}}^{\frac{\pi}{2\phi_1}}
\frac{\phi_1^2 r_0^2(\theta\phi_1)r_0'(\theta\phi_1)\sin(\theta\phi_1)}{R(\phi_1,\theta\phi_1)^{\frac{5}{2}}}
(F_1(\rho(\phi_1,\theta\phi_1))-1)d\theta\\
&+\bigintss_0^{\frac{\pi}{2\phi_2}}
\frac{\phi_1^2 r_0^2(\theta\phi_1)r_0'(\theta\phi_1)\sin(\theta\phi_1)}{R(\phi_1,\theta\phi_1)^{\frac{5}{2}}}
(F_1(\rho(\phi_1,\theta\phi_1)-F_1(\rho(\phi_2,\theta\phi_2))d\theta \\
&+ \bigintss_0^{\frac{\pi}{2\phi_2}}
\Big(\frac{\phi_1^2 r_0^2(\theta\phi_1)r_0'(\theta\phi_1)\sin(\theta\phi_1)}{R(\phi_1,\theta\phi_1)^{\frac{5}{2}}}-
\frac{\phi_2^2 r_0^2(\theta\phi_2)r_0'(\theta\phi_2)\sin(\theta\phi_2)}{R(\phi_2,\theta\phi_2)^{\frac{5}{2}}} \Big)
(F_1(\rho(\phi_2,\theta\phi_2))-1)d\theta\\
=&I_{1,1}+I_{1,2}+I_{1,3}.
\end{align*}
To estimate the function $I_{1,1}$,  let us consider the auxiliary function 
$$
G_{1,\phi}(s)=\bigintsss_0^{\frac{\pi}{2s}}
\frac{\phi^2 r_0^2(\theta\phi)r_0'(\theta\phi)\sin(\theta\phi)}{R(\phi,\theta\phi)^{\frac{5}{2}}}
(F_1(\rho(\phi,\theta\phi))-1)d\theta. 
$$
{First we may write   for $0<\phi_1\leqslant \phi_2 <\pi/2,$ 
$$
|I_{1,1}|=|G_{1,\phi_1}(\phi_2)-G_{1,\phi_1}(\phi_1)|\le \int_{\phi_1}^{\phi_2}|\partial_sG_{1,\phi_1}(s)|ds. 
$$
Hence,  it is enough to obtain an appropriate  bound for $\partial_sG_{1,\phi_1}.$ Taking the derivative in $s$, applying \eqref{F1-1} and some standard estimates used before we have 
$$
|\partial_sG_{1,\phi}(s)|\le C\left(1+\Big|\ln\left(\frac{\pi}{2}-s\right)\Big|\right), 
$$
and so by H\"older inequality
$$
|I_{1,1}|\le C\bigintsss_{\phi_1}^{\phi_2}\left(1+\Big|\ln\left(\frac{\pi}{2}-s\right)\Big|\right)ds \le C |\phi_1-\phi_2|^{\alpha}, 
$$
for all $\alpha \in (0,1).$
Let us now move to the estimate of  the term $I_{1,2}.$ By \eqref{tvmf2} and some standard estimates used along the work we obtain, for any $\alpha\in (0,1)$ 
\begin{align*}
|I_{1,2}|&\le  C|\phi_1-\phi_2|^{\alpha} \bigintsss_0^{\infty}\frac{\theta^3 (1+\theta)^{3\alpha-1}}{(1+\theta)^5(1-\theta)^{2\alpha}}\ln^{\alpha}(\frac{1+\theta}{2})\left(1+\ln|\frac{1+\theta}{1-\theta}|\right)^{1-\alpha}d\theta\\
& \le C|\phi_1-\phi_2|^{\alpha}, 
\end{align*}
where in the last inequality we have used that $\ln(\frac{1+\theta}{2})\simeq \frac{\theta-1}{2}$ if $\theta$ is enough close to $1.$}\\
To estimate the term $I_{1,3}, $ let us take the function 
\begin{align*}
G_{2,\phi}(s)=& \bigintss_0^{\frac{\pi}{2\phi}}
\frac{s^2 r_0^2(\theta s)r_0'(\theta s)\sin(\theta s)}{R(s,\theta s)^{\frac{5}{2}}}
(F_1(\rho(\phi,\theta\phi))-1)d\theta\\ 
=&\bigintss_0^{\frac{\pi}{2\phi}}
\frac{(\frac{r_0(\theta s)}{s})^2r_0'(\theta s)(\frac{\sin(\theta s)}{s})}{\frac{R( s,\theta s)^{\frac{5}{2}}}{s^5}}
(F_1(\rho(\phi,\theta\phi))-1)d\theta. 
\end{align*}
Now, $|I_{2,3}|=|G_{2,\phi_2}(\phi_1)-G_{2,\phi_2}(\phi_2)|. $ As in the case of the function $\zeta_1$ we will get the desired estimate through decomposing the integral in several terms. 
The first term to consider is 
$$
\bigintsss_0^{\frac{\pi}{2\phi}}
\frac{\big((\frac{r_0(\theta s_1)}{s_1})^2-(\frac{r_0(\theta s_2)}{s_2})^2\big)
r_0'(\theta s_1)(\frac{\sin(\theta s_1)}{s_1})}{\frac{R(s_1,\theta s_1)^{\frac{5}{2}}}{s_ 1^5}}
(F_1(\rho(\phi,\theta\phi))-1)d\theta. 
$$
Using \eqref{r0stheta3} and \eqref{F1-1} one can see that this term is bounded by 
$$
C |s_1-s_2|^{\alpha}\bigintsss_0^{\infty}\frac{\theta^{3+\alpha}}{(1+\theta)^6}(1+\ln\big|\frac{1+\theta}{1-\theta}\big|)d\theta \le C |s_1-s_2|^{\alpha}.
$$
For the next adding term, using \eqref{sinephi2} and \eqref{F1-1} we obtain 
\begin{align*}
&\bigintsss_0^{\frac{\pi}{2\phi}}
\frac{(\frac{r_0(\theta s_2)}{s_2})^2
r_0'(\theta s_1)(\frac{\sin(\theta s_1)}{s_1}-\frac{\sin(\theta s_2)}{s_2})}{\frac{R(s_1,\theta s_1)^{\frac{5}{2}}}{s_ 1^5}}
(F_1(\rho(\phi,\theta\phi))-1)d\theta \\
& \le C |s_1-s_2|^{\alpha}\int_0^{\infty}\frac{\theta^{3+\alpha}}{(1+\theta)^6}\left(1+\ln\big|\frac{1+\theta}{1-\theta}\big|\right)d\theta\\
& \le C |s_1-s_2|^{\alpha}.
\end{align*}
For the next term we will use the regularityof $r_0$ and \eqref{F1-1},  obtaining 
\begin{align*}
&\bigintsss_0^{\frac{\pi}{2\phi}}
\frac{(\frac{r_0(\theta s_2)}{s_2})^2
(r_0'(\theta s_1)-r_0'(\theta s_2))(\frac{\sin(\theta s_2)}{s_2})}{\frac{R(s_1,\theta s_1)^{\frac{5}{2}}}{s_ 1^5}}
(F_1(\rho(\phi,\theta\phi))-1)d\theta \\
& \le C |s_1-s_2|^{\alpha}\bigintsss_0^{\infty}\frac{\theta^{3+\alpha}}{(1+\theta)^6}\left(1+\ln\big|\frac{1+\theta}{1-\theta}\big|\right)d\theta\\
& \le C |s_1-s_2|^{\alpha}.
\end{align*}
Let us move to the last term in $I_{1,3}.$ By \eqref{F1-1} and \eqref{partialR5}
\begin{align*}
&\left|\bigintss_0^{\frac{\pi}{2\phi}}{\frac{\sin(s_2 \theta)}{s_2} \left(\frac{r_0(s_2 \theta)}{s_2}\right)^2}(r_0'(s_2\theta))\left(s_1^5R^{-\frac52}(s_1,s_1\theta)-s_2^5R^{-\frac52}(s_2,s_2\theta)\right)(F_1(\rho(\phi,\theta\phi))-1)d\theta\right|\\
&\lesssim|s_1-s_2|^\alpha\bigintss_0^{\infty}\frac{\theta ^3}{(1+\theta)^{6-\alpha}}(1+\ln\big|\frac{1+\theta}{1-\theta})d\theta\\
&\lesssim|s_1-s_2|^\alpha. 
\end{align*} 
The two remaining terms in $\zeta_2$ are left to the reader because they are very similar to the first one. 
}It remains to estimate the term  $\zeta_3$ defined by \eqref{Deriv1}. It can be split as follows,
\begin{align*}
\zeta_3(\phi)=&{\frac{1}{4\pi}}\bigintsss_0^\pi\frac{\sin(\varphi)r_0^2(\varphi)}{R^\frac32(\phi,\varphi)}(\partial_\phi\rho(\phi,\varphi)+\partial_\varphi\rho(\phi,\varphi))\left[F_1'(\rho(\phi,\varphi))-\frac34\right]d\varphi\\
=&\frac{1}{4\pi}\bigintsss_0^{\frac{\pi}{2}}\frac{\sin(\varphi)r_0^2(\varphi)}{R^\frac32(\phi,\varphi)}(\partial_\phi\rho(\phi,\varphi)+\partial_\varphi\rho(\phi,\varphi))\left[F_1'(\rho(\phi,\varphi))-\frac34\right]d\varphi\\
&+\frac{1}{4\pi}\bigintsss_{\frac{\pi}{2}}^\pi\frac{\sin(\varphi)r_0^2(\varphi)}{R^\frac32(\phi,\varphi)}(\partial_\phi\rho(\phi,\varphi)+\partial_\varphi\rho(\phi,\varphi))\left[F_1'(\rho(\phi,\varphi))-\frac34\right]d\varphi\\
=&I_{3}+I_4.
\end{align*}
{Recall that we restrict to check the regularity for $\phi\in(0,\pi/2)$ without loss of generality and later we extend it to $\phi\in(0,\pi)$. Then, we} will show that $I_3$ belongs to $\mathscr{C}^\alpha(0,\frac\pi2)$. {We will skip here the details for the regularity of $I_4$ since this term is less singular and the same procedure works, see the estimates for $I_2$ previously done.} To estimate  $I_3$ we proceed as before through the use of  the change of variables $\varphi=\phi\theta$,
\begin{align*}
I_3(\phi)=&\frac{1}{4\pi}\bigintsss_0^{\frac{\pi}{2\phi}}\frac{\phi\sin(\phi\theta)r_0(\phi\theta)^2}{R(\phi,\phi\theta)^\frac32}((\partial_\phi\rho)(\phi,\theta\phi)+(\partial_\varphi\rho)(\phi,\phi\theta))\left[F_1'(\rho(\phi,\phi\theta))-\frac34\right]d\theta.
\end{align*}
Define the functions
\begin{align*}
H_{1,\phi}(s):=&\bigintsss_0^{\frac{\pi}{2s}}\frac{\phi\sin(\phi\theta)r_0^2(\phi\theta)}{R^\frac32(\phi,\phi\theta)}\Big((\partial_\phi\rho)(\phi,\theta\phi)+(\partial_\varphi\rho)(\phi,\phi\theta)\Big)\left[F_1'(\rho(\phi,\phi\theta))-\frac34\right]d\theta,\\
H_{2,\phi}(s):=&\bigintsss_0^{\frac{\pi}{2\phi}}\frac{s\sin(s\theta)r_0^2(s\theta)}{R^2(s,s\theta)}R^\frac12(\phi,\phi\theta)\Big((\partial_\phi\rho)(\phi,\theta\phi)+(\partial_\varphi\rho)(\phi,\phi\theta)\Big)\left[F_1'(\rho(\phi,\phi\theta))-\frac34\right]d\theta,\\
H_{3,\phi}(s):=&\bigintsss_0^{\frac{\pi}{2\phi}}\frac{\phi\sin(\phi\theta)r_0^2(\phi\theta)}{R^2(\phi,\phi\theta)}R^\frac12(s,s\theta)\Big((\partial_\phi\rho)(s,\theta s)+(\partial_\varphi\rho)(s,s\theta)\Big)\left[F_1'(\rho(\phi,\phi\theta))-\frac34\right]d\theta,\\
H_{4,\phi}(s):=&\bigintsss_0^{\frac{\pi}{2\phi}}\frac{\phi\sin(\phi\theta)r_0^2(\phi\theta)}{R^2(\phi,\phi\theta)}R^\frac12(\phi,\phi\theta)\Big((\partial_\phi\rho)(\phi,\theta\phi)+(\partial_\varphi\rho)(\phi,\phi\theta)\Big)\left[F_1'(\rho(s,s\theta))-\frac34\right]d\theta.
\end{align*}
In order to check that $I_3$ belongs to  $\mathscr{C}^\alpha(0,\frac\pi2)$, it suffices to prove that each function $H_{i,\phi}$ is  in $\mathscr{C}^\alpha(0,\frac\pi2)$ uniformly in $\phi\in(0,\pi/2)$, for any $i=1,\dots, 4$. Let us start with $H_{1,\phi}$ showing that its derivative is bounded. 

From straightforward calculus it is easy to check that for any $0\leq\phi\leq s<\frac\pi2,$
\begin{align*}
|H_{1,\phi}'(s)|\leq &\frac{\pi}{2s^2}\frac{\phi\sin\left(\frac{\phi\pi}{2s}\right)r_0^2\left(\frac{\phi\pi}{2s}\right)}{R^\frac32\left(\phi,\frac{\phi\pi}{2s}\right)}\Big|(\partial_\phi\rho)\left(\phi,{\phi\pi}/{2s}\right)+(\partial_\varphi\rho)\left(\phi,{\phi\pi}/{2s}\right)\Big|\left|F_1'\left(\rho\big(\phi,{\phi\pi}/{2s}\big)\right)-\frac34\right|.
\end{align*}
Hence, we obtain 
\begin{align*}
|H_{1,\phi}'(s)|\lesssim &\frac{1}{s^5}\frac{\phi^4}{\phi^3(1+\frac{\pi}{2s})^3}\Big|(\partial_\phi\rho)\left(\phi,{\phi\pi}/{2s}\right)+(\partial_\varphi\rho)\left(\phi,{\phi\pi}/{2s}\right)\Big|\left|F_1'\left(\rho\big(\phi,{\phi\pi}/{2s}\big)\right)-\frac34\right|.
\end{align*}
Using \eqref{Sing0}--\eqref{Sing3X}--\eqref{Sing4} allows to get 
\begin{align*}
|H_{1,\phi}'(s)|\lesssim &\frac{1}{s^5}\frac{\phi^4}{\phi^3(1+\frac{\pi}{2s})^3}\frac{\phi^2\left(1-\frac{\pi}{2s}\right)^2}{\phi^3\left(1+\frac{\pi}{2s}\right)^3}\frac{\phi^2\frac{\pi}{2s}}{\phi^2\left(1-\frac{\pi}{2s}\right)^2}\lesssim 1,
\end{align*}
which is uniformly bounded  on $0\leq\phi\leq s<\frac\pi2$. We shall skip the details   for $H_{2,\phi}$  which can be analyzed following the same lines of  the term  $T_{2,\phi}$ introduced in \eqref{T2phi1}.

Let us now focus on the estimate of  $H_{3,\phi}$. Set
$$
\mathscr{T}(\theta,s):=R^\frac12(s,s\theta)\big((\partial_\phi\rho)(s, s\theta)+(\partial_\varphi\rho)(s,s\theta)\big),
$$
then  using \eqref{Sing3X}, we  deduce 
\begin{equation}\label{T-1}
|\mathscr{T}(\theta,s)|\leq C\frac{(1-\theta)^2}{(1+\theta)^2}\cdot
\end{equation}
By ranging the expression of $\mathscr{T}$ as follows 
\begin{align*}
\mathscr{T}(\theta,s)=&4\frac{\frac{r_0^2(s\theta)}{s^2}-\frac{r_0^2(s)}{s^2}}{\frac{R^\frac32(s,s\theta)}{s^{3}}}r_0^\prime(s)\left(\frac{r_0(s\theta)}{s}-\frac{r_0(s)}{s}\right)\\
&+4\frac{\frac{r_0^2(s\theta)}{s^2}-\frac{r_0^2(s)}{s^2}}{\frac{R^\frac32(s,s\theta)}{s^3}}\frac{r_0(s)}{s}\Big(r_0^\prime(s)-r_0^\prime(s\theta)\Big)\\
\nonumber &{+8s\frac{\frac{r_0(s\theta)}{s}\frac{r_0(s)}{s}}{\frac{R^\frac32(s,s\theta)}{s^3}}\frac{\big(\cos s-\cos s\theta)}{s}\left(\frac{\sin s}{s}-\frac{\sin s\theta}{s}\right)}\\
&+4\frac{\frac{(\cos s-\cos s\theta)^2}{s^2}}{\frac{R^\frac32(s,s\theta)}{s^3}}\left(\frac{r_0(s\theta)}{s}r_0^\prime(s)+\frac{r_0(s)}{s}{r_0^\prime(s\theta)}\right),
\end{align*}
and differentiating with respect to $s$  we find
\begin{equation}\label{T-2}
|\partial_s \mathscr{T}(\theta,s)|\leq C\frac{|1-\theta|}{1+\theta}.
\end{equation}
{We will not give the full details for this estimate because the computations are long and tedious, but to get a more precise idea how this works we shall just explain  the estimate of  the first term in $\partial_s \mathscr{T}$ given by 
$$
\forall \,\,0<\theta s< \frac\pi2{, s<\frac\pi2},\quad \mathscr{T}_1(\theta,s)=:4\frac{\partial_s \left(\frac{r_0(s\theta)-r_0(s)}{s}\right)\frac{r_0(s\theta)+r_0(s)}{s}}{\frac{R^\frac32(s,s\theta)}{s^3}}r_0^\prime(s)\frac{r_0(s\theta)-r_0(s)}{s}\cdot
$$
{Note that the other terms can be treated similarly and we use similar estimates but with $\sin$, $\cos$ and $r_0'$ instead of $r_0$. In particular, here we use $r_0\in \mathscr{C}^2$ in order to bound $r_0''$. }
Define 
$$
\forall \,\,0<\theta s< \frac\pi2,{s<\frac\pi2}\quad g(\theta,s):=\frac{r_0(s\theta)-r_0(s)}{s}\cdot$$
Then, one has $\partial_\theta g(\theta,s)=r_0'(s\theta)$ and then
 \begin{equation}\label{g-st}|\partial_s\partial_\theta g(\theta,s)|=|\theta r_0''(s\theta)|\leq C\theta.\end{equation}
Since $g(1,s)=0$, we can write by Taylor's formula
$$
g(\theta,s)=\int_1^\theta\partial_\theta g(\tau,s)d\tau,
$$
and hence
$$
\partial_s g(\theta,s)=\int_1^\theta \partial_s \partial_\theta g(\tau,s)d\tau.
$$
Using \eqref{g-st}, we achieve
$$
\forall \,\,0<\theta s< \frac\pi2,\quad |\partial_s g(\theta,s)|\leq C|1-\theta|{(1+\theta)}.
$$
Plugging this into the the definition of  $\mathscr{T}_1$  and using the mean value theorem yields to the estimate
$$
|\mathscr{T}_1|\leq C\frac{|1-\theta|{(1+\theta)^2}|1-\theta|}{(1+\theta)^3}\leq C\frac{(1-\theta)^2}{(1+\theta)}\cdot
$$
Now, interpolating between \eqref{T-1} and \eqref{T-2}, we find that for any $\alpha\in(0,1)$
\begin{equation}\label{T-3}
|\mathscr{T}(\theta,s_1)-\mathscr{T}(\theta,s_2)|\leq C|s_1-s_2|^\alpha\frac{|1-\theta|^{2-\alpha}}{(1+\theta)^{2-\alpha}}.
\end{equation}
Using \eqref{Sing4} we get
\begin{equation}\label{est-f1prime}
\forall \, 0\leq \phi\theta\leq\frac\pi2,\quad \left|F_1'(\rho(\phi,\phi\theta))-\frac34\right|\leq C\frac{\theta}{(1-\theta)^2}.
\end{equation}
Combining this estimate with  \eqref{T-3} and \eqref{Sing4}, we conclude that for any $0\leq s_1,s_2\leq\phi\leq\frac\pi2$
\begin{align}\label{T-4}
 |H_{3,\phi}(s_1)-H_{3,\phi}(s_2)|\leq& C|s_1-s_2|^\alpha\int_0^{+\infty}\frac{\phi^4\theta^3}{\phi^4(1+\theta)^4}\frac{(1-\theta)^{2-\alpha}}{(1+\theta)^{2-\alpha}}\frac{\theta}{(1-\theta)^2} d\theta \leq C|s_1-s_2|^\alpha,
\end{align}
for any $\alpha\in(0,1)$. 
Let us finish working with $H_{4,\phi}$.
{ Using \eqref{r0phi} and the standard inequality $$
\Big|\frac{\cos(\theta\phi)-1}{\phi}\Big|\le \theta,$$
one gets 

\begin{equation}\label{AAA}
\Big|\frac{1}{1-\rho(\phi\theta,\phi)}\Big|\le\frac{(1+\theta)^2}{(1-\theta)^2}.
\end{equation}
As a consequence of \eqref{estimat-7}, \eqref{est-R} and \eqref{AAA} one has}

\begin{equation}\label{est-f1prime2}
\left|\partial_s \left(F_1'(\rho(s,s\theta))-\frac34\right)\right|\leq C|F_1''(\rho(s,s\theta))||\partial_s(\rho(s,s\theta))| \leq C\frac{(1+\theta)^4}{(1-\theta)^4}
\left|\ln\left(\frac{1+\theta}{2}\right)\right|.
\end{equation}
Interpolating between \eqref{est-f1prime} and \eqref{est-f1prime2} we achieve
\begin{align}\label{est-f1prime3}
\left|F_1'(\rho(s_1,s_1\theta))-F_1'(\rho(s_2,s_2\theta))\right|\leq& C|s_1-s_2|^\alpha\frac{(1+\theta)^{4\alpha}}{(1-\theta)^{4\alpha}}\left|\ln\left(\frac{1+\theta}{2}\right)\right|^\alpha\frac{\theta^{1-\alpha}}{(1-\theta)^{2(1-\alpha)}}\nonumber\\
\leq &C|s_1-s_2|^\alpha\frac{(1+\theta)^{1+3\alpha}}{(1-\theta)^{2+2\alpha}}\left|\ln\left(\frac{1+\theta}{2}\right)\right|^\alpha.
\end{align}
Finally, using  \eqref{T-1} and \eqref{est-f1prime3} we obtain for any  $0\leq s_1,s_2\leq\phi$
\begin{align*}
|H_{4,\phi}(s_1)-H_{4,\phi}(s_2)|\leq& C|s_1-s_2|^\alpha\bigintsss_0^{\infty}\frac{\theta^3}{(1+\theta)^4}\frac{(1-\theta)^2}{(1+\theta)^{2}}\frac{(1+\theta)^{1+3\alpha}}{(1-\theta)^{2+2\alpha}}\left|\ln\left(\frac{1+\theta}{2}\right)\right|^\alpha d\theta\\
\leq& C|s_1-s_2|^\alpha\bigintsss_0^{\infty}{(1+\theta)^{3\alpha-2}|1-\theta|^{-2\alpha}}\left|\ln\left(\frac{1+\theta}{2}\right)\right|^\alpha d\theta\\
\leq& C|s_1-s_2|^\alpha,
\end{align*}
the convergence of the integral is guaranteed pro\begin{equation}\label{sinephi}
\Big|\partial_\phi\left(\frac{\sin(\phi \theta)}{\phi} \right)\Big|\leq \theta^2.
\end{equation}vided that $\alpha\in(0,1)$. This achieves the proof of $\nu_\Omega\in \mathscr{C}^{1,\alpha}(0,\pi)$ for any $\alpha\in(0,1).$
}
}

\medskip
\noindent
{\bf{(4)}} Since the function  $\nu_\Omega$ reaches its minimum at a point $\phi_0\in[0,\pi]$, we have that if this point belongs to the open set $(0,\pi)$ then necessary $\nu_\Omega^\prime(\phi_0)=0$.  However when $\phi_0\in\{0,\pi\}$ then from the point {\bf{(3)}} of Proposition \ref{Lem-meas} we deduce also that the derivative is vanishing at $\phi_0.$ Using the mean value theorem, we obtain for any $\phi\in[0,\pi]$
\begin{align*}
\nu_\Omega(\phi)=&\nu_\Omega(\phi_0)+\nu_\Omega^\prime\big(\overline\phi\big)(\phi-\phi_0)=\nu_\Omega(\phi_0)+\big(\nu_\Omega^\prime\big(\overline\phi\big)-\nu_\Omega^\prime\big(\phi_0\big)\big)(\phi-\phi_0),
\end{align*}
for some $\overline\phi\in(\phi_0,\phi)$. Since $\nu_\Omega^\prime\in \mathscr{C}^\alpha$ then 
$$
\Big|\nu_\Omega^\prime\big(\overline\phi\big)-\nu_\Omega^\prime\big(\phi_0\big)\Big|\leq\|\nu_\Omega^\prime\|_{\mathscr{C}^\alpha}|\phi-\phi_0|^\alpha.
$$
Notice that $\|\nu_\Omega^\prime\|_{\mathscr{C}^\alpha}$ is independent of $\Omega.$ Consequently
$$
\forall \phi\in[0,\pi],\quad 0\leq \nu_\Omega\big(\phi\big)-\nu_\Omega\big(\phi_0\big)\leq C|\phi-\phi_0|^{1+\alpha},
$$
for some absolute constant $C$. In the particular case $\Omega=\kappa$ we get from the definition \eqref{kappa} that  $\nu_\kappa(\phi_0)=0$ and therefore the preceding result becomes
$$
\forall \phi\in[0,\pi],\quad 0\leq \nu_\kappa\big(\phi\big)\leq C|\phi-\phi_0|^{1+\alpha},\quad \nu_\kappa\big(\phi_0\big)=0.
$$
\end{proof}

\subsection{Eigenvalue problem}\label{Eigenvalue problem}
In Section \ref{Symmetriz} we have checked that the operator $\mathcal{L}_n^\Omega$ defined in \eqref{operator} is of integral type.  Then studying the kernel of this operator reduces to solving the integral equation
\begin{equation}\label{kerneleq}
\mathcal{K}_n^{{\Omega}} h_n(\phi):=\int_0^\pi K_n(\phi,\varphi)h_n(\varphi)d\mu_{\Omega}(\varphi)=h_n(\phi),\quad \forall \phi\in [0,\pi],
\end{equation}
where the kernel $K_n$  and the measure $d\mu_\Omega$ are  defined successively in \eqref{K-kernel} and \eqref{signed-meas}. The parameter $\Omega$ ranges  over the interval $(-\infty,\kappa)$. This latter  condition is imposed to guarantee the positivity of the measure $d\mu_\Omega$ through the positivity of $\nu_\Omega$ according to \mbox{Lemma $\ref{Lem-meas}$.} 
We point out that  studying the kernel of $\mathcal{L}_n^\Omega$  amounts to finding the values of $\Omega$ such that  $1$ is an eigenvalue of $\mathcal{K}_n^{{\Omega}}$.
To investigate the spectral study of $\mathcal{K}_n^{{\Omega}}$ we need to 
introduce the Hilbert space $L^2_{\mu_\Omega}$ of measurable functions $f:[0,\pi]\to \mathbb{R}$ such that
\begin{equation}\label{muomega}
\|f\|_{\mu_\Omega}:=\left(\int_0^\pi|f(\varphi)|^2d\mu_{\Omega}(\varphi)\right)^{\frac12}<\infty.
\end{equation}

Notice that the space $L^2_{\mu_\Omega}$ is equipped with the usual inner product:
\begin{align}\label{scalar-prod1}
 \langle f,g \rangle_{{\Omega}} =\int_{0}^{\pi}f(\varphi)g(\varphi)d\mu_{\Omega}(\varphi), \quad \forall\, f,g\in L^2_{\mu_\Omega}.
\end{align}
\begin{rems}\label{remark-imp1}
\begin{enumerate}
\item Since $d\mu_\Omega$ is a nonnegative bounded Borel measure for any $\Omega\in(-\infty,\kappa)$, then the Hilbert space  $L^2_{\mu_\Omega}$ is  separable.
\item For any $\Omega\in(-\infty,\kappa)$, the space $L^2_{\mu_\Omega}$ is isomorphic to the space $L^2_{\mu}$ where 
$$
d\mu(\varphi)=\sin(\varphi)\, r_0^2(\varphi)\, d\varphi.
$$
This follows from Proposition $\ref{Lem-meas}$-$(2)$which ensures that $\nu_\Omega$ is nowhere vanishing. However this property fails for the critical value $\Omega=\kappa$ because $\nu_\kappa$ is vanishing at some points.

\end{enumerate}
\end{rems}
The next proposition deals with some basic  properties of the operator $\mathcal{K}_n^{{\Omega}}.$

\begin{pro}\label{prop-operator}
Let $\Omega\in(-\infty,\kappa)$ and $r_0$ satisfies the assumptions {\bf{(H1)}} and {\bf{(H2)}}. Then, the following assertions hold true.
\begin{enumerate}
\item For any $n\geq1$, the operator $\mathcal{K}_n^{{\Omega}}:L^2_{\mu_\Omega}\rightarrow L^2_{\mu_\Omega}$ is   Hilbert--Schmidt   and self-adjoint.
\item For any $n\geq1$, the eigenvalues of $\mathcal{K}_n^{{\Omega}}$ form a countable family of real numbers. Let  $\lambda_n(\Omega)$ be  the largest eigenvalue, then it is strictly positive   and satisfies
$$
\bigintsss_0^\pi\bigintsss_0^\pi\frac{H_n(\phi,\varphi)\sin^\frac12 (\phi)r_0(\phi)\varrho(\varphi)\varrho(\phi)}{\nu_\Omega^{\frac12}(\varphi)\nu_\Omega^{\frac12}(\phi)\sin^\frac12(\varphi) r_0(\varphi)} d\varphi d\phi\leq \lambda_n(\Omega)\leq {\left\{\bigintsss_0^\pi\bigintsss_0^\pi K_n^2(\phi,\varphi)d\mu_\Omega(\varphi)\,d\mu_\Omega(\phi)\right\}^\frac12,}$$
for any function $\varrho$ such that $\displaystyle{\int_0^\pi \varrho^2(\varphi)d\varphi=1.}$
\item We have the following decay: for any $\alpha\in[0,1)$ there exists $C>0$ such that 
$$
\forall\, \Omega\in(-\infty,\kappa),\,\forall\, n\geq1,\quad \bigintsss_0^\pi\bigintsss_0^\pi K_n^2(\phi,\varphi)d\mu_\Omega(\varphi)\,d\mu_\Omega(\phi)\leq {C(\kappa-\Omega)^{-2} n^{-2\alpha}}.
$$
\item The eigenvalue $\lambda_n(\Omega)$ is simple {{and the associated nonzero 
 eigenfunctions do not vanish in $(0,\pi)$.}}
\item For any $\Omega\in(-\infty,\kappa)$, the sequence $n\in\N^{\star}\mapsto \lambda_n(\Omega)$ is strictly decreasing.
\item For any $n\geq1$ the map $\Omega\in(-\infty,\kappa)\mapsto \lambda_n(\Omega)$ is differentiable and strictly increasing.
\end{enumerate}
\end{pro}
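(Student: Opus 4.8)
The plan is to treat the six assertions in order, feeding in three ingredients established earlier: the symmetry of the kernel in \eqref{form-K}, the monotonicity and decay of $\{H_n\}_n$ from Lemma \ref{Lem-Hndecreasing}, and the lower bound $\nu_\Omega\ge\kappa-\Omega>0$ from Proposition \ref{Lem-meas}(2). For (1), self-adjointness of $\mathcal{K}_n^\Omega$ on $L^2_{\mu_\Omega}$ is immediate from $K_n(\phi,\varphi)=K_n(\varphi,\phi)$ together with the fact that $d\mu_\Omega$ is a genuine nonnegative measure when $\Omega<\kappa$, while the Hilbert--Schmidt property is the content of (3) with $\alpha=0$; here one writes
\begin{equation*}
\int_0^\pi\!\!\int_0^\pi K_n^2(\phi,\varphi)\,d\mu_\Omega(\varphi)\,d\mu_\Omega(\phi)=\int_0^\pi\!\!\int_0^\pi\frac{H_n^2(\phi,\varphi)\,\sin(\phi)\,r_0^2(\phi)}{\sin(\varphi)\,r_0^2(\varphi)\,\nu_\Omega(\phi)\,\nu_\Omega(\varphi)}\,d\varphi\,d\phi .
\end{equation*}
Compactness follows, and the spectral theorem yields the countable family of real eigenvalues accumulating only at $0$, so $\lambda_n(\Omega)=\max\mathrm{spec}(\mathcal{K}_n^\Omega)=\sup_{f\neq0}\langle\mathcal{K}_n^\Omega f,f\rangle_\Omega/\|f\|_{\mu_\Omega}^2$ is attained. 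To get the lower bound in (2) I would test this Rayleigh quotient with $f(\varphi)=\varrho(\varphi)\big(\sin(\varphi)\,r_0^2(\varphi)\,\nu_\Omega(\varphi)\big)^{-1/2}$, for which $\|f\|_{\mu_\Omega}=\|\varrho\|_{L^2(0,\pi)}$: substituting the explicit forms of $K_n$ and $d\mu_\Omega$, all the weights telescope and $\langle\mathcal{K}_n^\Omega f,f\rangle_\Omega$ becomes exactly the stated double integral. Choosing $\varrho>0$ and using $H_n>0$ then forces $\lambda_n(\Omega)>0$, and the upper bound is just $\lambda_n(\Omega)\le\|\mathcal{K}_n^\Omega\|\le\|\mathcal{K}_n^\Omega\|_{\mathrm{HS}}$.

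For (3) I would insert the asymptotic bound \eqref{estim-asym} of Lemma \ref{Lem-Hndecreasing} into the displayed double integral, bound $\nu_\Omega(\phi)\nu_\Omega(\varphi)\ge(\kappa-\Omega)^2$, and absorb the ratios $\sin/r_0$ using \textbf{(H2)}; the remaining integral of a negative power of $|\phi-\varphi|$ — refined, if needed, by keeping the exponentially $n$-decaying factor of \eqref{trix1} away from the diagonal — converges and produces the factor $n^{-2\alpha}$, uniformly in $\Omega\in(-\infty,\kappa)$. Item (4) is a Perron--Frobenius statement: since $F_n>0$ on $[0,1)$ and $r_0,\nu_\Omega>0$ on $(0,\pi)$, formula \eqref{form-K} gives $K_n>0$ a.e.\ on $(0,\pi)^2$, and one runs the self-adjoint argument — if $g$ is a real eigenfunction for $\lambda_n(\Omega)$ then
\begin{equation*}
\lambda_n(\Omega)\|g\|_{\mu_\Omega}^2=\langle\mathcal{K}_n^\Omega g,g\rangle_\Omega\le\langle\mathcal{K}_n^\Omega|g|,|g|\rangle_\Omega\le\lambda_n(\Omega)\|g\|_{\mu_\Omega}^2,
\end{equation*}
so equality forces $g(\phi)g(\varphi)=|g(\phi)g(\varphi)|$ for a.e.\ $(\phi,\varphi)$, i.e.\ $g$ has constant sign, and then $g=\lambda_n(\Omega)^{-1}\mathcal{K}_n^\Omega g$ is (up to a global sign) strictly positive a.e.\ on $(0,\pi)$. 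Two orthonormal eigenfunctions of $\lambda_n(\Omega)$ would then both be a.e.\ positive, contradicting orthogonality; hence $\lambda_n(\Omega)$ is simple and its nonzero eigenfunctions do not vanish in $(0,\pi)$.

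Item (5) uses the same positivity and the strict monotonicity $H_{n+1}<H_n$ pointwise for $\phi\neq\varphi$ (Lemma \ref{Lem-Hndecreasing}), hence $K_{n+1}<K_n$ a.e.; with $\psi_{n+1}>0$ the top eigenfunction of $\mathcal{K}_{n+1}^\Omega$ one gets
\begin{equation*}
\lambda_{n+1}(\Omega)\|\psi_{n+1}\|_{\mu_\Omega}^2=\langle\mathcal{K}_{n+1}^\Omega\psi_{n+1},\psi_{n+1}\rangle_\Omega<\langle\mathcal{K}_{n}^\Omega\psi_{n+1},\psi_{n+1}\rangle_\Omega\le\lambda_{n}(\Omega)\|\psi_{n+1}\|_{\mu_\Omega}^2 ,
\end{equation*}
so $n\mapsto\lambda_n(\Omega)$ is strictly decreasing. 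For (6), since the ambient space itself moves with $\Omega$, I would first conjugate by the unitary $V_\Omega:L^2_{\mu_\Omega}\to L^2_\mu$, $V_\Omega f=\nu_\Omega^{1/2}f$, where $d\mu(\varphi)=\sin(\varphi)r_0^2(\varphi)\,d\varphi$ is fixed (cf.\ Remarks \ref{remark-imp1}), so that $\tilde{\mathcal{K}}_n^\Omega:=V_\Omega\mathcal{K}_n^\Omega V_\Omega^{-1}$ has symmetric kernel $\tilde K_n(\phi,\varphi)=\nu_\Omega^{1/2}(\phi)\nu_\Omega^{1/2}(\varphi)K_n(\phi,\varphi)$ on the $\Omega$-independent space $L^2_\mu$. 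Because $\nu_\Omega=\big(\int_0^\pi H_1(\phi,\cdot)\,d\varphi\big)-\Omega$ is affine in $\Omega$ and stays bounded below, $\Omega\mapsto\tilde{\mathcal{K}}_n^\Omega$ is a real-analytic family of compact self-adjoint operators; since $\lambda_n(\Omega)$ is simple by (4), Kato's analytic perturbation theory gives that $\lambda_n$, together with a local unit eigenfunction $\psi_n(\Omega)$ that may be taken $>0$, depends analytically on $\Omega$, and the Feynman--Hellmann identity yields $\lambda_n'(\Omega)=\langle(\partial_\Omega\tilde{\mathcal{K}}_n^\Omega)\psi_n,\psi_n\rangle_\mu$. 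A short computation shows $\tilde K_n(\phi,\varphi)$ is, for each fixed $(\phi,\varphi)$, a strictly increasing function of $\Omega$ (the $\Omega$-free prefactor is positive and $\nu_\Omega^{-1/2}$ increases), so $\partial_\Omega\tilde K_n>0$ and therefore $\lambda_n'(\Omega)>0$; the same monotonicity can be read off directly by testing the Rayleigh quotient of $\tilde{\mathcal{K}}_n^{\Omega_2}$ with the positive top eigenfunction of $\tilde{\mathcal{K}}_n^{\Omega_1}$ for $\Omega_1<\Omega_2$.

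The lengthiest but routine point is the weight bookkeeping in (3); the one genuine subtlety is making the perturbation-theoretic argument in (6) rigorous on a Hilbert space whose inner product depends on the parameter, and this is exactly what the unitary reduction to $L^2_\mu$ is designed to bypass.
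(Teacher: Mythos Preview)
Your treatment of items (1)--(5) follows the paper's proof essentially verbatim: the test function $f=\varrho\,(\sin\,r_0^2\,\nu_\Omega)^{-1/2}$ for the lower bound in (2), the direct insertion of \eqref{estim-asym} into the Hilbert--Schmidt norm for (3), the Perron--Frobenius argument for (4), and the comparison via the positive eigenfunction of $\mathcal{K}_{n+1}^\Omega$ for (5) are all exactly what the authors do.

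Item (6) is where you diverge, and your route is the cleaner one. The paper works directly in the $\Omega$-dependent spaces $L^2_{\mu_\Omega}$: it fixes $\Omega_0$, writes $\mathcal{K}_n^\Omega=\mathcal{K}_n^{\Omega_0}+(\Omega-\Omega_0)\mathscr{R}_n^{\Omega_0,\Omega}$ via the geometric expansion $\nu_\Omega^{-1}=\sum_m(\Omega-\Omega_0)^m\nu_{\Omega_0}^{-m-1}$, and then needs to establish the strong continuity $\|f_n^\Omega-f_n^{\Omega_0}\|_{\mu_{\Omega_0}}\to0$ of the normalized eigenfunctions across these moving spaces; this is done through an explicit Riesz-projection argument (constructing $P_n(\Omega)$ via a contour integral and invoking Kato/Reed--Simon). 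Your unitary conjugation $V_\Omega:L^2_{\mu_\Omega}\to L^2_\mu$ collapses all of this to textbook analytic perturbation theory on a fixed Hilbert space, and the sign of $\lambda_n'(\Omega)$ then drops out immediately from the monotonicity of $\tilde K_n=\nu_\Omega^{-1/2}(\phi)\nu_\Omega^{-1/2}(\varphi)\times(\text{$\Omega$-free positive factor})$ together with the positivity of the top eigenfunction. Both approaches yield the same derivative formula (after undoing the conjugation), but yours sidesteps the delicate cross-space continuity entirely.
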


\begin{proof}

\medskip
\noindent
{\bf (1)}  In order to check that $\mathcal{K}_n^{{\Omega}}$ is a Hilbert--Schmidt operator, we need to verify 
that the kernel $K_n$  satisfies the integrability condition
$$
\|\mathcal{K}_n^{{\Omega}}\|_{\mu_{\Omega}}:=\left(\int_0^\pi\int_0^\pi |K_n(\phi,\varphi)|^2d\mu_{\Omega}(\varphi)d\mu_{\Omega}(\phi)\right)^{\frac12}<+\infty.
$$
Indeed, by \eqref{K-kernel} and \eqref{H-1}, one gets
\begin{align*}
\|\mathcal{K}_n^{{\Omega}}\|_{\mu_{\Omega}}^2=&C_n\bigintsss_0^\pi\bigintsss_0^\pi\frac{\sin(\varphi)\sin(\phi)\,r_0^{2n}(\phi)r_0^{2n}(\varphi)}{R^{2n+1}(\phi,\varphi)\,\nu_{\Omega}(\varphi)\nu_{\Omega}(\phi)}\,F_n^2\left(\frac{4r_0(\phi)r_0(\varphi)}{R(\phi,\varphi)}\right)d\varphi d\phi,
\end{align*}
for some constant $C_n$ and $R$ was defined in \eqref{RefR}. Remark that 
$$
\left|\frac{4r_0(\phi)r_0(\varphi)}{R(\phi,\varphi)}\right|\leq 1.
$$
Moreover, according to Proposition \ref{Lem-meas} the function  $\nu_{\Omega}(\varphi)$ is not vanishing in the interval $[0,\pi]$ provided that $\Omega<\kappa$. Therefore we get 
\begin{align*}
\|\mathcal{K}_n^{{\Omega}}\|_{\mu_{\Omega}}^2\lesssim&\bigintsss_0^\pi\bigintsss_0^\pi\frac{\sin(\varphi)\sin(\phi)}
{{ R(\phi, \varphi)}}
F_n^2\left(\frac{4r_0(\phi)r_0(\varphi)}{R(\phi,\varphi)}\right)d\varphi d\phi.
\end{align*}
By \eqref{estimat-1} and the assumption {\bf{(H2)}} we deduce that
\begin{align*}
\|\mathcal{K}_n^{{\Omega}}\|_{\mu_{\Omega}}^2\leq&C+C\bigintsss_0^\pi\bigintsss_0^\pi
\ln^2\left(1-\frac{4r_0(\phi)r_0(\varphi)}{R(\phi,\varphi)}\right)d\varphi d\phi\\
\leq&C+C\bigintsss_0^\pi\bigintsss_0^\pi
\ln^2\left(\frac{(r_0(\phi)-r_0(\varphi))^2+(\cos\phi-\cos\varphi)^2}{R(\phi,\varphi)}\right)d\varphi d\phi.
\end{align*}
It suffices now to use the inequality \eqref{Est-LogX} to get
\begin{align*}
\|\mathcal{K}_n^{{\Omega}}\|_{\mu_{\Omega}}^2\leq&C+C\bigintsss_0^\pi\bigintsss_0^\pi
\ln^2\left(\frac{\sin\phi+\sin\varphi}{|\phi-\varphi|}\right)d\varphi d\phi<\infty.
\end{align*}

This concludes that the operator $\mathcal{K}_n^{{\Omega}}$ is bounded and is of  Hilbert--Schmidt type. 
As a consequence from the general theory this operator is necessarily compact.

On the other hand, as we have mentioned before the kernel $K_n$ is symmetric in view of the formula \eqref{form-K} and the symmetry of $R$ defined in \eqref{RefR}. 
Therefore we deduce that  $\mathcal{K}_n^{{\Omega}}$ is a self--adjoint operator

\medskip
\noindent
{\bf (2)}
From the spectral theorem on self-adjoint compact operators, we know that the eigenvalues of $\mathcal{K}_n^{{\Omega}}$ form a countable family of real numbers. Define the real numbers 
$$
m=\inf_{\|h\|_{\mu_{\Omega}}=1} \langle\mathcal{K}_n^{{\Omega}} h, h\rangle_{{\Omega}} \quad \textnormal{and} \quad M=\sup_{\|h\|_{\mu_{\Omega}}=1} \langle\mathcal{K}_n^{{\Omega}} h, h\rangle_{{\Omega}}.
$$
Since $\mathcal{K}$ is self-adjoint, we obtain $\sigma (\mathcal{K}_n^{{\Omega}})\subset[m,M]$, with $m\in\sigma (\mathcal{K}_n^{{\Omega}})$ and $M\in\sigma(\mathcal{K}_n^{{\Omega}})$, where the set  $\sigma(\mathcal{K}_n^{{\Omega}})$ denotes  the spectrum of $\mathcal{K}_n^{{\Omega}}$. Since $\lambda_n(\Omega)$ is the largest eigenvalue, then
\begin{equation}\label{eigenv}
\lambda_n(\Omega)=M=\sup_{\|h\|_{\mu_{\Omega}}=1} \langle\mathcal{K}_n^{{\Omega}} h, h\rangle_{{\Omega}}.
\end{equation}
We shall prove that $M>0$ and $|m|\leq M$. Indeed, for any $h\in L^2_{\mu_\Omega}$, the positive  function $|h|$ belongs also  to $ L^2_{\mu_\Omega}$ with  the same norm  and using the positivity of the kernel $K_n$ we obtain
$$
\sup_{\|h\|_{\mu_{\Omega}}=1} \langle\mathcal{K}_n^{{\Omega}} h, h\rangle_{{\Omega}}=\sup_{h\geq0, \|h\|_{\mu_{\Omega}}=1} \langle\mathcal{K}_n^{{\Omega}} h, h\rangle_{{\Omega}}.
$$
Using once again  the positivity of the kernel one deduces that 
$$
\forall h\geq 0, \|h\|_{\mu_{\Omega}}=1\Longrightarrow \langle\mathcal{K}_n^{{\Omega}} h, h\rangle_{{\Omega}}>0.
$$
Consequently, we obtain that $M>0$. In order to prove that $|m|\le M$, we shall proceed as follows. Using the positivity of the kernel, we achieve
$$
 |m| \leq \langle\mathcal{K}_n^{{\Omega}} |h|, |h|\rangle_{{\Omega}}\leq M,\quad \forall \,\|h\|_{\mu_\Omega}=1.
$$
This implies that $M$ is nothing but the spectral radius of the operator $\mathcal{K}_n^{{\Omega}}$, that is,
$$
M=\|\mathcal{K}_n^{{\Omega}}\|_{\mathcal{L}(L^2_{\mu_\Omega})}.
$$
From the  Cauchy--Schwarz inequality, one deduces that 
$$
\|\mathcal{K}_n^{{\Omega}}\|_{\mathcal{L}(L^2_{\mu_\Omega})}^2
\leq \int_0^\pi\int_0^\pi|K_n(\phi,\varphi)|^2d\mu_{\Omega}(\phi)d\mu_{\Omega}(\varphi),
$$
which implies that
$$
\lambda_n^2(\Omega)\leq \int_0^\pi\int_0^\pi|K_n(\phi,\varphi)|^2d\mu_{\Omega}(\phi)d\mu_{\Omega}(\varphi).
$$
For the lower bound, we shall work with the special function
{$$
f(\varphi)=\frac{\varrho(\varphi)}{\sin(\varphi)^\frac12 r_0(\varphi)\nu_{\Omega}(\varphi)^{\frac12}},\quad \varphi\in(0,\pi),
$$}
with the  normalized condition $\|f\|_{\mu_{\Omega}}=1$ which is equivalent to 
$$
\int_0^\pi\varrho^2(\varphi)d\varphi=1
$$
and
$$
\lambda_n(\Omega)\geq\langle\mathcal{K}_n^{{\Omega}}f,f\rangle_{{\Omega}}=\bigintsss_0^\pi\bigintsss_0^\pi\frac{H_n(\phi,\varphi)}{\nu_\Omega^{\frac12}(\varphi)\nu_\Omega^{\frac12}(\phi)}\frac{\sin^\frac12(\phi) r_0(\phi)}{\sin^\frac12(\varphi) r_0(\varphi)} \varrho(\varphi)\varrho(\phi)d\varphi d\phi.
$$
This gives the announced lower bound for the largest eigenvalue.

\medskip
\noindent
{\bf (3)}
From the expression of $K_n$ given by \eqref{K-kernel} we easily get
$$
 \|\mathcal{K}_n^{{\Omega}}\|_{\mu_\Omega}^2{ \le}\bigintsss_0^\pi\bigintsss_0^\pi 
 K_n^2(\phi,\varphi)d\mu_\Omega(\varphi)\,d\mu_\Omega(\phi)=\bigintsss_0^\pi\bigintsss_0^\pi 
 \frac{H_n^2(\phi,\varphi)}{\nu_{\Omega}(\phi)\nu_{\Omega}(\varphi)}\frac{\sin(\phi)r_0^2(\phi)}{\sin(\varphi) r_0^2(\varphi)}d\phi d\varphi.
$$
Using the definition \eqref{kappa} of $\kappa$ we infer
$$
\forall \, \phi\in[0,\pi],\quad \nu_\Omega(\phi)\geq \kappa-\Omega
$$
and we obtain

$$
\|\mathcal{K}_n^{{\Omega}}\|_{\mu_\Omega}^2\lesssim (\kappa-\Omega)^{-2}
\bigintsss_0^\pi\bigintsss_0^\pi {H_n^2(\phi,\varphi)}\frac{\sin(\phi)r_0^2(\phi)}{\sin(\varphi) r_0^2(\varphi)}d\phi  d\varphi.
$$ 
Applying Lemma \ref{Lem-Hndecreasing} combined with the assumption {\bf{(H2)}} yields for any $0\leq \alpha<\beta\leq1$
\begin{align*}
\|\mathcal{K}_n^{{\Omega}}\|_{\mu_\Omega}^2\lesssim& (\kappa-\Omega)^{-2} n^{-2\alpha}\bigintsss_0^\pi\bigintsss_0^\pi|\phi-\varphi|^{-2\beta}d\phi d\varphi.
\end{align*}
By taking $\beta<\frac12$ we get the convergence of the integral and consequently we obtain the desired result,
\begin{align}\label{Est-kern-P}
{\|\mathcal{K}_n^{{\Omega}}\|_{\mu_\Omega}^2\lesssim (\kappa-\Omega)^{-2} n^{-2\alpha}.}
\end{align}

\medskip
\noindent
{\bf (4)}
First, let us check that any nonzero eigenfunction associated to the largest eigenvalue $\lambda_n(\Omega)$ should be with  a constant sign. Indeed, let $f$ 
be a nonzero normalized eigenfunction and assume that it changes the sign over a non negligible set. From the strict positivity of the kernel in the interval  $(0,\pi)$, 
we deduce that 
$$
 \mathcal{K}_n^{{\Omega}} f(\phi)<\mathcal{K}_n^{{\Omega}} |f|(\phi),\quad \forall \phi\in(0,\pi).
$$
First, by the assumption on $f$ we get
$$
\int_0^\pi \mathcal{K}_n^{{\Omega}}(f)(\phi)f(\phi)d\mu_{\Omega}(\phi)=\lambda_n(\Omega)\int_0^\pi f^2(\phi) d\mu_{\Omega}(\phi)=\lambda_n(\Omega).
$$
Second, from \eqref{eigenv} we have that
$$
\int_0^\pi \mathcal{K}_n^{{\Omega}}(|f|)(\phi)|f(\phi)|d\mu_{\Omega}(\phi)\leq \lambda_n(\Omega).
$$
Consequently,
$$
\lambda_n(\Omega)=\int_0^\pi \mathcal{K}_n^{{\Omega}}(f)(\phi)f(\phi)d\mu_{\Omega}(\phi)< \int_0^\pi \mathcal{K}_n^{{\Omega}}(|f|)(\phi)|f(\phi)|d\mu_{\Omega}(\phi)\leq \lambda_n(\Omega),
$$
achieving a contradiction. 
Hence, any nonzero eigenfunction of $\lambda_n(\Omega)$ must have a constant sign. Now let us check that $f$ is not vanishing in $(0,\pi).$ First we write 
$$
f(\phi)=\frac{1}{\lambda_n(\Omega)}\mathcal{K}_n^{{\Omega}} f(\phi)=\frac{1}{\lambda_n(\Omega)\nu_\Omega(\phi)}\int_0^\pi H_n(\phi,\varphi) f(\varphi) d\varphi.
$$

From \eqref{H-1} and Proposition \ref{Lem-meas} we get
$$
\forall \phi,\varphi\in(0,\pi),\quad H_n(\phi,\varphi)>0, \quad \nu_\Omega(\phi)>0.
$$
The first assertion  follows from the strict positivity of the associated hypergeometric function. Combined with the positivity of $f$ we deduce that
$$
\forall \phi\in(0,\pi),\quad f(\phi)>0.
$$

Finally, we shall  check  that the subspace generated by the eigenfunctions associated to $\lambda_n(\Omega)$ is one--dimensional. Assume that we have two independent eigenfunctions $f_0$ and $f_1$, which are necessarily with constant sign,  then there exists $a, b\in\R$ such that the eigenfunction $af_0+bf_1$ changes its sign. This is a  contradiction.

\medskip
\noindent
{\bf (5)}
Using \eqref{K-kernel} combined with  Lemma \ref{Lem-Hndecreasing}, we get that $n\in\N^{\star}\mapsto K_n(\phi,\varphi)$ is strictly decreasing for any $\varphi\neq \phi\in(0,\pi)$. Then, for any $\Omega\in(-\infty,\kappa)$ and for any nonnegative function $f$, we get
$$
\forall\, \phi\in(0,\pi),\quad \mathcal{K}_n^{{\Omega}} f(\phi)>\mathcal{K}_{n+1}^{{\Omega}}f(\phi),
$$
which implies in turn  that
$$
\int_0^\pi \mathcal{K}_n^{{\Omega}}(f)(\phi)f(\phi)d\mu_\Omega(\phi)>\int_0^\pi \mathcal{K}_{n+1}^{{\Omega}}(f)(\phi)f(\phi)d\mu_\Omega(\phi).
$$
Since the largest eigenvalue $\lambda_{n+1}(\Omega)$ is reached at some positive normalized function $f_{n+1}\geq0$, then
 \begin{align*}
 \lambda_{n+1}(\Omega)&=\int_0^\pi\mathcal{K}_{n+1}^{{\Omega}}(f_{n+1})(\phi) f_{n+1}(\phi)d\mu_{\Omega}(\phi)\\
 &<\int_0^\pi\mathcal{K}_n^{{\Omega}}(f_{n+1})(\phi) f_{n+1}(\phi)d\mu_{\Omega}(\phi)\\
 &<\sup_{\|f\|_{\mu_\Omega}=1}\int_0^\pi\mathcal{K}_n^{{\Omega}}(f)(\phi) f(\phi)d\mu_{\Omega}(\phi)\\
 &<\lambda_{n}(\Omega).
\end{align*}
This provides the announced result.

\medskip
\noindent
{\bf (6)} {  Fix $\Omega_0\in(-\infty,\kappa)$ and denote by $f_n^\Omega$ the  positive normalized  eigenfunction associated to the eigenvalue $\lambda_n(\Omega)$. Using the definition of the eigenfunction yields
\begin{align}\label{normaliz-1}
\lambda_n(\Omega)=\frac{\langle \mathcal{K}_n^{{\Omega}} f_n^\Omega, f_n^{\Omega_0}\rangle_{{\Omega_0}}}{\langle f_n^\Omega, f_n^{\Omega_0}\rangle_{{\Omega_0}}}, \quad \|f_n^\Omega\|_{\mu_{\Omega_0}}=1.
\end{align}
The regularity follows from the general theory using the fact that this eigenvalue is simple. However we can in our special case give a direct proof for its differentiability in the following way. 
From the decomposition 
$$
\frac{1}{\nu_\Omega(\phi)}=\frac{1}{\nu_{\Omega_0}(\phi)}+\frac{\Omega-\Omega_0}{\nu_\Omega(\phi)\nu_{\Omega_0}(\phi)},
$$ 
we get according to the expression of $\mathcal{K}_n^{{\Omega}}$
\begin{align}\label{F-deco1}
\nonumber \mathcal{K}_n^{{\Omega}} f(\phi)=& {\frac{1}{\nu_{\Omega_0}(\phi)}}\int_0^\pi H_n(\phi,\varphi) f(\varphi) d\varphi+ 
{(\Omega-\Omega_0)}\bigintsss_0^\pi \frac{H_n(\phi,\varphi)}{\nu_\Omega(\phi)\nu_{\Omega_0}(\phi)} f(\varphi) d\varphi\\
=&\mathcal{K}_n^{{\Omega_0}} f(\phi)+(\Omega-\Omega_0)\mathscr{R}_n^{\Omega_0,\Omega} f(\phi)
\end{align}
with
\begin{align}\label{Form-kern-d}
\nonumber\mathscr{R}_n^{\Omega_0,\Omega} f(\phi)&:=\bigintsss_0^\pi \frac{H_n(\phi,\varphi)}{\nu_\Omega(\phi)\nu_{\Omega_0}(\phi)} f(\varphi) d\varphi\\
&={\frac{1}{\nu_{\Omega_0}(\phi)}\mathcal{K}_n^{{\Omega}} f(\phi).}
\end{align}
Therefore we obtain
\begin{align*}
\lambda_n(\Omega)=\frac{\langle \mathcal{K}_n^{{\Omega_0}} f_n^\Omega, f_n^{\Omega_0}\rangle_{{\Omega_0}}}{\langle f_n^\Omega, f_n^{\Omega_0}\rangle_{{\Omega_0}}}+(\Omega-\Omega_0)\frac{\langle \mathscr{R}_n^{\Omega_0,\Omega}f_n^\Omega, f_n^{\Omega_0}\rangle_{{\Omega_0}}}{\langle f_n^\Omega, f_n^{\Omega_0}\rangle_{{\Omega_0}}}.
\end{align*}
{As $\mathcal{K}_n^{{\Omega_0}}$ is self-adjoint on the Hilbert space $L^2_{\mu_{\Omega_0}}$  then
$$
\frac{\langle \mathcal{K}_n^{{\Omega_0}} f_n^\Omega, f_n^{\Omega_0}\rangle_{{\Omega_0}}}{\langle f_n^\Omega, f_n^{\Omega_0}\rangle_{{\Omega_0}}}=\frac{\langle  f_n^\Omega, \mathcal{K}_n^{{\Omega_0}}f_n^{\Omega_0}\rangle_{{\Omega_0}}}{\langle f_n^\Omega, f_n^{\Omega_0}\rangle_{{\Omega_0}}}=\lambda_n(\Omega_0).
$$}
Let us assume for a while that 
\begin{align}\label{con-R}
{\lim_{\Omega\to \Omega_0}\frac{\langle \mathscr{R}_n^{\Omega_0,\Omega}f_n^\Omega, f_n^{\Omega_0}\rangle_{{\Omega_0}}}{\langle f_n^\Omega, f_n^{\Omega_0}\rangle_{{\Omega_0}}}={\langle \mathscr{R}_n^{\Omega_0,\Omega_0}f_n^{\Omega_0}, f_n^{\Omega_0}}\rangle_{{\Omega_0}}}.
\end{align}
Then we deduce that  $\Omega\mapsto \lambda_n(\Omega)$ is differentiable at $\Omega_0$ with 
\begin{align*}
\lambda_n^\prime(\Omega_0)=&{\langle \mathscr{R}_n^{\Omega_0,\Omega_0}f_n^{\Omega_0}, f_n^{\Omega_0}\rangle_{{\Omega_0}}}=\bigintsss_0^\pi \frac{H_n(\phi,\varphi)}{\nu_{\Omega_0}(\phi)} f_n^{\Omega_0}(\varphi) f_n^{\Omega_0}(\phi) \sin(\phi) r_0^2(\phi)d\phi d\varphi.
\end{align*}
Since
$$
\forall \varphi, \phi\in(0,\pi), \quad H_n(\phi, \varphi)>0, \,\,f_n^{\Omega_0}(\phi)>0, \,\,\nu_{\Omega_0}(\phi)>0,
$$
we find that
$ \lambda_n^\prime(\Omega_0)>0$, which achieves the proof of the suitable result.\\
It remains to prove \eqref{con-R}. First, 
for the numerator of the  left hand side  we first make the splitting
\begin{align}\label{XP-1}
\nonumber\langle \mathscr{R}_n^{\Omega_0,\Omega}f_n^\Omega, f_n^{\Omega_0}\rangle_{{\Omega_0}}&=\langle \mathscr{R}_n^{\Omega_0,\Omega_0}f_n^\Omega, f_n^{\Omega_0}\rangle_{{\Omega_0}}+\langle ( \mathscr{R}_n^{\Omega_0,\Omega}-\mathscr{R}_n^{\Omega_0,\Omega_0})f_n^\Omega, f_n^{\Omega_0}\rangle_{{\Omega_0}}\\
&:= \mathcal{I}_1(\Omega)+\mathcal{I}_2(\Omega).
\end{align}
To estimate the second term $\mathcal{I}_2$  we use the identities \eqref{F-deco1} and \eqref{Form-kern-d} leading to
$$
\mathscr{R}_n^{\Omega_0,\Omega}-\mathscr{R}_n^{\Omega_0,\Omega_0}=\frac{\Omega-\Omega_0}{\nu_{\Omega_0}^2(\phi)}\mathcal{K}_n^{{\Omega}}.
$$
It follows that
\begin{align*}
\mathcal{I}_2(\Omega)=(\Omega-\Omega_0)\langle  {\nu^{-2}_{\Omega_0}} \mathcal{K}_n^{\Omega}f_n^\Omega,f_n^{\Omega_0}\rangle_{{\Omega_0}}.
\end{align*}
Hence, applying  Proposition \ref{Lem-meas}-(2) combined with Cauchy-Schwarz inequality and the normalization assumption  of the eigenfunctions  in \eqref{normaliz-1} we infer
\begin{align}
\big|\mathcal{I}_2(\Omega)\big|\lesssim |\Omega-\Omega_0| \|\mathcal{K}_n^{\Omega}f_n^\Omega\|_{\mu_{\Omega_0}}.
\end{align}
From Remarks \ref{remark-imp1}-(2),  \eqref{Est-kern-P} and \eqref{normaliz-1} we may write for $\Omega$ close to  $\Omega_0$
\begin{align*}
\big|\mathcal{I}_2(\Omega)\big|&\lesssim |\Omega-\Omega_0| \|\mathcal{K}_n^{\Omega}f_n^\Omega\|_{\mu_{\Omega}}\\
&\lesssim |\Omega-\Omega_0|.
\end{align*}
This obviously gives
\begin{align}\label{I-2-es}
\lim_{\Omega\to\Omega_0} \mathcal{I}_2(\Omega)=0.
\end{align}
Let us move to the  term $\mathcal{I}_1$ introduced in \eqref{XP-1}. Then 
combining  \eqref{Form-kern-d} with the fact that $\mathcal{K}_n^{{\Omega_0}}$ is self-adjoint on the Hilbert space $L^2_{\mu_{\Omega_0}}$  allows to get
\begin{align}\label{I-1-P1}
\mathcal{I}_1(\Omega)&=\langle f_n^\Omega, g\rangle_{{\Omega_0}}\quad\hbox{with}\quad g:=\mathcal{K}_n^{{\Omega_0}}\big(\nu^{-1}_{\Omega_0}f_n^{\Omega_0}\big).
\end{align}
Applying Proposition \ref{Lem-meas}-(2) we  easily get that $g\in L^2_{\mu_{\Omega_0}}$. Now we claim that 
\begin{align}\label{strong-conv-1}
\lim_{\Omega\to \Omega_0}\|f_n^\Omega-f_n^{\Omega_0}\|_{\mu_{\Omega_0}}=0.
\end{align}
Before giving its proof, let us see how to conclude. It is  easy  to check from \eqref{I-1-P1} and \eqref{strong-conv-1} that 
$$
\lim_{\Omega\to\Omega_0} \mathcal{I}_1(\Omega)=\mathcal{I}_1(\Omega_0)\quad \hbox{and}\quad \lim_{\Omega\to\Omega_0}\langle f_n^\Omega, f_n^{\Omega_0}\rangle_{{\Omega_0}}=1.
$$
Thus combining this result with  \eqref{XP-1} and \eqref{I-2-es} yields \eqref{con-R}.  It remains to check \eqref{strong-conv-1} which is a consequence of classical results on perturbation theory.  One can use for instance   \cite{Nagy} or \cite[Chapter XII]{reed}, where the analytic dependence of the eigenvalues and the associated eigenfunctions is analyzed. Let us briefly discuss the main arguments used  to get the continuity of the eigenfunctions  with respect to the parameter $\Omega$.  First  we set
$$
A(\Omega):=\mathcal{K}_n^{{\Omega}}.
$$
Then  using \eqref{nu-function} we finds
\begin{align*}
\frac{1}{\nu_{\Omega}(\phi)}=\sum_{m\in\N}\frac{(\Omega-\Omega_0)^m}{\nu^{m+1}_{\Omega_0}(\phi)}\cdot
\end{align*} 
Then similarly to \eqref{F-deco1} we obtain the decomposition
\begin{align}\label{analytic-1}
A(\Omega)=\sum_{m\in\N}(\Omega-\Omega_0)^mA_m,\quad \quad\hbox{with}\quad A_m:=\nu^{-m}_{\Omega_0}(\phi){\mathcal{K}_n^{{\Omega_0}}}.
\end{align}
By applying  the lower bound of Proposition \ref{Lem-meas}-(2) we get that $A_m$ is bounded with
$$
\|A_m\|_{\mathcal{L}(L^2_{\mu_{\Omega_0}})}\lesssim (\kappa-\Omega_0)^{-m}
$$
This shows that $A(\Omega)$ is analytic for $\Omega$ close enough to $\Omega_0$. Now define the sets
$$
\rho(A(\Omega)):=\Big\{z\in\mathbb{C}, A(\Omega)-z\textnormal{Id}\, \hbox{ is invertible }\Big\}\quad\hbox{and}\quad \Gamma:=\Big\{(\Omega, z)\in\mathbb{C}^2, z\in \rho(A(\Omega))\Big\}.
$$
Then it is known that the resolvent set $A(\Omega)$ and $\Gamma$ are open, see Theorem XII.7 in \cite{reed}. Now, since $\lambda_n(\Omega_0)$ is an isolated   simple eigenvalue and $\Gamma$ is open then we may  find  $\delta>0$ such that the oriented circle $\gamma:=\big\{z, |z- \lambda_n(\Omega_0)|=\delta\big\}$ is contained in $\rho(A(\Omega))$ for all $|\Omega-\Omega_0|\leqslant\delta.$  Thus, for $|\Omega-\Omega_0|\leqslant\delta$ the operator
$$
P_n(\Omega)=-\frac{1}{2i \pi}\int_{\gamma}\big(A(\Omega)-z\textnormal{Id}\big)^{-1} dz
$$
is well-defined and it is  analytic in view of \eqref{analytic-1}. Notice that one may get the estimate
\begin{align}\label{proj-diff}
\|P_n(\Omega)-P_n(\Omega_0)\|_{\mathcal{L}(L^2_{\mu_{\Omega_0}})}\lesssim |\Omega-\Omega_0|.
\end{align}
Then  from classical results on spectral theory, see  for instance Theorems XII.5-XII.6-XII.8 in \cite{reed},  $P_n(\Omega_0)$ is a projection on the one dimensional eigenspace associate to $\lambda_n(\Omega_0).$  In addition, $A_n(\Omega)$ admits only one eigenvalue inside the circle $\Gamma$  which necessary coincides  with  $\lambda_n(\Omega)$. Notice that this latter claim can be proved using the continuity in $\Omega$ of the largest eigenvalue  which can be checked from \eqref{eigenv}.  Furthermore, $P_n(\Omega)$ is  still a one dimensional projection on the eigenspace associated to the  $\lambda_n(\Omega)$. As a consequence, if $f_n^{\Omega_0}$ is a normalized eigenfunction of the operator $A(\Omega_0)=\mathcal{K}_n^{{\Omega_0}}$ associated to $\lambda_n(\Omega_0)$, then
$
P_n(\Omega)f_n^{\Omega_0}
$ is an eigenfunction of $A(\Omega)$  associated to $\lambda_n(\Omega)$. Applying \eqref{proj-diff} yields
$$
\lim_{\Omega\to \Omega_0}\|P_n(\Omega)f_n^{\Omega_0}-f_n^{\Omega_0}\|_{{\mu_{\Omega_0}}}=0,
$$
where we have used the fact that $P_n(\Omega_0)f_n^{\Omega_0}=f_n^{\Omega_0}.$ Now, by taking
$$
f_n^{\Omega}:=\frac{P_n(\Omega)f_n^{\Omega_0}}{\|P_n(\Omega)f_n^{\Omega_0}\|}_{{\mu_{\Omega_0}}}
$$
we get a normalized eigenfunction in the sense of \eqref{normaliz-1} and the family $\Omega\mapsto f_n^\Omega\in L^2_{\mu_{\Omega_0}}$ is continuous at $\Omega_0$, which ensures \eqref{strong-conv-1}. }This ends the proof of the desired result. 
 \end{proof}
Next we shall establish  the following result.
\begin{pro}\label{prop-kernel-onedim}
Let $n\geq1$  and $r_0$ satisfies the assumptions {\bf{(H1)}} and {\bf{(H2)}}. Set
\begin{equation}\label{set-eigenvalues}
\mathscr{S}_n:=\Big\{\Omega\in(-\infty,\kappa) \quad\textnormal{s.t.}\quad \lambda_n(\Omega)=1\Big\}.
\end{equation} Then the following holds true
\begin{enumerate}
\item The set $\mathscr{S}_n$ is formed by a single point denoted by $\Omega_n$ .
\item The sequence $(\Omega_n)_{n\geq1}$ is strictly increasing and satisfies 
$$
\lim_{n\to\infty}\Omega_n=\kappa.
$$
\end{enumerate}

\end{pro}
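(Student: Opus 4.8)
The plan is to extract everything from Proposition \ref{prop-operator} and Proposition \ref{Lem-meas}, so that the argument becomes essentially soft. For part (1), I would first recall that by Proposition \ref{prop-operator}-(6) the map $\Omega\in(-\infty,\kappa)\mapsto\lambda_n(\Omega)$ is continuous (indeed differentiable) and strictly increasing, so it attains the value $1$ at most once and $\mathscr{S}_n$ has at most one point. To see it has exactly one I would control the two endpoints of the interval. Combining the upper bound of Proposition \ref{prop-operator}-(2) with the decay estimate of Proposition \ref{prop-operator}-(3) (say with $\alpha=0$) gives $\lambda_n(\Omega)\leq C(\kappa-\Omega)^{-1}$, whence $\lambda_n(\Omega)\to 0$ as $\Omega\to-\infty$; in particular $\lambda_n(\Omega)<1$ for $\Omega$ sufficiently negative. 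The point is then to show that $\lambda_n(\Omega)\to+\infty$ as $\Omega\to\kappa^-$: once this is established, the intermediate value theorem together with the strict monotonicity produces a unique $\Omega_n\in(-\infty,\kappa)$ with $\lambda_n(\Omega_n)=1$, which is assertion (1).

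For the blow-up at $\kappa$ I would use the lower bound of Proposition \ref{prop-operator}-(2): for every $\varrho$ with $\int_0^\pi\varrho^2=1$,
$$\lambda_n(\Omega)\geq\int_0^\pi\int_0^\pi\frac{H_n(\phi,\varphi)\,\sin^{1/2}(\phi)\,r_0(\phi)\,\varrho(\varphi)\varrho(\phi)}{\nu_\Omega^{1/2}(\varphi)\,\nu_\Omega^{1/2}(\phi)\,\sin^{1/2}(\varphi)\,r_0(\varphi)}\,d\varphi\,d\phi.$$
Let $\phi_0\in[0,\pi]$ be a point where $\nu_\kappa$ reaches its minimum value $0$, and choose $\varrho=\varrho_\delta:=(2\delta)^{-1/2}\mathbf{1}_{(\phi_0-\delta,\phi_0+\delta)\cap(0,\pi)}$. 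Using the strict positivity of $H_n$, and the asymptotics \eqref{H-1} together with {\bf{(H2)}} near the poles, one bounds the symmetric kernel $\frac{H_n(\phi,\varphi)\sin^{1/2}(\phi)r_0(\phi)}{\sin^{1/2}(\varphi)r_0(\varphi)}$ from below on $(\phi_0-\delta,\phi_0+\delta)^2$; feeding this into the inequality above, letting $\Omega\uparrow\kappa$ (monotone convergence, since $\nu_\Omega=\nu_\kappa+(\kappa-\Omega)$ decreases to $\nu_\kappa$) and using $0\leq\nu_\kappa(\phi)\leq C|\phi-\phi_0|^{1+\alpha}$ from Proposition \ref{Lem-meas}-(4), a direct computation gives, for any fixed $\alpha\in(0,1)$,
$$\liminf_{\Omega\to\kappa^-}\lambda_n(\Omega)\gtrsim\delta^{-\alpha},$$
the convergence of all the integrals involved relying crucially on $\alpha<1$; letting $\delta\to 0$ forces the limit to be $+\infty$. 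This balance between the (possibly degenerate) lower bound of the weighted kernel and the $\mathscr{C}^{1,\alpha}$-vanishing rate of $\nu_\kappa$ at its minimum is the only genuinely delicate point of the proof.

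Part (2) is then routine. For the strict monotonicity of $(\Omega_n)$ I would invoke Proposition \ref{prop-operator}-(5), which gives $\lambda_{n+1}(\Omega)<\lambda_n(\Omega)$ for every $\Omega$; evaluating at $\Omega=\Omega_n$ yields $\lambda_{n+1}(\Omega_n)<\lambda_n(\Omega_n)=1=\lambda_{n+1}(\Omega_{n+1})$, and the strict monotonicity of $\lambda_{n+1}$ then forces $\Omega_n<\Omega_{n+1}$. Being increasing and bounded above by $\kappa$, the sequence $(\Omega_n)$ converges to some $\ell\leq\kappa$. If $\ell<\kappa$, then fixing any $\alpha\in(0,1)$ and using once more Proposition \ref{prop-operator}-(2)--(3) we would obtain $1=\lambda_n(\Omega_n)\leq C(\kappa-\Omega_n)^{-1}n^{-\alpha}\leq C(\kappa-\ell)^{-1}n^{-\alpha}\to 0$ as $n\to\infty$, a contradiction; hence $\ell=\kappa$, which completes the proof of (2).
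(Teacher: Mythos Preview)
Your overall strategy matches the paper's: show $\lambda_n(\Omega)\to 0$ at $-\infty$, show blow-up at $\kappa$, then invoke strict monotonicity of $\Omega\mapsto\lambda_n(\Omega)$ for uniqueness; for (2), use $\lambda_{n+1}(\Omega_n)<1$ and the decay bound $\lambda_n\lesssim(\kappa-\Omega)^{-1}n^{-\alpha}$ to force $\Omega_n\to\kappa$. The argument for (2) is essentially identical (the paper evaluates at the limit point $\overline\Omega$ rather than at $\Omega_n$, but the contradiction is the same).

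The one place your route differs is the blow-up at $\kappa$. The paper plugs in a power-law test function $\varrho(\phi)=c_\beta|\phi-\phi_0|^{-\beta}$ with $\beta<\tfrac12$ and shows, via Fatou, that the resulting double integral with $\nu_\kappa$ is infinite (choosing $\tfrac{1+\alpha}{2}+\beta>1$). You instead use indicators $\varrho_\delta$ and extract the rate $\delta^{-\alpha}$. Both mechanisms exploit the same vanishing rate $\nu_\kappa(\phi)\lesssim|\phi-\phi_0|^{1+\alpha}$; your version is arguably cleaner in the interior case $\phi_0\in(0,\pi)$, and your use of monotone convergence (via $\nu_\Omega=\nu_\kappa+(\kappa-\Omega)$) is sharper than Fatou. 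One point deserves care: when $\phi_0\in\{0,\pi\}$ the symmetric kernel $\frac{H_n(\phi,\varphi)\sin^{1/2}\phi\,r_0(\phi)}{\sin^{1/2}\varphi\,r_0(\varphi)}$ is \emph{not} bounded below by a positive constant on $(0,\delta)^2$---it degenerates at the pole. The paper handles this by an explicit case split, using $F_n\geq 1$ and {\bf(H2)} to get the homogeneous lower bound $\gtrsim(\phi\varphi)^{n+1/2}/(\phi+\varphi)^{2n+1}$ and then a scaling computation. Your sentence ``the asymptotics \eqref{H-1} together with {\bf(H2)} near the poles'' gestures at this, and indeed feeding that lower bound into your $\varrho_\delta$ still yields $\liminf\gtrsim\delta^{-\alpha}$ after rescaling, so your claim is correct; but be aware that ``bounds the kernel from below'' cannot mean a uniform positive bound there, and the explicit scaling argument is what makes the $\delta^{-\alpha}$ estimate survive.
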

\begin{proof}
\medskip
\noindent
{\bf (1)}
To check that the set $\mathscr{S}_n$ is non empty we shall use the mean value theorem.
 From the upper bound in Proposition \ref{prop-operator}--(2) and \eqref{K-kernel} we find that
 $$
 0\leq\lambda_n(\Omega)\leq { \left\{ \bigintss_0^\pi\bigintss_0^\pi \frac{H_n^2(\phi,\varphi)\sin \phi\, r_0^2(\phi)}{\nu_\Omega(\phi)\nu_\Omega(\varphi)\, \sin(\varphi)\, r_0^2(\varphi)}d\varphi d\phi\right\}^\frac12.}
 $$
Thus by  taking the limit as $\Omega\to-\infty$ we deduce that 
\begin{equation}\label{liminf0}
\lim_{\Omega\rightarrow -\infty} \lambda_n(\Omega)=0.
\end{equation}
Next, we intend to show that 
\begin{equation}\label{liminf}
\lim_{\Omega\to \kappa}\lambda_n(\Omega)=\infty.
\end{equation}

Using the lower bound of $\lambda_n(\Omega)$ in Proposition \ref{prop-operator}--(2),  we find by virtue of Fatou Lemma
\begin{equation*}
\bigintsss_0^\pi\bigintsss_0^\pi\frac{H_n(\phi,\varphi)}{\nu_\kappa^{\frac12}(\varphi)\nu_\kappa^{\frac12}(\phi)}\frac{\sin^\frac12(\phi) r_0(\phi)}{\sin^\frac12(\varphi) r_0(\varphi)} \varrho(\phi)\varrho(\varphi)d\varphi d\phi\leq \liminf_{\Omega\rightarrow \kappa} \lambda_n(\Omega),
\end{equation*}
for any nonnegative $\varrho$ satisfying $\displaystyle{\int_0^\pi\varrho^2(\phi)d\phi=1}$.  According to Proposition \ref{Lem-meas}--$(4)$, the function $\nu_\kappa$ reaches its minimum at a point $\phi_0\in[0,\pi]$ and
$$
\forall\, \phi\in[0,\pi],\quad 0\leq \nu_\kappa(\phi)\leq C|\phi-\phi_0|^{1+\alpha}.
$$
There are two possibilities: $\phi_0\in(0,\pi)$ or $\phi_0\in\{0,\pi\}$. Let us start with the first case and we shall  take $\varrho$ as follows
$$
\varrho(\phi)=\frac{c_\beta}{|\phi-\phi_0|^\beta},
$$
with $\beta<\frac12$ and the constant $c_\beta$ is chosen such that $\varrho$ is normalized. Hence using the preceding estimates  we get
\begin{equation}\label{Tramb1}
C\bigintsss_0^\pi\bigintsss_0^\pi\frac{H_n(\phi,\varphi)}{|\phi-\phi_0|^{\frac{1+\alpha}{2}+\beta}|\varphi-\phi_0|^{\frac{1+\alpha}{2}+\beta}}\frac{\sin^\frac12(\phi) r_0(\phi)}{\sin^\frac12(\varphi) r_0(\varphi)} d\varphi d\phi\leq \liminf_{\Omega\rightarrow \kappa} \lambda_n(\Omega).
\end{equation}
Let $\varepsilon>0$ such that $[\phi_0-\varepsilon,\phi_0+\varepsilon]\subset (0,\pi)$. According to \eqref{H-1} the function    $H_n$ is strictly positive in the domain $(0,\pi)^2$, hence  there exists $\delta>0$ such 
$$
\forall\, (\phi,\varphi)\in[\phi_0-\varepsilon,\phi_0+\varepsilon]^2, \quad \frac{H_n(\phi,\varphi)\sin^\frac12(\phi) r_0(\phi)}{\sin^\frac12(\varphi) r_0(\varphi)}\geq \delta.
$$
Thus we obtain
\begin{equation*}
C\bigintsss_{\phi_0-\varepsilon}^{\phi_0+\varepsilon}\bigintsss_{\phi_0-\varepsilon}^{\phi_0+\varepsilon}\frac{d\phi\,d\varphi}{|\phi-\phi_0|^{\frac{1+\alpha}{2}+\beta}|\varphi-\phi_0|^{\frac{1+\alpha}{2}+\beta}}\leq \liminf_{\Omega\rightarrow \kappa} \lambda_n(\Omega).
\end{equation*}
By taking $\frac{1+\alpha}{2}+\beta>1$, which is an admissible configuration, we
find
$$
\lim_{\Omega\rightarrow \kappa} \lambda_n(\Omega)=+\infty.
$$
Now let us move to the second possibility  where $\phi_0\in\{0,\pi\}$ and without 
any loss of generality we can only deal with the case $\phi_0=0.$ From \eqref{H-1} and using the inequality
$$
\forall \, x\in[0,1),\quad F_n(x)\geq 1,
$$
we obtain
\begin{align*}
\forall \phi,\varphi\in(0,\pi),\quad H_n(\phi,\varphi)\geq&c_n\frac{\sin(\varphi)r_0^{n-1}(\phi)r_0^{n+1}(\varphi)}{\left[R(\phi,\varphi)\right]^{n+\frac12}}.
\end{align*}
Combined with the assumption ${\bf{(H2)}}$, it implies
\begin{align*}
\forall \phi,\varphi\in(0,\pi),\quad H_n(\phi,\varphi)\geq&c_n\frac{\sin^{n+2}(\varphi)\sin^{n-1}(\phi)}{\left[R(\phi,\varphi)\right]^{n+\frac12}}.
\end{align*}
 Plugging this into \eqref{Tramb1} we find
\begin{equation*}
C_n\bigintsss_0^\pi\bigintsss_0^\pi\frac{1}{\phi^{\frac{1+\alpha}{2}+\beta}\varphi^{\frac{1+\alpha}{2}+\beta}} \frac{\sin^{n+\frac12}(\varphi)\sin^{n+\frac12}(\phi)}{\left[R(\phi,\varphi)\right]^{n+\frac12}}d\varphi d\phi\leq \liminf_{\Omega\rightarrow \kappa} \lambda_n(\Omega).
\end{equation*}
Let $\varepsilon>0$ sufficiently small, then using Taylor expansion we get according to \eqref{RefR}
$$
0\leq \phi,\varphi\leq\varepsilon\Longrightarrow R(\phi,\varphi)\leq C(\phi+\varphi)^2.
$$
 Thus
 \begin{equation*}
C_n\bigintsss_0^\varepsilon\bigintsss_0^\varepsilon\frac{1}{\phi^{\frac{1+\alpha}{2}+\beta}\varphi^{\frac{1+\alpha}{2}+\beta}} \frac{\varphi^{n+\frac12}\phi^{n+\frac12}}{(\phi+\varphi)^{2n+1}}d\varphi d\phi\leq \liminf_{\Omega\rightarrow \kappa} \lambda_n(\Omega).
\end{equation*}
which gives after simplification
\begin{equation*}
C_n\bigintsss_0^\varepsilon\bigintsss_0^\varepsilon\frac{\varphi^{n-\frac\alpha2-\beta}\phi^{n-\frac\alpha2-\beta}}{(\phi+\varphi)^{2n+1}}d\varphi d\phi\leq \liminf_{\Omega\rightarrow \kappa} \lambda_n(\Omega).
\end{equation*}
Making the change of variables $\varphi=\phi \theta$ we obtain
\begin{align*}
\bigintsss_0^\varepsilon\bigintsss_0^\varepsilon\frac{\varphi^{n-\frac\alpha2-\beta}\phi^{n-\frac\alpha2-\beta}}{(\phi+\varphi)^{2n+1}}d\varphi d\phi=&\bigintsss_0^\varepsilon\phi^{-\alpha-2\beta}\bigintsss_0^{\frac\varepsilon\phi}\frac{\theta^{n-\frac\alpha2-\beta}}{(1+\theta)^{2n+1}}d\theta d\phi.
\end{align*}
This integral diverges provided that $\alpha+2\beta>1$ and thus  under this assumption
$$
\lim_{\Omega\rightarrow \kappa} \lambda_n(\Omega)=+\infty.
$$
Hence we obtain \eqref{liminf}.
By the intermediate mean value, we achieve the existence of at least one solution for the equation  
$$
\lambda_n(\Omega)=1.
$$
Consequently, using Proposition \ref{prop-operator} we deduce by the mean value theorem that  the set $\mathscr{S}_n$ contains only one element.

\medskip
\noindent
{\bf (2)} Since  $\Omega_n$ satisfies the equation
$$
\lambda_n(\Omega_n)=1.
$$
According to Proposition \ref{prop-operator}--$(5)$ the sequence $k\mapsto \lambda_k(\Omega_n)$ is strictly decreasing. It implies in particular that
$$
\lambda_{n+1}(\Omega_n)<\lambda_n(\Omega_n)=1.
$$
Hence by \eqref{liminf} one may apply  the mean value theorem and find  an element of the set $\mathscr{S}_{n+1}$
in the interval $(\Omega_n,\kappa)$. This means that $\Omega_{n+1}>\Omega_n$ and thus this sequence is strictly increasing. 
It remains to prove that this sequence is converging to $\kappa.$ The convergence of this sequence to some element $\overline{\Omega}\leq \kappa$ is clear. 
To prove that $\overline{\Omega}=\kappa$ we shall argue by contradiction by assuming that $\overline{\Omega}<\kappa.$
By the construction of $\Omega_n$ one has necessarily 
$$
\forall n\geq1,\quad \lambda_n(\overline{\Omega})>1.
$$
Using the upper-bound estimate stated in Proposition \ref{prop-operator}--$(2)$ combined with the point $(3)$ we obtain for any $\alpha\in(0,1)$
\begin{equation}\label{low-contra}
\forall\, n\geq1,\quad 0< \lambda_n(\overline{\Omega})\lesssim {(\kappa-\overline{\Omega})^{-1} n^{-\alpha}}.
\end{equation}
By taking the limit as $n\to+\infty$ we find
$$
\lim_{n\to\infty}\lambda_n(\overline{\Omega})=0.
$$
This contradicts \eqref{low-contra} which achieves the proof.
\end{proof}
\subsection{Eigenfunctions regularity}\label{Eigenfunctions regularity}
This section is devoted to the strong regularity  of the eigenfunctions associated to the operator $\mathcal{K}_n^{{\Omega}}$ and 
constructed in Proposition \ref{prop-operator}. We have already  seen that these eigenfunctions belong to a weak function space $L^2_{\mu_\Omega}$. 
Here we shall show first their continuity and later their H\"{o}lder regularity.

\subsubsection{Continuity}
The main result of this section reads as follows.
\begin{pro}\label{prop-higher-reg}
Let $\Omega\in(-\infty,\kappa),$ $n\geq1$, $r_0$ satisfies the assumptions {\bf{(H1)}} and {\bf{(H2)}}, and $f$ be
an eigenfunction for $\mathcal{K}_n^{{\Omega}}$ associated to a non-vanishing  eigenvalue. Then $f$ is continuous
\mbox{over $[0,\pi]$},  and for  $n\geq 2$ it satisfies the boundary condition $f(0)=f(\pi)=0$. However this boundary condition fails 
for $n=1$ at least with the eigenfunctions associated to the largest eigenvalue $\lambda_1(\Omega).$ 
\end{pro}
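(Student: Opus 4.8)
The plan is to start from the eigenvalue relation $f = \lambda^{-1}\mathcal{K}_n^{\Omega}f$ written in the unweighted form, namely
\begin{equation*}
f(\phi)=\frac{1}{\lambda\,\nu_\Omega(\phi)}\int_0^\pi H_n(\phi,\varphi)f(\varphi)\,d\varphi,
\end{equation*}
where $\lambda\neq 0$ is the eigenvalue. Since by Proposition \ref{Lem-meas} the function $\nu_\Omega$ is continuous and bounded below by $\kappa-\Omega>0$ on $[0,\pi]$, the continuity of $f$ reduces to the continuity of $\phi\mapsto \int_0^\pi H_n(\phi,\varphi)f(\varphi)\,d\varphi$. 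Here I would first record that $f\in L^2_{\mu_\Omega}$, hence $f\in L^2(\sin(\varphi)r_0^2(\varphi)\,d\varphi)$ by Remarks \ref{remark-imp1}, and by the assumption {\bf(H2)} this is the same as $f\in L^2(\sin^3\varphi\,d\varphi)$; in particular $f$ is locally $L^2$ away from the poles, and near the poles it is controlled in this weighted sense.

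The main step is an integral-operator continuity argument: I would show that for fixed $\phi_1,\phi_2\in[0,\pi]$,
\begin{equation*}
\Big|\int_0^\pi\big(H_n(\phi_1,\varphi)-H_n(\phi_2,\varphi)\big)f(\varphi)\,d\varphi\Big|
\end{equation*}
tends to $0$ as $\phi_2\to\phi_1$, together with the boundary behaviour. For this I would use the decay/size estimate \eqref{estim-asym} from Lemma \ref{Lem-Hndecreasing}, which for a suitable choice of exponents gives a bound of the form $|H_n(\phi,\varphi)|\lesssim \sin(\varphi)r_0^{1/2}(\varphi)r_0^{-3/2}(\phi)|\phi-\varphi|^{-\beta}$ with $\beta$ close to $1$, and an analogous estimate for $\partial_\phi H_n$ obtained by differentiating \eqref{H-1} and using the boundary behaviour of the hypergeometric functions (Proposition \ref{Prop-behav}), paralleling the estimates carried out for $\nu_\Omega$ in the proof of Proposition \ref{Lem-meas}. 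Splitting the $\varphi$-integral into a neighbourhood of the diagonal (size $|\phi_1-\phi_2|$) and its complement, on the near-diagonal piece I would use the size bound on $H_n$ together with Cauchy--Schwarz against the $L^2_{\mu_\Omega}$-norm of $f$, and on the far piece the mean value theorem with the gradient bound; both pieces go to $0$. This yields continuity of $f$ on the open interval $(0,\pi)$, and the same splitting applied at $\phi=0$ (resp.\ $\phi=\pi$) gives the existence of the limits and their value.

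For the boundary condition when $n\geq 2$: from \eqref{H-1} the kernel $H_n(\phi,\varphi)$ carries a factor $r_0^{n-1}(\phi)$, and $r_0(\phi)\to 0$ as $\phi\to 0,\pi$ by {\bf(H1)}; so for $n\geq 2$ the prefactor $r_0^{n-1}(\phi)/\nu_\Omega(\phi)$ vanishes at the poles while the remaining integral stays bounded (using again the size estimate on the rescaled kernel, i.e. the bound $\phi\mapsto \int_0^\pi H_n(\phi,\varphi)|f(\varphi)|\,d\varphi/r_0^{n-1}(\phi)$ is bounded near $0$, which follows from {\bf(H2)} and Cauchy--Schwarz). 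Hence $f(0)=f(\pi)=0$. For $n=1$ the prefactor $r_0^{0}(\phi)=1$ does not vanish, so the argument breaks down; and in fact for the eigenfunction $f_1$ associated to $\lambda_1(\Omega)$ we know from Proposition \ref{prop-operator}--(4) that $f_1>0$ on $(0,\pi)$, so $f_1$ does not change sign and its continuous extension to $[0,\pi]$ is a pointwise limit of positive numbers; combining this with the explicit value of $f_1(0)=\lambda_1(\Omega)^{-1}\nu_\Omega(0)^{-1}\int_0^\pi H_1(0,\varphi)f_1(\varphi)\,d\varphi$, which is a strictly positive number since $H_1(0,\varphi)>0$ and $f_1>0$, shows $f_1(0)>0$, so the Dirichlet condition genuinely fails at $n=1$.

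The main obstacle I expect is the careful treatment of the poles: near $\phi=0,\pi$ the kernel $H_n$ is not bounded in the naive sense, and the measure degenerates like $\sin^3\varphi$, so one must combine the rescaling $\varphi=\phi\theta$ (as in Proposition \ref{Lem-meas}) with the weighted Cauchy--Schwarz estimate to extract both the boundedness of the integral term and its limit; getting uniform-in-$\phi$ domination for the rescaled integrand so as to pass to the limit is the delicate point, but it follows the same pattern as the dominated-convergence arguments already used for $\nu_\Omega$ in the proof of Proposition \ref{Lem-meas}--(3).
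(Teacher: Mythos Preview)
Your outline has the right endpoints but there is a genuine gap at the poles. The claim that
\[
\phi\longmapsto \frac{1}{r_0^{\,n-1}(\phi)}\int_0^\pi H_n(\phi,\varphi)\,|f(\varphi)|\,d\varphi
\]
stays bounded near $\phi=0$ ``by {\bf(H2)} and Cauchy--Schwarz'' does not go through. The only a priori information on $f$ is $f\in L^2_{\mu_\Omega}$, i.e.\ $f\in L^2(\sin^3\!\varphi\,d\varphi)$, which allows $|f(\varphi)|\sim \varphi^{-3/2+\varepsilon}$ near $0$. If you apply Cauchy--Schwarz against this weight, after the rescaling $\varphi=\phi\theta$ the dual integral behaves like
\[
\int_0^\pi \frac{\sin(\varphi)\,r_0^{2n}(\varphi)}{R^{2n+1}(\phi,\varphi)}\,F_n^2\,d\varphi\;\sim\;\phi^{-2n}\int_0^\infty \frac{\theta^{2n+1}}{(1+\theta)^{4n+2}}\,d\theta,
\]
so the bound you obtain is of order $\phi^{-n}$, not $O(1)$. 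The same defect invalidates the ``existence of the limits'' at the endpoints via your splitting.

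What is missing is a bootstrap. The paper first symmetrises by setting $g(\varphi)=r_0^{3/2}(\varphi)f(\varphi)\in L^2(d\varphi)$ and observes that the rescaled kernel $r_0^{3/2}(\phi)\,r_0^{-3/2}(\varphi)\,H_n(\phi,\varphi)$ is controlled by $1+\ln\big((\sin\phi+\sin\varphi)/|\phi-\varphi|\big)$, which lies in $L^2(d\varphi)$ uniformly in $\phi$; one Cauchy--Schwarz then gives $g\in L^\infty$, i.e.\ $|f(\phi)|\lesssim r_0^{-3/2}(\phi)$. Feeding this back into the eigenvalue relation and using the explicit form of $H_n$ together with the change of variables $\varphi=\phi\theta$ yields first $|f(\phi)|\lesssim r_0^{-1/2}(\phi)$, and a second iteration gives $|f(\phi)|\lesssim r_0^{n-1}(\phi)+r_0^{1/2}(\phi)$. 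Only at this stage is $f$ known to be bounded, after which continuity on $[0,\pi]$ follows by dominated convergence and the boundary condition for $n\ge2$ is immediate. Your treatment of the $n=1$ case is correct once this boundedness is in hand, and matches the paper's argument.
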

 
\begin{proof}
Let $f\in L^2_{\mu_{\Omega}}$ be any non trivial eigenfunction of the operator  $\mathcal{K}_n^{{\Omega}}$ defined in \eqref{kerneleq} and associated to an eigenvalue $\lambda\neq0$, then 
\begin{equation}\label{Eigen-V1}
f(\phi)=\frac{1}{\lambda\, \nu_\Omega(\phi)}\int_0^\pi H_n(\phi,\varphi) f(\varphi)d\varphi, \quad \forall \phi\in (0,\pi)\,\textnormal{a.e.}
\end{equation}
Since $f\in L^2_{\mu_\Omega},$ then the function $g: \varphi\in [0,\pi]\mapsto r_0^{\frac32}(\varphi)f(\varphi)$belongs to  $ L^2((0,\pi);d\varphi)$. Therefore the equation \eqref{Eigen-V1} can be written in terms of $g$ as follows
 $$
 g(\phi)=\frac{1}{\lambda\,\nu_\Omega(\phi)}\bigintsss_0^\pi r_0^{-\frac32}(\varphi){r_0^{\frac32}(\phi)H_n(\phi,\varphi)} g(\varphi)d\varphi, \quad \forall \phi\in (0,\pi)\,\textnormal{a.e.}
 $$
 Coming back to the definition of $H_n$ in \eqref{H-1} we obtain for some constant $c_n$ the formula
 \begin{align*}
r_0^{-\frac32}(\varphi){r_0^{\frac32}(\phi)}H_n(\phi,\varphi)=&c_n\frac{\sin(\varphi) r_0^{n-\frac12}(\varphi)r_0^{n+\frac12}(\phi)}{\left[R(\phi,\varphi)\right]^{n+\frac12}} F_n\left(\frac{4r_0(\phi)r_0(\varphi)}{R(\phi,\varphi)}\right).
\end{align*}
Using \eqref{estimat-1} and the assumption {\bf{(H2)}}   yields
\begin{align}\label{H-XX1}
r_0^{-\frac32}(\varphi){r_0^{\frac32}(\phi)}H_n(\phi,\varphi)\lesssim&\frac{r_0^{n+\frac12}(\varphi)r_0^{n+\frac12}(\phi)}{R^{n+\frac12}(\phi,\varphi)}\left(1+\ln\left(\frac{\sin(\phi)+\sin(\varphi)}{|\phi-\varphi|}\right) \right)\\
\nonumber\lesssim&1+\ln\left(\frac{\sin(\phi)+\sin(\varphi)}{|\phi-\varphi|}\right). 
\end{align}
This implies, using Cauchy-Schwarz inequality  and the fact that $\nu_\Omega$ is bounded away from zero
\begin{align*}
 |g(\phi)|\lesssim& \bigintsss_0^\pi\left(1+\ln\left(\frac{\sin(\phi)+\sin(\varphi)}{|\phi-\varphi|}\right)\right)g(\varphi)d\varphi\\
 \lesssim& \|g\|_{L^2(d\varphi)}\left(1+\bigintsss_0^\pi\ln^2\left(\frac{\sin(\phi)+\sin(\varphi)}{|\phi-\varphi|}\right)d\varphi\right)^{\frac12}\\
  \lesssim& \|f\|_{\mu_\Omega},\quad \forall \phi\in (0,\pi)\,\textnormal{a.e.}
\end{align*}
It follows that $g$ is bounded. Now inserting this estimate into  \eqref{Eigen-V1} allows to get
\begin{align}\label{split}
 |f(\phi)|\lesssim&\|g\|_{L^\infty}\int_0^\pi  r_0^{-\frac32}(\varphi)H_n(\phi,\varphi) d\varphi,\nonumber\\
 \lesssim&  \|f\|_{\mu_\Omega}\bigintss_0^\pi\frac{\sin(\varphi) r_0^{n-\frac12}(\varphi)r_0^{n-1}(\phi)}{\left[R(\phi,\varphi)\right]^{n+\frac12}} F_n\left(\frac{4r_0(\phi)r_0(\varphi)}{R(\phi,\varphi)}\right)d\varphi.
\end{align}
Using once again {estimat-1} and the assumption {\bf{(H2)}} we deduce that
 \begin{align*}
 |f(\phi)|\lesssim& \|f\|_{\mu_\Omega}\bigintss_0^\pi\frac{\sin^{n-1}(\phi)\sin^{n+\frac12}(\varphi)}{\big((\sin(\phi)+\sin(\varphi))^2+(\cos\phi-\cos\varphi)^2\big)^{n+\frac12}}\Big(1+\ln\Big(\frac{\sin(\phi)+\sin(\varphi)}{|\phi-\varphi|}\Big)\Big)d\varphi.
\end{align*}
By symmetry we may  restrict the analysis to $\phi\in[0,\frac\pi2]$. Thus, splitting the integral given in \eqref{split} and using that  $$
\inf_{\varphi\in[\pi/2,\pi]\\
\atop\phi\in[0,\pi/2]}R(\phi,\varphi)>0,
$$ we obtain 
\begin{align*}
 |f(\phi)|\lesssim& \|f\|_{\mu_\Omega}\bigintss_0^{\frac\pi2}\frac{\sin^{n-1}(\phi)\sin^{n+\frac12}(\varphi)}{\big(\sin(\phi)+\sin(\varphi)
 \big)^{2n+1}}\Big(1+\ln\Big(\frac{\sin(\phi)+\sin(\varphi)}{|\phi-\varphi|}\Big)\Big)d\varphi\\
 +&\|f\|_{\mu_\Omega}\bigintss_{\frac\pi2}^\pi\Big(1+\ln\Big(\frac{\sin(\phi)+\sin(\varphi)}{|\phi-\varphi|}\Big)\Big)d\varphi.
\end{align*}
It follows that
\begin{align*}
 |f(\phi)|\lesssim& \|f\|_{\mu_\Omega}\bigintss_0^{\frac\pi2}\frac{\phi^{n-1}\varphi^{n+\frac12}}{\big(\phi+\varphi\big)^{2n+1}}\Big(1+\ln\Big(\frac{\phi+\varphi}{|\phi-\varphi|}\Big)\Big)d\varphi
 +\|f\|_{\mu_\Omega}.
\end{align*}
Using the change of variables $\varphi=\phi \theta$ we get
\begin{align*}
 |f(\phi)|\lesssim& \|f\|_{\mu_\Omega}\phi^{-\frac12}\bigintss_0^{\frac{\pi}{2\phi}}\frac{\theta^{n+\frac12}}{\big(1+\theta\big)^{2n+1}}\Big(1+\ln\Big(\frac{1+\theta}{|1-\theta|}\Big)\Big)d\theta
 +\|f\|_{\mu_\Omega}\\
 \lesssim& \|f\|_{\mu_\Omega}\phi^{-\frac12}.
\end{align*}
Consequently we find 
$$
\sup_{\phi\in(0,\pi)}r_0^{\frac12}(\phi)| f(\phi)|\lesssim  \|f\|_{\mu_\Omega}.
$$
Inserting this estimate into \eqref{Eigen-V1} and using \eqref{H-XX1} yields
\begin{align*}
|f(\phi)|\lesssim& \|f\|_{\mu_\Omega} \bigintsss_0^\pi r_0^{-\frac12}(\varphi) H_n(\phi,\varphi) d\varphi\\
\lesssim& \|f\|_{\mu_\Omega} \bigintss_0^\pi\frac{r_0^{n+\frac32}(\varphi)r_0^{n-1}(\phi)}{R^{n+\frac12}(\phi,\varphi)}\left(1+\ln\left(\frac{\sin(\phi)+\sin(\varphi)}{|\phi-\varphi|}\right) \right) d\varphi.
\end{align*}
As before we can restrict $\phi\in [0,\frac\pi2]$ and by using the fact
$$
\inf_{\varphi\in[\pi/2,\pi]\\
\atop\phi\in[0,\pi/2]}R(\phi,\varphi)>0,
$$
we deduce after splitting the integral
\begin{align*}
|f(\phi)|\lesssim& \|f\|_{\mu_\Omega} \bigintsss_0^\pi r_0^{-\frac12}(\varphi) H_n(\phi,\varphi) d\varphi\\
\lesssim& \|f\|_{\mu_\Omega} r_0^{n-1}(\phi)+\|f\|_{\mu_\Omega} \bigintss_0^{\frac\pi2}\frac{\varphi^{n+\frac32}\phi^{n-1}}{(\phi+\varphi)^{2n+1}}\left(1+\ln\left(\frac{\phi+\varphi}{|\phi-\varphi|}\right) \right) d\varphi.
\end{align*}
Making the change of variables $\varphi=\phi \theta$ leads to
\begin{align*}
\forall \phi\in[0,\pi/2],\quad |f(\phi)|\lesssim& \|f\|_{\mu_\Omega} r_0^{n-1}(\phi)+\|f\|_{\mu_\Omega}\phi^{\frac12} \bigintss_0^{\frac{\pi}{2\phi}}
\frac{\theta^{n+\frac32}}{(1+\theta)^{2n+1}}\left(1+\ln\left(\frac{\theta+1}{|\theta-1|}\right) \right) d\theta\\
\lesssim& \|f\|_{\mu_\Omega}(\phi^{n-1}+{ \phi^{\frac{1}{2}}}).
\end{align*}
Consequently we get
\begin{align*}
\forall \phi\in(0,\pi),\quad |f(\phi)|\lesssim& \|f\|_{\mu_\Omega}(r_0^{n-1}(\phi)+{r_0^{\frac{1}{2}}(\phi)}).
\end{align*}

This shows that $f$ is bounded over $(0,\pi)$ and  by the dominated convergence theorem one can show that $f$ is in fact continuous on $[0,\pi]$ and satisfies for $n\geq2$ 
the boundary condition
$$
f(0)=f(\pi)=0.
$$
Last, we shall check that this boundary condition fails  for $n=1$  with the largest eigenvalue $\lambda_1(\Omega).$ Indeed, according to \eqref{Eigen-V1} we have
$$
f(0)=\frac{1}{\lambda \nu_\Omega(0)}\int_0^\pi H_1(0,\varphi) f(\varphi)d\varphi.
$$
However, from \eqref{H-1} we get 
$$
\forall\, \varphi\in(0,\pi),\quad H_1(0,\varphi)=c_1 \frac{r_0^2(\varphi)\sin(\varphi)}{R^\frac32(0,\varphi)}>0.
$$
Combining this with the fact that  $f$ does not change the sign allows to get that $f(0)\neq0.$ 
\end{proof}

\subsubsection{H\"{o}lder continuity}
The main goal of this section is to prove the H\"{o}lder regularity of the eigenfunctions. 
{{ 
{\begin{pro}\label{HoldX1}
Assume that $r_0$ satisfies the conditions {\bf (H)}  and let  {$\Omega\in(-\infty,\kappa)$}, then any solution $h$ of the equation
\begin{equation}\label{EigenV}
h(\phi)=\frac{1}{\lambda\,\nu_\Omega(\phi)}\int_0^\pi H_n(\varphi,\phi)h(\varphi)d\varphi,\quad \forall \phi\in(0,\pi),
\end{equation}
with $\lambda\neq0$, belongs to $\mathscr{C}^{1,\alpha}(0,\pi)$, for any $n\geq 2$. The functions involved in the above expression can be found in \eqref{H-1}--\eqref{K-kernel}--\eqref{nu-function}.
\end{pro}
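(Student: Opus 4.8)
The plan is to view \eqref{EigenV} as the fixed-point identity $h=\mathcal{K}_n^{\Omega}h$ (after absorbing $\lambda$) and to run a regularity bootstrap which reduces everything to gain-of-regularity estimates for the integral part at frequency $n$; the analysis is one derivative higher than, but structurally identical to, the study of $\nu_\Omega$ in Proposition \ref{Lem-meas}. Write $G(\phi):=\int_0^\pi H_n(\varphi,\phi)\,h(\varphi)\,d\varphi$, so that $h=\lambda^{-1}\,\nu_\Omega^{-1}\,G$. By Proposition \ref{Lem-meas} one has $\nu_\Omega\in\mathscr{C}^{1,\alpha}([0,\pi])$ and $\nu_\Omega\geq\kappa-\Omega>0$ on $[0,\pi]$ for $\Omega<\kappa$, hence $\nu_\Omega^{-1}\in\mathscr{C}^{1,\alpha}([0,\pi])$; since $\mathscr{C}^{1,\alpha}(0,\pi)$ is a Banach algebra it suffices to prove $G\in\mathscr{C}^{1,\alpha}(0,\pi)$. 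From \eqref{H-1} the $\phi$-dependence of the integrand factors as the $\mathscr{C}^2$ prefactor $\sin(\phi)\,r_0^{n+1}(\phi)$ (by {\bf(H1)}) times the kernel $r_0^{n-1}(\varphi)\,R^{-(n+\frac12)}(\phi,\varphi)\,F_n\!\bigl(4r_0(\phi)r_0(\varphi)/R(\phi,\varphi)\bigr)$ acting on $h$; since the $\mathscr{C}^{1,\alpha}(0,\pi)$ norm also controls the supremum, uniform estimates up to the poles $\phi\in\{0,\pi\}$ are required, and the vanishing of this prefactor there is exactly what will pay for the cylindrical degeneracy of the kernel.

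I would then bootstrap the regularity of $h$. Starting from the continuity and the pole-vanishing of $h$ given by Proposition \ref{prop-higher-reg}, and using the pointwise bounds $1\leq F_n(x)\lesssim1+|\ln(1-x)|$ (Proposition \ref{Prop-behav}), the estimate \eqref{Est-LogX} and the arc-chord bound \eqref{Chord}, elementary difference-quotient estimates show $G\in\mathscr{C}^\alpha([0,\pi])$, hence $h\in\mathscr{C}^\alpha([0,\pi])$ with $h(0)=h(\pi)=0$; moreover, re-injecting these bounds into $G$ upgrades the endpoint decay of $h$ to any prescribed order, each pass gaining a power of $r_0$ from the prefactor $\sin(\phi)r_0^{n+1}(\phi)$. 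This places us in a setting where we may differentiate under the integral sign.

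The core step is the $\mathscr{C}^{1,\alpha}$ regularity of $G$. I would decompose $\partial_\phi H_n(\varphi,\phi)$ exactly as $\partial_\phi H_1$ was decomposed in Step $1$ of the proof of Proposition \ref{Lem-meas}: an algebraic cancellation rewrites it as a sum of terms only mildly (logarithmically, or boundedly) singular across the diagonal $\varphi=\phi$---built from $\mathscr{K}_1$-type factors, from $F_n(\varrho)-1$ and from $F_n'(\varrho)-F_n'(0)$, all vanishing as $\varphi\to\phi$---plus one total $\partial_\varphi$-derivative, which carries the only non-integrable singularity and is disposed of by an integration by parts, the boundary contributions vanishing because the $\mathscr{K}_1$-factors and $h$ vanish at the poles and the remaining logarithmic kernel being then paired with the derivative of $h$ produced by the bootstrap. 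Estimating all pieces through the boundary behavior of the hypergeometric functions ($|F_n(x)-1|\lesssim x(1+|\ln(1-x)|)$, $|F_n'(x)|\lesssim(1-x)^{-1}$, Proposition \ref{Prop-behav}), the arc-chord bound \eqref{Chord} and assumption {\bf(H2)}, one obtains $G'\in L^\infty(0,\pi)$ and, on each compact subinterval of $(0,\pi)$, $G'\in\mathscr{C}^\alpha$. The remaining---and hardest---task is the uniform $\mathscr{C}^\alpha$ control of $G'$ near the poles, where I would rescale $\varphi=\phi\theta$, split each offending integral into a pair $T_{1,\phi}$, $T_{2,\phi}$ as in Step $2$ of Proposition \ref{Lem-meas}, prove $T_{1,\phi}\in\textnormal{Lip}$ and $T_{2,\phi}\in\mathscr{C}^\alpha$ uniformly in $\phi$, and conclude via the identity $T_{1,\phi_1}(\phi_2)=T_{2,\phi_2}(\phi_1)$; the interpolation inequalities proved there for $\sin(\phi\theta)/\phi$, $r_0(\phi\theta)/\phi$, $(1-\cos(\phi\theta))/\phi$, $r_0'(\phi\theta)$ and for $s\mapsto s^{2n+1}R^{-(n+\frac12)}(s,s\theta)$ carry over verbatim.

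The main obstacle, exactly as in Proposition \ref{Lem-meas}, is this pole analysis: one must simultaneously exploit the algebraic cancellation to kill the logarithmic singularity of the $H_1$-type contributions, trade the vanishing prefactor $\sin(\phi)r_0^{n+1}(\phi)$ and the endpoint decay of $h$ against the negative powers of $\phi$ produced by $R^{-(n+\frac12)}$ after rescaling---the power counting closing only for $n\geq2$, in agreement with the different endpoint behavior for $n=1$ recorded in Proposition \ref{prop-higher-reg}---and handle the diagonal $\theta\simeq1$ of the rescaled integrals using $\ln\!\bigl(\tfrac{1+\theta}{2}\bigr)\sim\tfrac{\theta-1}{2}$ to secure convergence for every $\alpha\in(0,1)$. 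Once $G\in\mathscr{C}^{1,\alpha}(0,\pi)$ is established, the conclusion $h=\lambda^{-1}\nu_\Omega^{-1}G\in\mathscr{C}^{1,\alpha}(0,\pi)$ follows from the algebra property.
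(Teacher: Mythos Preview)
Your outline is reasonable and would likely go through, but it takes a genuinely different route from the paper. You stay in the one-dimensional hypergeometric representation of $H_n$ and propose to reprise, with $h$ in the integrand, the long rescaling analysis of Proposition~\ref{Lem-meas}: decompose $\partial_\phi H_n$ into mild pieces plus a total $\partial_\varphi$-derivative, integrate by parts, and then handle the poles via $\varphi=\phi\theta$ and the $T_{1,\phi}/T_{2,\phi}$ splitting. The paper instead \emph{reverts to the two-dimensional integral form} \eqref{FMZ1}, writing
\[
\mathcal{F}_n(h)(\phi)=\frac{1}{4\pi\,r_0(\phi)}\int_0^\pi\!\!\int_0^{2\pi}\frac{\sin(\varphi)r_0(\varphi)\cos(n\eta)}{\widehat R^{1/2}(\phi,\varphi,\eta)}\,h(\varphi)\,d\eta\,d\varphi,
\]
so that no special functions appear at all. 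The key tool is then a single anisotropic lower bound $\widehat R\gtrsim(\phi-\varphi)^2+(\sin^2\phi+\sin^2\varphi)\sin^2(\eta/2)$; an integration by parts in $\eta$ (using the Chebyshev factorisation $\sin(n\eta)=\sin\eta\,U_{n-1}(\cos\eta)$) removes the $r_0^{-1}(\phi)$ singularity at the poles in one stroke, and the bootstrap runs cleanly in four steps $L^\infty\to\mathscr C^\alpha\to W^{1,\infty}\to\mathscr C^{1,\beta}$, the last step being packaged into the abstract potential-theory Proposition~\ref{prop-potentialtheory} rather than any rescaling argument. What the paper's approach buys is economy: it avoids the delicate boundary behaviour of $F_n,F_n'$ and reuses exactly the framework (Proposition~\ref{prop-potentialtheory}, the estimate on $\widehat R$) that is needed anyway for the nonlinear analysis in Section~\ref{sec-regularity}. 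Your approach buys direct reuse of Proposition~\ref{Lem-meas}'s machinery, at the cost of carrying hypergeometric estimates and the rescaling through one more derivative with $h$ present.

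One minor point: you read $H_n(\varphi,\phi)$ literally and extract the prefactor $\sin(\phi)\,r_0^{n+1}(\phi)$, but the paper's proof (and the origin of \eqref{EigenV} from \eqref{kerneleq}) actually works with $H_n(\phi,\varphi)$, whose $\phi$-prefactor is $r_0^{n-1}(\phi)$; this is a typo in the statement. It does not affect your power counting, since $R$ is symmetric and both orderings give the same homogeneity after rescaling, but be aware that the ``vanishing prefactor'' you invoke near the poles is $r_0^{n-1}(\phi)$, which is why $n\geq2$ is genuinely needed.
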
}

\begin{proof}
{From the initial expression of the linearized operator \eqref{exp-lin1} in Proposition \ref{Prop-lin1} and combining it with Proposition \ref{Prop-lin2}, one has}
\begin{align}\label{FMZ1}
\mathcal{F}_n(h)(\phi):=&\int_0^\pi H_n(\varphi,\phi)h(\varphi)d\varphi{=}
\frac{1}{4\pi}\frac{1}{r_0(\phi)}
\int_0^\pi\int_0^{2\pi}\mathcal{H}_n(\phi,\varphi,\eta)h(\varphi)d\eta d\varphi,
\end{align}
with
\begin{align*}
 \widehat{R}(\phi,\varphi,\eta):=&(r_0(\phi)-r_0(\varphi))^2+
2r_0(\phi)r_0(\varphi)(1-\cos\eta)+(\cos\phi-\cos\varphi)^2,\\
\mathcal{H}_n(\phi,\varphi,\eta):=&\frac{\sin(\varphi)r_0(\varphi)\cos(n\eta)}{\widehat{R}^{\frac12}(\phi,\varphi,\eta)}.
\end{align*}
It is clear that  any solution $h$ of \eqref{EigenV} is equivalent to a solution of 
$$
\forall\, \phi\in(0,\pi),\quad h(\phi)=\frac{\mathcal{F}_n(h)(\phi)}{\lambda \,\nu_\Omega(\phi)}\cdot
$$
From Proposition \ref{Lem-meas} we know  that $\nu_\Omega\in \mathscr{C}^{1,\alpha}(0,\pi)$ and does not vanish when $\Omega\in(-\infty,\kappa)$. Therefore to check the regularity  $h\in \mathscr{C}^{1,\alpha}(0,\pi)$  it is enough to establish that $\mathcal{F}_n(h)\in \mathscr{C}^{1,\alpha}(0,\pi),$ due to the fact that $\mathscr{C}^\alpha$ is an algebra.   Since $h$ is symmetric  with respect to $\phi=\frac{\pi}{2}$, then one can verify that $\mathcal{F}_n(h)$ preserves this symmetry and hence  we shall only study the regularity  in the interval $[0,\frac{\pi}{2}]$ { and check that the left and right derivative at $\pi/2$ coincide}.  Notice that Proposition \ref{prop-higher-reg} tells us that $h$ is continuous in $[0,\pi]$,  for any $n\geq 1$. \\
In order to prove such regularity, let us  first check  that
\begin{equation}\label{estim-den}
\widehat{R}(\phi,\varphi,\eta)\geq C\left\{(\phi-\varphi)^2+(\sin^2(\phi)+\sin^2(\varphi))\sin^2 (\eta/2)\right\}, \forall\,\phi,\varphi\in(0,\pi), \eta\in(0,2\pi),
\end{equation}
which is the key point in this proof. In order to do so, recall first from \eqref{Chord} that 
\begin{equation}\label{estim-den-1}
\widehat{R}(\phi,\varphi,\eta)\geq C(\phi-\varphi)^2.
\end{equation}
On the other hand, define the function
$$
g_1(x)=x^2+r_0(\varphi)^2-2xr_0(\varphi)\cos(\eta)+(\cos(\varphi)-\cos(\phi))^2,
$$
which obviously verifies  $g_1(r_0(\phi))=\widehat{R}(\phi,\varphi,\eta)$. Such function has a minimum located at
$$
x_c=r_0(\varphi)\cos(\eta).
$$
Now we shall distinguish two cases: $\cos\eta\in[0,1]$ and $\cos\eta\in[-1,0].$ In the first case we get
\begin{align*}
g_1(x)\geq& g_1(x_c)=r_0^2(\varphi)\sin^2(\eta)+(\cos(\varphi)-\cos(\phi))^2\\
\geq & r_0^2(\varphi)\sin^2(\eta).
\end{align*}
From elementary  trigonometric relations we deduce that
$$
\sin^2\eta= 2\sin^2(\eta/2)\big(1+\cos(\eta)\big)\geq 2\sin^2(\eta/2).
$$
This implies in particular that, for $\cos\eta\in[0,1]$
$$
\widehat{R}(\phi,\varphi,\eta)\geq 2 r_0^2(\varphi)\sin^2(\eta/2).
$$
As to the second case $\cos\eta\in[-1,0]$, we simply notice that  the critical point $x_c$ is negative and therefore the second degree polynomial  $g_1$  is strictly increasing in $\R_+$. This implies that 
\begin{align*}
\widehat{R}(\phi,\varphi,\eta)=g_1(r_0(\phi))\geq& g_1(0)
\geq r_0^2(\varphi)
\geq r_0^2(\varphi)\sin^2(\eta/2).
\end{align*}
Therefore we get in both cases
\begin{equation}\label{estim-den-2}
\widehat{R}(\phi,\varphi,\eta)\geq  r_0^2(\varphi)\sin^2(\eta/2).
\end{equation}
By the  symmetry property $\widehat{R}(\phi,\varphi,\eta)=\widehat{R}(\varphi,\phi,\eta)$ we also get 
\begin{equation}\label{estim-den-3}
\widehat{R}(\phi,\varphi,\eta)\geq r_0^2(\phi)\sin^2(\eta/2).
\end{equation}
Adding  together \eqref{estim-den-1}--\eqref{estim-den-2}--\eqref{estim-den-3}, we achieve
\begin{equation}\label{estim-den-4}
3\widehat{R}(\phi,\varphi,\eta)\geq C(\phi-\varphi)^2+(r_0^2(\phi)+r_0^2(\varphi))\sin^2(\eta/2).
\end{equation}
It suffices now to combine this inequality with the assumption    {\bf (H2)} on $r_0$ in order to get  the desired estimate \eqref{estim-den}.

Let us now prove that  $\mathcal{F}_n(h)\in \mathscr{C}^{1,\alpha}$ and for this aim we shall  proceed in four steps.

\medskip
\noindent
$\bullet$ {\bf Step 1:} If $h\in L^\infty$ then  $\mathcal{F}_n(h)\in \mathscr{C}^\alpha(0,\pi)$.

Here we check that $\mathcal{F}_n(h)\in \mathscr{C}^\alpha(0,\pi)$ for any $n\geq 1$. 
In order to avoid the singularity in the denominator coming from $r_0$, we integrate by parts in the variable $\eta$  
\begin{align*}
\mathcal{F}_n(h)(\phi)=-\frac{1}{4\pi n}\bigintss_0^\pi\bigintss_0^{2\pi}\frac{\sin(\varphi)r_0^2(\varphi)\sin(n\eta)\sin(\eta) h(\varphi)}{\widehat{R}^{\frac32}(\phi,\varphi,\eta)}d\eta d\varphi.
\end{align*}
Introduce
$$
K_1(\phi,\varphi,\eta):=\frac{\sin(\varphi)r_0^2(\varphi)\sin(n\eta)\sin(\eta) 
h(\varphi)}{\widehat{R}^{\frac32}(\phi,\varphi,\eta)},
$$
and according to  Chebyshev polynomials we know  that 
\begin{equation}\label{Cheby1}\sin (n\eta)=\sin (\eta) \,U_{n-1}(\cos \eta),
\end{equation}
with $U_{n}$ being a polynomial of degree $n$. Thus
$$
K_1(\phi,\varphi,\eta)=\frac{\sin(\varphi)r_0^2(\varphi)U_{n-1}(\cos \eta)\sin^2(\eta) 
h(\varphi)}{\widehat{R}^{\frac32}(\phi,\varphi,\eta)}\cdot
$$
Using the assumption {\bf (H2)} combined with the  estimate \eqref{estim-den} for the denominator $\widehat{R}(\phi,\varphi,\eta)$, we achieve
\begin{align*}
|K_1(\phi,\varphi,\eta)|\lesssim &\frac{\|h\|_{L^\infty}\sin^3(\varphi) \sin^2(\eta/2)}{\left((\phi-\varphi)^2+(\sin^2(\phi)+\sin^2(\varphi))\sin^2 (\eta/2)\right)^\frac32}\\
\lesssim &\frac{\sin(\varphi)}{\left((\phi-\varphi)^2+(\sin^2(\phi)+\sin^2(\varphi))\sin^2 (\eta/2)\right)^\frac12}\cdot
\end{align*}
Interpolating between the two inequalities
$$
\frac{\sin(\varphi)}{\left((\phi-\varphi)^2+(\sin^2(\phi)+\sin^2(\varphi))\sin^2 (\eta/2)\right)^\frac12}\le |\varphi-\phi|^{-1}
$$
and
$$ \frac{\sin(\varphi)}{\left((\phi-\varphi)^2+(\sin^2(\phi)+\sin^2(\varphi))\sin^2 (\eta/2)\right)^\frac12}\leq\sin^{-1} (\eta/2),
$$
we deduce that for any $\beta\in[0,1]$
 \begin{equation}\label{Tad11}
 \frac{\sin(\varphi)}{\left((\phi-\varphi)^2+(\sin^2(\phi)+\sin^2(\varphi))\sin^2 (\eta/2)\right)^\frac12}\lesssim |\varphi-\phi|^{-(1-\beta)}\sin^{-\beta} (\eta/2).
 \end{equation}
Then,
\begin{align*}
|K_1(\phi,\varphi,\eta)|\lesssim &\frac{1}{|\phi-\varphi|^{1-\beta}\sin^\beta(\eta/2)}\cdot
\end{align*}
Let us now bound the derivative $\partial_\phi K_1(\phi,\varphi,\eta)$. For this purpose,  let us first  show that
\begin{equation}\label{zamma1}
{|\partial_\phi \widehat{R}(\phi,\varphi,\eta)|}\lesssim \widehat{R}^{\frac12}(\phi,\varphi,\eta).
\end{equation}
Indeed
\begin{align*}
\frac{\partial_\phi\widehat{R}(\phi,\varphi,\eta)}{\widehat{R}^\frac12(\phi,\varphi,\eta)}=\frac{2r_0'(\phi)(r_0(\phi)-r_0(\varphi))+2r_0'(\phi)r_0(\varphi)(1-\cos(\eta))+2\sin(\phi)(\cos(\varphi)-\cos(\phi))}{\widehat{R}^\frac12(\phi,\varphi,\eta)}\cdot
\end{align*}
Using the identity  $1-\cos(\eta)=2 \sin^2(\eta/2)$ and \eqref{estim-den}, we get a constant $C$ such that 
\begin{align*}
\frac{\big|\partial_\phi\widehat{R}(\phi,\varphi,\eta)\big|}{\widehat{R}^\frac12(\phi,\varphi,\eta)}\lesssim \frac{|\phi-\varphi|+\sin(\varphi)\sin^2(\eta/2)}{\left((\phi-\varphi)^2+(\sin^2(\phi)+\sin^2(\varphi))\sin^2(\eta/2)\right)^\frac12}\leq C,
\end{align*}
achieving \eqref{zamma1}. Therefore, taking the  derivative in $\phi$ of $K_1$ yields
\begin{align*}
\left|\partial_\phi K_1(\phi,\varphi,\eta)\right|\leq &C\|h\|_{L^\infty}\frac{\sin^3(\varphi)\sin^2(\eta/2)}{\left((\phi-\varphi)^2+(\sin^2(\phi)+\sin^2(\varphi))\sin^2(\eta/2)\right)^2}\\
\lesssim &\frac{\sin(\varphi)}{(\phi-\varphi)^2+(\sin^2(\phi)+\sin^2(\varphi))\sin^2(\eta/2)}\\
\lesssim &\frac{|\phi-\varphi|^{-1}\sin(\varphi)}{\left((\phi-\varphi)^2+(\sin^2(\phi)+\sin^2(\varphi))\sin^2(\eta/2)\right)^{\frac12}}\cdot
\end{align*}
Hence  \eqref{Tad11} allows to get or any $\beta\in(0,1)$,
\begin{align*}
\left|\partial_\phi K_1(\phi,\varphi,\eta)\right|
\lesssim &\frac{1}{|\phi-\varphi|^{2-\beta}\sin^{\beta}(\eta/2)}\cdot
\end{align*}
{ Here, we can use Proposition \ref{prop-potentialtheory} to the case where the operator  $\mathcal{K}$  depends  only on one variable by taking $g_1(\theta,\eta)=g_3(\theta,\eta)=\sin^{-\beta}(\eta/2)$. Then,  we infer  $\mathcal{F}_n(h)\in \mathscr{C}^\beta(0,\pi)$ for any $\beta\in(0,1)$.
}

\medskip
\noindent
$\bullet$ {\bf Step 2:} For $n\geq 2$, if $h$ is bounded then {$h(0)=h(\pi)=0$}.

Notice that this property was shown in Proposition \ref{prop-higher-reg} and we give here an alternative proof. 
Since   $\nu_\Omega$ is not vanishing then this amounts to checking that $\mathcal{F}_n(h)(0)=0$. By continuity, it is clear by Fubini  that 
\begin{align*}
\mathcal{F}_n(h)(0)=&-\frac{1}{4\pi n }\bigintss_0^\pi\bigintss_0^{2\pi}\frac{\big(\sin(\varphi)r_0^2(\varphi)\sin(n\eta)\sin(\eta)h(\varphi)}{\big(r_0^2(\varphi)+(1-\cos\varphi)^2\big)^\frac32}d\eta d\varphi\\
=&-\frac{1}{4\pi n }\bigintss_0^\pi\frac{\sin(\varphi)r_0^2(\varphi)h(\varphi)d\varphi}{\big(r_0^2(\varphi)+(1-\cos\varphi)^2\big)^\frac32}\bigintsss_0^{2\pi}\sin(n\eta)\sin(\eta) d\eta,
\end{align*}
which is vanishing if $n\geq 2$. Hence, $h(0)=0$, for any $n\geq 2$. {In the same way, we achieve $h(\pi)=0$.}

\medskip
\noindent
$\bullet$ {\bf Step 3:} If $h\in \mathscr{C}^\alpha(0,\pi)$ and {$h(0)=h(\pi)=0$}, then $\mathcal{F}_n(h)\in W^{1,\infty}(0,\pi)$.

We have shown before that $\mathcal{F}_n(h)\in \mathscr{C}^\beta(0,\pi)$ for any $\beta\in(0,1)$.
Then it   is enough to check that $\mathcal{F}_n(h)^\prime\in L^\infty(0,\pi)$. For this aim, we write 
\begin{align}\label{Fnprime}
\mathcal{F}_n(h)^\prime(\phi)=&\frac38\frac{1}{\pi n}\bigintsss_0^\pi\bigintsss_0^{2\pi}\frac{\sin(\varphi)r_0(\varphi)^2\sin(n\eta)\sin(\eta) h(\varphi)\partial_\phi \widehat{R}(\phi,\varphi,\eta)}{\widehat{R}^\frac52(\phi,\varphi,\eta)}d\eta d\varphi.
\end{align}
Adding and subtracting some appropriate terms, we find
\begin{align}\label{Splitt11}
\mathcal{F}_n(h)^\prime(\phi)=&\frac{3}{8\pi n}\bigintsss_0^\pi\bigintsss_0^{2\pi}\frac{\sin(\varphi)r_0^2(\varphi)\sin(n\eta)\sin(\eta) \big(h(\varphi)-h(\phi)\big)\partial_\phi \widehat{R}(\phi,\varphi,\eta)}{\widehat{R}^\frac52(\phi,\varphi,\eta)}d\eta d\varphi \nonumber\\
&+\frac{3h(\phi)}{8\pi n}\bigintsss_0^\pi\bigintsss_0^{2\pi}\frac{\sin(\varphi)r_0^2(\varphi)\sin(n\eta)\sin(\eta) \big[\partial_\phi \widehat{R}(\phi,\varphi,\eta)+\partial_\varphi \widehat{R}(\phi,\varphi,\eta)\big]}{\widehat{R}^\frac52(\phi,\varphi,\eta)}d\eta d\varphi \nonumber\\
&-\frac{3h(\phi)}{8\pi n}\bigintsss_0^\pi\bigintsss_0^{2\pi}\frac{\sin(\varphi)r_0^2(\varphi)\sin(n\eta)\sin(\eta) \partial_\varphi \widehat{R}(\phi,\varphi,\eta)}{\widehat{R}^\frac52(\phi,\varphi,\eta)}d\eta d\varphi\nonumber\\
:=& \frac{3}{8\pi n}(I_1+I_2-I_3)(\phi).
\end{align}
Let us bound each term separately. Using \eqref{estim-den}, \eqref{Cheby1}  and \eqref{zamma1} we achieve
\begin{align*}
|I_1(\phi)|\leq& C\|h\|_{\mathscr{C}^\alpha}\bigintsss_0^\pi\bigintsss_0^{2\pi}\frac{\sin^3(\varphi)\sin^2(\eta)|\varphi-\phi|^\alpha d\eta d\varphi}{\left((\phi-\varphi)^2+(\sin^2(\phi)+\sin^2(\varphi))\sin^2(\eta/2)\right)^2}\\
\leq& C\|h\|_{\mathscr{C}^\alpha}\bigintsss_0^\pi\bigintsss_0^{2\pi}\frac{\sin(\varphi)|\varphi-\phi|^\alpha d\eta d\varphi}{(\phi-\varphi)^2+(\sin^2(\phi)+\sin^2(\varphi))\sin^2(\eta/2)}
.\end{align*}
We write in view of \eqref{Tad11}
\begin{align*}
\frac{\sin(\varphi)}{(\phi-\varphi)^2+(\sin^2(\phi)+\sin^2(\varphi))\sin^2(\eta/2)}\leq&\frac{|\phi-\varphi|^{-1}\sin(\varphi)}{\left((\phi-\varphi)^2+(\sin^2(\phi)+\sin^2(\varphi))\sin^2(\eta/2)\right)^\frac12}\\
\lesssim&\frac{1}{|\phi-\varphi|^{2-\beta}\sin^\beta(\eta/2)}\cdot
\end{align*}
Therefore by imposing $1-\alpha<\beta<1$ we get
\begin{align*}
|I_1(\phi)| 
\leq C\|h\|_{\mathscr{C}^\alpha}\bigintsss_0^\pi\bigintsss_0^{2\pi}\frac{d\eta d\varphi}{|\phi-\varphi|^{2-\alpha-\beta}\sin^{\beta}(\eta/2)}
\leq C\|h\|_{\mathscr{C}^\alpha},
\end{align*}
which implies immediately  that  $I_1\in L^\infty$. Now let us move to the boundedness of $I_2$. From direct computations we get  
\begin{align}\label{MahmaX1}
\Big|\big(\partial_\phi +\partial_\varphi\big)\widehat{R}(\phi,\varphi,\eta)\Big|=&\Big|2(r_0(\phi)-r_0(\varphi))(r_0^\prime(\phi)-r_0^\prime(\varphi))+2(\cos\phi-\cos\varphi)(\sin\varphi-\sin\phi)\nonumber\\
+&2(1-\cos\eta)\big(r_0^\prime(\phi)r_0(\varphi)+r_0^\prime(\varphi) r_0(\phi)\big)\Big|.
\end{align}

Combining the assumption {\bf{(H2)}} with  $r_0'\in W^{1,\infty}$ and the mean value theorem  yields
\begin{align}\label{dif-den}
\Big|\big(\partial_\phi +\partial_\varphi\big)\widehat{R}(\phi,\varphi,\eta)\Big|
\leq & C\Big(  |\phi-\varphi |^2+(\sin\varphi+\sin\phi)\sin^2(\eta/2) \Big).
\end{align}
Hence, using \eqref{estim-den} and \eqref{Cheby1} we obtain
\begin{align*}
|I_2(\phi)|\leq& C{|h(\phi)-h(0)|}\bigintsss_0^\pi\bigintsss_0^{2\pi}\frac{\sin^3(\varphi)\sin^2(\eta)
\left(|\phi-\varphi|^{2}+(\sin\varphi+\sin\phi)\sin^2(\eta/2)\right) }{\left((\phi-\varphi)^2+(\sin^2(\phi)+\sin^2(\varphi))\sin^2(\eta/2)\right)^\frac{5}{2}}d\eta d\varphi\\
\leq& 
C {\|h\|_{\mathscr{C}^\alpha}}
{ \bigintsss_0^\pi\bigintsss_0^{2\pi}\frac{\phi ^{\alpha}d\eta d\varphi}
{\left((\phi-\varphi)^2+(\sin^2(\phi)+\sin^2(\varphi))\sin^2(\eta/2)\right)^{\frac12}}}\cdot
\end{align*}
Interpolation inequalities imply 
\begin{equation}\label{Low-bdd}
\frac{1}{\big((\phi-\varphi)^2+(\sin^2(\varphi)+\sin^2(\phi))\sin^2(\eta/2)\big)^{\frac12}}\le |\phi-\varphi|^{\alpha-1}\sin^{-\alpha}(\phi)\sin^{-\alpha}(\eta/2).\end{equation}
Therefore we get for any $\phi\in(0,\pi/2), \varphi\in(0,\pi)$ and $\eta\in(0,2\pi),$
\begin{align}\label{Tahy1}
\frac{\phi ^{\alpha}}
{\big((\phi-\varphi)^2+(\sin(\phi)+\sin(\varphi))^2\sin^2(\eta/2)\big)^{\frac12}}
\lesssim |\phi-\varphi|^{\alpha-1}\sin^{-\alpha}(\eta/2).
\end{align}
It follows that
\begin{align*}
|I_2(\phi)|\leq& C\|h\|_{\mathscr{C}^\alpha}
\bigintsss_0^\pi\bigintsss_0^{2\pi}\frac{d\eta d\varphi}{|\phi-\varphi|^{1-\alpha}\sin^\alpha(\eta/2)}
\leq C||h||_{\mathscr{C}^\alpha},
\end{align*}
which gives the boundedness of $I_2.$ It remains to bound 
the last  term $I_3$. Then  integrating by parts we infer
\begin{align*}
I_3(\phi)={{\frac23}}h(\phi)\bigintsss_0^\pi\bigintsss_0^{2\pi}\frac{\partial_{\varphi}\big(\sin(\varphi)r_0^2(\varphi)\big)\sin(n\eta)\sin(\eta) }
{\widehat{R}^\frac32(\phi,\varphi,\eta)}d\eta d\varphi.
\end{align*}
{To make the previous integration by parts rigorously, we should split the integral in $\varphi\in(\varepsilon,\phi-\varepsilon)$ and $\varphi\in(\pi+\varepsilon,\pi-\varepsilon)$ and later taking the limit as $\varepsilon\rightarrow 0$. }

Then, since $h(0)=0$ and $h\in \mathscr{C}^\alpha$ we find according to the assumptions {\bf (H)}, \eqref{estim-den} and \eqref{Cheby1}
\begin{align*}
|I_3(\phi)|\leq&C \|h\|_{\mathscr{C}^\alpha}\bigintss_0^\pi\bigintss_0^{2\pi}\frac{\phi^\alpha\sin^2(\varphi)\sin^2(\eta) d\eta d\varphi}{\big[(\phi-\varphi)^2
+(\sin^2(\phi)+\sin^2(\varphi))\sin^2(\eta/2)\big]^\frac{3}{2}}\\
\leq& C\|h\|_{\mathscr{C}^\alpha}\bigintss_0^\pi\bigintss_0^{2\pi}\frac{\phi^\alpha d\eta d\varphi}{\big[(\phi-\varphi)^2
+(\sin^2(\phi)+\sin^2(\varphi))\sin^2(\eta/2)\big]^\frac{1}{2}}\cdot\end{align*}
Applying  \eqref{Tahy1} yields
\begin{align*}
|I_3(\phi)|
\leq& C\|h\|_{\mathscr{C}^\alpha}\bigintss_0^\pi\bigintss_0^{2\pi}\frac{d\eta d\varphi}{|\phi-\varphi|^{1-\alpha}\sin^\alpha(\eta/2)}\leq C\|h\|_{\mathscr{C}^\alpha}.
\end{align*}
That implies that $h\in W^{1,\infty}(0,\pi/2)$.

{Moreover, since $r_0'(\pi/2)=0$ (this comes from the symmetry of $r_0$ with respect to $\pi/2$) and using \eqref{Fnprime} we find that $h'(\pi/2)=0$, which justifies why we can check the regularity only on $\phi\in(0,\pi/2)$. Finally, we get the desired result, that is, $h\in W^{1,\infty}(0,\pi)$.}

\medskip
\noindent
$\bullet$ {\bf Step 4:} If $h^\prime\in L^\infty(0,\pi)$ and {$h(0)=h(\pi)=0$}, then $\mathcal{F}_n(h)^\prime\in \mathscr{C}^\beta(0,\pi)$ for any $\beta\in(0,1)$.

Coming back to \eqref{Splitt11} and integrating by parts in the last integral we deduce 
 \begin{align}\label{Splitt17}
\mathcal{F}_n(h)^\prime(\phi)=&\frac{3}{8\pi n}\bigintss_0^\pi\bigintss_0^{2\pi}\frac{\sin(\varphi)r_0^2(\varphi)h(\varphi)\sin(n\eta)\sin(\eta) \big[\partial_\phi \widehat{R}(\phi,\varphi,\eta)+\partial_\varphi \widehat{R}(\phi,\varphi,\eta)\big]}{\widehat{R}^\frac52(\phi,\varphi,\eta)}d\eta d\varphi \nonumber\\
&-\frac{1}{4\pi n}\bigintss_0^\pi\bigintss_0^{2\pi}\frac{\partial_\varphi\big(\sin(\varphi)r_0^2(\varphi) h(\varphi)\big)\sin(n\eta)\sin(\eta) }{\widehat{R}^\frac32(\phi,\varphi,\eta)}d\eta d\varphi\nonumber\\
&\qquad \qquad :=\frac{3}{8\pi n}\bigintsss_0^\pi\bigintsss_0^{2\pi} \big(T_1-{\frac23} T_2\big)(\phi,\varphi,\eta) d\eta d\varphi,
\end{align}
 with 
 $$
  T_2(\phi,\varphi,\eta)=\frac{\partial_\varphi\big(\sin(\varphi)
r_0(\varphi)^2h(\varphi)\big)\sin(n\eta)\sin(\eta) }{\widehat{R}^{\frac32}(\phi,\varphi,\eta)}\cdot
$$
{As in the previous step, integration by parts can be justified by splitting the integral in $\varphi\in(\varepsilon,\phi-\varepsilon)$ and $\varepsilon\in(\phi+\varepsilon,\pi-\varepsilon)$ and later taking limits as $\varepsilon\rightarrow 0$.}

We want to apply Proposition \ref{prop-potentialtheory} to  each of those terms. First, for $T_1$ we use {\bf{(H)}} and  \eqref{dif-den} combined with 
 \eqref{estim-den}, we arrive at
\begin{align*}
|T_1(\phi,\varphi,\eta)|\lesssim &\,\,\frac{\sin^3(\varphi)|h(\varphi)|\sin^2(\eta)\big(|\phi-\varphi|^2+(\sin\phi+\sin\varphi)\sin^2({\eta/2})\big)}{\left\{(\phi-\varphi)^2+(\sin^2(\phi)+\sin^2(\varphi))\sin^2(\eta/2)\right\}^\frac52}\\
\lesssim &\,\,\frac{|h(\varphi)|}{\left\{(\phi-\varphi)^2+(\sin^2(\phi)+\sin^2(\varphi))\sin^2(\eta/2)\right\}^\frac12}\cdot
\end{align*}
Since $h'\in L^\infty$ and $h(0)=h(\pi)=0$ then we can write $h(\varphi)=\sin(\varphi)\widehat{h}(\varphi),$ with $\widehat{h}\in L^\infty(0,\pi)$.

Consequently,
\begin{align*}
|T_1(\phi,\varphi,\eta)|
\lesssim &||\widehat{h}||_{L^\infty}\frac{\sin(\varphi)}{\left\{(\phi-\varphi)^2+\sin^2(\varphi)\sin^2(\eta/2)\right\}^\frac12}\\
\lesssim &\frac{1}{\left\{(\phi-\varphi)^2+\sin^2(\eta/2)\right\}^\frac12}\cdot
\end{align*}
Interpolating again, we find that for any  $\beta\in[0,1]$
$$
|T_1(\phi,\varphi,\eta)|\lesssim \frac{1}{|\phi-\varphi|^{1-\beta}\sin^\beta(\eta/2)}\cdot
$$
Let us mention that we have proven that
\begin{align}\label{estim-h}
\frac{|h(\varphi)|}{\left\{(\phi-\varphi)^2+(\sin^2(\phi)+\sin^2(\varphi))\sin^2(\eta/2)\right\}^\frac12}\leq C\frac{1}{|\phi-\varphi|^{1-\beta}\sin^\beta(\eta/2)},
\end{align}
for any $\beta\in(0,1)$, $\phi\in(0,\pi/2), \varphi\in(0,\pi)$ and $\eta\in(0,2\pi)$, which will be useful later.

Now we shall  estimate the derivative  of $T_1$ with respect to $\phi$. We start with 
\begin{align}\label{MahmaX1-2}
\Big|\partial_\phi\left\{\big(\partial_\phi +\partial_\varphi\big)\widehat{R}(\phi,\varphi,\eta)\right\}\Big|=&\Big|2r_0'(\phi)(r_0^\prime(\phi)-r_0^\prime(\varphi))+2(r_0(\phi)-r_0(\varphi))r_0''(\phi)\nonumber\\
&-2\sin(\phi)(\sin\varphi-\sin\phi)-2(\cos\phi-\cos\varphi)\cos(\phi)\nonumber\\
&+2(1-\cos\eta)\big(r_0''(\phi)r_0(\varphi)+r_0^\prime(\varphi) r_0'(\phi)\big)\Big|.
\end{align}
Using that $r_0''\in L^\infty$, we find
\begin{align}\label{MahmaX1-3}
\Big|\partial_\phi\left\{\big(\partial_\phi +\partial_\varphi\big)\widehat{R}(\phi,\varphi,\eta)\right\}\Big|\leq&C\left(|\phi-\varphi|+\sin^2(\eta/2)\right).
\end{align}
Thus,
\begin{align*}
|\partial_\phi T_1(\phi,\varphi,\eta)|\lesssim &\frac{\sin^3(\varphi)|h(\varphi)|\sin^2(\eta)\Big|\partial_\phi\left\{\big(\partial_\phi +\partial_\varphi\big)\widehat{R}(\phi,\varphi,\eta)\right\}\Big|}{\widehat{R}^\frac52(\phi,\varphi,\eta)}\\
&+\frac{\sin^3(\varphi)|h(\varphi)|\sin^2(\eta)\left|\big(\partial_\phi +\partial_\varphi\big)\widehat{R}(\phi,\varphi,\eta)\right|\left|\partial_\phi \widehat{R}(\phi,\varphi,\eta)\right|}{\widehat{R}^\frac72(\phi,\varphi,\eta)}\cdot
\end{align*}
Using \eqref{estim-den}--\eqref{dif-den}--\eqref{MahmaX1-3}, we find
\begin{align*}
|\partial_\phi T_1(\phi,\varphi,\eta)|\lesssim &\frac{\sin^3(\varphi)|h(\varphi)|\sin^2(\eta)\left\{|\phi-\varphi|+\sin^2(\eta/2)\right\}}{\left\{(\phi-\varphi)^2+(\sin^2(\phi)+\sin^2(\varphi))\sin^2(\eta/2)\right\}^\frac52}\\
&+\frac{\sin^3(\varphi)|h(\varphi)|\sin^2(\eta)\left\{|\phi-\varphi|^2+(\sin(\varphi)+\sin(\phi))\sin^2(\eta/2)\right\}}{\left\{(\phi-\varphi)^2+(\sin^2(\phi)+\sin^2(\varphi))\sin^2(\eta/2)\right\}^3}\cdot
\end{align*}
It follows that
\begin{align*}
|\partial_\phi T_1(\phi,\varphi,\eta)|\lesssim &\frac{\sin(\varphi)|h(\varphi)|\left\{|\phi-\varphi|+\sin^2(\eta/2)\right\}}{\left\{(\phi-\varphi)^2+(\sin^2(\phi)+\sin^2(\varphi))\sin^2(\eta/2)\right\}^\frac32}\\
&+\frac{\sin(\varphi)|h(\varphi)|\left\{|\phi-\varphi|^2+(\sin(\varphi)+\sin(\phi))\sin^2(\eta/2)\right\}}{\left\{(\phi-\varphi)^2+(\sin^2(\phi)+\sin^2(\varphi))\sin^2(\eta/2)\right\}^2}\\
\lesssim &\frac{|h(\varphi)|}{\left\{(\phi-\varphi)^2+(\sin^2(\phi)+\sin^2(\varphi))\sin^2(\eta/2)\right\}}\cdot
\end{align*}
Putting together this estimate with   \eqref{estim-h} we infer
\begin{align*}
|\partial_\phi T_1(\phi,\varphi,\eta)|\lesssim& \frac{|\phi-\varphi|^{-1}|h(\varphi)|}{\left\{(\phi-\varphi)^2+(\sin^2(\phi)+\sin^2(\varphi))\sin^2(\eta/2)\right\}^{\frac12}}\\
\lesssim& C|\phi-\varphi|^{-(2-\beta)}\sin^{-\beta}(\eta/2),
\end{align*}
for any $\beta\in(0,1)$.

Concerning the estimate of the term 
 $T_2$, we first make appeal to   \eqref{estim-den} and \eqref{Cheby1} leading to
\begin{align*}
|T_2(\phi,\varphi,\eta)|\lesssim &\frac{(\sin^3(\varphi)+\sin^2(\varphi)|h(\varphi)|)\sin^2(\eta/2)}{\left\{(\phi-\varphi)^2+(\sin^2(\phi)+\sin^2(\varphi))\sin^2(\eta/2)\right\}^\frac32}\\
\lesssim&\frac{(\sin(\varphi)+|h(\varphi)|)}{\left\{(\phi-\varphi)^2+(\sin^2(\phi)+\sin^2(\varphi))\sin^2(\eta/2)\right\}^\frac12}\cdot
\end{align*}
Applying \eqref{Tad11} and \eqref{estim-h}, one finds
$$
|T_2(\phi,\varphi,\eta)|\leq C |\phi-\varphi|^{\beta-1}\sin^{-\beta}(\eta/2),
$$
for any $\beta\in(0,1)$. The next stage is devoted to the estimate of $\partial_\phi T_2$ and one gets from direct computations
\begin{align*}
|\partial_\phi T_2(\phi,\varphi,\eta)|\lesssim &\frac{(\sin^3(\varphi)+\sin^2(\varphi)|h(\varphi)|)\sin^2(\eta/2)\left|\partial_\phi \widehat{R}(\phi,\varphi,\eta)\right|}{\widehat{R}^\frac52(\phi,\varphi,\eta)}\cdot
\end{align*}
Using \eqref{zamma1} and \eqref{estim-den}, it implies
\begin{align*}
|\partial_\phi T_2(\phi,\varphi,\eta)|\lesssim &\frac{(\sin^3(\varphi)+\sin^2(\varphi)|h(\varphi)|)\sin^2(\eta/2)}{\left\{(\phi-\varphi)^2+(\sin^2(\phi)+\sin^2(\varphi))\sin^2(\eta/2)\right\}^2}\\
\lesssim &\frac{(\sin(\varphi)+|h(\varphi)|)}{(\phi-\varphi)^2+(\sin^2(\phi)+\sin^2(\varphi))\sin^2(\eta/2)}\cdot
\end{align*}
Therefore  \eqref{Tad11} and \eqref{estim-h} allows to get
$$
|\partial_\phi T_2(\phi,\varphi,\eta)|\lesssim |\phi-\varphi|^{-(2-\beta)}\sin^\beta(\eta/2),
$$
for any $\beta\in(0,1)$. Hence, by Proposition \ref{prop-potentialtheory}{, adapted to a one variable function,} we achieve that $\mathcal{F}_n(h)'\in \mathscr{C}^\beta$, for any $\beta\in(0,1)$, which achieves the proof of the announced result. \end{proof}}
}
{
\subsection{Fredholm structure}
In this section we shall be concerned with the Fredholm structure of the linearized operator $\partial_f \tilde{F}(\Omega,0)$ defined through \eqref{Prop-lin2-expression} and \eqref{operator}. Our  main result reads as follows.

\begin{pro}\label{cor-fredholm}
Let $m\geq2,$ $\alpha\in(0,1)$ and  $\Omega\in(-\infty,\kappa)$, then  $\partial_f \tilde{F}(\Omega,0):X_m^\alpha\rightarrow X_m^\alpha$ is a well--defined  Fredholm operator with zero index. In addition, for $\Omega=\Omega_m$, the kernel of $\partial_f \tilde{F}(\Omega_m,0)$ is one--dimensional and its range is closed and of co-dimension one.

Recall that the spaces $X_m^\alpha$ have been introduced in \eqref{space} and $\Omega_m$ in Proposition $\ref{prop-kernel-onedim}.$
\end{pro}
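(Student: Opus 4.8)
The strategy is to exploit the explicit diagonal form of the linearized operator given in Proposition \ref{Prop-lin2}, namely
\begin{equation*}
\partial_f \tilde{F}(\Omega,0)h(\phi,\theta)=\sum_{n\geq 1}\cos(nm\theta)\,\mathcal{L}_{nm}^\Omega(h_n)(\phi),\qquad h=\sum_{n\geq1}h_n(\phi)\cos(nm\theta)\in X_m^\alpha,
\end{equation*}
so that everything reduces to the scalar operators $\mathcal{L}_{nm}^\Omega=\nu_\Omega\,(\mathrm{Id}-\mathcal{K}_{nm}^\Omega)$ acting on a suitable one--variable H\"older space with Dirichlet endpoint conditions and symmetry about $\phi=\pi/2$. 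First I would check that $\partial_f \tilde{F}(\Omega,0)$ maps $X_m^\alpha$ into $X_m^\alpha$: the $m$--fold structure and the symmetry $h(\pi-\phi,\theta)=h(\phi,\theta)$ are preserved by construction (using assumption \textbf{(H3)} on $r_0$, exactly as in the regularity arguments of Proposition \ref{HoldX1}), multiplication by $\nu_\Omega$ preserves $\mathscr{C}^{1,\alpha}$ by Proposition \ref{Lem-meas}(3) since $\mathscr{C}^{1,\alpha}$ is an algebra and $\nu_\Omega\in\mathscr{C}^{1,\alpha}([0,\pi])$, and the smoothing estimates in Steps 1--4 of the proof of Proposition \ref{HoldX1} show that $h\mapsto \mathcal{F}_{nm}(h)/(\lambda\nu_\Omega)$ is bounded on $\mathscr{C}^{1,\alpha}(0,\pi)$ with the correct Dirichlet behaviour at $\{0,\pi\}$ for $nm\geq2$ (which holds since $m\geq2$). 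Then I would write
\begin{equation*}
\partial_f \tilde{F}(\Omega,0)=M_{\nu_\Omega}\big(\mathrm{Id}-\mathcal{K}^\Omega\big),\qquad \mathcal{K}^\Omega h:=\sum_{n\geq1}\cos(nm\theta)\,\mathcal{K}_{nm}^\Omega(h_n),
\end{equation*}
where $M_{\nu_\Omega}$ is the (invertible, since $\nu_\Omega$ is bounded away from $0$) multiplication operator. Being Fredholm of index zero then follows once I show $\mathcal{K}^\Omega:X_m^\alpha\to X_m^\alpha$ is compact, because $\mathrm{Id}-(\text{compact})$ is Fredholm of index zero and composition with the isomorphism $M_{\nu_\Omega}$ preserves this.

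\textbf{Compactness of $\mathcal{K}^\Omega$.} This is the technical heart. I would argue it in two layers. On each fixed frequency, $\mathcal{K}_{nm}^\Omega$ is an integral operator with kernel $H_{nm}(\phi,\varphi)/(\lambda\nu_\Omega(\phi))$ whose regularizing properties are quantified in the proof of Proposition \ref{HoldX1}: it maps $L^\infty$ into $\mathscr{C}^\beta$ and $\mathscr{C}^\alpha_{\mathrm{Dir}}$ into $\mathscr{C}^{1,\beta}$, so by Arzel\`a--Ascoli each $\mathcal{K}_{nm}^\Omega$ is compact from $\mathscr{C}^{1,\alpha}$ to itself with a gain of H\"older exponent. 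To handle the full series I would use the decay estimate of Lemma \ref{Lem-Hndecreasing}, namely $|H_n(\phi,\varphi)|\lesssim n^{-\alpha'}\sin(\varphi)r_0^{1/2}(\varphi)r_0^{-3/2}(\phi)|\phi-\varphi|^{-\beta'}$ for any $0\le\alpha'<\beta'\le1$, together with the analogous bounds on $\partial_\phi H_n$ implicit in the Step 3--4 computations of Proposition \ref{HoldX1}, to show that the high--frequency tail $\sum_{n\geq N}$ has operator norm $\to0$ as $N\to\infty$ on $X_m^\alpha$. Concretely, truncating the Fourier series gives operators $\mathcal{K}^\Omega_{\leq N}$ which are finite sums of compact operators, hence compact, and $\|\mathcal{K}^\Omega-\mathcal{K}^\Omega_{\leq N}\|\to0$ by the $n^{-\alpha'}$ decay (choosing $\alpha'>0$); a norm limit of compact operators is compact. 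I expect this uniform--in--frequency control to be the main obstacle, since one must keep track of the endpoint singularities at $\{0,\pi\}$ simultaneously with the $n\to\infty$ decay; the rescaling argument $\varphi=\phi\theta$ used throughout Section \ref{Sec-assym} and in Proposition \ref{HoldX1} is the tool that makes the two compatible.

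\textbf{Kernel and cokernel at $\Omega=\Omega_m$.} For the kernel: $h=\sum_n h_n(\phi)\cos(nm\theta)\in X_m^\alpha$ lies in $\ker\partial_f \tilde{F}(\Omega_m,0)$ iff $\mathcal{L}_{nm}^{\Omega_m}(h_n)=0$ for all $n\geq1$, i.e.\ iff $h_n$ is a $\mathscr{C}^{1,\alpha}$ eigenfunction of $\mathcal{K}_{nm}^{\Omega_m}$ for the eigenvalue $1$. By Proposition \ref{prop-operator}(4)--(5), $1$ can be an eigenvalue of $\mathcal{K}_{nm}^{\Omega_m}$ only when $\lambda_{nm}(\Omega_m)\geq1$; since $n\mapsto\Omega_{n}$ is strictly increasing (Proposition \ref{prop-kernel-onedim}(2)) and $\lambda_{nm}$ is strictly increasing in $\Omega$ and equals $1$ at $\Omega_{nm}$, we have $\lambda_{nm}(\Omega_m)<\lambda_m(\Omega_m)=1$ for every $n\geq2$, so those frequencies contribute nothing, while for $n=1$ the eigenvalue $1=\lambda_m(\Omega_m)$ is simple with a one--dimensional eigenspace spanned by a function which, by Proposition \ref{prop-higher-reg} and Proposition \ref{HoldX1} (here $m\geq2$ is essential), is $\mathscr{C}^{1,\alpha}$, vanishes at $\{0,\pi\}$, and is even about $\pi/2$ — hence lies in $X_m^\alpha$. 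Therefore $\ker\partial_f \tilde{F}(\Omega_m,0)$ is one--dimensional. For the range: by the index--zero Fredholm property just established, $\dim\ker=\mathrm{codim}\,\mathrm{Range}$, so the range has co-dimension one and, being the range of a Fredholm operator, is closed. I would close by recording explicitly a generator of the kernel, $h_\star(\phi,\theta)=g_m(\phi)\cos(m\theta)$ with $g_m$ the positive normalized eigenfunction of $\mathcal{K}_m^{\Omega_m}$, for use in the transversality verification later.
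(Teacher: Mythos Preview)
Your overall architecture---write $\partial_f\tilde F(\Omega,0)$ as (isomorphism) composed with $\mathrm{Id}-(\text{compact})$, then read off the kernel frequency by frequency using Propositions \ref{prop-operator}, \ref{prop-kernel-onedim}, \ref{prop-higher-reg} and \ref{HoldX1}---matches the paper exactly, and your kernel/cokernel argument at $\Omega=\Omega_m$ is essentially identical to the one given there.

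The substantive difference is in how compactness is established. The paper does \emph{not} proceed mode by mode. Instead it writes
\[
\partial_f\tilde F(\Omega,0)h(\phi,\theta)=\nu_\Omega(\phi)h(\phi,\theta)-\tfrac{1}{4\pi}G(h)(\phi,\theta),
\]
with $G$ the full two--variable integral operator \eqref{G-hX1}, and proves directly that $G:X_m^\alpha\to\mathscr{C}^{1,\beta}$ is bounded for every $\beta\in(\alpha,1)$, using kernel estimates and the potential--theory Proposition~\ref{prop-potentialtheory}; compactness then follows from the embedding $\mathscr{C}^{1,\beta}\hookrightarrow\mathscr{C}^{1,\alpha}$. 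Your route---truncate $\mathcal{K}^\Omega$ at frequency $N$, show each $\mathcal{K}_{nm}^\Omega$ is compact, and control the tail via Lemma~\ref{Lem-Hndecreasing}---runs into a genuine difficulty that the paper's approach sidesteps: the target space $X_m^\alpha$ carries a $\mathscr{C}^{1,\alpha}$ norm in \emph{both} $\phi$ and $\theta$, and recovering $\theta$--regularity from mode--wise bounds on the $h_n$'s is not free. The pointwise decay $|H_n|\lesssim n^{-\alpha'}$ of Lemma~\ref{Lem-Hndecreasing} controls $L^2_{\mu_\Omega}$ norms (as in Proposition~\ref{prop-operator}(3)), not $\mathscr{C}^{1,\alpha}(\phi)$ operator norms, and even granting such bounds you would still need to sum them so as to reconstruct the joint H\"older regularity in $(\phi,\theta)$; the Fourier coefficients of a $\mathscr{C}^{1,\alpha}$ function need not have summable $\mathscr{C}^{1,\alpha}$ norms. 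You correctly flag this as ``the main obstacle'', but the resolution you sketch (derivative bounds on $H_n$ implicit in Proposition~\ref{HoldX1}) does not close it. The paper's choice to keep the kernel undecomposed and apply Proposition~\ref{prop-potentialtheory} to $G$ and its derivatives $\partial_\phi G,\partial_\theta G$ is precisely what makes this issue disappear.
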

}

\begin{proof}
We shall first prove the second part, assuming the first one. The structure of the linearized operator is detailed in  \eqref{Prop-lin2-expression} and one has for $h(\phi,\theta)=\sum_{n\geq1} h_n(\phi)\cos(n\theta)$
\begin{equation*}
\partial_{f} \tilde{F}(\Omega,0)h(\phi,\theta)=\sum_{n\geq 1}\cos(n\theta)\mathcal{L}_n^\Omega (h_n)(\phi),
\end{equation*}
where
\begin{align*}
\mathcal{L}_n^\Omega(h_n)(\phi)=&\nu_\Omega(\phi)h_n(\phi)-\int_0^\pi H_n(\phi,\varphi)h_n(\varphi)d\varphi, \quad  \phi\in[0,\pi].
\end{align*}
In view of \eqref{operator} and \eqref{kerneleq}, {$\mathcal{L}_n^\Omega(h)=0$}  can be written in the form 
$$
\mathcal{K}_n^\Omega h=h.
$$
We define the dispersion set by 
$$
\mathcal{S}=\big\{\Omega\in(-\infty,\kappa),\quad\textnormal{Ker} \partial_{f} \tilde{F}(\Omega,0)\neq\{0\}\big\}.
$$
Hence $\Omega\in\mathcal{S}$ if and only if there exists $m\geq1$ such that the equation 
$$
\forall \phi\in[0,\pi],\quad \mathcal{K}_m^\Omega(h_m)(\phi)=h_m(\phi),
$$
admits a nontrivial  solution satisfying the regularity $h_m\in \mathscr{C}^{1,\alpha}(0,\pi)$ and the boundary condition $h_m(0)=h_m(\pi)=0.$ By virtue of Proposition \ref{prop-higher-reg} and Proposition \ref{HoldX1} the foregoing conditions are satisfied for any eigenvalue provided that $m\geq2.$ On the other hand, we have shown in Proposition  \ref{prop-operator}-$(4)$  that for  $\Omega=\Omega_m$ the kernel of $\mathcal{L}_m$ is one--dimensional.  Moreover,   Proposition  \ref{prop-operator}-$(5)$ ensures that for any $n>m$ we have $\lambda_n(\Omega_m)<\lambda_m(\Omega_m)=1$. Since by construction $\lambda_n(\Omega_m)$ is the largest eigenvalue of $\mathcal{K}_n^{\Omega_m}$, then $1$ could not be an eigenvalue of this operator and the equation
$$
\mathcal{K}_n^{\Omega_m} h=h,
$$
admits only the trivial solution. Thus the kernel of the restricted operator $\partial_f \tilde{F}(\Omega_m,0):X_m^\alpha\rightarrow X_m^\alpha$  is one-dimensional and is generated by the eigenfunction
$$
(\phi,\theta)\mapsto h_m(\phi) \cos( m\theta).
$$ 
We emphasize that this element belongs to the space $X_m^\alpha$ because it belongs to the function space $\mathscr{C}^{1,\alpha}((0,\pi)\times{\T})$  since  $\phi\mapsto h_m(\phi)\in \mathscr{C}^{1,\alpha}(0,\pi).$
That the range of $\partial_f \tilde{F}(\Omega_m,0) $  is closed and of co-dimension one follows from the fact this operator is Fredholm of zero index.

Next, let us show that $\partial_f \tilde{F}(\Omega,0) $ is Fredholm of zero index. 
By virtue of the computations developed  in Proposition \ref{Prop-lin1} and the expression of \eqref{operator}, we assert that 
$$
\partial_f \tilde{F}(\Omega,0)h(\phi,\theta)=\nu_\Omega(\phi)h(\phi,\theta)-\frac{1}{4\pi}G(h)(\phi,\theta),
$$
with
\begin{align}\label{G-hX1}
G(h)(\phi,\theta)=&\frac{1}{r_0(\phi)}\bigintsss_0^\pi\bigintsss_0^{2\pi}\frac{\sin(\varphi)r_0(\varphi)h(\varphi,\eta)d\eta d\varphi}{A(\phi,\theta,\varphi,\eta)^\frac12},\\
A(\phi,\theta,\varphi,\eta)=&(r_0(\phi)-r_0(\varphi))^2+2r_0(\phi)r_0(\varphi)(1-\cos(\theta-\eta))+(\cos(\phi)-\cos(\varphi))^2.\nonumber
\end{align}
Since $\Omega\in(-\infty,\kappa)$, the function $\nu_\Omega$ is not vanishing. Moreover, by Proposition \ref{Lem-meas} one has that $\nu_\Omega\in \mathscr{C}^{1,\beta}$, for any $\beta\in(0,1)$.

Define the linear operator $\nu_\Omega\textnormal{Id}: X_m^\alpha\rightarrow X_m^\alpha$ by 
$$
(\nu_\Omega\textnormal{Id})(h)(\phi,\theta)=\nu_\Omega(\phi)h(\phi,\theta)\cdot
$$
We shall check  that it defines  an isomorphism. The continuity of this operator follows from the regularity  $\nu_\Omega\in \mathscr{C}^{1,\alpha}(0,\pi)$ combined with the fact that $ \mathscr{C}^{1,\alpha}((0,\pi)\times{\T})$ is an algebra. The Dirichlet boundary condition, the  $m$--fold symmetry and the absence of the frequency zero are  immediate for the product $\nu_\Omega h$, which finally belongs to  $X_m^\alpha$. Moreover, since $\nu_\Omega$ is not vanishing, one has that $\nu_\Omega\textnormal{Id}$ is injective. In order to check that such an operator is an isomorphism, it is enough to check that it is surjective, as a consequence of the Banach theorem. Take $k\in X_m^\alpha$, and we will find $h\in X_m^\alpha$ such that $(\nu_\Omega\textnormal{Id})(h)=k$. Indeed, $h$ is given by
$$
h(\phi,\theta)=\frac{d(\phi,\theta)}{\nu_\Omega(\phi)}\cdot
$$
Using the regularity of $\nu_\Omega$ and the fact that it is not vanishing, it is easy to check that  its inverse $\frac{1}{\nu_\Omega}$ still belongs to $\mathscr{C}^{1,\alpha}(0,\pi)$. Similar arguments as before allow to get $h\in X_m^\alpha$. Hence $\nu_\Omega\textnormal{Id}$ is an isomorphism, and thus it is a Fredholm operator of zero index. From classical results on index theory, it is known that to get   $\partial_f \tilde{F}(\Omega,0)$ is Fredholm  of zero index, it is enough to establish that the perturbation ${G}: X_m^\alpha\to  X_m^\alpha$ is compact. To do so, we prove that for any $\beta\in(\alpha,1)$ one has the smoothing effect
$$
\forall\, h\in X_{m}^\alpha,\quad \|G(h)\|_{\mathscr{C}^{1,\beta}}\leq C\|h\|_{\mathscr{C}^{1,\alpha}},
$$
that we combine with the compact embedding $\mathscr{C}^{1,\beta}\big((0,\pi)\times{\T}\big)\hookrightarrow \mathscr{C}^{1,\alpha}\big((0,\pi)\times{\T}\big)$.

Take $h\in X_m^\alpha$ and let us show that $G(h)\in \mathscr{C}^{1,\beta}\big((0,\pi)\times{\T}\big)$, for any $\beta\in(0,1)$. We shall first deal with a preliminary fact. Define the following function
\begin{equation}\label{gtheta-def}
\forall\, \varphi\in[0,\pi],\, \theta,\eta\in\R,\quad g_\theta (\varphi,\eta)=\int_\theta^\eta h(\varphi,\tau)d\tau.
\end{equation}
By \eqref{platit1} we infer
$$
|g_\theta(\varphi,\eta)|\leq C\|h\|_{\textnormal{Lip}}|\theta-\eta|\sin(\varphi),
$$
According to the definition of the space $X_m^\alpha$, the partial function $\tau\mapsto h(\varphi,\tau)$ is $2\pi$-periodic and with zero  average, that is, $\displaystyle{\int_{0}^{2\pi}h(\varphi,\tau)d\tau=0}$.  This allows to get that $\eta\mapsto g_\theta(\varphi,\eta)$ is also $2\pi$-periodic, and from elementary arguments we find 
\begin{equation}\label{gtheta}
|g_\theta(\varphi,\eta)|\leq C\|h\|_{\textnormal{Lip}}\,|\sin((\theta-\eta)/2)|\sin(\varphi),
\end{equation}
for any $\varphi\in[0,\pi]$ and $\theta,\eta\in[0,2\pi]$. In addition, it is immediate  that $g_\theta\in\mathscr{C}^{1,\alpha}\big((0,\pi)\times{\T}\big)$ and 
$$
 \partial_\varphi g_\theta (\varphi,\eta)=\int_\theta^\eta \partial_\varphi h(\varphi,\tau)d\tau.
$$
The same arguments as before show that the partial function $\tau\mapsto \partial_\varphi h(\varphi,\tau)$ is $2\pi$-periodic and with zero  average. Moreover,  $\eta\mapsto \partial_\varphi g_\theta(\varphi,\eta)$ is also $2\pi$-periodic  and 
\begin{equation}\label{gthetaM1}
|\partial_\varphi g_\theta(\varphi,\eta)|\leq C\|h\|_{\textnormal{Lip}}\,|\sin((\theta-\eta)/2)|,
\end{equation}
for any $\varphi\in[0,\pi]$ and $\theta,\eta\in[0,2\pi]$.
Using the auxiliary function $g_\theta$, one can integrate by parts in $G(h)$ in the variable $\eta$ obtaining
\begin{align}\label{G-2}
G(h)(\phi,\theta)=\bigintsss_0^\pi\bigintsss_0^{2\pi}\frac{\sin(\varphi)r_0^2(\varphi)\sin(\eta-\theta)g_\theta(\varphi,\eta)}{A(\phi,\theta,\varphi,\eta)^\frac32}d\eta d\varphi.
\end{align}
{We can justify the integration by parts by splitting the integral in $\eta\in(0,\theta-\varepsilon)$ and $\eta\in(\theta+\varepsilon,2\pi)$ and later taking limits as $\varepsilon\rightarrow 0$.}
The boundary term in the above integration by parts is vanishing due to the periodicity in $\eta$ of the  involved functions.
It follows from  \eqref{estim-den}, 
\begin{equation}\label{estim-den2}
A(\phi,\theta,\varphi,\eta)\gtrsim (\phi-\varphi)^2
+(\sin^2(\phi)+\sin^2(\varphi))\sin^2((\theta-\eta)/2),
\end{equation}
for any $\phi,\varphi\in(0,\pi)$ and $\theta,\eta\in(0,2\pi)$, and this estimate is crucial in the proof.

The boundedness of $G(h)$ can be implemented  by using \eqref{gtheta} and \eqref{estim-den2}. Indeed, we write
\begin{align*}
|G(h)(\phi,\theta)|\lesssim& \|h\|_{\textnormal{Lip}}\bigintsss_0^\pi\bigintsss_0^{2\pi}\frac{\sin^2(\varphi)r_0^2(\varphi)|\sin(\theta-\eta)||\sin((\theta-\eta)/2)|d\eta d\varphi}{\left((\phi-\varphi)^2+(\sin^2(\varphi)+\sin^2(\phi))\sin^2((\theta-\eta)/2)\right)^\frac32}\\
\lesssim& \|h\|_{\textnormal{Lip}}\bigintsss_0^\pi\bigintsss_0^{2\pi}\frac{r_0^2(\varphi)\sin^2(\varphi)\sin^2((\theta-\eta)/2)d\eta d\varphi}{\left((\phi-\varphi)^2+(\sin^2(\varphi)+\sin^2(\phi))\sin^2((\theta-\eta)/2)\right)^\frac32}\cdot
\end{align*}
Therefore, we obtain
\begin{align*}
|G(h)(\phi,\theta)|
\lesssim& \|h\|_{\textnormal{Lip}}\bigintsss_0^\pi\bigintsss_0^{2\pi}\frac{r_0^2(\varphi)d\eta d\varphi}{\left((\phi-\varphi)^2+(\sin^2(\varphi)+\sin^2(\phi))\sin^2((\theta-\eta)/2)\right)^\frac12}\cdot
\end{align*}
From the assumption ${\bf (H2)}$ on $r_0$ combined with \eqref{Tad11}  we get for any $\beta\in(0,1)$, and then
\begin{align*}
|G(h)(\phi,\theta)|\lesssim& \|h\|_{\textnormal{Lip}}\bigintsss_0^\pi\bigintsss_0^{2\pi}|\phi-\varphi|^{\beta-1}|\sin((\theta-\eta)/2)|^{-\beta} d\eta d\varphi\lesssim \|h\|_{\textnormal{Lip}}.
\end{align*}
Therefore
 \begin{align}\label{boundG1}
\|G(h)\|_{L^\infty}\lesssim \|h\|_{\mathscr{C}^{1,\alpha}}.
\end{align}
The next step is to  check now that {$\partial_\phi G(h)\in \mathscr{C}^\beta$} by making appeal to  Proposition \ref{prop-potentialtheory}. From direct computations using \eqref{G-2} we find
\begin{align*}
\partial_\phi G(h)(\phi,\theta)=\frac32\bigintsss_0^\pi\bigintsss_0^{2\pi}\frac{\sin(\varphi)r_0^2(\varphi)\sin(\theta-\eta)g_\theta(\varphi,\eta)\partial_\phi A(\phi,\theta,\varphi,\eta)}{A(\phi,\theta,\varphi,\eta)^\frac52}d\eta d\varphi.
\end{align*}
Note that we can insert the derivative inside the integral, to make this rigorous, cut off the integral  in $\eta$ away from $\theta$ and take a limit.
Adding and subtracting in the numerator $\partial_\varphi A(\phi,\theta,\varphi,\eta)$, it can be written in the form
\begin{align*}
\partial_\phi G(h)(\phi,\theta)=&\frac32\bigintsss_0^\pi\bigintsss_0^{2\pi}\frac{\sin(\varphi)r_0^2(\varphi)\sin(\theta-\eta)g_\theta(\varphi,\eta)\big(\partial_\phi A(\phi,\theta,\varphi,\eta)+\partial_\varphi A(\phi,\theta,\varphi,\eta)\big)}{A(\phi,\theta,\varphi,\eta)^\frac52}d\eta d\varphi\\
&-\frac32\bigintsss_0^\pi\bigintsss_0^{2\pi}\frac{\sin(\varphi)r_0^2(\varphi)\sin(\theta-\eta)g_\theta(\varphi,\eta)\partial_\varphi A(\phi,\theta,\varphi,\eta)}{A(\phi,\theta,\varphi,\eta)^\frac52}d\eta d\varphi.
\end{align*}
Integrating by parts in $\varphi$ in the last term yields
\begin{align*}
\partial_\phi G(h)(\phi,\theta)=&\frac32\bigintsss_0^\pi\bigintsss_0^{2\pi}\frac{\sin(\varphi)r_0^2(\varphi)\sin(\theta-\eta)g_\theta(\varphi,\eta)\big(\partial_\phi A(\phi,\theta,\varphi,\eta)+\partial_\varphi A(\phi,\theta,\varphi,\eta)\big)}{A(\phi,\theta,\varphi,\eta)^\frac52}d\eta d\varphi\\
&-\bigintsss_0^\pi\bigintsss_0^{2\pi}\frac{\partial_\varphi\left(\sin(\varphi)r_0^2(\varphi)g_\theta(\varphi,\eta)\right)\sin(\theta-\eta)}{A(\phi,\theta,\varphi,\eta)^\frac32}d\eta d\varphi\\
=:&\frac32\mathscr{G}_1(\phi,\theta)-\mathscr{G}_2(\phi,\theta).
\end{align*}
The goal is to check the kernel assumptions for Proposition \ref{prop-potentialtheory} in order to prove that $\mathscr{G}_1$ and $\mathscr{G}_2$ belong to $\mathscr{C}^\beta$, for any $\beta\in(0,1)$. For this aim, we define the kernels
$$
{K}_1(\phi,\theta,\varphi,\eta):=\frac{\sin(\varphi)r_0^2(\varphi)\sin(\theta-\eta)g_\theta(\varphi,\eta)(\partial_\phi +\partial_\varphi)A(\phi,\theta,\varphi,\eta)}{A(\phi,\theta,\varphi,\eta)^\frac52},
$$
and
$$
K_2(\phi,\theta,\varphi,\eta):=\frac{\partial_\varphi\left(\sin(\varphi)r_0^2(\varphi)g_\theta(\varphi,\eta)\right)\sin(\theta-\eta)}{A(\phi,\theta,\varphi,\eta)^\frac32}\cdot
$$
Let us start with  $K_1$ and show that it  satisfies the hypothesis of Proposition \ref{prop-potentialtheory}. From straightforward calculus we obtain in view of the assumptions ${\bf{(H)}}$ and the mean value theorem 
\begin{align}\label{Aphivarphi}
|(\partial_\phi +\partial_\varphi)A(\phi,\theta,\varphi,\eta)|=&|2(r_0'(\phi)-r_0'(\varphi))(r_0(\phi)-r_0(\varphi))\nonumber\\
&+2(r_0(\phi)r_0'(\varphi)+r_0(\varphi)r_0'(\phi))(1-\cos(\theta-\eta))\nonumber\\
&-2(\sin(\phi)-\sin(\varphi))(\cos(\phi)-\cos(\varphi))|\nonumber\\
\lesssim& \,(\phi-\varphi)^2+\big(\sin\varphi+\sin\phi\big)\sin^2((\theta-\eta)/2).
\end{align}
Using the inequality $|ab|\leq \frac12(a^2+b^2)$ allows to get
\begin{align*}
\sin\varphi|(\partial_\phi +\partial_\varphi)A(\phi,\theta,\varphi,\eta)|
\lesssim& \,(\phi-\varphi)^2+\big(\sin^2\varphi+\sin^2\phi\big)\sin^2((\theta-\eta)/2).
\end{align*}
Thus, applying \eqref{estim-den2} we deduce that
\begin{align}\label{denV2}
\sin\varphi\big|(\partial_\phi +\partial_\varphi)A(\phi,\theta,\varphi,\eta)\big|\lesssim& |A(\phi,\theta,\varphi,\eta)|.
\end{align}
Then, putting together   \eqref{gtheta}, ${\bf{(H2)}}$, \eqref{estim-den2} and \eqref{denV2}  we find
\begin{align*}
|K_1(\phi,\theta,\varphi,\eta)|\lesssim& \|h\|_{\textnormal{Lip}}\frac{\sin(\varphi)r_0^2(\varphi)\sin^2((\theta-\eta)/2)}{\left((\phi-\varphi)^2
+(\sin^2(\phi)+\sin^2(\varphi))\sin^2((\theta-\eta)/2)\right)^\frac32}\\
 \lesssim& \|h\|_{\textnormal{Lip}}\frac{\sin(\varphi)}{\left((\phi-\varphi)^2
+(\sin^2(\phi)+\sin^2(\varphi))\sin^2((\theta-\eta)/2)\right)^\frac12}\cdot
\end{align*}
As a consequence of  \eqref{Tad11}, we immediately get
\begin{align}\label{denVM2}
|K_1(\phi,\theta,\varphi,\eta)|\lesssim \|h\|_{\textnormal{Lip}} |\phi-\varphi|^{\beta-1}|\sin((\theta-\eta)/2)|^{-\beta},
\end{align}
for any $\beta\in(0,1)$. Let us compute the derivative with respect to $\phi$ of $K_1$,
\begin{align*}
\partial_\phi K_1(\phi,\theta,\varphi,\eta)=&\frac{\sin(\varphi)r_0^2(\varphi)\sin(\theta-\eta)g_\theta(\varphi,\eta)\partial_\phi((\partial_\phi +\partial_\varphi)A(\phi,\theta,\varphi,\eta))}{A(\phi,\theta,\varphi,\eta)^\frac52}\\
&-\frac52 \frac{\sin(\varphi)r_0^2(\varphi)\sin(\theta-\eta)g_\theta(\varphi,\eta)((\partial_\phi +\partial_\varphi)A(\phi,\theta,\varphi,\eta)) \partial_\phi A(\phi,\theta,\varphi,\eta) }{A(\phi,\theta,\varphi,\eta)^\frac72}\cdot
\end{align*}
From direct computations, we easily get 
\begin{equation}\label{dphiA}
|\partial_\phi((\partial_\phi +\partial_\varphi)A(\phi,\theta,\varphi,\eta))|\lesssim |\phi-\varphi|+\sin^2((\theta-\eta)/2)
\end{equation}
and
\begin{equation}\label{dphiA2}
|\partial_\phi A(\phi,\theta,\varphi,\eta)|\lesssim |\phi-\varphi|+\sin(\varphi)\sin^2((\theta-\eta)/2).
\end{equation}
Then, it is clear from \eqref{estim-den2}  that
\begin{equation}\label{dphiA3}
|\partial_\phi A(\phi,\theta,\varphi,\eta)|\lesssim  A^\frac12 (\phi,\theta,\varphi,\eta).
\end{equation}
In addition, one may  check that
\begin{align}\label{AphivarphiM}
|(\partial_\phi +\partial_\varphi)A(\phi,\theta,\varphi,\eta)|
\lesssim& \,(\phi-\varphi)^2+\big(\sin\varphi+\sin\phi\big)\sin^2((\theta-\eta)/2)\nonumber\\
\lesssim& A^{\frac12}(\phi,\theta,\varphi,\eta)\big(|\varphi-\phi|+|\sin((\theta-\eta)/2)|\big).
\end{align}
By using  \eqref{gtheta}, ${\bf{(H2)}}$, \eqref{estim-den2}, \eqref{dphiA}, \eqref{dphiA3} and \eqref {AphivarphiM}, one achieves
\begin{align*}
|\partial_\phi K_1(\phi,\theta,\varphi,\eta)|\lesssim&\|h\|_{\textnormal{Lip}}\frac{\sin^2(\varphi)\left(|\phi-\varphi|+\sin^2((\theta-\eta)/2)\right)}{\left((\phi-\varphi)^2
+(\sin^2(\phi)+\sin^2(\varphi))\sin^2((\theta-\eta)/2)\right)^\frac32}\\
&+\|h\|_{\textnormal{Lip}} \frac{\sin^2(\varphi)\big(|\varphi-\phi|+|\sin((\theta-\eta)/2)|\big)}{\big((\phi-\varphi)^2
+(\sin^2(\phi)+\sin^2(\varphi))\sin^2((\theta-\eta)/2)\big)^{\frac32}}\\
\lesssim&\|h\|_{\textnormal{Lip}}\frac{\sin^2(\varphi)\left(|\phi-\varphi|+|\sin((\theta-\eta)/2)|\right)}{\left((\phi-\varphi)^2
+(\sin^2(\phi)+\sin^2(\varphi))\sin^2((\theta-\eta)/2)\right)^\frac32}\cdot
\end{align*}
Therefore, using some elementary inequalities allow to get
\begin{align*}
|\partial_\phi K_1(\phi,\theta,\varphi,\eta)|
\lesssim&\|h\|_{\textnormal{Lip}} \frac{\sin\varphi}{(\phi-\varphi)^2
+(\sin^2(\phi)+\sin^2(\varphi))\sin^2((\theta-\eta)/2)}\\
\lesssim& \|h\|_{\textnormal{Lip}}\frac{|\phi-\varphi|^{-1}\,\sin\varphi}{\left((\phi-\varphi)^2
+(\sin^2(\phi)+\sin^2(\varphi))\sin^2((\theta-\eta)/2)\right)^{\frac12}}\cdot
\end{align*}
Applying \eqref{Tad11} implies for any $\beta\in(0,1)$
$$
|\partial_\phi K_1(\phi,\theta,\varphi,\eta)|\leq C \|h\|_{\textnormal{Lip}}|\phi-\varphi|^{-(2-\beta)}|\sin((\theta-\eta)/2)|^{-\beta}.
$$
Now, let us move to the estimate of the partial derivative  $\partial_\theta K_1$, given by 
\begin{align*}
\partial_\theta K_1(\phi,\theta,\varphi,\eta)=&\frac{\sin(\varphi)r_0^2(\varphi)\partial_\theta(\sin(\theta-\eta)g_\theta(\varphi,\eta))(\partial_\phi +\partial_\varphi)A(\phi,\theta,\varphi,\eta)}{A(\phi,\theta,\varphi,\eta)^\frac52}\\
&+\frac{\sin(\varphi)r_0^2(\varphi)\sin(\theta-\eta)g_\theta(\varphi,\eta)\partial_\theta\left\{(\partial_\phi +\partial_\varphi)A(\phi,\theta,\varphi,\eta)\right\}}{A(\phi,\theta,\varphi,\eta)^\frac52}\\
&-\frac52 \frac{\sin(\varphi)r_0^2(\varphi)\sin(\theta-\eta)g_\theta(\varphi,\eta)((\partial_\phi +\partial_\varphi)A(\phi,\theta,\varphi,\eta))(\partial_\theta A(\phi,\theta,\varphi,\eta))}{A(\phi,\theta,\varphi,\eta)^\frac72}\cdot
\end{align*}
By definition of $g_\theta$ in \eqref{gtheta-def} and \eqref{gtheta}, one concludes in view of \eqref{platit1} and \eqref{estim-den2} that
\begin{align}\label{K1theta-1}
|\partial_\theta (\sin(\theta-\eta)g_\theta(\varphi,\eta))|\lesssim &\|h\|_{\textnormal{Lip}}\sin(\varphi)|\sin((\theta-\eta)/2)|\lesssim\|h\|_{\textnormal{Lip}} A^{\frac12}(\phi,\theta,\varphi,\eta).
\end{align}
Moreover, one gets
\begin{align}\label{Athetaphivarphi}
|\partial_\theta\left\{(\partial_\phi +\partial_\varphi)A(\phi,\theta,\varphi,\eta)\right\}|\lesssim& (\sin(\phi)+\sin(\varphi))|\sin(\theta-\eta)|\lesssim A^{\frac12}(\phi,\theta,\varphi,\eta).
\end{align}
Using also the definition of $A$, we obtain
\begin{equation}\label{Atheta}
|\partial_\theta A(\phi,\theta,\varphi,\eta)|\leq C\sin(\phi)\sin(\varphi)|\sin(\theta-\eta)|\lesssim \sin(\varphi)A^\frac12(\phi,\theta,\varphi,\eta).
\end{equation}
Then, with the help of \eqref{gtheta}, \eqref{estim-den2}, \eqref{Aphivarphi} \eqref{K1theta-1}, \eqref{Athetaphivarphi} and \eqref{Atheta}, we can estimate $\partial_\theta K_1$ as
\begin{align*}
|\partial_\theta K_1(\phi,\theta,\varphi,\eta)|\lesssim&\|h\|_{\textnormal{Lip}}\frac{\sin^3(\varphi)\left((\phi-\varphi)^2+(\sin(\varphi)+\sin(\phi))\sin^2((\theta-\eta)/2)\right)}{\left((\phi-\varphi)^2+(\sin^2(\phi)+\sin^2(\varphi))\sin^2((\theta-\eta)/2)\right)^2}\\
&+\|h\|_{\textnormal{Lip}}\frac{\sin^2(\varphi)}{(\phi-\varphi)^2+(\sin^2(\phi)+\sin^2(\varphi))\sin^2((\theta-\eta)/2)}\\
&+ \|h\|_{\textnormal{Lip}}\frac{\sin^3(\varphi)\left((\phi-\varphi)^2+(\sin(\varphi)+\sin(\phi))\sin^2((\theta-\eta)/2)\right)}{\left((\phi-\varphi)^2+(\sin^2(\phi)+\sin^2(\varphi))\sin^2((\theta-\eta)/2)\right)^2}\cdot
\end{align*}
Consequently we get
\begin{align*}
|\partial_\theta K_1(\phi,\theta,\varphi,\eta)|\lesssim&\frac{\|h\|_{\textnormal{Lip}}\,\sin^2(\varphi)}{(\phi-\varphi)^2+(\sin^2(\phi)+\sin^2(\varphi))\sin^2((\theta-\eta)/2)}\\
&\lesssim\frac{\|h\|_{\textnormal{Lip}}\,|\sin((\theta-\eta)/2)|^{-1}\,\sin(\varphi)}{\left((\phi-\varphi)^2+(\sin^2(\phi)+\sin^2(\varphi))\sin^2((\theta-\eta)/2)\right)^{\frac12}}\cdot
\end{align*}
Therefore we obtain by virtue of \eqref{Tad11} 
\begin{align}\label{partialthetaK1}
|\partial_\theta K_1(\phi,\theta,\varphi,\eta)|\leq C\|h\|_{\textnormal{Lip}}|\phi-\varphi|^{-\beta}|\sin((\theta-\eta)/2|^{-(2-\beta)},
\end{align}
for any $\beta\in(0,1)$. Hence, all the  hypothesis of Proposition \ref{prop-potentialtheory} are satisfied and therefore we deduce that $\mathscr{G}_1$ belongs to $\mathscr{C}^\beta\big((0,\pi)\times{\T}\big)$, for any $\beta\in(0,1)$. The estimates of the kernel  $K_2$ we are quite  similar to those of $K_1$ modulo some slight adaptations. We shall not develop all the estimates which are straightforward and tedious. We will restrict this discussion to the analogous estimate to  \eqref{denVM2} and \eqref{partialthetaK1}. First note that thanks  to  \eqref{gtheta} and \eqref{gthetaM1} one gets
$$
\big|\partial_\varphi(\sin(\varphi)\, r_0^2(\varphi)g_\theta(\varphi,\theta))\big|\lesssim \|h\|_{\textnormal{Lip}}\sin^3(\varphi)|\sin((\theta-\eta)/2)|.
$$
This implies that
\begin{align*}
\big|K_2(\phi,\theta,\varphi,\eta)\big|\lesssim&\|h\|_{\textnormal{Lip}}\frac{\sin^3(\varphi)\sin^2((\theta-\eta)/2)}{A(\phi,\theta,\varphi,\eta)^\frac32}\\
\lesssim&\|h\|_{\textnormal{Lip}}\frac{\sin(\varphi)}{A(\phi,\theta,\varphi,\eta)^\frac12}\cdot
\end{align*}
It follows from \eqref{estim-den2} and \eqref{Tad11} that
\begin{equation}\label{K2-CoX}
| K_2(\phi,\theta,\varphi,\eta)|\lesssim \|h\|_{\textnormal{Lip}}|\phi-\varphi|^{-\beta}|\sin((\theta-\eta)/2|^{-(1-\beta)},
\end{equation}
which is the announced estimate. As to the estimate of $\partial_\theta K_2$ we first write
\begin{align}\label{FormCoron1}
\partial_\theta K_2(\phi,\theta,\varphi,\eta)=&-\frac{\partial_\varphi\left(\sin(\varphi)r_0^2(\varphi)h(\varphi,\theta)\right)\sin(\theta-\eta)}{A(\phi,\theta,\varphi,\eta)^\frac32}+\frac{\partial_\varphi\left(\sin(\varphi)r_0^2(\varphi) g_\theta(\varphi,\eta)\right)\cos(\theta-\eta)}{A(\phi,\theta,\varphi,\eta)^\frac32}\nonumber\\
-&\frac32  K_2(\phi,\theta,\varphi,\eta)\frac{\partial_\theta A(\phi,\theta,\varphi,\eta)}{A(\phi,\theta,\varphi,\eta)}
\end{align}
Straightforward calculations using ${\bf{(H2)}}$ and \eqref{platit1} show that
\begin{align*}
\frac{\left|\partial_\varphi\left(\sin(\varphi)r_0^2(\varphi)h(\varphi,\theta)\right)\sin(\theta-\eta)\right|}{A(\phi,\theta,\varphi,\eta)^\frac32}\lesssim&\|h\|_{\textnormal{Lip}}
\frac{\sin^3(\varphi)|\sin((\theta-\eta)/2)|}{A(\phi,\theta,\varphi,\eta)^\frac32}\\
\lesssim&\|h\|_{\textnormal{Lip}}
\frac{\sin^2(\varphi)}{A(\phi,\theta,\varphi,\eta)}\cdot
\end{align*}
 Putting together  \eqref{estim-den2} and \eqref{Tad11} implies
 \begin{align}\label{EsW1}
 \frac{\sin^2(\varphi) }{A(\phi,\theta,\varphi,\eta)}\lesssim&|\sin((\theta-\eta)/2)|^{-1} \frac{\sin(\varphi) }{A^{\frac12}(\phi,\theta,\varphi,\eta)}\nonumber\\
 \lesssim& \frac{1}{|\phi-\varphi|^{\beta}|\sin((\theta-\eta)/2)|^{2-\beta}}\cdot
 \end{align} 
 Therefore we find
 \begin{align*}
\frac{\left|\partial_\varphi\left(\sin(\varphi)r_0^2(\varphi)h(\varphi,\theta)\right)\sin(\theta-\eta)\right|}{A(\phi,\theta,\varphi,\eta)^\frac32}\lesssim& \frac{\|h\|_{\textnormal{Lip}}
}{|\phi-\varphi|^{\beta}|\sin((\theta-\eta)/2)|^{2-\beta}}\cdot
\end{align*}
As to the second term of the right-hand side of \eqref{FormCoron1}, we get in view of ${\bf{(H2)}}$, \eqref{gtheta} and \eqref{gthetaM1}
\begin{align*}
\frac{\left|\partial_\varphi\left(\sin(\varphi)r_0^2(\varphi) g_\theta(\varphi,\eta)\right)\cos(\theta-\eta)\right|}{A(\phi,\theta,\varphi,\eta)^\frac32}\lesssim&\|h\|_{\textnormal{Lip}}
\frac{\sin^3(\varphi)|\sin((\theta-\eta)/2)|}{A(\phi,\theta,\varphi,\eta)^\frac32}\\
\lesssim&\|h\|_{\textnormal{Lip}}
\frac{\sin^2(\varphi)}{A(\phi,\theta,\varphi,\eta)}\cdot
\end{align*}
It follows from \eqref{EsW1} that
\begin{align*}
\frac{\left|\partial_\varphi\left(\sin(\varphi)r_0^2(\varphi) g_\theta(\varphi,\eta)\right)\cos(\theta-\eta)\right|}{A(\phi,\theta,\varphi,\eta)^\frac32}\lesssim&\|h\|_{\textnormal{Lip}}
 \frac{1}{|\phi-\varphi|^{\beta}|\sin((\theta-\eta)/2)|^{2-\beta}}\cdot
\end{align*}
Concerning the last term of \eqref{FormCoron1}, we put together \eqref{Atheta}, \eqref{K2-CoX}, \eqref{estim-den2} and \eqref{Tad11} that
\begin{align*}
  |K_2(\phi,\theta,\varphi,\eta)|\frac{|\partial_\theta A(\phi,\theta,\varphi,\eta)|}{A(\phi,\theta,\varphi,\eta)}\lesssim& \|h\|_{\textnormal{Lip}}|\phi-\varphi|^{-\beta}|\sin((\theta-\eta)/2)|^{-(1-\beta)}\frac{\sin\varphi}{A^{\frac12}(\phi,\theta,\varphi,\eta)}\\
  \lesssim& \|h\|_{\textnormal{Lip}}|\phi-\varphi|^{-\beta}|\sin((\theta-\eta)/2)|^{-(2-\beta)}.
  \end{align*}
  Therefore collecting the preceding estimates allows to get the suitable estimate for $\partial_\theta K_2$, 
  $$
  |\partial_\theta K_2(\phi,\theta,\varphi,\eta)|\lesssim \|h\|_{\textnormal{Lip}}|\phi-\varphi|^{-\beta}|\sin((\theta-\eta)/2)|^{-(2-\beta)}.
  $$

The estimate for  $\partial_\phi K_2$ can be done  similarly in a straightforward way. Consequently the assumptions of Proposition \ref{prop-potentialtheory} hold true and one deduces that  $\mathscr{G}_1\in \mathscr{C}^\beta\big((0,\pi)\times{\T}\big)$. Hence we obtain $\partial_\phi G\in \mathscr{C}^\beta\big((0,\pi)\times{\T}\big),$ with the estimate
\begin{align}\label{partialphiG}
 \|\partial_\phi G(h)\|_{\mathscr{C}^{\beta}}\lesssim \|h\|_{\mathscr{C}^{1,\alpha}}.
 \end{align}
The next stage is to show that  $\partial_\theta G(h)\in \mathscr{C}^\beta\big((0,\pi)\times{\T}\big)$ following the same strategy as before. From \eqref{G-hX1}, we get
\begin{align*}
\partial_\theta G(h)(\phi,\theta)&=-\frac12\frac{1}{r_0(\phi)}\bigintsss_0^\pi\bigintsss_0^{2\pi}\frac{\sin(\varphi)r_0(\varphi)h(\varphi,\eta)\partial_\theta A(\phi,\theta,\varphi,\eta)}{A(\phi,\theta,\varphi,\eta)^\frac32}d\eta d\varphi.
\end{align*}
Direct computations show that
\begin{align*}
\partial_\theta A(\phi,\theta,\varphi,\eta)=&2r_0(\phi) r_0(\varphi)\sin(\theta-\eta)=-\partial_\eta A(\phi,\theta,\varphi,\eta).
\end{align*}
It follows
\begin{align*}
\partial_\theta G(h)(\phi,\theta)&=\frac12\frac{1}{r_0(\phi)}\bigintsss_0^\pi\bigintsss_0^{2\pi}\frac{\sin(\varphi)r_0(\varphi)h(\varphi,\eta)\partial_\eta A(\phi,\theta,\varphi,\eta)}{A(\phi,\theta,\varphi,\eta)^\frac32}d\eta d\varphi.
\end{align*}
On the other hand, integration by parts in $\eta$  {(this can be done by cutting off the integral in $\eta$ away from $\theta$ and taking a limit)} yields
$$
\bigintsss_0^\pi\bigintsss_0^{2\pi}\frac{\sin(\varphi)r_0(\varphi)h(\varphi,\theta)\partial_\eta A(\phi,\theta,\varphi,\eta)}{A(\phi,\theta,\varphi,\eta)^\frac32}d\eta d\varphi=0.
$$
Thus we deduce by subtraction 
\begin{align}\label{G-3-theta}
\partial_\theta G(h)(\phi,\theta)&=\frac12\frac{1}{r_0(\phi)}\bigintsss_0^\pi\bigintsss_0^{2\pi}\frac{\sin(\varphi)r_0(\varphi)\big(h(\varphi,\eta)-h(\varphi,\theta)\big)\partial_\eta A(\phi,\theta,\varphi,\eta)}{A(\phi,\theta,\varphi,\eta)^\frac32}d\eta d\varphi\nonumber\\
&=\bigintsss_0^\pi\bigintsss_0^{2\pi}\frac{\sin(\varphi)r_0^2(\varphi)\big(h(\varphi,\eta)-h(\varphi,\theta)\big)\sin(\eta-\theta)}{A(\phi,\theta,\varphi,\eta)^\frac32}d\eta d\varphi.
\end{align}
Since $h\in\mathscr{C}^{1,\alpha}$, then  the mean value theorem implies 
$$
|h(\varphi,\theta)-h(\varphi,\eta)|\lesssim \|h \|_{\textnormal{Lip}}|\theta-\eta|.
$$
Moreover, by the $2\pi$-periodicity of $h$ in $\eta$ one  obtains 
\begin{equation}\label{G-3-theta-h-2}
|h(\varphi,\theta)-h(\varphi,\eta)|\lesssim \|h \|_{\mathscr{C}^{1,\alpha}}\big|\sin\big((\theta-\eta)/2\big)\big|.
\end{equation}
Define the kernel 
$$
K_3(\phi,\theta,\varphi,\eta):=\frac{\sin(\varphi)r_0^2(\varphi)\sin(\eta-\theta)\big(h(\varphi,\eta)-h(\varphi,\theta)\big) }{A(\phi,\theta,\varphi,\eta)^\frac32},
$$
and let us check the hypothesis of Proposition \ref{prop-potentialtheory}. First using \eqref{estim-den2},  ${\bf{(H2)}}$ and \eqref{G-3-theta-h-2} we obtain 
\begin{align*}
|K_3(\phi,\theta,\varphi,\eta)|\lesssim&\|h \|_{\mathscr{C}^{1,\alpha}} \frac{\sin^{3}(\varphi)\sin^2((\theta-\eta)/2)}{\left((\phi-\varphi)^2+(\sin^2(\phi)+\sin^2(\varphi))\sin^2((\theta-\eta)/2)\right)^\frac32}\\
\lesssim&\|h \|_{\mathscr{C}^{1,\alpha}}\frac{\sin(\varphi)}{\left((\phi-\varphi)^2+(\sin^2(\phi)+\sin^2(\varphi))\sin^2((\theta-\eta)/2)\right)^\frac12}.
\end{align*}
Applying  \eqref{Tad11} yields
\begin{align}\label{boundX-K3}
|K_3(\phi,\theta,\varphi,\eta)|\lesssim  \|h \|_{\mathscr{C}^{1,\alpha}}|\phi-\varphi|^{-(1-\beta)}|\sin((\theta-\eta)/2)|^{-\beta},
\end{align}
for any $\beta\in(0,1)$.  Let us estimate $\partial_\phi K_3$ which is explicitly given by,
\begin{align*}
\partial_\phi K_3(\phi,\theta,\varphi,\eta)=&-\frac32 \frac{\sin(\varphi)r_0^2(\varphi)\sin(\eta)\big(h(\varphi,\eta)-h(\varphi,\theta)\big)\partial_\phi A(\phi,\theta,\varphi,\eta) }{A(\phi,\theta,\varphi,\eta)^\frac52}\\
=&-\frac32 K_3(\phi,\theta,\varphi,\eta)\frac{\partial_\phi A(\phi,\theta,\varphi,\eta)}{A(\phi,\theta,\varphi,\eta)}\cdot
\end{align*}
By virtue of \eqref{dphiA3} and \eqref{boundX-K3}, we achieve
\begin{align*}
|\partial_\phi K_3(\phi,\theta,\varphi,\eta)|\lesssim& \|h \|_{\mathscr{C}^{1,\alpha}}|\phi-\varphi|^{-(1-\beta)}\big|\sin\big((\theta-\eta)/2\big)\big|^{-\beta} A^{-\frac12}(\phi,\theta,\varphi,\eta),\\
\lesssim &\frac{\|h \|_{\mathscr{C}^{1,\alpha}}}{|\phi-\varphi|^{2-\beta}|\sin((\theta-\eta)/2)|^{\beta}},
\end{align*}
for any $\beta\in(0,1)$.  It remains to establish the suitable  estimates for $\partial_\theta K_3$. First we have
\begin{align*}
\partial_\theta K_3(\phi,\theta,\varphi,\eta)=&-\frac32 K_3(\phi,\theta,\varphi,\eta)\frac{\partial_\theta A(\phi,\theta,\varphi,\eta)}{A(\phi,\theta,\varphi,\eta)}
- \frac{\sin(\varphi)r_0^2(\varphi)\sin(\eta-\theta)\partial_\theta h(\varphi,\theta) }{A(\phi,\theta,\varphi,\eta)^\frac32} \\
&-\frac{\sin(\varphi)r_0^2(\varphi)\cos(\eta-\theta)\big(h(\varphi,\eta)-h(\varphi,\theta)\big) }{A(\phi,\theta,\varphi,\eta)^\frac32}\cdot
\end{align*} 
Using \eqref{Atheta} and \eqref{boundX-K3} (where we exchange $\beta$ by $1-\beta$) we get 
\begin{align*}
\big|K_3(\phi,\theta,\varphi,\eta)\big|\frac{\big|\partial_\theta A(\phi,\theta,\varphi,\eta)\big|}{A(\phi,\theta,\varphi,\eta)}&\lesssim \frac{\|h \|_{\mathscr{C}^{1,\alpha}}}{|\phi-\varphi|^{\beta}|\sin((\theta-\eta)/2)|^{1-\beta}}\frac{\sin\varphi }{A^{\frac12}(\phi,\theta,\varphi,\eta)}\\
&\lesssim \frac{\|h \|_{\mathscr{C}^{1,\alpha}}}{|\phi-\varphi|^{\beta}|\sin((\theta-\eta)/2)|^{2-\beta}}\cdot
\end{align*} 
For the second term of the right-hand side of $\partial_\theta K_3$  we write in view of ${\bf{(H2)}}$ 
\begin{align*}
 \frac{\sin(\varphi)r_0^2(\varphi)|\sin(\eta-\theta)||\partial_\theta h(\varphi,\theta)| }{A(\phi,\theta,\varphi,\eta)^\frac32} \lesssim&\|h\|_{\textnormal{Lip}} \frac{\sin^3(\varphi)|\sin((\theta-\eta)/2)| }{A(\phi,\theta,\varphi,\eta)^\frac32}\\ 
 \lesssim& \|h\|_{\textnormal{Lip}} \frac{\sin^2(\varphi) }{A(\phi,\theta,\varphi,\eta)}\cdot
 \end{align*} 
 Applying \eqref{EsW1} yields
 \begin{align*}
 \frac{\sin(\varphi)r_0^2(\varphi)|\sin(\eta-\theta)||\partial_\theta h(\varphi,\theta)| }{A(\phi,\theta,\varphi,\eta)^\frac32} \lesssim& \frac{\|h\|_{\textnormal{Lip}}  }{|\phi-\varphi|^{\beta}|\sin((\theta-\eta)/2)|^{2-\beta}}\cdot \end{align*} 
 Concerning the last term of the right-hand side of $\partial_\theta K_3$, it is similar to the foregoing one. Indeed, using \eqref{G-3-theta-h-2} and ${\bf{(H2)}}$ we get
 \begin{align*}
 \frac{\sin(\varphi)r_0^2(\varphi)|\cos(\eta-\theta)|\big|h(\varphi,\eta)-h(\varphi,\theta)\big| }{A(\phi,\theta,\varphi,\eta)^\frac32}\lesssim&\|h\|_{\mathscr{C}^{1,\alpha}}  \frac{\sin^3(\varphi)|\sin((\eta-\theta)/2)|}{A(\phi,\theta,\varphi,\eta)^\frac32}\\
 \lesssim&\|h\|_{\mathscr{C}^{1,\alpha}}  \frac{\sin^2(\varphi)}{A(\phi,\theta,\varphi,\eta)}\cdot
 \end{align*} 
It suffices to use \eqref{EsW1} to obtain
 \begin{align*}
 \frac{\sin(\varphi)r_0^2(\varphi)|\cos(\eta-\theta)|\big|h(\varphi,\eta)-h(\varphi,\theta)\big| }{A(\phi,\theta,\varphi,\eta)^\frac32}\lesssim&\|h\|_{\mathscr{C}^{1,\alpha}}  \frac{\sin^3(\varphi)|\sin((\eta-\theta)/2)|}{A(\phi,\theta,\varphi,\eta)^\frac32}\\
 \lesssim& \frac{\|h\|_{\mathscr{C}^{1,\alpha}}  }{|\phi-\varphi|^{\beta}|\sin((\theta-\eta)/2)|^{2-\beta}}\cdot
 \end{align*} 
 Therefore we get from the preceding estimates
 \begin{align*}
 |\partial_\theta K_3(\phi,\theta,\varphi,\eta)|\lesssim&\frac{\|h\|_{\mathscr{C}^{1,\alpha}}  }{|\phi-\varphi|^{\beta}|\sin((\theta-\eta)/2)|^{2-\beta}}\cdot
 \end{align*} 
 Consequently, all the assumptions of Proposition \ref{prop-potentialtheory} are verified by the kernel $K_3$ and  thus we deduce that  $\partial_\theta G(h)\in \mathscr{C}^\beta\big((0,\pi)\times{\T}\big)$ for any $\beta\in(0,1),$ with the estimate
 $$
 \|\partial_\theta G(h)\|_{\mathscr{C}^{\beta}}\lesssim \|h\|_{\mathscr{C}^{1,\alpha}}. 
 $$
Putting together this estimate with \eqref{partialphiG} and \eqref{boundG1}  yields
$$
 \| G(h)\|_{\mathscr{C}^{1,\beta}}\lesssim \|h\|_{\mathscr{C}^{1,\alpha}},
 $$
and this achieves the proof of the proposition.
\end{proof}

\subsection{Transversality}
We have shown in Proposition \ref{cor-fredholm} that when $\Omega$ belongs to the discrete set $\{\Omega_m, m\geq2\}$ then the linearized operator $\partial_f \tilde{F}(\Omega,0)$ is of Fredholm type with one--dimensional kernel.  This property is not enough to bifurcate to nontrivial solutions for the nonlinear problem. A sufficient condition for that,  according to Theorem \ref{CR}, is the   the transversal assumption which  amounts to checking 
$$
\partial^2_{\Omega, f} \tilde{F}(\Omega_m,0)f_m^\star\notin \textnormal{Im}(\partial_f \tilde{F}(\Omega_m,0)),
$$
where $f_m^\star$ is a generator  of the kernel of $\partial_f \tilde{F}(\Omega_m,0)$.
Note that as a consequence of \eqref{operator} and \eqref{K-kernel}, for a function  $h:(\phi,\theta)\mapsto \sum_{n\geq1}h_n(\phi) \cos(n\theta) \in X_m^\alpha$, we get
$$
\partial_{f} \tilde{F}(\Omega,0)h(\phi,\theta)=\sum_{n\geq1}\mathcal{L}_n^\Omega h_n(\phi) \cos(n\theta),
$$
with
\begin{align*}
\mathcal{L}_n^\Omega h_n(\phi)=&\nu_\Omega(\phi) h_n(\phi)-\bigintsss_0^\pi H_n(\phi,\varphi) h_n(\phi,\varphi)d\varphi\\
=&\nu_\Omega(\phi) \big(h_n(\phi)-\mathcal{K}_n^\Omega h_n(\phi)\big),
\end{align*}
where $\mathcal{K}_n^\Omega$ is defined in \eqref{kerneleq}.
Hence, the second mixed derivative takes the form,
$$
\partial^2_{\Omega,f} \tilde{F}(\Omega,0)h(\phi,\theta)=-h(\phi,\theta).
$$
Our main result of this section reads as follows.
\begin{pro}\label{prop-transversal}
Let $m\geq2,$ then the transversal condition holds true, that is, 
$$
\partial^2_{\Omega, f} \tilde{F}(\Omega_m,0)f_m^\star\notin \textnormal{Im}(\partial_f \tilde{F}(\Omega_m,0)),
$$
where $f_m^\star$ is a generator  of the kernel of $\partial_f \tilde{F}(\Omega_m,0)$.
\end{pro}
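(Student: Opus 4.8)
The kernel of $\partial_f \tilde F(\Omega_m,0)$ is generated, by Proposition~\ref{cor-fredholm}, by the single mode
$$
f_m^\star(\phi,\theta)=h_m^{\Omega_m}(\phi)\cos(m\theta),
$$
where $h_m^{\Omega_m}$ is the (positive, up to sign) eigenfunction of $\mathcal{K}_m^{\Omega_m}$ associated to the eigenvalue $\lambda_m(\Omega_m)=1$. Since $\partial^2_{\Omega,f}\tilde F(\Omega,0)h=-h$, the transversality statement amounts to showing that
$$
-h_m^{\Omega_m}(\phi)\cos(m\theta)\notin \textnormal{Im}\big(\partial_f \tilde F(\Omega_m,0)\big).
$$
Because $\partial_f\tilde F(\Omega_m,0)$ acts diagonally on Fourier modes in $\theta$ through the operators $\mathcal L_n^{\Omega_m}$, and only the $n=m$ mode is relevant here, the question reduces to a purely one--dimensional statement: $h_m^{\Omega_m}$ does not lie in the range of $\mathcal L_m^{\Omega_m}$ viewed as an operator on the appropriate H\"older space with Dirichlet conditions.

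\textbf{Key steps.} First I would use the self--adjoint structure. Recall $\mathcal L_m^{\Omega_m}(g)=\nu_{\Omega_m}\big(g-\mathcal K_m^{\Omega_m}g\big)$, and that $\mathcal K_m^{\Omega_m}$ is compact self--adjoint on the Hilbert space $L^2_{\mu_{\Omega_m}}$ with $1$ a simple eigenvalue (Proposition~\ref{prop-operator}). Suppose for contradiction that $h_m^{\Omega_m}=\mathcal L_m^{\Omega_m}(g)$ for some admissible $g$. Dividing by $\nu_{\Omega_m}$ (which is nowhere zero since $\Omega_m<\kappa$, by Proposition~\ref{Lem-meas}) this reads $\nu_{\Omega_m}^{-1}h_m^{\Omega_m}=(\textnormal{Id}-\mathcal K_m^{\Omega_m})g$. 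Now I pair with $h_m^{\Omega_m}$ in the inner product $\langle\cdot,\cdot\rangle_{\Omega_m}$ of $L^2_{\mu_{\Omega_m}}$: the right--hand side gives
$$
\langle (\textnormal{Id}-\mathcal K_m^{\Omega_m})g,\,h_m^{\Omega_m}\rangle_{\Omega_m}=\langle g,\,(\textnormal{Id}-\mathcal K_m^{\Omega_m})h_m^{\Omega_m}\rangle_{\Omega_m}=0,
$$
using that $\mathcal K_m^{\Omega_m}$ is self--adjoint and $h_m^{\Omega_m}$ is a $1$--eigenfunction. On the other hand the left--hand side is
$$
\langle \nu_{\Omega_m}^{-1}h_m^{\Omega_m},\,h_m^{\Omega_m}\rangle_{\Omega_m}=\int_0^\pi \frac{(h_m^{\Omega_m}(\phi))^2}{\nu_{\Omega_m}(\phi)}\,d\mu_{\Omega_m}(\phi)=\int_0^\pi \sin(\phi)\,r_0^2(\phi)\,(h_m^{\Omega_m}(\phi))^2\,d\phi>0,
$$
since $h_m^{\Omega_m}$ is a nonzero continuous function (Proposition~\ref{prop-higher-reg}) and $\sin(\phi)r_0^2(\phi)>0$ on $(0,\pi)$ by {\bf (H1)}. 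This contradiction establishes the claim, provided the pairing manipulation is legitimate — i.e. provided $g$ and $\nu_{\Omega_m}^{-1}h_m^{\Omega_m}$ genuinely lie in $L^2_{\mu_{\Omega_m}}$, which follows from the continuity/boundedness of all the functions involved (here $g\in X_m^\alpha\subset \mathscr{C}^{1,\alpha}$ and $\nu_{\Omega_m}$ is bounded below).

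\textbf{Main obstacle.} The only genuinely delicate point is the compatibility between the weak Hilbert--space framework $L^2_{\mu_{\Omega_m}}$, in which the self--adjointness and the spectral simplicity live, and the strong H\"older space $X_m^\alpha$ with Dirichlet boundary conditions, in which the Crandall--Rabinowitz bifurcation is actually applied. One must make sure that computing the image of $\partial_f\tilde F(\Omega_m,0)$ at the level of the $m$--th Fourier coefficient really does reduce to the operator $\mathcal L_m^{\Omega_m}$, that a putative preimage $g\in X_m^\alpha$ is admissible as a test function in $L^2_{\mu_{\Omega_m}}$ (which is immediate since $X_m^\alpha\hookrightarrow L^2_{\mu_{\Omega_m}}$ continuously), and that $h_m^{\Omega_m}$ — constructed a priori only in $L^2_{\mu_{\Omega_m}}$ — is the same object whose H\"older regularity and non--vanishing were obtained in Propositions~\ref{prop-higher-reg} and~\ref{HoldX1}; all of this is already in place from the preceding sections, so the argument above closes cleanly, and the orthogonality computation itself is entirely routine.
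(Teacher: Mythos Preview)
Your proof is correct and follows essentially the same route as the paper's: reduce to the $m$-th Fourier mode, assume $h_m^{\Omega_m}=\mathcal{L}_m^{\Omega_m}(g)$, divide by $\nu_{\Omega_m}$, pair with $h_m^{\Omega_m}$ in $\langle\cdot,\cdot\rangle_{\Omega_m}$, and use the self--adjointness of $\mathcal{K}_m^{\Omega_m}$ together with $\mathcal{K}_m^{\Omega_m}h_m^{\Omega_m}=h_m^{\Omega_m}$ to get a contradiction with the strict positivity of $\int_0^\pi \sin(\phi)r_0^2(\phi)(h_m^{\Omega_m})^2\,d\phi$. Your added remarks on the compatibility between the $L^2_{\mu_{\Omega_m}}$ and $X_m^\alpha$ frameworks are well placed and make the argument slightly more explicit than the paper's version.
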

\begin{proof}
Recall from the proof of Proposition \ref{cor-fredholm}  that the function $f_m^\star$ has the form
$$
f_m^\star(\phi,\theta)=h_m^\star(\phi)\cos(m\theta)
$$
and  $h_m^\star$ is a nonzero solution to the equation
$$ 
\mathcal{K}_m^{\Omega_m} h_m^\star(\phi)=h_m^\star(\phi).
$$
It follows that 
$$
\partial^2_{\Omega,f} \tilde{F}(\Omega_m,0)f_m^\star(\phi,\theta)=-h_m^\star(\phi)\cos(m\theta).
$$
Assume that this element belongs to the range of $\partial_{f} \tilde{F}(\Omega_m,0)$. Then we can find $h_m$ such that
$$
h_m^\star(\phi)=\nu_{\Omega_m}(\phi)\big( h_m(\phi)-\mathcal{K}_m^{\Omega_m} h_m(\phi)\big).
$$
Dividing this equality by  $\nu_{\Omega_m}$ and taking the inner product with $h_m^{\star}$, with respect to $\langle\cdot ,\cdot \rangle_{\Omega_m}$ defined in \eqref{scalar-prod1}    yields by the symmetry of $ \mathcal{K}_m^{\Omega_m}$
\begin{align*}
\Big\langle \frac{h_m^{\star}}{\nu_{\Omega_m}},h_m^{\star}\Big\rangle_{\Omega_m}=&\Big\langle h_m,h_m^{\star}\Big\rangle_{\Omega_m}-\Big\langle \mathcal{K}_m^{\Omega_m} h_m,h_m^{\star}\Big\rangle_{\Omega_m}\\
=&\Big\langle h_m,h_m^{\star}\Big\rangle_{\Omega_m}-\Big\langle  h_m, \mathcal{K}_m^{\Omega_m}h_m^{\star}\Big\rangle_{\Omega_m}\\
=&\Big\langle h_m,h_m^{\star}-\mathcal{K}_m^{\Omega_m}h_m^{\star}\Big\rangle_{\Omega_m}\\
=&0.
\end{align*}
Coming back to the definition of the inner product \eqref{scalar-prod1} and \eqref{signed-meas}, we find
$$
\bigintsss_0^\pi \left(h_m^\star(\varphi)\right)^2\sin(\varphi)\, r_0^2(\varphi) d\varphi=0.
$$
From the assumption ${\bf{(H)}}$  we know  that $r_0$  does not vanish in $(0,\pi)$. Then we get  from   the continuity of $h_m^\star$ that  this latter function should vanish everywhere in $(0,\pi)$, which is a contradiction. Hence, we deduce that $f_m^\star$  does not belong to the range of $\partial_f \tilde{F}(\Omega_m,0)$ and then the transversal condition is satisfied.
\end{proof}

\section{Nonlinear action}\label{sec-regularity}
This section is devoted to the regularity study of the nonlinear functional $\tilde{F}$ defined in \eqref{Ftilde} that we recall for the convenience of the reader,
\begin{equation*}
 \tilde{F}(\Omega,f)(\phi,\theta)=\frac{1}{r_0(\phi)}\left\{I(f)(\phi,\theta)-\frac{\Omega}{2}r^2(\phi,\theta)-m(\Omega,f)(\phi)\right\},
\end{equation*}
for any $(\phi,\theta)\in(0,\pi)\times(0,2\pi)$ and where
\begin{equation*}
I(f)(\phi,\theta)=-\frac{1}{4\pi}\bigintsss_{0}^{\pi}\bigintsss_0^{2\pi}\bigintsss_0^{r(\varphi,\eta)}\frac{r\sin(\varphi)drd\eta d\varphi}{|(re^{i\eta},\cos(\varphi))-(r(\phi,\theta)e^{i\theta},\cos(\phi))|},
\end{equation*}
the mean $m$ is defined in \eqref{meanT} and
$$
r(\phi,\theta)=r_0(\phi)+f(\phi,\theta).
$$

We would like in particular to analyze the symmetry/regularity  persistence of the function spaces  $X_m^\alpha$ defined in \eqref{space} and \eqref{spaceX1} through the action of the nonlinear functional $\tilde{F}$.

\subsection{Symmetry persistence}
The main task here is to check the symmetry persistence of the function  spaces  $X_m^\alpha$ defined in \eqref{space} through the nonlinear  action of $\tilde{F}.$ Notice that  at this level, we  do not raise the problem of  whether or not this functional is well-defined and this target is postponed later in Section \ref{sec-regularity}.  First recall that 

{\begin{align*}
X_m^\alpha=\Big\{f:[0,\pi]\times[0,2\pi]\rightarrow \R\, :\, \, &f\in \mathscr{C}^{1,\alpha}, f(0,\theta)=f(\pi,\theta)\equiv 0,\,\\
& f\left(\frac{\pi}{2}-\phi,\theta\right)=f\left(\frac{\pi}{2}+\phi,\theta\right), \, f\left(\phi,\theta\right)=\sum_{n\geq 1}f_n(\phi)\cos(nm\theta)\Big\}.
\end{align*}}

\begin{pro}\label{prop-SymX1}
Let $\Omega\in\R$, $f\in X_m^\alpha$ with  $m\geq 1$ and assume that $r_0$ satisfies the \mbox{conditions ${\bf{(H)}}$.}  Then the following assertions hold true.
\begin{enumerate}
\item The equatorial symmetry: 
$$\tilde{F}(\Omega,f)\left(\pi-\phi,\theta\right)=\tilde{F}(\Omega,f)\left(\phi,\theta\right),\quad \forall \, (\phi,\theta)\in[0,\pi]\times\R.
$$
\item We get the algebraic structure, 
$$
\tilde{F}(\Omega,f)(\phi,\theta)=\sum_{n\geq 1} f_n(\phi)\cos(n\theta),
$$
for some functions $f_n$ and for any $(\phi,\theta)\in[0,\pi]\times[0,2\pi]$.
\item The $m$-fold symmetry: $\tilde{F}(\Omega,f)(\phi,\theta+\frac{2\pi}{m})=\tilde{F}(\Omega,f)(\phi,\theta)$, for any $(\phi,\theta)\in[0,\pi]\times\R$.

\end{enumerate}
\end{pro}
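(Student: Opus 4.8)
The strategy is to trace each symmetry through the explicit integral representation of $\tilde{F}$, using change of variables in the integrals defining $I(f)$ and $m(\Omega,f)$. The three assertions are of increasing ease once the right substitutions are identified, so I would prove $(1)$ first, then $(2)$, then $(3)$, although $(2)$ and $(3)$ are closely related and could be merged.

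\textbf{Step 1 (equatorial symmetry).} I would start from the formula
$$
\tilde{F}(\Omega,f)(\phi,\theta)=\frac{1}{r_0(\phi)}\left\{I(f)(\phi,\theta)-\frac{\Omega}{2}r^2(\phi,\theta)-m(\Omega,f)(\phi)\right\},
$$
and examine the effect of $\phi\mapsto\pi-\phi$. Since $f\in X_m^\alpha$ satisfies $f(\pi-\phi,\theta)=f(\phi,\theta)$ and, by hypothesis ${\bf(H3)}$, $r_0(\pi-\phi)=r_0(\phi)$, the prefactor $1/r_0(\phi)$ and the quadratic term $r^2(\phi,\theta)$ are invariant. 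For $I(f)$ one uses that the Euclidean distance $|(re^{i\eta},\cos\varphi)-(r(\phi,\theta)e^{i\theta},\cos\phi)|$ is preserved under the simultaneous reflection $\phi\mapsto\pi-\phi$, $\varphi\mapsto\pi-\varphi$ (because $\cos(\pi-\phi)=-\cos\phi$, so $\cos\phi-\cos\varphi\mapsto-(\cos\phi-\cos\varphi)$, and the horizontal parts are untouched); combined with $\sin(\pi-\varphi)=\sin\varphi$, $r_0(\pi-\varphi)=r_0(\varphi)$, $r(\pi-\varphi,\eta)=r(\varphi,\eta)$, the change of variables $\varphi\mapsto\pi-\varphi$ shows $I(f)(\pi-\phi,\theta)=I(f)(\phi,\theta)$. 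The same substitution applied inside the $\theta$-average gives $m(\Omega,f)(\pi-\phi)=m(\Omega,f)(\phi)$. Assembling these yields assertion $(1)$.

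\textbf{Steps 2 and 3 ($\theta$-structure and $m$-fold symmetry).} For $(2)$ I would show first that $\tilde{F}(\Omega,f)(\phi,\cdot)$ is even in $\theta$ and $2\pi$-periodic, which forces a cosine expansion $\sum_{n\ge1}f_n(\phi)\cos(n\theta)$ plus possibly a constant term; the constant term is killed precisely because $m(\Omega,f)(\phi)$ is the $\theta$-average of $G(\Omega,f)(\phi,\cdot)$, so $\tilde F$ has zero $\theta$-mean. Evenness in $\theta$ follows from the substitution $(\eta,\theta)\mapsto(-\eta,-\theta)$ in $I(f)$ together with the fact that $f(\phi,-\theta)=f(\phi,\theta)$ (all Fourier modes of $f$ are cosines), and the distance kernel depends on $\theta,\eta$ only through $\cos(\theta-\eta)$. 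For $(3)$, one exploits that $f(\varphi,\eta+\tfrac{2\pi}{m})=f(\varphi,\eta)$ for $f\in X_m^\alpha$, hence $r(\varphi,\eta+\tfrac{2\pi}{m})=r(\varphi,\eta)$; performing the shift $\eta\mapsto\eta+\tfrac{2\pi}{m}$ and $\theta\mapsto\theta+\tfrac{2\pi}{m}$ simultaneously inside $I(f)$ leaves the kernel invariant (again because it involves only $\cos(\theta-\eta)$) and leaves the domain of $\eta$-integration invariant by periodicity, so $I(f)(\phi,\theta+\tfrac{2\pi}{m})=I(f)(\phi,\theta)$; the quadratic term and the mean are trivially $\tfrac{2\pi}{m}$-periodic in $\theta$. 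This gives assertion $(3)$. I do not expect a genuine obstacle here — the only point requiring mild care is bookkeeping the innermost radial integral $\int_0^{r(\varphi,\eta)}$ under these substitutions and confirming that all the relevant invariances of $r_0$ come from ${\bf(H)}$; the argument is otherwise a sequence of elementary changes of variables.
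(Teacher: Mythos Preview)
Your proposal is correct and follows essentially the same approach as the paper: each assertion is reduced to the corresponding symmetry of $I(f)$ via the change of variables $\varphi\mapsto\pi-\varphi$ for $(1)$, $\eta\mapsto-\eta$ for $(2)$, and $\eta\mapsto\eta+\tfrac{2\pi}{m}$ for $(3)$, using the invariances of $r_0$ and $f$ built into ${\bf(H)}$ and $X_m^\alpha$. Your remark that the subtraction of $m(\Omega,f)$ kills the zero Fourier mode is a useful clarification the paper leaves implicit.
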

{\begin{proof}
\medskip
\noindent
${\bf{(1)}}$ From the expression of $\tilde{F}$ in \eqref{Ftilde}, it is suffices to check the property for  $I(f)$. One can easily verify using the symmetry of the functions $\cos$ and $r$ combined with the change of variables $\varphi\mapsto \pi-\varphi$
\begin{align*}
I(f)\left(\pi-\phi,\theta\right)=&-\frac{1}{4\pi}\bigintsss_{0}^{\pi}\bigintsss_0^{2\pi}
\bigintsss_0^{r(\varphi,\eta)}\frac{r\sin(\varphi)drd\eta d\varphi}{|(re^{i\eta},\cos(\varphi))-(r(\pi-\phi,\theta)e^{i\theta},\cos(\pi-\phi))|}\\
=&-\frac{1}{4\pi}\bigintsss_{0}^{\pi}\bigintsss_0^{2\pi}\bigintsss_0^{r(\pi-\varphi,\eta)}
\frac{r\sin(\pi-\varphi)drd\eta d\varphi}{|(re^{i\eta},-\cos(\varphi))-(r(\phi,\theta)e^{i\theta},-\cos(\phi))|}\\
=&-\frac{1}{4\pi}\bigintsss_{0}^{\pi}\bigintsss_0^{2\pi}\bigintsss_0^{r(\varphi,\eta)}
\frac{r\sin(\varphi)drd\eta d\varphi}{|(re^{i\eta},\cos(\varphi))-(r(\phi,\theta)e^{i\theta},\cos(\phi))|}\\
=&I(f)\left(\phi,\theta\right).
\end{align*}

\medskip
\noindent
${\bf{(2)}}$ In order to get  the desired structure,  it suffices to check the following symmetry
$$
I(f)(\phi,-\theta)=I(f)(\phi,\theta),\quad \forall \, (\phi,\theta)\in[0,\pi]\times\R.
$$
To do that, we use the symmetry of $r$, that is  $r(\varphi,-\theta)=r(\varphi,\theta),$ combined with  the change of variables $\eta\mapsto-\eta$ allowing one to get
\begin{align*}
I(f)(\phi,-\theta)=&-\frac{1}{4\pi}\bigintsss_{0}^{\pi}\bigintsss_0^{2\pi}\bigintsss_0^{r(\varphi,\eta)}\frac{r\sin(\varphi)drd\eta d\varphi}{|(re^{i\eta},\cos(\varphi))-(r(\phi,-\theta)e^{-i\theta},\cos(\phi))|}\\
=&-\frac{1}{4\pi}\bigintsss_{0}^{\pi}\bigintsss_0^{2\pi}\bigintsss_0^{r(\varphi,-\eta)}\frac{r\sin(\varphi)drd\eta d\varphi}{|(re^{-i\eta},\cos(\varphi))-(r(\phi,\theta)e^{-i\theta},\cos(\phi))|}\\
=&-\frac{1}{4\pi}\bigintsss_{0}^{\pi}\bigintsss_0^{2\pi}\bigintsss_0^{r(\varphi,\eta)}\frac{r\sin(\varphi)drd\eta d\varphi}{|(re^{i\eta},\cos(\varphi))-(r(\phi,\theta)e^{i\theta},\cos(\phi))|}\\
=&I(f)(\phi,\theta).
\end{align*}

\medskip
\noindent
${\bf{(3)}}$ First, since $r$ belongs to $X_m^\alpha$ then it satisfies $r(\varphi, \theta+\frac{2\pi}{m})=r(\varphi,\theta).$ Thus we get by the change of variables $\eta\mapsto \eta+\frac{2\pi}{m}$
\begin{align*}
I(f)\left(\phi,\theta+\frac{2\pi}{m}\right)
&=-\frac{1}{4\pi}\bigintsss_{0}^{\pi}\bigintsss_0^{2\pi}\bigintsss_0^{r(\varphi,\eta)}\frac{r\sin(\varphi)drd\eta d\varphi}{|(re^{i\eta},\cos(\varphi))-(r(\phi,\theta+\frac{2\pi}{m})e^{i(\theta+\frac{2\pi}{m})},\cos(\phi))|}\\
&=-\frac{1}{4\pi}\bigintsss_{0}^{\pi}\bigintsss_0^{2\pi}\bigintsss_0^{r(\varphi,\eta+\frac{2\pi}{m})}\frac{r\sin(\varphi)drd\eta d\varphi}{|(re^{i(\eta+\frac{2\pi}{m})},\cos(\varphi))-(r(\phi,\theta)e^{i(\theta+\frac{2\pi}{m})},\cos(\phi))|}\\
&=-\frac{1}{4\pi}\bigintsss_{0}^{\pi}\bigintsss_0^{2\pi}\bigintsss_0^{r(\varphi,\eta)}\frac{r\sin(\varphi)drd\eta d\varphi}{|(re^{i\eta},\cos(\varphi))-(r(\phi,\theta)e^{i\theta},\cos(\phi))|}\\
&=I(f)(\phi,\theta).
\end{align*}
Notice that we have used the fact that the Euclidean distance {in} $\C$ is invariant by the rotation action $z\mapsto e^{i\frac{2\pi}{m}} z$.
\end{proof}}

The next discussion is devoted to the symmetry effects of the surface of the vortices on the velocity structure. We shall show the following.

{\begin{lem}\label{lemma-velocidad0}
If $r_0$ satisfies  ${\bf{(H)}}$ and $f\in X_m^\alpha$, with $m\geq 2$, then 
\begin{align*}
\forall\, z\in\R,\quad \bigintsss_0^\pi\bigintsss_0^{2\pi}\frac{\sin(\varphi)\partial_\eta(r(\varphi,\eta)\cos(\eta))d\eta d\varphi}{\left(r^2(\varphi,\eta)+\big(z-\cos\varphi\big)^2\right)^\frac12}=&0,\\
\forall\, z\in\R,\quad \bigintsss_0^\pi\bigintsss_0^{2\pi}\frac{\sin(\varphi)\partial_\eta(r(\varphi,\eta)\sin(\eta))d\eta d\varphi}{\left(r^2(\varphi,\eta)+\big(z-\cos\varphi\big)^2\right)^\frac12}=&0.
\end{align*}
As a consequence, the velocity field defined in \eqref{Veloc1} is vanishing at the vertical axis, that is, 
$$
U(0,0,z)=0,
$$
for any $z\in\R$. 
\end{lem}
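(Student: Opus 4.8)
The plan is to exploit the $m$-fold rotational symmetry of $f$ (equivalently, of $r$) through a change of variables in the angular integration, exactly as in the proof of Proposition~\ref{prop-SymX1}. The key observation is that for $m\geq 2$ the angular average of the first Fourier modes vanishes. More precisely, since $f\in X_m^\alpha$ we have $r(\varphi,\eta)=r_0(\varphi)+\sum_{n\geq 1}f_n(\varphi)\cos(nm\eta)$, so that $\eta\mapsto r(\varphi,\eta)$ is $\frac{2\pi}{m}$-periodic. The first integral to be analyzed involves $\partial_\eta(r(\varphi,\eta)\cos\eta)$, and integrating by parts in $\eta$ (the boundary term vanishes by $2\pi$-periodicity of the integrand) we may equivalently study, for each fixed $\varphi$ and $z$, integrals of the form
$$
\bigintsss_0^{2\pi}\frac{r(\varphi,\eta)\cos(\eta)\,\sin(\varphi)\,d\eta}{\left(r^2(\varphi,\eta)+(z-\cos\varphi)^2\right)^{\frac12}}\qquad\text{and}\qquad \bigintsss_0^{2\pi}\frac{r(\varphi,\eta)\sin(\eta)\,\sin(\varphi)\,d\eta}{\left(r^2(\varphi,\eta)+(z-\cos\varphi)^2\right)^{\frac12}}.
$$
Actually it is cleanest to work directly with the stated integrals: writing $\Phi(\varphi,\eta,z):=\dfrac{\sin\varphi}{\left(r^2(\varphi,\eta)+(z-\cos\varphi)^2\right)^{1/2}}$, note that $\eta\mapsto\Phi(\varphi,\eta,z)$ is $\frac{2\pi}{m}$-periodic because $r(\varphi,\cdot)$ is.

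First I would handle the $\cos$-integral. Perform the shift $\eta\mapsto\eta+\frac{2\pi}{m}$ in $\displaystyle\int_0^{2\pi}\partial_\eta\bigl(r(\varphi,\eta)\cos\eta\bigr)\Phi(\varphi,\eta,z)\,d\eta$. Using $r(\varphi,\eta+\tfrac{2\pi}{m})=r(\varphi,\eta)$ and $\Phi(\varphi,\eta+\tfrac{2\pi}{m},z)=\Phi(\varphi,\eta,z)$, the integral equals $\int_0^{2\pi}\partial_\eta\bigl(r(\varphi,\eta)\cos(\eta+\tfrac{2\pi}{m})\bigr)\Phi(\varphi,\eta,z)\,d\eta$. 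Subtracting, one gets that $\int_0^{2\pi}\partial_\eta\bigl(r(\varphi,\eta)[\cos\eta-\cos(\eta+\tfrac{2\pi}{m})]\bigr)\Phi\,d\eta=0$. Expanding $\cos(\eta+\tfrac{2\pi}{m})=\cos\eta\cos\tfrac{2\pi}{m}-\sin\eta\sin\tfrac{2\pi}{m}$, this reads
$$
(1-\cos\tfrac{2\pi}{m})\!\int_0^{2\pi}\!\!\partial_\eta(r\cos\eta)\,\Phi\,d\eta \;+\; \sin\tfrac{2\pi}{m}\!\int_0^{2\pi}\!\!\partial_\eta(r\sin\eta)\,\Phi\,d\eta\;=\;0,
$$
where the arguments $(\varphi,\eta)$ and $(\varphi,\eta,z)$ are suppressed. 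Doing the same with the $\sin$-integral (or, equivalently, running the shift argument on $\int_0^{2\pi}\partial_\eta(r\sin\eta)\Phi\,d\eta$) produces the companion relation
$$
(1-\cos\tfrac{2\pi}{m})\!\int_0^{2\pi}\!\!\partial_\eta(r\sin\eta)\,\Phi\,d\eta \;-\; \sin\tfrac{2\pi}{m}\!\int_0^{2\pi}\!\!\partial_\eta(r\cos\eta)\,\Phi\,d\eta\;=\;0.
$$
For $m\geq 2$ the $2\times 2$ coefficient matrix has determinant $(1-\cos\tfrac{2\pi}{m})^2+\sin^2\tfrac{2\pi}{m}=2(1-\cos\tfrac{2\pi}{m})\neq 0$, so both inner $\eta$-integrals vanish identically for every $\varphi$ and $z$. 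Integrating in $\varphi$ over $(0,\pi)$ then gives the two displayed identities of the lemma. (One should note that $\Phi$ has no singularity here since $r(\varphi,\eta)>0$ for $\varphi\in(0,\pi)$ by $\mathbf{(H1)}$ and the smallness of $f$; near $\varphi\in\{0,\pi\}$ the factor $\sin\varphi$ together with $r(\varphi,\eta)\sim c\sin\varphi$ keeps everything integrable, so Fubini is legitimate.)

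For the consequence, recall from \eqref{Veloc1} and the surface representation \eqref{Bio-Form} that, after the change of variable $y_3=\cos\varphi$ and writing the horizontal sections in polar coordinates,
$$
U(0,0,z)=\frac{1}{4\pi}\bigintsss_0^\pi\bigintsss_0^{2\pi}\frac{\sin(\varphi)\bigl(\partial_\eta(r(\varphi,\eta)e^{i\eta})+i\,r(\varphi,\eta)e^{i\eta}\bigr)}{\left(r^2(\varphi,\eta)+(z-\cos\varphi)^2\right)^{1/2}}\,d\eta\,d\varphi,
$$
interpreting the planar vector as a complex number. The term $i\,r(\varphi,\eta)e^{i\eta}$ can be absorbed: integrating by parts in $\eta$, $\int_0^{2\pi}\partial_\eta\bigl(e^{i\eta}\bigr)\,(\text{stuff})\,d\eta$ combines with the explicit $\partial_\eta r$ piece to give $\int_0^{2\pi}\partial_\eta\bigl(r(\varphi,\eta)e^{i\eta}\bigr)\Phi\,d\eta$ up to a term $\int_0^{2\pi} r e^{i\eta}\,\partial_\eta\Phi\,d\eta$; more directly, one checks that $\partial_\eta(r e^{i\eta})+i r e^{i\eta}=\partial_\eta(r e^{i\eta})+\partial_\eta(\text{const})\cdot\ldots$ — in any case the real and imaginary parts of the full $\eta$-integral are precisely linear combinations of $\int_0^{2\pi}\partial_\eta(r\cos\eta)\Phi\,d\eta$, $\int_0^{2\pi}\partial_\eta(r\sin\eta)\Phi\,d\eta$, and (after the $\partial_\eta e^{i\eta}$ integration by parts) again the same two integrals, all of which vanish by the first part. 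Hence $U(0,0,z)=0$. The only genuinely delicate point is bookkeeping the integration by parts that eliminates the $i\,r e^{i\eta}$ term cleanly; I expect this to be the main (though still routine) obstacle, and it is handled exactly as in the derivation of \eqref{VelXW1} where the same $\partial_\eta r\, e^{i\eta}+i r e^{i\eta}$ combination already appears.
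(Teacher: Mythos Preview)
Your argument for the two integral identities is correct but takes a different route from the paper. The paper first shows $I_1=0$ using only the \emph{evenness} of $r$ in $\eta$ (since $r(\varphi,-\eta)=r(\varphi,\eta)$, the change $\eta\mapsto-\eta$ makes the $\eta$-integrand odd), and then plugs $I_1=0$ into the single relation $I_2=\cos(2\pi/m)I_2+\sin(2\pi/m)I_1$ coming from the shift $\eta\mapsto\eta+2\pi/m$, which forces $I_2=0$ when $m\geq2$. You instead use the $m$-fold shift on both integrals simultaneously to produce a $2\times2$ linear system with determinant $2(1-\cos(2\pi/m))\neq0$; this is equally valid and has the minor advantage of relying on a single symmetry, though the paper's argument for $I_1$ is a touch shorter.

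For the consequence $U(0,0,z)=0$, you are overcomplicating things. No integration by parts is needed: by the product rule, $(\partial_\eta r)e^{i\eta}+i\,r\,e^{i\eta}=\partial_\eta\bigl(r(\varphi,\eta)e^{i\eta}\bigr)$ identically, so the real and imaginary parts of the numerator in the formula for $U(0,0,z)$ are exactly $\partial_\eta(r\cos\eta)$ and $\partial_\eta(r\sin\eta)$. Hence $U(0,0,z)=\frac{1}{4\pi}(I_1(z),I_2(z),0)$ directly, and the first part gives the conclusion. Your hedging about ``bookkeeping the integration by parts'' and the ``genuinely delicate point'' is unnecessary --- there is none.
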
}
{\begin{proof}
Set for any $z\in\R,$
\begin{align*}
I_1(z):=&\bigintsss_0^\pi\bigintsss_0^{2\pi}\frac{\sin(\varphi)\partial_\eta(r(\varphi,\eta)\cos(\eta))d\eta d\varphi}{\left(r^2(\varphi,\eta)+\big(z-\cos\varphi\big)^2\right)^\frac12},\\
I_2(z):=&\bigintsss_0^\pi\bigintsss_0^{2\pi}\frac{\sin(\varphi)\partial_\eta(r(\varphi,\eta)\sin(\eta))d\eta d\varphi}{\left(r^2(\varphi,\eta)+\big(z-\cos\varphi\big)^2\right)^\frac12}.
\end{align*}
Observe that from the periodicity in $\eta$ we may write 
\begin{align*}
I_1(z)=&\bigintsss_0^\pi\bigintsss_{-\pi}^{\pi}\frac{\sin(\varphi)(\partial_\eta r)(\varphi,\eta)\cos(\eta)d\eta d\varphi}{\left(r^2(\varphi,\eta)+\big(z-\cos\varphi\big)^2\right)^\frac12}\\
&-\bigintsss_0^\pi\bigintsss_{-\pi}^{\pi}\frac{\sin(\varphi)r(\varphi,\eta)\sin(\eta)d\eta d\varphi}{\left(r^2(\varphi,\eta)+\big(z-\cos\varphi\big)^2\right)^\frac12}.
\end{align*}
Since $f\in X_m^\alpha$, then  $r(\varphi,-\eta)=r(\varphi,\eta)$ and so  $(\partial_\eta r)(\varphi,-\eta)=-(\partial_\eta r)(\varphi,\eta)$. Therefore making the change of variables $\eta\mapsto-\eta$ allows to get $I_1(z)=0$.

To check $I_2(z)=0$ we shall use the $m$-fold symmetry of $r$. In fact  by  the change of variables  $\eta\mapsto \eta+\frac{2\pi}{m}$ and using the $2\pi$-periodicity in $\eta$ and some elementary trigonometric identity, we find
\begin{align*}
I_2(z)=& \bigintsss_0^\pi\bigintsss_0^{2\pi}\frac{\sin(\varphi)\partial_\eta\big(r(\varphi,\eta)\sin(\eta+\frac{2\pi}{m})\big)}{(r(\varphi,\eta)^2+(z-\cos(\varphi)^2)^\frac12}d\eta d\varphi\\
=& \cos({2\pi}/{m})I_2(z)+\sin({2\pi}/{m}) I_1(z).
\end{align*}
Since $m\geq 2$ and $I_1(z)=0$ then  we get  $I_2(z)=0$.

Coming back to \eqref{Veloc1} and following the change of variables giving \eqref{VelXW1} we easily get
$$
U(0,0,z)=\big(I_1(z), I_2(z),0\big),
$$
which gives the announced result.
\end{proof}}

\subsection{Deformation of the Euclidean norm}
The spherical  change of coordinates used to recover both  the velocity and the stream function from the surface geometry of the patch yields a deformation of the Green function. Notice that in the usual  Cartesian coordinates the  Green kernel  is radial and thus it is isotropic with respect to all the variables. In the new coordinates we lose this property and the Green  kernel becomes anisotropic and the north and south poles are degenerating points. To deal with these defects one needs refined treatments in the behavior of the kernel or also the adaptation of the function spaces which are of Dirichlet type.  
The following lemma is crucial to deal with the anisotropy of the kernel. 

\begin{lem}\label{lemma-estimdenominator}
Let $m\geq1, \alpha\in(0,1)$, $r_0$ satisfies ${\bf{(H)}},$ $f\in X_m^\alpha$  such that  {$\|f\|_{X_m^\alpha}\leq\varepsilon$} with $\varepsilon$ small enough and set $r=r_0+f$. Define  for any $\phi\in[0,\frac{\pi}{2}]$, $\varphi\in[0,\pi]$, $\theta,\eta\in[0,2\pi]$ and $s\in[0,1]$
$$
J_s(\phi,\theta,\varphi,\eta):=(r(\varphi,\eta)-sr(\phi,\theta))^2+2sr(\phi,\theta)r(\varphi,\eta)(1-\cos(\theta-\eta))+(\cos(\phi)-\cos(\varphi))^2.
$$
Then 
\begin{align}
|J_0(\phi,\theta,\varphi,\eta)|\geq& C\sin^2(\varphi),\label{den2}\\
|J_s(\phi,\theta,\varphi,\eta)|\geq& C\Big(\big(\sin^2(\varphi)+s^2\phi^2\big){\sin^2((\theta-\eta)/2)}+(\varphi+\phi)^2(\phi-\varphi)^2\Big),\label{den3}
\end{align}
with $C$ an absolute constant. Remark that we have restricted $\phi$ to $\in[0,\pi/2]$ instead of $[0,\pi]$ because of the symmetry of $r$ with respect to $\frac\pi2.$

\end{lem}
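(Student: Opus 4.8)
The estimate \eqref{den2} is immediate: since $f\in X_m^\alpha$ with small norm, the arc-chord behaviour near the poles survives, so we have $r(\varphi,\eta)\geq C^{-1}\sin\varphi$ and $|\cos\phi-\cos\varphi|\ge 0$, hence $J_0=r(\varphi,\eta)^2+(\cos\phi-\cos\varphi)^2\geq C^{-1}\sin^2\varphi$. So the whole work is in \eqref{den3}. The plan is to proceed exactly as in the proof of \eqref{estim-den} (Proposition \ref{HoldX1}), splitting the task into establishing, separately, the three lower bounds
\begin{align*}
J_s(\phi,\theta,\varphi,\eta)&\gtrsim (\varphi+\phi)^2(\phi-\varphi)^2,\\
J_s(\phi,\theta,\varphi,\eta)&\gtrsim \sin^2(\varphi)\sin^2((\theta-\eta)/2),\\
J_s(\phi,\theta,\varphi,\eta)&\gtrsim s^2\phi^2\sin^2((\theta-\eta)/2),
\end{align*}
and then adding the three (with suitable weights) to get \eqref{den3}. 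Here the implied constants are absolute, using $\|f\|_{X_m^\alpha}\le\varepsilon$, the assumptions \textbf{(H1)}--\textbf{(H2)}, the Dirichlet bound $|f(\varphi,\eta)|\le C\|f\|_{\mathrm{Lip}}\sin\varphi$ from \eqref{platit1}, and the arc-chord inequality \eqref{Chord}.

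First I would record the elementary facts that I will use repeatedly: for $\varepsilon$ small, \textbf{(H2)} together with \eqref{platit1} gives $C^{-1}\sin\varphi\le r(\varphi,\eta)\le C\sin\varphi$ and likewise $C^{-1}\phi\le r(\phi,\theta)\le C\phi$ for $\phi\in[0,\pi/2]$; also $1-\cos(\theta-\eta)=2\sin^2((\theta-\eta)/2)$. For the first lower bound I would drop the middle (nonnegative) term and use that $J_s\ge (r(\varphi,\eta)-sr(\phi,\theta))^2+(\cos\phi-\cos\varphi)^2$; I want to compare this to $\big((r_0(\phi)+r_0(\varphi))^2+(\cos\phi-\cos\varphi)^2\big)(\phi-\varphi)^2/\big((\phi+\varphi)^{-2}\cdots\big)$ — more precisely, it suffices to show $J_s\gtrsim (r_0(\varphi)-r_0(\phi))^2+(\cos\phi-\cos\varphi)^2$, which by \eqref{Chord} is $\gtrsim (\phi-\varphi)^2$, and then upgrade $(\phi-\varphi)^2$ to $(\phi+\varphi)^2(\phi-\varphi)^2$ using \textbf{(H2)} (since $(r_0(\phi)+r_0(\varphi))^2\asymp(\phi+\varphi)^2$ is bounded and we also trivially have $J_s\lesssim(\phi+\varphi)^2$ on the relevant range... actually the cleaner route is: $J_s+$ middle term $\ge (\phi-\varphi)^2\cdot$const, and since $\phi+\varphi\lesssim 1$ this already gives $(\phi+\varphi)^2(\phi-\varphi)^2\lesssim (\phi-\varphi)^2\lesssim J_s$). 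The delicate point in this first bound is the sign interference between $r(\varphi,\eta)$ and $sr(\phi,\theta)$: when $\theta-\eta$ is bounded away from $0$, the middle term $2sr(\phi,\theta)r(\varphi,\eta)(1-\cos(\theta-\eta))$ absorbs the cross term, so one should treat the regimes $\cos(\theta-\eta)\ge 1/2$ and $\cos(\theta-\eta)<1/2$ separately, exactly as $g_1$ was analyzed via its critical point in the proof of \eqref{estim-den}.

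For the second and third lower bounds I would mimic the $g_1$-argument of Proposition \ref{HoldX1} literally. Fix $\varphi,\eta$ and $s$, and regard $J_s$ as a quadratic in $x=r(\phi,\theta)$, namely $g(x)=s^2x^2-2sx\,r(\varphi,\eta)\cos(\theta-\eta)+r(\varphi,\eta)^2+(\cos\phi-\cos\varphi)^2$, whose minimum (if $s>0$) is at $x_c=s^{-1}r(\varphi,\eta)\cos(\theta-\eta)$ with value $r(\varphi,\eta)^2\sin^2(\theta-\eta)+(\cos\phi-\cos\varphi)^2\ge 2r(\varphi,\eta)^2\sin^2((\theta-\eta)/2)$; if $\cos(\theta-\eta)<0$ the polynomial is increasing on $x\ge0$ so $g(x)\ge g(0)=r(\varphi,\eta)^2+(\cos\phi-\cos\varphi)^2\ge r(\varphi,\eta)^2\sin^2((\theta-\eta)/2)$. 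Either way $J_s\gtrsim r(\varphi,\eta)^2\sin^2((\theta-\eta)/2)\gtrsim\sin^2(\varphi)\sin^2((\theta-\eta)/2)$, which is the second bound. For the third bound I instead view $J_s$ as a quadratic in $y=r(\varphi,\eta)$; its minimum over $y\ge 0$ is, by the same dichotomy on the sign of $\cos(\theta-\eta)$, at least $s^2r(\phi,\theta)^2\sin^2(\theta-\eta)+(\cos\phi-\cos\varphi)^2\gtrsim s^2\phi^2\sin^2((\theta-\eta)/2)$, where the last step uses $r(\phi,\theta)\asymp\phi$ on $[0,\pi/2]$. Finally, adding the three estimates (dividing by $3$) yields \eqref{den3} with an absolute constant.

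\textbf{Main obstacle.} The routine calculus is harmless; the one place demanding care is making every constant genuinely \emph{absolute} (independent of $f$, $s$, $\phi$, etc.) — this is where the uniform equivalences $r(\varphi,\eta)\asymp\sin\varphi$ and $r(\phi,\theta)\asymp\phi$ for $\|f\|_{X_m^\alpha}\le\varepsilon$, together with \textbf{(H2)} and \eqref{platit1}, must be invoked explicitly, and where one must be sure the smallness of $\varepsilon$ is used only through these equivalences and not, say, through an implicit dependence on $\theta-\eta$. A secondary subtlety is the behaviour at $s=0$, where the quadratic-in-$y$ argument degenerates and the minimum over $x$ is not attained at $x_c$; but there \eqref{den2} already gives the stronger bound $J_0\gtrsim\sin^2\varphi$, so \eqref{den3} at $s=0$ follows since $\sin^2\varphi\gtrsim\sin^2\varphi\sin^2((\theta-\eta)/2)+\varphi^2(\phi-\varphi)^2$ — here using $\phi,\varphi\in[0,\pi]$ so that $(\phi+\varphi)^2(\phi-\varphi)^2\lesssim(\phi-\varphi)^2\lesssim(\phi^2+\varphi^2)$, and checking the inequality in the two regimes $\varphi\lesssim\phi$ and $\varphi\gtrsim\phi$ separately.
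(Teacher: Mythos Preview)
Your approach is essentially the paper's: the quadratic-in-$x$ and quadratic-in-$y$ minimizations, with the dichotomy on the sign of $\cos(\theta-\eta)$, are exactly what the paper does (it calls these $g_1$ and $g_2$). Two clean-ups, though.

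First, your handling of the bound $J_s\gtrsim(\phi+\varphi)^2(\phi-\varphi)^2$ is unnecessarily tangled. You do not need $(r_0(\varphi)-r_0(\phi))^2$ or the arc-chord estimate \eqref{Chord} at all. The first two terms of $J_s$ together equal $|r(\varphi,\eta)e^{i\eta}-sr(\phi,\theta)e^{i\theta}|^2\ge0$, so trivially $J_s\ge(\cos\phi-\cos\varphi)^2$. The paper then writes $|\cos\phi-\cos\varphi|=2\big|\sin(\phi/2)-\sin(\varphi/2)\big|\big(\sin(\phi/2)+\sin(\varphi/2)\big)$ and bounds each factor below (using $\phi\in[0,\pi/2]$ to keep $(\phi+\varphi)/2$ away from $\pi$), obtaining $|\cos\phi-\cos\varphi|\gtrsim(\phi+\varphi)|\phi-\varphi|$ directly. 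In fact the paper never isolates this as a separate first bound: the $(\cos\phi-\cos\varphi)^2$ term is carried along in the quadratic minimizations, so each of the two $g_i$ arguments already yields the full estimate $J_s\gtrsim(\,\cdot\,)\sin^2((\theta-\eta)/2)+(\phi+\varphi)^2(\phi-\varphi)^2$.

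Second, your $s=0$ paragraph contains a false claim: the inequality $\sin^2\varphi\gtrsim(\phi+\varphi)^2(\phi-\varphi)^2$ (or the version with $\varphi^2(\phi-\varphi)^2$) fails when $\varphi\to\pi$ with $\phi$ fixed in $[0,\pi/2]$, since the left side vanishes while the right does not. But there is no difficulty here: the quadratic arguments work uniformly in $s\in[0,1]$. When $s=0$ the polynomial in $x$ is constant, equal to $J_0=r(\varphi,\eta)^2+(\cos\phi-\cos\varphi)^2\ge r(\varphi,\eta)^2\sin^2((\theta-\eta)/2)$, and the third bound $s^2\phi^2\sin^2((\theta-\eta)/2)$ is trivially $0$; so no separate treatment is needed.
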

\begin{proof}
Since $f\in B_{X_m^\alpha}(\varepsilon)$, for some $\varepsilon<1$, and $r_0$ verifies {\bf (H2)} then 
\begin{align*}
r(\varphi,\eta)=&r_0(\varphi)+f(\varphi,\eta)\geq 2C \sin\varphi-|f(\varphi,\eta)|.
\end{align*}
In addition $f$ satisfies \eqref{platit1} and in particular  
$$
\frac{|f(\varphi,\eta)|}{\sin(\varphi)}\le C_1\|f\|_{\textnormal{Lip}}.
$$
It follows that,
\begin{align*}
r(\varphi,\eta)\geq \left(2C -C_1\| f\|_{\textnormal{Lip}}\right)\sin(\varphi).
\end{align*}
By imposing $\|f\|_{\textnormal{Lip}}\leq \varepsilon= \frac{C}{C_1}$, we infer
\begin{equation}\label{Min11}
r(\varphi,\eta)\geq  C\sin(\varphi).
\end{equation}
Consequently, we obtain
\begin{align*}
J_0(\phi,\theta,\varphi,\eta)=&r^2(\varphi,\eta)+(\cos(\varphi)-\cos(\phi))^2\geq C\sin^2(\varphi),
\end{align*}
which gives the estimate \eqref{den2}. Let us now check the validity of \eqref{den3}. First, we remark that
$$
J_s(\phi,\theta,\varphi,\eta)=r^2(\varphi,\eta)+s^2r^2(\phi,\theta)-2sr(\varphi,\eta)r(\phi,\theta)\cos(\theta-\eta)+(\cos(\varphi)-\cos(\phi))^2.
$$
Denote
$$
g_1(x):=r^2(\varphi,\eta)+x^2-2xr(\varphi,\eta)\cos(\theta-\eta)+(\cos(\varphi)-\cos(\phi))^2,
$$
and therefore  we get the relation $g_1(sr(\phi,\theta))=J_s(\phi,\theta,\varphi,\eta)$. From variation arguments we infer that the function $g_1$ reaches its global  minimum at  the point 
$$
x_c=r(\varphi,\eta)\cos(\theta-\eta).
$$
Let us distinguish the cases $\cos(\theta-\eta)\in[0,1]$ and $\cos(\theta-\eta)\in[-1,0]$. In the first case, one has according to \eqref{Min11}
\begin{align*}
J_s(\phi,\theta,\varphi,\eta)=&g_1(sr(\phi,\theta))\nonumber\\\geq& g_1(x_c)
=r^2(\varphi,\eta)\sin^2(\theta-\eta)+(\cos(\varphi)-\cos(\phi))^2\nonumber\\
\geq& C(\sin^2(\varphi){\sin^2(\theta-\eta)}+(\cos(\varphi)-\cos(\phi))^2).
\end{align*}
Using that $\cos(\theta-\eta)\in[0,1]$, one gets
  $$\sin^2(\theta-\eta)=2\sin^2((\theta-\eta)/2)(1+\cos(\theta-\eta))\geq 2\sin^2((\theta-\eta)/2).$$
Moreover, since $\phi\in[0,\frac{\pi}{2}]$ and $\varphi\in[0,\pi]$, we obtain
\begin{align}\label{est-cos}
|\cos(\varphi)-\cos(\phi)|=&|(1-\cos(\phi))-(1-\cos(\varphi))|\nonumber\\
=&2|\sin^2(\phi/2)-\sin^2(\varphi/2)|\nonumber\\
=&2|\sin(\phi/2)-\sin(\varphi/2)|(\sin(\phi/2)+\sin(\varphi/2))\nonumber\\
\geq&C|\phi-\varphi||\phi+\varphi|.
\end{align}
Hence
\begin{align}\label{den3-1-1}
J_s(\phi,\theta,\varphi,\eta)\geq& C\Big(\sin^2(\varphi){\sin^2((\theta-\eta)/2)}+(\phi+\varphi)^2(\phi-\varphi)^2\Big).
\end{align}
In the second case where  $\cos(\theta-\eta)\in[-1,0]$, the critical point is negative,  $x_c\leq 0$, and one has from the variations of $g_1$, the estimate \eqref{Min11} and \eqref{est-cos}
\begin{align}\label{den3-1-2}
J_s(\phi,\theta,\varphi,\eta)=&g_1(sr(\phi,\theta))\nonumber\\\geq& g_1(0)
=r(\varphi,\eta)^2+(\cos(\varphi)-\cos(\phi))^2\nonumber\\
\geq& C(\sin^2(\varphi){\sin^2((\theta-\eta)/2)}+(\phi+\varphi)^2(\phi-\varphi)^2).
\end{align}
Putting together \eqref{den3-1-1} and \eqref{den3-1-2}, one deduces that
\begin{align}\label{den3-1}
J_s(\phi,\theta,\varphi,\eta)\geq& C\Big(\sin^2(\varphi){\sin^2((\theta-\eta)/2)}+(\phi+\varphi)^2(\phi-\varphi)^2\Big),
\end{align}
for any $\phi\in[0,\pi/2]$, $\varphi\in[0,\pi]$ and $\theta,\eta\in[0,2\pi]$.

Following the same ideas, we introduce the function
$$
g_2(x):=x^2+s^2{r^2(\phi,\theta)}-2sxr(\phi,\theta)\cos(\theta-\eta)+(\cos(\varphi)-\cos(\phi))^2,
$$
which satisfies  $g_2(r(\varphi,\eta))=J_s(\phi,\theta,\varphi,\eta)$. Then as before we can check easily that the function $g_2$ reaches its  minimum at the point  
$
\tilde{x}_c=sr(\phi,\theta)\cos(\theta-\eta).
$
Similarly we distinguishing between two cases $\cos(\theta-\eta)\in[0,1]$ and $\cos(\theta-\eta)\in[-1,0]$. For  the first case, using  \eqref{est-cos}, we have
\begin{align*}
J_s(\phi,\theta,\varphi,\eta)\geq& C(s^2\sin^2(\phi){\sin^2((\theta-\eta)/2)}+(\phi+\varphi)^2(\phi-\varphi)^2).
\end{align*}
Since $\phi\in[0,\pi/2]$, we have that $\sin(\phi)\geq \frac2\pi\phi$, and then
\begin{align}
J_s(\phi,\theta,\varphi,\eta)\geq& C(s^2\phi^2{\sin^2((\theta-\eta)/2)}+(\phi+\varphi)^2(\phi-\varphi)^2).\label{den3-2}
\end{align}
{In the other case, i.e. $\cos(\theta-\eta)\in[-1,0]$, one has that $\tilde{x}_c<0$ and then
\begin{align}
\nonumber J_s(\phi,\theta,\varphi,\eta)=g_2(r(\varphi,\eta))\geq g_2(0)=&s^2r(\phi,\theta)^2+(\cos(\varphi)-\cos(\phi))^2\\
\geq &s^2\sin^2(\phi)\sin^2((\theta-\eta)/2)+(\phi+\varphi)^2(\phi-\varphi)^2.\label{den3-3}
\end{align}

By summing up  \eqref{den3-1}--\eqref{den3-2}--\eqref{den3-3} we achieve \eqref{den3}.}
\end{proof}

\subsection{Regularity persistence}
In this section we shall investigate the regularity of the function $\tilde{F}$ introduced in \eqref{Ftilde}. The main result reads as follows.
\begin{pro}\label{prop-wellpos}
Let  $m\geq 2, \alpha\in(0,1)$ and $r_0$ satisfy ${\bf{(H)}}$. There exists $\varepsilon\in(0,1)$ small enough such that the functional 
$$\tilde{F}:\R\times B_{X_m^\alpha}(\varepsilon)\rightarrow X_m^\alpha
$$ is well-defined and of class $\mathscr{C}^1$. The function spaces $X_m^\alpha$ are defined in \eqref{space} and \eqref{spaceX1}.
\end{pro}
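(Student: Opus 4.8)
The plan is to establish the three required properties in turn: (i) $\tilde F$ maps $\R\times B_{X_m^\alpha}(\varepsilon)$ into $X_m^\alpha$, (ii) it is well-defined in the sense that the integrals converge with the claimed $\mathscr{C}^{1,\alpha}$ regularity, and (iii) it is of class $\mathscr{C}^1$ in $(\Omega,f)$. The symmetry and algebraic structure (membership in the Fourier cone $\sum f_n(\phi)\cos(nm\theta)$, equatorial symmetry, and the vanishing of the zero mode, which is built into the definition of $\tilde F$ through the subtraction of $m(\Omega,f)$) have already been handled in Proposition \ref{prop-SymX1}. The boundary conditions $f(0,\theta)=f(\pi,\theta)=0$ need a separate small argument: since $\tilde F(\Omega,f)=\frac{1}{r_0(\phi)}\{I(f)-\frac\Omega2 r^2-m\}$ and $r(\phi,\theta)^2\to 0$ like $\sin^2\phi$ at the poles by {\bf(H2)} and \eqref{platit1}, while a computation analogous to Step 2 in the proof of Proposition \ref{HoldX1} (using the $m$-fold symmetry, $m\geq2$, and Lemma \ref{lemma-velocidad0}) shows $I(f)(0,\theta)-m(\Omega,f)(0)=0$, the quotient remains bounded and vanishes at $\phi\in\{0,\pi\}$ once we prove it is continuous there.

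The heart of the matter is the $\mathscr{C}^{1,\alpha}$ estimate
$$
\|\tilde F(\Omega,f)\|_{\mathscr{C}^{1,\alpha}((0,\pi)\times\T)}\leq C(\Omega,\|f\|_{X_m^\alpha}),
$$
and here I would follow exactly the strategy already deployed in the proof of Proposition \ref{cor-fredholm} for the linear operator $G$, now for the genuinely nonlinear $I(f)$. First I would perform the radial integration $\int_0^{r(\varphi,\eta)} r\,dr/|\cdots|$ and, to remove the $1/r_0(\phi)$ singularity at the poles, integrate by parts in $\eta$ after introducing the primitive $g_\theta(\varphi,\eta)=\int_\theta^\eta h$-type auxiliary functions; this is legitimate because of the $2\pi$-periodicity and zero average in $\eta$, exactly as in \eqref{gtheta-def}--\eqref{G-2}. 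Then I would write the denominator of every resulting kernel in the form $J_s(\phi,\theta,\varphi,\eta)$ of Lemma \ref{lemma-estimdenominator} and invoke the lower bounds \eqref{den2}--\eqref{den3}; together with {\bf(H2)}, \eqref{platit1}, the interpolation inequality of the type \eqref{Tad11}, and the mean value theorem applied to differences of the denominator (as in \eqref{Aphivarphi}, \eqref{dif-den}, \eqref{MahmaX1-3}), every kernel and its $\partial_\phi$ and $\partial_\theta$ derivatives will be bounded by $C|\phi-\varphi|^{-(1-\beta)}|\sin((\theta-\eta)/2)|^{-\beta}$ and $C|\phi-\varphi|^{-(2-\beta)}|\sin((\theta-\eta)/2)|^{-\beta}$ for suitable $\beta\in(0,1)$. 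One then applies Proposition \ref{prop-potentialtheory} (the anisotropic potential-theory lemma in the appendix) to conclude that $I(f)$, hence $\tilde F(\Omega,f)$, lies in $\mathscr{C}^{1,\alpha}$, using also that $\mathscr{C}^{1,\alpha}$ is a Banach algebra and that $\phi\mapsto 1/r_0(\phi)$ behaves like $1/\sin\phi$ times a $\mathscr{C}^\alpha$ function, so multiplication by $1/r_0(\phi)$ is controlled once the bracket vanishes like $\sin\phi$ at the poles. The term $-\frac\Omega2 r^2$ is harmless since $r=r_0+f\in\mathscr{C}^{1,\alpha}$ and $r^2/r_0$ stays in $X_m^\alpha$ by {\bf(H2)}, and $m(\Omega,f)$ is an average of a $\mathscr{C}^{1,\alpha}$ function hence even smoother in $\phi$.

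For the $\mathscr{C}^1$ regularity in $(\Omega,f)$, the dependence on $\Omega$ is affine, so only the Gateaux differentiability in $f$ and continuity of $f\mapsto \partial_f\tilde F(\Omega,f)$ require work. I would compute the Gateaux derivative by differentiating under the integral sign — the differentiation being justified because the $f$-dependence enters only through $r=r_0+f$ in the integrand and in the upper limit $r(\varphi,\eta)$ of the radial integral, both smoothly — obtaining an expression structurally identical to $I(f)$ but with one extra factor linear in the perturbation $h$ and denominators still of the form $J_s$. The same chain of estimates (Lemma \ref{lemma-estimdenominator}, interpolation, Proposition \ref{prop-potentialtheory}) then shows $\partial_f\tilde F(\Omega,f):X_m^\alpha\to X_m^\alpha$ is bounded, and a further routine but careful continuity estimate — bounding differences $J_s(f_1)-J_s(f_2)$ and the corresponding differences of kernels by $\|f_1-f_2\|_{X_m^\alpha}$ — gives continuity of $f\mapsto\partial_f\tilde F(\Omega,f)$, hence $\mathscr{C}^1$ by the standard criterion. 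The main obstacle, as flagged in the introduction, is precisely the pole singularity: the factor $1/r_0(\phi)$ is genuinely singular and the naive bound on $I(f)$ only sees $|I(f)|\lesssim\sin\phi$, so one must extract the cancellation (via the $\eta$-integration by parts and the structure of $\widehat R$/$J_s$ established in Lemma \ref{lemma-estimdenominator}) showing that $I(f)(\phi,\theta)-m(\Omega,f)(\phi)$ vanishes to the right order at $\phi\in\{0,\pi\}$; keeping this uniform when we also differentiate in $\phi$ (which lowers the decay by one power) is the delicate point, and it is what forces the particular combination of the partial Dirichlet condition defining $X_m^\alpha$, the $m$-fold symmetry with $m\geq2$, and the anisotropic estimate \eqref{den3}.
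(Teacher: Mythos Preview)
Your proposal identifies the correct tools (Lemma \ref{lemma-estimdenominator}, Proposition \ref{prop-potentialtheory}, the interpolation inequalities of type \eqref{Tad11}, and Lemma \ref{lemma-velocidad0}) and the correct difficulty (the pole singularity from the factor $1/r_0(\phi)$), but the specific mechanism you propose to neutralise that singularity does not carry over from the linearised setting to the nonlinear one. In the proof of Proposition \ref{cor-fredholm} the $g_\theta$ integration-by-parts trick \eqref{gtheta-def}--\eqref{G-2} works because the integrand is \emph{linear} in $h(\varphi,\eta)$ and $h$ has zero $\eta$-average; this produces an extra factor $\sin(\eta-\theta)\,r_0(\varphi)$ that cancels the $1/r_0(\phi)$. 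In the nonlinear $I(f)$ the $\eta$-dependence enters through the upper limit $r(\varphi,\eta)=r_0(\varphi)+f(\varphi,\eta)$ and through the denominator, and there is no function of zero $\eta$-mean sitting multiplicatively in the integrand, so the primitive $g_\theta$ has no natural analogue. You would discover this obstruction as soon as you tried to write down the analogue of \eqref{G-2}.

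The paper's route is different: it first differentiates $F_1$ in $\theta$ and $\phi$, expressing the result through the velocity $U=\nabla_h^\perp\psi_0$ via \eqref{F1theta}--\eqref{F1phi}. For $\partial_\phi F_1$ the most singular piece $r_0'(\phi)r_0^{-2}(\phi)\psi_0$ is handled by Taylor-expanding $\psi_0$ around the axis (eq.~\eqref{psi0X}); the zeroth-order term depends only on $\phi$ and disappears after subtracting the $\theta$-mean, while the first-order term is the velocity integral. The remaining obstacle is then the quantity $r_0^{-1}(\phi)\,U(sr(\phi,\theta)e^{i\theta},\cos\phi)\cdot ie^{i\theta}$, and here the paper's key device is to \emph{subtract} the corresponding expression with $s=0$, which vanishes identically by Lemma \ref{lemma-velocidad0}; algebraically this produces the kernel $\mathscr{K}_2$ in \eqref{Kern2X1} with an extra factor $r(\phi,\theta)$ in the numerator that exactly cancels $1/r_0(\phi)$. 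This $J_0$-subtraction is the nonlinear replacement for the $g_\theta$ device, and it is the step your outline is missing. The subsequent $\mathscr{C}^\alpha$ estimates are done by direct splitting of differences (terms $\mathcal{I}_3$--$\mathcal{I}_6$, $\mathcal{I}_7$--$\mathcal{I}_{12}$) rather than by a blanket appeal to Proposition \ref{prop-potentialtheory}, because the nonlinear kernels depend on $(\phi,\theta)$ in a more intricate way than the hypotheses \eqref{prop-potentialtheory-h0}--\eqref{prop-potentialtheory-h3} allow.
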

\begin{proof}
First we shall split the functional  $\tilde{F}$ into two pieces
$$
\tilde{F}(\Omega,f)(\phi,\theta)=F_1(f)(\phi,\theta)-\frac{\Omega}{2} F_2(f)(\phi,\theta)-\frac{1}{2\pi}\bigintsss_0^{2\pi}\left[F_1(f)(\phi,\theta)-\frac{\Omega}{2} F_2(f)(\phi,\theta)\right]d\theta,
$$
with
\begin{align*}
F_1(f)(\phi,\theta)=&\frac{I(f)(\phi,\theta)}{r_0(\phi)},\\
F_2(f)(\phi,\theta)=&2f(\phi,\theta)+\frac{f^2(\phi,\theta)}{r_0(\phi)}\cdot
\end{align*}
{Define also 
\begin{align*}
\mathscr{F}_1(f)(\phi,\theta):=F_1(f)(\phi,\theta)-\frac{1}{2\pi}\bigintsss_0^{2\pi}F_1(f)(\phi,\theta)d\theta,\\
\mathscr{F}_2(f)(\phi,\theta):=F_2(f)(\phi,\theta)-\frac{1}{2\pi}\bigintsss_0^{2\pi}F_2(f)(\phi,\theta)d\theta.
\end{align*}}
Note that $I(f)$ is defined \eqref{def-I} and it is nothing but the stream function $\psi_0$ associated to the domain parametrized by 
$$
(\phi,\theta)\in[0,\pi]\times[0,2\pi]\mapsto \left(\big( r_0(\phi)+f(\phi,\theta)\big)e^{i\theta}, \cos\phi\right)\cdot
$$
Thus
\begin{equation}\label{F1-X1}
F_1(f)(\phi,\theta)=\frac{\psi_0\left(\big(r_0(\phi)+f(\phi,\theta)\big)e^{i\theta}, \cos\phi\right)}{r_0(\phi)}\cdot
\end{equation}
We point out that according to  the general potential theory the stream function $\psi_0$  belongs  at least  to the space $\mathscr{C}^{1,\alpha}(\R^3)$. The proof will be divided into three steps.

\medskip
\noindent
{{\bf Step 1:} {$f\mapsto \mathscr{F}_2(f)$} is $\mathscr{C}^1$.}
In this step, we check that {$\mathscr{F}_2$} is well-defined and of class  $\mathscr{C}^1$. {Note that checking the regularity for $\mathscr{F}_2$ is equivalent to do it for $F_2$.} The first term {of $F_2$} is trivial to check. As to the second one, it is clear by Taylor's formula using the boundary conditions and ${\bf{(H2)}}$ that  the function $\frac{f^2}{r_0}$ is bounded and  vanishes at the points $\phi=0,\pi$. For the regularity, we differentiate  with respect to $\phi$, 
$$
\partial_\phi\left(\frac{f^2(\phi,\theta)}{{ r_0(\phi)}}\right)=-r_0'(\phi) \left(\frac{f(\phi,\theta)}{r_0(\phi)}\right)^2+2\frac{f(\phi,\theta)}{r_0(\phi)}\partial_\phi f(\phi,\theta).
$$
Using again Taylor's formula and the assumptions ${\bf{(H)}}$ on $r_0$ we deduce  that the functions $(\phi,\theta)\mapsto \frac{f(\phi,\theta)}{\sin\phi}$ and $(\phi,\theta)\mapsto \frac{\sin\phi }{r_0(\phi)}$ belongs to  $\mathscr{C}^{\alpha}$. Thus using the algebra structure of this latter space we infer that  $(\phi,\theta)\mapsto \frac{f(\phi,\theta)}{r_0(\phi)}$ belongs also to $\mathscr{C}^\alpha$.  The same algebra structure allows to get  $\partial_\phi\left(\frac{f^2}{{ r_0}}\right)\in \mathscr{C}^\alpha.$  Following  the same argument we obtain $\partial_\theta F_2$ belongs to $\mathscr{C}^\alpha.$ Concerning the symmetry; it  can be derived from  Proposition \ref{prop-SymX1} combined with the fact that frequency $n=0$ is eliminated in the definition of $\mathscr{F}_2$ by subtracting the mean value in $\theta$.

 Now let us check the   $\mathscr{C}^1$ dependence in  $f$ of $F_2$. First we can check that   its Frechet derivative takes the form 
$$
\partial_f F_2(f)h(\phi,\theta)=2h(\phi,\theta)+2\frac{f(\phi,\theta)h(\phi,\theta)}{r_0(\phi)}\cdot
$$
Using similar ideas as before, we can easily get that 
$$
\|\partial_f {\mathscr{F}_2(f_1)h-\partial_f \mathscr{F}_2(f_2)h}\|_{X_m^\alpha}\leq C\|f_1-f_2\|_{X_m^\alpha}\|h\|_{X_m^\alpha}.
$$
{This implies  that $f\mapsto \partial_f \mathscr{F}_2(f)$ is continuous and therefore $\mathscr{F}_2$ is of class $\mathscr{C}^1$.}

\medskip
\noindent
{{{\bf Step 2:} $f\mapsto \mathscr{F}_1(f)$ {\it  is well-defined}.} This is more involved than $\mathscr{F}_2.$ According to  Proposition  \ref{prop-SymX1} the functional $\mathscr{F}_1$  is symmetric with respect to $\phi=\frac{\pi}{2}$ and therefore it suffices to check the desired regularity in the range $\phi\in(0,\pi/2)$ { and check that the derivative is not discontinuity at $\pi/2$}. Let us emphasize  that   we need to check the regularity not for $F_1$ but for $\mathscr{F}_1$}
First, we shall check that $\mathscr{F}_1$ is bounded and satisfies the boundary condition $\mathscr{F}_1(0,\theta)=0$, for any $\theta\in(0,2\pi)$. The remaining boundary condition  $\mathscr{F}_1(\pi,\theta)=0$  follows from the symmetry with respect to the equatorial.  For this purpose, we write by virtue of Taylor's formula
\begin{align}\label{psi0X}
\forall x_h\in\R^2,\quad \psi_0(x_h,\cos\phi)
=&\psi_0(0,0,\cos\phi)+x_h\cdot\int_0^1\nabla_h\psi_0\big(\tau x_h,\cos\phi\big)d\tau.
\end{align}
Making the substitution $x_h=(r_0(\phi)+f(\phi,\theta))e^{i\theta}$ and using \eqref{F1-X1} we infer
\begin{align*}
F_1(f)(\phi,\theta)=&\frac{\psi(0,0,\cos\phi)}{r_0(\phi)}+\left(1+\frac{f(\phi,\theta)}{r_0(\phi)}\right)e^{i\theta}\cdot\int_0^1\nabla_h\psi_0\left(\tau(r_0(\phi)+f(\phi,\theta))e^{i\theta},\cos\phi\right)d\tau\\
=:&\frac{\psi(0,0,\cos\phi)}{r_0(\phi)}+\mathscr{F}_{1,1}(\phi,\theta).
\end{align*}
We observe that  the $\cdot$ denotes  the usual Euclidean  inner product  of $\R^2.$
Consequently, we obtain
\begin{equation}\label{Simplifi1}
\mathscr{F}_1(\phi,\theta)=\mathscr{F}_{1,1}(\phi,\theta)-\langle \mathscr{F}_{1,1}\rangle_\theta.
\end{equation}

Let us analyze the  term $ \mathscr{F}_{1,1}$ and check its continuity and the Dirichlet boundary condition. First we observe from the assumption ${\bf{(H2)}}$ that $0$ is a simple zero for $r_0$ and we know that $f(0,\theta)=0$, then    one may easily  obtain the bound 
$$
|\mathscr{F}_{1,1}(\phi,\theta)|\leq C(1+\|\partial_\phi f\|_{L^\infty})\|\nabla_h\psi_0\|_{L^\infty(\R^3)}.
$$
Furthermore, according to  Lebesgue dominated convergence theorem  we infer
$$
\lim_{\phi\to 0}\mathscr{F}_{1,1}(\phi,\theta)=\left(1+\frac{\partial_\phi f(0,\theta)}{r_0^\prime(0)}\right)e^{i\theta}\cdot\nabla_h\psi_0\left(0,0,1\right),
$$
and this convergence is uniform in $\theta\in(0,2\pi)$. Notice that the same tool gives the continuity of $ \mathscr{F}_{1,1}$ in $[0;\pi/2]\times[0;2\pi].$

Now, applying  Lemma \ref{lemma-velocidad0} we get $\nabla_h\psi_0\left(0,0, 1\right)=0$, and therefore
$$
\forall \,\theta\in(0,2\pi),\quad \lim_{\phi\to 0}\mathscr{F}_{1,1}(\phi,\theta)=\lim_{\phi\to 0}\langle\mathscr{F}_{1,1}\rangle_\theta=0.
$$
This implies that $\mathscr{F}_1$ is  continuous in $[0,\pi]\times[0,2\pi]$ and it  satisfies the required  Dirichlet  boundary condition $\mathscr{F}_1(0,\theta)=\mathscr{F}_1(\pi,\theta)=0$.

The next step is to establish that  $\partial_\theta\mathscr{F}_1$ and $\partial_\phi \mathscr{F}_1$ are $\mathscr{C}^\alpha$. We will relate such derivatives to the two-components velocity field $U=\nabla_h^\perp \psi_0$. Differentiating \eqref{F1-X1} with respect to $\theta$  leads to 
\begin{align}
\nonumber \partial_\theta\mathscr{F}_1(\phi,\theta)=&
\partial_\theta F_1(f)(\phi,\theta)\\
=&r_0^{-1}(\phi)\,\nabla_h \psi_0(r(\phi,\theta)e^{i\theta}, \cos(\phi))\cdot\left(r(\phi,\theta)ie^{i\theta}+\partial_\theta r(\phi,\theta)e^{i\theta}\right)\nonumber\\
=&{-}\frac{r(\phi,\theta)}{r_0(\phi)} U(r(\phi,\theta)e^{i\theta}, \cos(\phi))\cdot e^{i\theta}{+} \partial_\theta r(\phi,\theta) 
\frac{U(r(\phi,\theta)e^{i\theta},\cos(\phi))}{r_0(\phi)}\cdot ie^{i\theta},\label{F1theta}
\end{align}
where $r(\phi,\theta)=r_0(\phi)+f(\phi,\theta)$ and recall that $\cdot$ is the usual Euclidean  inner product  of $\R^2.$

Concerning  the regularity of the partial derivative  in $\phi$, we achieve 
\begin{align}
\partial_\phi F_1(f)(\phi,\theta)
=&-\frac{r_0'(\phi)}{r_0^2(\phi)}\psi_0(r(\phi,\theta)e^{i\theta}, \cos(\phi))+
\frac{\partial_\phi r(\phi,\theta)}{r_0(\phi)}\nabla_h \psi_0(r(\phi,\theta)e^{i\theta},\cos(\phi))\cdot e^{i\theta}\nonumber\\
&-\frac{\sin(\phi)}{r_0(\phi)}\partial_z\psi_0(r(\phi,\theta)e^{i\theta},\cos(\phi))\nonumber\\
=&-\frac{r_0'(\phi)}{r_0^2(\phi)}\psi_0(r(\phi,\theta)e^{i\theta}, \cos(\phi))-
\partial_\phi r(\phi,\theta)\frac{U(r(\phi,\theta)e^{i\theta},\cos(\phi))}{r_0(\phi)} \cdot ie^{i\theta}\nonumber\\
&-\frac{\sin(\phi)}{r_0(\phi)}\partial_z\psi_0(r(\phi,\theta)e^{i\theta},\cos(\phi)).\label{F1phi}
\end{align}
Define
$$
\mathscr{J}_{1}(\phi,\theta)=\frac{r_0'(\phi)}{r_0^2(\phi)}\psi_0(r(\phi,\theta)e^{i\theta}, \cos(\phi)),
$$
and
$$
\mathscr{J}_{2}(\phi,\theta)=\partial_\phi r(\phi,\theta)\frac{U(r(\phi,\theta)e^{i\theta},\cos(\phi))}{r_0(\phi)} \cdot ie^{i\theta}.
$$
Then from \eqref{F1phi} we may write
$$
\partial_\phi F_1(f)(\phi,\theta)=-\mathscr{J}_{1}(\phi,\theta)-\mathscr{J}_{2}(\phi,\theta)-\frac{\sin(\phi)}{r_0(\phi)}\partial_z\psi_0(r(\phi,\theta)e^{i\theta},\cos(\phi)).
$$
{Let us justify why we can restrict ourselves to prove the regularity for $\phi\in[0,\pi/2]$ by symmetry. Indeed, since $r(\pi-\phi,\theta)=r(\phi,\theta)$ and  $r_0(\pi-\phi)=r_0(\phi)$, then we obtain that
$$
r_0'(\pi-\phi)=-r_0'(\phi), \quad \partial_\phi r(\pi-\phi,\theta)=\partial_\phi r(\phi,\theta).
$$ 
That implies that $r_0'(\pi/2)=0$ and $\partial_\phi r(\pi/2,\theta)=0$. By this way, it is easy to check that $\mathscr{J}_1(\pi/2,\theta)=\mathscr{J}_2(\pi/2,\theta)=0$ yielding that there is not any jump at $\pi/2$.} Moreover, note that the last term  belongs to $\mathscr{C}^\alpha$ {for any $\phi\in[0,\pi]$ (here we do not need to resctrict ourselves to $\phi\in[0,\pi/2]$) and then, by symmetry, we can extend it to $(0,\pi)$}. Indeed, as  $(\phi,\theta)\mapsto \big(r(\phi,\theta)e^{i\theta},\cos(\phi)\big)$ belongs to $\mathscr{C}^{1,\alpha}$ and $\partial_z\psi_0\in \mathscr{C}^{\alpha}(\R^3)$ then by composition we infer $(\phi,\theta)\mapsto \partial_z\psi_0\big(r(\phi,\theta)e^{i\theta},\cos(\phi)\big)$ is in $\mathscr{C}^{\alpha}((0,\pi)\times{\T}\big)$. On the  other hand, the function  $\frac{\sin}{r_0}$ belongs to $ \mathscr{C}^\alpha$ and thus by the algebra structure of $\mathscr{C}^\alpha$  we obtain the desired result.

Concerning the  term $\mathscr{J}_{1},$ we  use Taylor's formula for the stream function $\psi_0$ as in \eqref{psi0X} finding that  
\begin{align*}
\mathscr{J}_{1}(\phi,\theta)=&\frac{r_0'(\phi)\psi_0(0,0,\cos\phi)}{r_0^2(\phi)}+r_0'(\phi)r_0^{-1}(\phi)\left(1+\frac{f(\phi,\theta)}{r_0(\phi)}\right)\,\bigintsss_0^1\nabla_h\psi_0\big(s\, r(\phi,\theta)e^{i\theta},\cos\phi\big)ds\cdot e^{i\theta}\\
=&\frac{r_0'(\phi)\psi_0(0,0,\cos\phi)}{r_0^2(\phi)}+r_0'(\phi)\left(1+\frac{f(\phi,\theta)}{r_0(\phi)}\right)\,r_0^{-1}(\phi) \bigintsss_0^1U\big(s\, r(\phi,\theta)e^{i\theta},\cos\phi\big)ds\cdot ie^{i\theta}.
\end{align*}
We observe that the first term is singular and depends only on $\phi$ and therefore it does not contribute in $\mathscr{J}_{1}-\langle \mathscr{J}_{1}\rangle_\theta.$ Since $(\phi,\theta)\mapsto \frac{r(\phi,\theta)}{r_0(\phi)}$ belongs to $\mathscr{C}^\alpha$ then to get  $\mathscr{J}_{1}-\langle \mathscr{J}_{1}\rangle_\theta\in \mathscr{C}^\alpha$ it suffices to prove that
\begin{equation}\label{CondXX1}
(\phi,\theta)\mapsto\bigintsss_0^1 \frac{U(s r(\phi,\theta)e^{i\theta}, \cos(\phi))}{r_0(\phi)}\cdot ie^{i\theta} d s\in \mathscr{C}^\alpha.
\end{equation}
On the other hand to obtain $\mathscr{J}_2\in \mathscr{C}^\alpha$ it is enough to get 
\begin{equation}\label{CondXX2}
(\phi,\theta)\mapsto \frac{U(r(\phi,\theta)r^{i\theta}, \cos(\phi))}{r_0(\phi)}\cdot i\,e^{i\theta}\in \mathscr{C}^\alpha.
\end{equation}
From   \eqref{F1theta} we get that $\partial_\theta F_1(f)\in \mathscr{C}^\alpha$ provided that \eqref{CondXX1} and  \eqref{CondXX2} are satisfied  together with 
\begin{equation}\label{CondXX3}
(\phi,\theta)\mapsto {U(r(\phi,\theta)r^{i\theta}, \cos(\phi))}\cdot e^{i\theta}\in \mathscr{C}^\alpha.
\end{equation}
 By virtue of \eqref{v} and the fact that $U=\nabla_h^\perp\psi$, we find that
\begin{equation}\label{v-2}
U(r(\phi,\theta)e^{i\theta},\cos(\phi))=\frac{1}{4\pi}\bigintsss_{0}^{\pi}\bigintsss_0^{2\pi}\frac{\sin(\varphi)\big(\partial_{\eta}r(\varphi,\eta)e^{i\eta}+ir(\varphi,\eta)e^{i\eta}\big)\,d\eta d\varphi}{|(r(\phi,
\theta)e^{i\theta},\cos(\phi))-(r(\varphi,\eta)e^{i\eta},\cos(\varphi))|}\cdot
\end{equation}
Next we intend to prove \eqref{CondXX1}, \eqref{CondXX2} and \eqref{CondXX3}.

\medskip
\noindent
{$\bullet$ {\it Proof of \eqref{CondXX3}.}
Using \eqref{v-2}, we deduce that
$$
U(r(\phi,\theta)e^{i\theta},\cos(\phi))\cdot e^{i\theta}=\frac{1}{4\pi}\bigintsss_{0}^{\pi}\bigintsss_0^{2\pi}\frac{\sin(\varphi)\partial_\eta\big(r(\varphi,\eta)\cos(\eta-\theta)\big)\,d\eta d\varphi}{|(r(\phi,
\theta)e^{i\theta},\cos(\phi))-(r(\varphi,\eta)e^{i\eta},\cos(\varphi))|}\cdot
$$
Using the notation of Lemma \ref{lemma-estimdenominator} we find that
$$
|(r(\phi,
\theta)e^{i\theta},\cos(\phi))-(r(\varphi,\eta)e^{i\eta},\cos(\varphi))|=J_1^{\frac12}(\phi,\theta,\varphi,\eta),
$$
and therefore we may write
$$
U(r(\phi,\theta)e^{i\theta},\cos(\phi))\cdot e^{i\theta}=\frac{1}{4\pi}\bigintsss_{0}^{\pi}\bigintsss_0^{2\pi}\frac{\sin(\varphi)\partial_\eta\big(r(\varphi,\eta)\cos(\eta-\theta)\big)\,d\eta d\varphi}{J_1^{\frac12}(\phi,\theta,\varphi,\eta)}.
$$
This can be split into two integral terms
\begin{align}\label{veitheta}
\nonumber U(r(\phi,\theta)e^{i\theta},\cos(\phi))\cdot e^{i\theta}
=&\bigintsss_0^\pi\bigintsss_0^{2\pi}\frac{\sin(\varphi)\partial_\eta r(\varphi,\eta)\cos(\eta-\theta)d\eta d\varphi}{J_1^\frac12(\phi,\theta,\varphi,\eta)}\\
&-\bigintsss_0^\pi\bigintsss_0^{2\pi}\frac{\sin(\varphi)r(\varphi,\eta)\sin(\eta-\theta)d\eta d\varphi}{J_1^\frac12(\phi,\theta,\varphi,\eta)}\nonumber\\
:=&\mathcal{I}_1(\phi,\theta)-\mathcal{I}_2(\phi,\theta).
\end{align}
Next, we shall prove  that ${\mathcal{I}}_1$ is $\mathscr{C}^\alpha$.
Notice  that the second term $\mathcal{I}_2$ is easier to deal with than $\mathcal{I}_1$ because its kernel is more regular on the diagonal  than that of $\mathcal{I}_1$. To get $\mathcal{I}_2\in \mathscr{C}^\alpha$  it suffices to use  in a standard way  Proposition \ref{prop-potentialtheory}. We shall skip this part and focus our attention on the proof to the delicate part $\mathcal{I}_1.$  For this aim let us define the kernel
\begin{equation*}
\mathscr{K}_1(\phi,\theta,\varphi,\eta)=\frac{\sin(\varphi)\partial_\eta r(\varphi,\eta)\cos(\eta-\theta)}{J_1^\frac12(\phi,\theta,\varphi,\eta)}\cdot
\end{equation*}
We shall start with checking that  $\mathscr{K}_1$ is bounded. For this goal  we use Lemma \ref{lemma-estimdenominator} which implies 
\begin{align*}
|\mathscr{K}_1(\phi,\theta,\varphi,\eta)|\leq &\frac{C\,\sin(\varphi)|\partial_\eta r(\varphi,\eta)|}{\{(\varphi+\phi)^2(\phi-\varphi)^2+(\sin^2(\varphi)+\phi^2)\sin^2((\theta-\eta)/2)\}^{\frac12}}\cdot
\end{align*}
It is easy to check the inequality
\begin{equation}\label{K1sin}
\frac{\sin(\varphi)}{\Big((\varphi+\phi)^2(\phi-\varphi)^2+(\sin^2(\varphi)+\phi^2)\sin^2((\theta-\eta)/2)\Big)^{\frac12}}\leq \frac{1}{\Big((\phi-\varphi)^2+\sin^2((\theta-\eta)/2)\Big)^\frac12}\cdot
\end{equation}
By interpolating between
$$
\Big((\phi-\varphi)^2+\sin^2((\theta-\eta)/2)\Big)^\frac12\geq |\phi-\varphi|,
$$
and
$$
\Big((\phi-\varphi)^2+\sin^2((\theta-\eta)/2)\Big)^\frac12\geq |\sin((\theta-\eta)/2)|,
$$
one finds that for any $\beta\in [0,1]$ and $\phi\leq \pi/2$ 
\begin{equation}\label{K1sin-2}
\frac{\sin(\varphi)}{\Big((\varphi+\phi)^2(\phi-\varphi)^2+(\sin^2(\varphi)+\phi^2)\sin^2((\theta-\eta)/2)\Big)^{\frac12}}\leq \frac{1}{|\phi-\varphi|^{1-\beta}|\sin((\theta-\eta)/2)|^\beta},
\end{equation}
implying that
\begin{align}\label{K1-est1}
|\mathscr{K}_1(\phi,\theta,\varphi,\eta)|\leq \frac{C}{|\phi-\varphi|^{1-\beta}|\sin((\theta-\eta)/2)|^\beta}\cdot
\end{align}
Therefore, we easily achieve that $\mathcal{I}_1\in L^\infty$. 
To establish that $\mathcal{I}_1\in\mathscr{C}^\alpha$, we proceed in a direct way using the definition. Before that we remark that to get the $\mathscr{C}^\alpha$ regularity in both variables $(\phi,\theta)$ it is enough to check the  $\mathscr{C}^\alpha$-regularity separately in the partial variables. Thus we shall check that $\phi\mapsto \mathcal{I}_1(\phi,\theta)$ is $\mathscr{C}^\alpha(0,\pi)$ uniformly in $\theta\in[0,2\pi]$. 
Take $\phi_1,\phi_2\in [0,\frac{\pi}{2}]$ with $0<\phi_1<\phi_2$, then it is easy to check from some algebraic considerations that 
\begin{align*}
\mathcal{I}_1(\phi_2,\theta)-\mathcal{I}_1(\phi_1,\theta)=&\bigintsss_0^\pi\bigintsss_0^{2\pi}\frac{\sin(\varphi)\partial_\eta r(\varphi,\eta)\cos(\eta-\theta)(J_1(\phi_1,\theta,\varphi,\eta)-J_1(\phi_2,\theta,\varphi,\eta))d\eta d\varphi}{J_1^\frac12(\phi_2,\theta,\varphi,\eta)J_1^\frac12(\phi_1,\theta,\varphi,\eta)(J_1^\frac12(\phi_1,\theta,\varphi,\eta)+J_1^\frac12(\phi_2,\theta,\varphi,\eta))}\cdot
\end{align*} 
Coming back to the definition of $J_1$ seen in Lemma \ref{lemma-estimdenominator}, we can check that 
\begin{align*}
J_1(\phi_1,\theta,\varphi,\eta)-J_1(\phi_2,\theta,\varphi,\eta)=&\big(r(\phi_1,\theta)-r(\phi_2,\theta)\big)\big(r(\phi_1,\theta)-r(\varphi,\eta)+r(\phi_2,\theta)-r(\varphi,\eta)\big)\\
&+2r(\varphi,\eta)\big(r(\phi_1,\theta)-r(\phi_2,\theta)\big)\big(1-\cos(\theta-\eta)\big)\\
&{-}\big(\cos\varphi-\cos\phi_1+\cos\varphi-\cos\phi_2\big)\big(\cos\phi_1-\cos\phi_2\big).
\end{align*}
Since $r\in \textnormal{Lip}$ we infer by interpolation
$$
\big|r(\phi_1,\theta)-r(\phi_2,\theta)\big|\le C|\phi_1-\phi_2|^\alpha\Big(|r(\phi_1,\theta)-r(\varphi,\eta)|^{1-\alpha}+|r(\phi_2,\theta)-r(\varphi,\eta)|^{1-\alpha}\Big),
$$
and
$$
\big|r(\phi_1,\theta)-r(\phi_2,\theta)\big|\le C|\phi_1-\phi_2|^\alpha\Big(r^{1-\alpha}(\phi_1,\theta)+r^{1-\alpha}(\phi_2,\theta)\Big).
$$
Consequently we find
\begin{align*}
|J_1(\phi_1,\theta,\varphi,\eta)-&J_1(\phi_2,\theta,\varphi,\eta)|\leq C|\phi_1-\phi_2|^\alpha\Big(|r(\phi_1,\theta)-r(\varphi,\eta)|^{2-\alpha}+|r(\phi_2,\theta)-r(\varphi,\eta)|^{2-\alpha}\\
&+r(\varphi,\eta)\big(r^{1-\alpha}(\phi_1,\theta)+r^{1-\alpha}(\phi_2,\theta)\big)\big(1-\cos(\eta-\theta)\big)\\
&+|\cos(\varphi)-\cos(\phi_1)|^{2-\alpha}+|\cos(\varphi)-\cos(\phi_2)|^{2-\alpha}\Big).
\end{align*}
From straightforward calculus we observe that
\begin{align*}
\frac{|r(\phi_i,\theta)-r(\varphi,\eta)|^{2-\alpha}+r(\varphi,\eta)r(\phi_i,\theta)^{1-\alpha}\big(1-\cos(\eta-\theta)\big)+|(\cos(\varphi)-\cos(\phi_i)|^{2-\alpha}}{J_1(\phi_i,\theta,\varphi,\eta)^\frac{2-\alpha}{2}}\leq C,
\end{align*}
and then we find
\begin{align*}
|J_1(\phi_1,\theta,\varphi,\eta)-&J_1(\phi_2,\theta,\varphi,\eta)|\leq C|\phi_1-\phi_2|^\alpha\Big(J_1^{\frac{2-\alpha}{2}}(\phi_1,\theta,\varphi,\eta)+J_1^{\frac{2-\alpha}{2}}(\phi_2,\theta,\varphi,\eta)\Big).\end{align*}
It follows that

\begin{align}\label{J1-K1}
&\frac{|J_1(\phi_1,\theta,\varphi,\eta)-J_1(\phi_2,\theta,\varphi,\eta)|}{J_1^\frac12(\phi_2,\theta,\varphi,\eta)J_1(\phi_1,\theta,\varphi,\eta)^\frac12(J_1(\phi_1,\theta,\varphi,\eta)^\frac12+J_1^\frac12(\phi_2,\theta,\varphi,\eta))}\nonumber\\
&\lesssim\frac{|\phi_1-\phi_2|^\alpha}{J_1^\frac{\alpha}{2}(\phi_2,\theta,\varphi,\eta)J_1^\frac12(\phi_1,\theta,\varphi,\eta)}+\frac{|\phi_1-\phi_2|^\alpha}{J_1^\frac{1}{2}(\phi_2,\theta,\varphi,\eta)J_1^\frac{\alpha}{2}(\phi_1,\theta,\varphi,\eta)}\cdot
\end{align}
Using  \eqref{J1-K1}, one finds
\begin{align*}
|\mathcal{I}_1(\phi_2,\theta)-\mathcal{I}_1(\phi_1,\theta)|\lesssim&|\phi_1-\phi_2|^\alpha\bigintsss_0^{\pi}\bigintsss_0^{2\pi}\frac{\sin(\varphi)|\partial_\eta r(\varphi,\eta)|d\eta d\varphi}{J_1^\frac{\alpha}{2}(\phi_2,\theta,\varphi,\eta)J_1^\frac12(\phi_1,\theta,\varphi,\eta)}\\
&+|\phi_1-\phi_2|^\alpha\bigintsss_0^{\pi}\bigintsss_0^{2\pi}\frac{\sin(\varphi)|\partial_\eta r(\varphi,\eta)| d\eta d\varphi}{J_1^\frac{1}{2}(\phi_2,\theta,\varphi,\eta)J_1^\frac{\alpha}{2}(\phi_1,\theta,\varphi,\eta)}\cdot
\end{align*}
By virtue of \eqref{K1sin-2}, for any $\beta\in(0,1)$ we obtain
\begin{align*}
\bigintsss_0^{\pi}\bigintsss_0^{2\pi}\frac{\sin(\varphi)|\partial_\eta r(\varphi,\eta)|d\eta d\varphi}{J_1^\frac{\alpha}{2}(\phi_2,\theta,\varphi,\eta)J_1^\frac12(\phi_1,\theta,\varphi,\eta)}\leq&\bigintsss_0^{\pi}\bigintsss_0^{2\pi}\frac{|\partial_\eta r(\varphi,\eta)|\,J_1^{-\frac{\alpha}{2}}(\phi_2,\theta,\varphi,\eta)d\eta d\varphi}{|\phi_1-\varphi|^{1-\beta}|\sin((\theta-\eta)/2)|^\beta }\cdot
\end{align*}
Hence, we get in view  of Lemma \ref{lemma-estimdenominator} and \eqref{platit1}
\begin{align*}
\frac{|\partial_\eta r(\varphi,\eta)|}{J_1(\phi_i,\theta,\varphi,\eta)^\frac{\alpha}{2}}\lesssim& \frac{\varphi^\alpha}{\left((\varphi+\phi_i)^2(\phi_i-\varphi)^2+(\sin^2(\varphi)+\phi_i^2)\sin^2((\theta-\eta)/2)\right)^\frac{\alpha}{2}}\\
\lesssim&{|\phi_i-\varphi|^{-\alpha}}.
\end{align*}
for any $i=1,2$. This implies that
\begin{align*}
\bigintsss_0^{\pi}\bigintsss_0^{2\pi}\frac{\sin(\varphi)|\partial_\eta r(\varphi,\eta)|d\eta d\varphi}{J_1^\frac{\alpha}{2}(\phi_2,\theta,\varphi,\eta)J_1^\frac12(\phi_1,\theta,\varphi,\eta)}\lesssim&\bigintsss_0^{\pi}\bigintsss_0^{2\pi}\frac{|\phi_2-\varphi|^{-\alpha}d\eta d\varphi}{|\phi_1-\varphi|^{1-\beta}|\sin((\theta-\eta)/2)|^\beta }\\
\lesssim&\bigintsss_0^{\pi}\frac{|\phi_2-\varphi|^{-\alpha}d\varphi}{|\phi_1-\varphi|^{1-\beta} }\cdot
\end{align*}
It is classical that 
$$
\sup_{\phi_1,\phi_2\in(0,\pi)}\bigintsss_0^{\pi}\frac{|\phi_2-\varphi|^{-\alpha}}{|\phi_1-\varphi|^{1-\beta} }d\varphi<\infty,
$$
provided that $0\le\alpha<\beta<1$. Similarly we prove  under the same condition that
$$
{\sup_{\phi_1,\phi_2\in(0,\pi/2)\\
\atop  \theta\in(0,2\pi)}}\bigintsss_0^{\pi}\bigintsss_0^{2\pi}\frac{\sin(\varphi)|\partial_\eta r(\varphi,\eta)| d\eta d\varphi}{J_1^\frac{1}{2}(\phi_2,\theta,\varphi,\eta)J_1^\frac{\alpha}{2}(\phi_1,\theta,\varphi,\eta)}<\infty.
$$
Putting together the preceding estimates yields for any $0<\alpha<\beta<1$
$$
\forall \theta\in(0,2\pi),{\phi_1,\phi_2\in(0,\pi/2)},\quad |\mathcal{I}_1(\phi_2,\theta)-\mathcal{I}_1(\phi_1,\theta)|\leq C|\phi_1-\phi_2|^\alpha,
$$
where the constant $C$ is independent of $\theta, \phi_1$ and $\phi_2.$
{
Let us move on the regularity of $\mathcal{I}_1$ in $\theta$. Here we shall use the estimate later proved \eqref{Jtheta} for $s=1$, that is, for $\theta_1,\theta_2\in[0,2\pi]$ we have
\begin{align}\label{Jtheta1}
\frac{|J_1(\phi,\theta_2,\varphi,\eta)-J_1(\phi,\theta_1,\varphi,\eta)|}{J_1^\frac12(\phi,\theta_2,\varphi,\eta)+J_1^\frac12(\phi,\theta_1,\varphi,\eta)}\lesssim|\theta_1-\theta_2|^\alpha\phi^{\alpha^2}(J_1^{\frac{1-\alpha}{2}}(\phi,\theta_1,\varphi,\eta)+J_1^{\frac{1-\alpha}{2}}(\phi,\theta_2,\varphi,\eta)).
\end{align}
Note that
\begin{align*}
\mathcal{I}_1(\phi,\theta_1)-\mathcal{I}_1(\phi,\theta_2)=&\int_0^\pi\int_0^{2\pi}\frac{\sin(\varphi)\partial_\eta r(\varphi,\eta)(\cos(\eta-\theta_1)-\cos(\eta-\theta_2))}{J_1^\frac12(\phi,\theta_1,\varphi,\eta)}d\eta d\varphi\\
&+\int_0^\pi\int_0^{2\pi}\frac{\sin(\varphi)\partial_\eta r(\varphi,\eta)\cos(\eta-\theta_2)(J_1(\phi,\theta_2,\varphi,\eta)-J_1(\phi,\theta_1,\varphi,\eta))}{J_1^\frac12(\phi,\theta_1,\varphi,\eta)J_1^\frac12(\phi,\theta_1,\varphi,\eta)(J_1^\frac12(\phi,\theta_1,\varphi,\eta)+J_1^\frac12(\phi,\theta_1,\varphi,\eta))}d\eta d\varphi\\
=:&\int_0^\pi\int_0^{2\pi}(\mathscr{K}_{1,1}+\mathscr{K}_{1,2})(\phi,\theta,\varphi,\eta)d\eta d\varphi
\end{align*}
On the one hand, using \eqref{K1sin-2} we easily get
\begin{align*}
|\mathscr{K}_{1,1}(\phi,\theta,\varphi,\eta)|\leq \frac{C|\theta_1-\theta_2|}{|\phi-\varphi|^{1-\beta}|\sin((\theta-\eta)/2)|^\beta},
\end{align*}
for any $\beta\in(0,1)$.  On the other hand, we use \eqref{Jtheta1} to estimate the second term:
\begin{align*}
|\mathscr{K}_{1,2}(\phi,\theta,\varphi,\eta)|\leq& C|\theta_1-\theta_2|^\alpha\frac{\sin(\varphi)^{1+\alpha}\phi^{\alpha^2}(J_1^{\frac{1-\alpha}{2}}(\phi,\theta_1,\varphi,\eta)+J_1^{\frac{1-\alpha}{2}}(\phi,\theta_2,\varphi,\eta))}{J_1^\frac12(\phi,\theta_1,\varphi,\eta)J_1^\frac12(\phi,\theta_1,\varphi,\eta)}\\
\leq &C|\theta_1-\theta_2|^\alpha\frac{\sin(\varphi)^{1+\alpha}\phi^{\alpha^2}}{J_1^\frac{\alpha}{2}(\phi,\theta_1,\varphi,\eta)J_1^\frac12(\phi,\theta_1,\varphi,\eta)}\\
&+C|\theta_1-\theta_2|^\alpha\frac{\sin(\varphi)^{1+\alpha}\phi^{\alpha^2}}{J_1^\frac{1}{2}(\phi,\theta_1,\varphi,\eta)J_1^\frac{\alpha}{2}(\phi,\theta_1,\varphi,\eta)}.
\end{align*}
By virtue of Lemma \ref{lemma-estimdenominator}, we arrive at
\begin{align*}
|\mathscr{K}_{1,2}(\phi,\theta,\varphi,\eta)|\leq& C|\theta_1-\theta_2|^\alpha \frac{\sin(\varphi)^{1+\alpha}}{\sin(\varphi)^{1+\alpha}(|\phi-\varphi|+|\sin((\theta_1-\eta)/2)|)^{\alpha}(|\phi-\varphi|+|\sin((\theta_2-\eta)/2)|)}\\
&+C|\theta_1-\theta_2|^\alpha \frac{\sin(\varphi)^{1+\alpha}}{\sin(\varphi)^{1+\alpha}(|\phi-\varphi|+|\sin((\theta_2-\eta)/2)|)^{\alpha}(|\phi-\varphi|+|\sin((\theta_1-\eta)/2)|)}\\
\leq &C|\theta_1-\theta_2|^\alpha \frac{1}{\sin(\varphi)^{\gamma_1}|\sin((\theta_1-\eta)/2)|^{\gamma_2}|\sin((\theta_1-\eta)/2)|^{\gamma_3}},
\end{align*}
for $\gamma_1,\gamma_2,\gamma_3\in(0,1)$ and $\gamma_2+\gamma_3<1$.

 At the end, we can find that
$$
|\mathcal{I}_1(\phi,\theta_2)-\mathcal{I}_1(\phi,\theta_1)|\leq C|\theta_1-\theta_2|^\alpha.
$$}
Finally, this allows to get that  $\mathcal{I}_1$ is $\mathscr{C}^\alpha((0,\pi)\times{\T})$. 

\medskip
\noindent
$\bullet$ {\it Proof of \eqref{CondXX2}}. In fact we shall establish a more refined result:
$$
(\phi,\theta)\mapsto \frac{U\big(sr(\phi,\theta)e^{i\theta},\cos(\phi)\big)}{r_0(\phi)}\cdot ie^{i\theta}\in \mathscr{C}^\alpha([0,\pi]\times[0,2\pi]).
$$
uniformly with respect to  $s\in[0,1].$ This allows to get the results \eqref{CondXX2} and \eqref{CondXX1}.

Coming back to \eqref{v-2} and using $J_s$ introduced in Lemma \ref{lemma-estimdenominator} we find the expression
\begin{align}
\frac{U\big(sr(\phi,\theta)e^{i\theta},\cos(\phi)\big)}{r_0(\phi)}\cdot ie^{i\theta}&
=\frac{1}{4\pi r_0(\phi)}\bigintsss_0^\pi\bigintsss_0^{2\pi}\frac{\sin(\varphi)\partial_\eta\big(r(\varphi,\eta)\sin(\eta-\theta)\big)}{J_s^\frac12(\phi,\theta,\varphi,\eta)}d\eta d\varphi.
\label{T2}
\end{align}
Moreover, by Lemma \ref{lemma-velocidad0} we have
$$
\forall\, (\phi,\theta)\in(0,\pi)\times(0,2\pi),\quad \bigintsss_0^\pi\bigintsss_0^{2\pi}\frac{\sin(\varphi)\partial_\eta\big(r(\varphi,\eta)\sin(\eta-\theta)\big)d\eta d\varphi}{J_0(\phi,\theta,\varphi,\eta)^\frac12}=0,
$$ 
and then we can subtract this vanishing term obtaining
\begin{align*}
&\qquad\qquad\qquad\qquad\frac{U\big(sr(\phi,\theta)e^{i\theta},\cos(\phi)\big)}{r_0(\phi)}\cdot ie^{i\theta}=\\
&-\frac{r(\phi,\theta)}{4\pi r_0(\phi)}\bigintsss_0^\pi\bigintsss_0^{2\pi}\frac{\sin(\varphi)\partial_\eta\big(r(\varphi,\eta)\sin(\eta-\theta)\big)s\big(sr(\phi,\theta)-2r(\varphi,\eta)\cos(\theta-\eta)\big)}{J_s^\frac12(\phi,\theta,\varphi,\eta)J_0^\frac12(\phi,\varphi,\eta)\big(J_s^\frac12(\phi,\theta,\varphi,\eta)+J_0^\frac12(\phi,\varphi,\eta)\big)}d\eta d\varphi.
\end{align*}
Notice that we eliminate .the variable $\theta$ from the definition of $J_0$ because it is independent of this parameter.
Since $(\phi,\theta)\mapsto \frac{r(\phi,\theta)}{r_0(\phi)}$ is $\mathscr{C}^\alpha$, then to get the desired regularity it is enough to check it for the  the integral term. Denote
\begin{equation}\label{Kern2X1}
\mathscr{K}_2(s,\phi,\theta,\varphi,\eta)=\frac{\sin(\varphi)\partial_\eta\big(r(\varphi,\eta)\sin(\eta-\theta)\big)s\big(sr(\phi,\theta)-2r(\varphi,\eta)\cos(\theta-\eta)\big)}{J_s^\frac12(\phi,\theta,\varphi,\eta)J_0^\frac12(\phi,\varphi,\eta)\big(J_s^\frac12(\phi,\theta,\varphi,\eta)+J_0^\frac12(\phi,\varphi,\eta)\big)},
\end{equation}
and let  us show first that 
$$
(\phi,\theta)\mapsto \bigintsss_0^\pi\bigintsss_0^{2\pi}\mathscr{K}_2(s,\phi,\theta,\varphi,\eta) d\eta d\varphi,
$$
belongs to $L^\infty$.  It is plain that 
\begin{align*}
|\mathscr{K}_2(s,\phi,\theta,\varphi,\eta)|\lesssim&\frac{\sin(\varphi)\big(r(\varphi,\eta)+|\partial_\eta r(\varphi,\eta)||\sin(\eta-\theta)|\big)s\big(sr(\phi,\theta)+r(\varphi,\eta)\big)}{J_s^\frac12(\phi,\theta,\varphi,\eta)J_0(\phi,\varphi,\eta)}\cdot
\end{align*}
Combined with the estimate \eqref{den2}, it yields
\begin{align*}
|\mathscr{K}_2(s,\phi,\theta,\varphi,\eta)|
\lesssim&\frac{(r(\varphi,\eta)+|\partial_\eta r(\varphi,\eta)||\sin(\eta-\theta)|)s(sr(\phi,\theta)+r(\varphi,\eta))}{\sin(\varphi)J_s^\frac12(\phi,\theta,\varphi,\eta)}\cdot
\end{align*}

As we have mentioned before at different stages,  the symmetry allows us to restrict the discussion    to the interval  $\phi\in(0,\pi/2)$. Then using  \mbox{Lemma \ref{lemma-estimdenominator}} and \eqref{platit1}  we achieve that
\begin{align}\label{srr-1}
\frac{sr(\phi,\theta)+r(\varphi,\eta)}{J_s^\frac12(\phi,\theta,\varphi,\eta)}\lesssim& \frac{s\phi+\sin\varphi}{\left\{(\varphi+\phi)^2(\phi-\varphi)^2+(\sin^2(\varphi)+s^2\phi^2)\sin^2((\theta-\eta)/2)\right\}^\frac12}\nonumber\\
\lesssim& \frac{1}{\big((\phi-\varphi)^2+\sin^2((\theta-\eta)/2)\big)^\frac12}\cdot
\end{align}
Hence, the estimate of \eqref{platit1} allows to get
\begin{align*}
|\mathscr{K}_2(s,\phi,\theta,\varphi,\eta)|
\lesssim&\frac{\sin(\varphi)+\sin^\alpha(\varphi)|\sin(\theta-\eta)|}{\sin(\varphi)\big((\phi-\varphi)^2+\sin^2((\theta-\eta)/2)\big)^\frac12}\\
\lesssim&\frac{1}{\big((\phi-\varphi)^2+\sin^2((\theta-\eta)/2)\big)^\frac12}+\frac{1}{\sin^{1-\alpha}(\varphi)}\cdot
\end{align*}
By interpolation we deduce for any $\beta\in(0,1)$,
\begin{align*}
|\mathscr{K}_2(s,\phi,\theta,\varphi,\eta)|
\lesssim&\frac{1}{|\phi-\varphi|^{1-\beta}|\sin((\theta-\eta)/2)|^\beta}+\sin^{\alpha-1}(\varphi).
\end{align*}
It follows that 
$$
(\phi,\theta)\mapsto\bigintsss_0^{\pi}\bigintsss_0^{2\pi}\mathscr{K}_2(s,\phi,\theta,\varphi,\eta) d\eta d\varphi\in L^\infty\big((0,\pi)\times{\T}\big).
$$
Let us move to the $\mathscr{C}^\alpha$-regularity of this latter function. This amounts to checking the partial regularity separately in $\phi$ and $\theta$. {The strategy is the same for both of them and to alleviate the discussion, we shall establish the regularity in the variable $\theta$, contrary to the preceding section where it was established for $\mathcal{I}_1$ in the direction of $\phi.$ } The goal is to get a convenient estimate for the difference 
$$
\bigintsss_0^{\pi}\bigintsss_0^{2\pi}\big(\mathscr{K}_2(s,\phi,\theta_1,\varphi,\eta)-\mathscr{K}_2(s,\phi,\theta_2,\varphi,\eta)\big)d\eta d\varphi,
$$
where $0\leq\theta_1<\theta_2\leq 2\pi$. Coming back to the definition of  the kernel $\mathscr{K}_2$ in \eqref{Kern2X1} one deduces through straightforward algebraic computations that
$$
\mathscr{K}_2(s,\phi,\theta_1,\varphi,\eta)-\mathscr{K}_2(s,\phi,\theta_2,\varphi,\eta)=\mathcal{I}_3+\mathcal{I}_4+\mathcal{I}_5+\mathcal{I}_6.
$$
with
$$
\mathcal{I}_3=\frac{\sin(\varphi)\partial_\eta\big[r(\varphi,\eta)\big(\sin(\eta-\theta_1)-\sin(\eta-\theta_2)\big)\big]s\big[sr(\phi,\theta_1)-2r(\varphi,\eta)\cos(\theta_1-\eta)\big]}{J_s^\frac12(\phi,\theta_1,\varphi,\eta)J_0^\frac12(\phi,\varphi,\eta)\big(J_s^\frac12(\phi,\theta_1,\varphi,\eta)+J_0^\frac12(\phi,\varphi,\eta)\big)},
$$
\begin{align*}
\mathcal{I}_4=&\frac{\sin(\varphi)\partial_\eta\big[r(\varphi,\eta)\sin(\eta-\theta_2)\big]s\big[sr(\phi,\theta_1)-2r(\varphi,\eta)\cos(\theta_1-\eta)\big]}{J_s^\frac12(\phi,\theta_1,\varphi,\eta)J_0^\frac12(\phi,\varphi,\eta)\big(J_s^\frac12(\phi,\theta_1,\varphi,\eta)+J_0^\frac12(\phi,\varphi,\eta)\big)}\\
&\times \frac{J_s(\phi,\theta_2,\varphi,\eta)-J_s(\phi,\theta_1,\varphi,\eta)}{\big[J_s^\frac12(\phi,\theta_2,\varphi,\eta)+J_0^\frac12(\phi,\varphi,\eta)\big]\big[J_s^\frac12(\phi,\theta_1,\varphi,\eta)+J_s^\frac12(\phi,\theta_2,\varphi,\eta)\big]},
\end{align*}
$$
\mathcal{I}_5=\frac{\sin(\varphi)\partial_\eta\big[r(\varphi,\eta)\sin(\eta-\theta_2)\big]s\big[sr(\phi,\theta_1)-sr(\phi,\theta_2)-2r(\varphi,\eta)\big(\cos(\theta_1-\eta)-\cos(\theta_2-\eta)\big)\big]}{J_s^\frac12(\phi,\theta_1,\varphi,\eta)J_0^\frac12(\phi,\varphi,\eta)\big(J_s^\frac12(\phi,\theta_2,\varphi,\eta)+J_0^\frac12(\phi,\varphi,\eta)\big)}
$$
and
\begin{align*}
\mathcal{I}_6=&\frac{\sin(\varphi)\partial_\eta\big(r(\varphi,\eta)\sin(\eta-\theta_2)\big)s\big[sr(\phi,\theta_2)-2r(\varphi,\eta)\cos(\theta_2-\eta)\big]}{J_s(\phi,\theta_1,\varphi,\eta)^\frac12J_0(\phi,\varphi,\eta)^\frac12(J_s(\phi,\theta_2,\varphi,\eta)^\frac12+J_0(\phi,\varphi,\eta)^\frac12)}\\
&\times \frac{J_s(\phi,\theta_2,\varphi,\eta)-J_s(\phi,\theta_1,\varphi,\eta)}{J_s^\frac12(\phi,\theta_2,\varphi,\eta)\big(J_s^\frac12(\phi,\theta_2,\varphi,\eta)+J_s^\frac12(\phi,\theta_1,\varphi,\eta)\big)}\cdot
\end{align*}
We shall estimate independently each one of those terms.  Concerning  the term $\mathcal{I}_3$ it  can be estimated using \eqref{den2} 
\begin{align*}
|\mathcal{I}_3|\lesssim& |\theta_1-\theta_2|\frac{\sin(\varphi)\big(r(\varphi,\eta)+|\partial_\eta r(\varphi,\eta)|\big)s\big(sr(\phi,\theta_1)+r(\varphi,\eta)\big)}{\sin^2(\varphi)J_s^\frac12(\phi,\theta_1,\varphi,\eta)}\\
\lesssim& |\theta_1-\theta_2|\frac{\big(r(\varphi,\eta)+|\partial_\eta r(\varphi,\eta)|\big)s\big(sr(\phi,\theta_1)+r(\varphi,\eta)\big)}{\sin(\varphi)J_s^\frac12(\phi,\theta_1,\varphi,\eta)}\cdot
\end{align*}
Then by virtue of \eqref{srr-1} and \eqref{platit1}, we find
\begin{align*}
|\mathcal{I}_3|
\lesssim&\frac{ |\theta_1-\theta_2|\big(\sin(\varphi)+\sin^\alpha(\varphi)\big)}{\sin(\varphi)\left((\phi-\varphi)^2+\sin^2((\theta_1-\eta)/2)\right)^\frac12}\\
\lesssim& \frac{ |\theta_1-\theta_2|\sin^{\alpha-1}(\varphi)}{\left((\phi-\varphi)^2+\sin^2((\theta_1-\eta)/2)\right)^\frac12}\cdot
\end{align*}
Combining this estimate with the interpolation inequality:   for any $\beta\in[0,1]$
$$
\frac{1}{\left\{(\phi-\varphi)^2+\sin^2((\theta_1-\eta)^2)\right\}^\frac12}\leq \frac{1}{|\varphi-\phi|^\beta|\sin((\eta-\theta_1)/2)|^{1-\beta}},
$$
we infer
\begin{align*}
|\mathcal{I}_3|
\lesssim& \frac{|\theta_1-\theta_2|}{\sin^{1-\alpha}(\varphi)|\varphi-\phi|^\beta|\sin((\eta-\theta_1)/2)|^{1-\beta}}\cdot
\end{align*}
Thus
\begin{align*}
\int_0^{\pi}\int_0^{2\pi}|\mathcal{I}_3|d\eta d\varphi\lesssim &\bigintsss_0^{\pi}\bigintsss_0^{2\pi}\frac{|\theta_1-\theta_2|d\eta d\varphi}{\sin^{1-\alpha}(\varphi)|\varphi-\phi|^\beta|\sin((\eta-\theta_1)/2)|^{1-\beta}}\\
\lesssim&|\theta_1-\theta_2|,
\end{align*}
uniformly in $\phi\in(0,\pi/2)$ and $\theta_1,\theta_2\in(0,2\pi)$, provided that $0<\beta<\alpha\leq 1$.

Concerning the term $\mathcal{I}_4$, we first use the definition of $J_s$ in Lemma \ref{lemma-estimdenominator} and one may check
\begin{align}\label{reg-est1}
|J_s(\phi,\theta_2,\varphi,\eta)-J_s(\phi,\theta_1,\varphi,\eta)|
\leq &|sr(\phi,\theta_1)-sr(\phi,\theta_2)|(|sr(\phi,\theta_1)-r(\varphi,\eta)|+|sr(\phi,\theta_2)-r(\varphi,\eta)|)\nonumber\\
&+2(sr(\phi,\theta_1)-sr(\phi,\theta_2))r(\varphi,\eta)(1-\cos(\theta_1-\eta))\nonumber\\
&+2sr(\phi,\theta_2)r(\varphi,\eta)\big|\cos(\theta_2-\eta)-\cos(\theta_1-\eta)\big|.
\end{align}
Using the trigonometric identity
\begin{equation}\label{Idw1}
1-\cos(\theta-\eta)=2\sin^2((\theta-\eta)/2),
\end{equation}
we get
\begin{align}\label{J-est}
|J_s(\phi,\theta_2,\varphi,\eta)-J_s(\phi,\theta_1,\varphi,\eta)|
\lesssim &|sr(\phi,\theta_1)-sr(\phi,\theta_2)|\Big(J_s^{\frac12}(\phi,\theta_2,\varphi,\eta)+J_s^{\frac12}(\phi,\theta_1,\varphi,\eta)\Big)\nonumber\\
&+2sr(\phi,\theta_2)r(\varphi,\eta)\big|\cos(\theta_2-\eta)-\cos(\theta_1-\eta)\big|.
\end{align}
From \eqref{platit1} combined with Taylor's formula we find
\begin{align*}
|r(\phi,\theta_2)-r(\phi,\theta_1)|&\leq\left|\int_{\theta_1}^{\theta_2}\partial_\eta r(\phi,\eta)d\eta\right|\\
&\lesssim|\theta_2-\theta_1| \sin^\alpha(\phi).
\end{align*}
Therefore we get  by interpolation inequality
\begin{align*}
|sr(\phi,\theta_1)-sr(\phi,\theta_2)|\lesssim |\theta_1-\theta_2|^\alpha s^\alpha \phi^{\alpha^2}&\Big[|sr(\phi,\theta_1)-r(\varphi,\eta)|^{1-\alpha}\nonumber\\
&+|sr(\phi,\theta_2)-r(\varphi,\eta)|^{1-\alpha}\Big].
\end{align*}
Hence
\begin{align}\label{reg-est2}
|sr(\phi,\theta_1)-sr(\phi,\theta_2)|\lesssim |\theta_1-\theta_2|^\alpha s^\alpha \phi^{\alpha^2}&\Big[J_s^{\frac{1-\alpha}{2}}(\phi,\theta_1,\varphi,\eta)+J_s^{\frac{1-\alpha}{2}}(\phi,\theta_2,\varphi,\eta)\Big].
\end{align}
Combining \eqref{reg-est2} together with \eqref{J-est} implies
\begin{align}\label{reg-est1M}
|J_s(\phi,\theta_2,\varphi,\eta)-J_s(\phi,\theta_1,\varphi,\eta)|
\lesssim & |\theta_1-\theta_2|^\alpha s^\alpha \phi^{\alpha^2}\Big(J_s^{1-\frac\alpha2}(\phi,\theta_2,\varphi,\eta)+J_s^{1-\frac\alpha2}(\phi,\theta_1,\varphi,\eta)\Big)\nonumber\\
+&2sr(\phi,\theta_2)r(\varphi,\eta)\big|\cos(\theta_2-\eta)-\cos(\theta_1-\eta)\big|.
\end{align}

{
Define
$$
\xi:=|\cos(\theta_2-\eta)-\cos(\theta_1-\eta)|.
$$
Using once again \eqref{Idw1} we get
\begin{align*}
\xi\lesssim& \Big|\sqrt{1-\cos(\theta_2-\eta)}-\sqrt{1-\cos(\theta_1-\eta)}\Big|\Big|\sqrt{1-\cos(\theta_2-\eta)}+\sqrt{1-\cos(\theta_1-\eta)}\Big|\\
\lesssim & \Big(|\sin((\theta_1-\eta)/2)|-|\sin((\theta_2-\eta)/2)|\Big)\Big(|\sin((\theta_1-\eta)/2)|+|\sin((\theta_2-\eta)/2)|\Big).
\end{align*}
Note that using the inequality
$$
||\sin x|-|\sin y||\leq|x-y|,
$$
and interpolating, one achieves
$$
 \Big(|\sin((\theta_1-\eta)/2)|-|\sin((\theta_2-\eta)/2)|\Big)\leq C|\theta_1-\theta_2|^\alpha \Big(|\sin((\theta_1-\eta)/2)|+|\sin((\theta_2-\eta)/2)|\Big)^{1-\alpha},
$$
for any $\alpha\in(0,1)$. Putting everything together, one finds
\begin{align}\label{xxa1}
\xi\lesssim &{|\theta_1-\theta_2|^\alpha\Big(|\sin((\theta_1-\eta)/2)|^{2-\alpha}+|\sin((\theta_2-\eta)/2)|^{2-\alpha}\Big).}
\end{align}
Now using the assumption \eqref{platit1} and \eqref{den3} we find
\begin{align*}
sr(\phi,\theta_2)r(\varphi,\eta)\xi&\lesssim s\phi r_0(\varphi){|\theta_1-\theta_2|^\alpha\Big(|\sin((\theta_1-\eta)/2)|^{2-\alpha}+|\sin((\theta_2-\eta)/2)|^{2-\alpha}\Big)}\\
&\lesssim s^\alpha\phi^\alpha{|\theta_1-\theta_2|^\alpha}\Big[J_s^{\frac{1-\alpha}{2}}(\phi,\theta_1,\varphi,\eta)+J_s^{\frac{1-\alpha}{2}}(\phi,\theta_2,\varphi,\eta)\Big].
\end{align*}
Inserting this inequality into \eqref{reg-est1M} implies
\begin{align*}
|J_s(\phi,\theta_2,\varphi,\eta)-J_s(\phi,\theta_1,\varphi,\eta)|
\lesssim&|\theta_1-\theta_2|^\alpha s^\alpha \phi^{\alpha^2}\Big(J_s^{1-\frac\alpha2}(\phi,\theta_2,\varphi,\eta)+J_s^{1-\frac\alpha2}(\phi,\theta_1,\varphi,\eta)\Big).
\end{align*}
It follows that
\begin{align}\label{Jtheta}
\frac{|J_s(\phi,\theta_2,\varphi,\eta)-J_s(\phi,\theta_1,\varphi,\eta)|}{J_s^\frac12(\phi,\theta_2,\varphi,\eta)+J_s^\frac12(\phi,\theta_1,\varphi,\eta)}
\lesssim &|\theta_1-\theta_2|^\alpha s^\alpha \phi^{\alpha^2}\Big(J_s^{\frac{1-\alpha}{2}}(\phi,\theta_2,\varphi,\eta)+J_s^{\frac{1-\alpha}{2}}(\phi,\theta_1,\varphi,\eta)\Big).
\end{align}}
Thus we get
\begin{align*}
&|\mathcal{I}_4|\lesssim |\theta_1-\theta_2|^\alpha\frac{\big(r(\varphi,\eta)+\partial_\eta r(\varphi,\eta)|\sin(\theta_2-\eta)|\big)s(sr(\phi,\theta_1)+r(\varphi,\eta))s^\alpha\phi^{\alpha^2}}{J_s^\frac12(\phi,\theta_1,\varphi,\eta)\big(\sin(\varphi)+J_s^\frac12(\phi,\theta_1,\varphi,\eta)\big)^\alpha\big(\sin(\varphi)+J_s^\frac12(\phi,\theta_2,\varphi,\eta)\big)}\\
&+|\theta_1-\theta_2|^\alpha\frac{(r(\varphi,\eta)+\partial_\eta r(\varphi,\eta)|\sin(\theta_2-\eta)|)s(sr(\phi,\theta_1)+r(\varphi,\eta))s^\alpha\phi^{\alpha^2}}{J_s^\frac12(\phi,\theta_1,\varphi,\eta)\big(\sin(\varphi)+J_s^\frac12(\phi,\theta_1,\varphi,\eta)\big)\big(\sin(\varphi)+J_s^\frac12(\phi,\theta_2,\varphi,\eta)\big)^\alpha}\cdot
\end{align*}
Applying  \eqref{srr-1} combined with \eqref{platit1}  we arrive at
\begin{align}\label{I4-2}
|\mathcal{I}_4|\lesssim& \frac{|\theta_1-\theta_2|^\alpha\big(\sin(\varphi)+\sin^\alpha(\varphi)|\sin(\theta_2-\eta)|\big)ss^\alpha\phi^{\alpha^2}}{\left\{(\phi-\varphi)^2+\sin^2((\theta_1-\eta)/2)\right\}^\frac12(\sin(\varphi)+J_s^\frac12(\phi,\theta_1,\varphi,\eta))^\alpha(\sin(\varphi)+J_s^\frac12(\phi,\theta_2,\varphi,\eta))}\nonumber\\
&+\frac{|\theta_1-\theta_2|^\alpha\big(\sin(\varphi)+\sin^\alpha(\varphi)|\sin(\theta_2-\eta)|\big)ss^\alpha\phi^{\alpha^2}}{\left\{(\phi-\varphi)^2+\sin^2((\theta_1-\eta)/2)\right\}^\frac12(\sin(\varphi)+J_s^\frac12(\phi,\theta_1,\varphi,\eta))(\sin(\varphi)+J_s^\frac12(\phi,\theta_2,\varphi,\eta))^\alpha}\nonumber \\
\lesssim& \quad \mathcal{J}_1+\mathcal{J}_2.
\end{align}
The right hand side  terms $\mathcal{J}_1$ and $\mathcal{J}_2$ are treated similarly and we shall only focus on the first one. We find that
\begin{align*}
&|\mathcal{J}_1|\lesssim \frac{|\theta_1-\theta_2|^\alpha}{\left\{(\phi-\varphi)^2+\sin^2((\theta_1-\eta)/2)\right\}^\frac12\sin^\alpha(\varphi)}\nonumber\\
&\qquad\qquad +\frac{|\theta_1-\theta_2|^\alpha|\sin(\theta_2-\eta)|\,s^{\alpha}\phi^{\alpha^2}}{\left\{(\phi-\varphi)^2+\sin^2((\theta_1-\eta)/2)\right\}^\frac12(\sin(\varphi)+J_s^\frac12(\phi,\theta_2,\varphi,\eta))}\cdot
\end{align*}
Using \eqref{den3} we deduce, since $\alpha\in(0,1),$ that
\begin{align}\label{lil1}
|\sin(\theta_2-\eta)|\,s^{\alpha}\phi^{\alpha^2}&\le |\sin(\theta_2-\eta)|^{\alpha^2}\,s^{\alpha^2}\phi^{\alpha^2}\nonumber \\
&\lesssim J_s^{\frac{\alpha^2}{2}}(\phi,\theta_2,\varphi,\eta).
\end{align}
Thus
\begin{align*}
&|\mathcal{J}_1|\lesssim \frac{|\theta_1-\theta_2|^\alpha}{\left\{(\phi-\varphi)^2+\sin^2((\theta_1-\eta)/2)\right\}^\frac12\sin^\alpha(\varphi)}\nonumber\\
&+\frac{|\theta_1-\theta_2|^\alpha}{\left\{(\phi-\varphi)^2+\sin^2((\theta_1-\eta)/2)\right\}^\frac12\sin^{1-\alpha^2}(\varphi)}\cdot
\end{align*}
The same estimate holds true for $\mathcal{J}_2.$ Therefore we find
\begin{align*}
&|\mathcal{I}_4|\lesssim |\theta_1-\theta_2|^\alpha \frac{\sin^{-\alpha}(\varphi)+\sin^{\alpha^2-1}(\varphi)}{\left\{(\phi-\varphi)^2+\sin^2((\theta_1-\eta)/2)\right\}^\frac12}\cdot
\end{align*}
Hence by interpolation inequality  we get for all $\gamma\in(0,1)$
\begin{align*}
&|\mathcal{I}_4|\lesssim |\theta_1-\theta_2|^\alpha \frac{\sin^{-\alpha}(\varphi)+\sin^{\alpha^2-1}(\varphi)}{|\phi-\varphi|^\gamma|\sin((\theta_1-\eta)/2)|^{1-\gamma}}\cdot\end{align*}
It follows that
\begin{align*}
\int_0^{\pi}\int_0^{2\pi}|\mathcal{I}_4|d\eta d\varphi\lesssim & |\theta_1-\theta_2|^\alpha\bigintsss_0^{\pi}\bigintsss_0^{2\pi}\frac{\sin^{-\alpha}(\varphi)+\sin^{\alpha^2-1}(\varphi)}{|\phi-\varphi|^\gamma|\sin((\theta_1-\eta)/2)|^{1-\gamma}}d\varphi d\eta\\
\lesssim&|\theta_1-\theta_2|^\alpha,
\end{align*}
uniformly in $\phi\in(0,\pi)$ and $\theta_1,\theta_2\in(0,2\pi)$, provided that $0<\gamma<\min(1-\alpha, \alpha^2)$.

{ Let us focus on the estimate of the term $\mathcal{I}_5.$} First we  make the decomposition
$$
\mathcal{I}_5=\mathcal{J}_3+\mathcal{J}_4,
$$
with
$$
\mathcal{J}_3=\frac{\sin(\varphi)\partial_\eta\big[r(\varphi,\eta)\sin(\eta-\theta_2)\big]s\big[sr(\phi,\theta_1)-sr(\phi,\theta_2))\big]}{J_s^\frac12(\phi,\theta_1,\varphi,\eta)J_0^\frac12(\phi,\varphi,\eta)\big(J_s^\frac12(\phi,\theta_2,\varphi,\eta)+J_0^\frac12(\phi,\varphi,\eta)\big)},
$$ 
and
$$
\mathcal{J}_4=-2\frac{\sin(\varphi)\partial_\eta\big[r(\varphi,\eta)\sin(\eta-\theta_2)\big]sr(\varphi,\eta)\big[\cos(\theta_1-\eta)-\cos(\theta_2-\eta)\big]}{J_s^\frac12(\phi,\theta_1,\varphi,\eta)J_0^\frac12(\phi,\varphi,\eta)\big(J_s^\frac12(\phi,\theta_2,\varphi,\eta)+J_0^\frac12(\phi,\varphi,\eta)\big)}\cdot
$$
Let us start with the last term $\mathcal{J}_4.$ Using Lemma \ref{lemma-estimdenominator} combined with \eqref{xxa1} and \eqref{platit1}  yields
\begin{align*}
|\mathcal{J}_4|\lesssim&|\theta_1-\theta_2|^\alpha\frac{\big[\sin\varphi+\sin^\alpha(\varphi)|\sin\big((\eta-\theta_2)/2\big)|\big]\sin\varphi}{J_s^\frac12(\phi,\theta_1,\varphi,\eta)\big(J_s^\frac12(\phi,\theta_2,\varphi,\eta)+\sin\varphi\big)}\cdot
\end{align*}
From \eqref{den3} we infer
\begin{align*}
|\mathcal{J}_4|\lesssim&|\theta_1-\theta_2|^\alpha\frac{\sin^2(\varphi)+\sin^\alpha(\varphi)J_s^\frac12(\phi,\theta_2,\varphi,\eta)|}{J_s^\frac12(\phi,\theta_1,\varphi,\eta)\big(J_s^\frac12(\phi,\theta_2,\varphi,\eta)+\sin\varphi\big)}\\
\lesssim&|\theta_1-\theta_2|^\alpha\frac{\sin^\alpha(\varphi)}{J_s^\frac12(\phi,\theta_1,\varphi,\eta)}\cdot
\end{align*}
Applying \eqref{srr-1} leads to
\begin{align*}
|\mathcal{J}_4|\lesssim&|\theta_1-\theta_2|^\alpha \frac{\varphi^{\alpha-1}}{\big((\phi-\varphi)^2+\sin^2((\theta_1-\eta)/2)\big)^\frac12}\\
\lesssim&|\theta_1-\theta_2|^\alpha \frac{\varphi^{\alpha-1}}{|\phi-\varphi|^\beta|\sin((\theta_1-\eta)/2)|^{1-\beta}}\cdot
\end{align*}
Therefore
\begin{align*}
\int_0^{\pi}\int_0^{2\pi}|\mathcal{J}_4|d\eta d\varphi&\lesssim |\theta_1-\theta_2|^\alpha\bigintsss_0^\pi\bigintsss_0^{2\pi} \frac{\varphi^{\alpha-1}d\eta d\varphi}{|\phi-\varphi|^\beta|\sin((\theta_1-\eta)/2)|^{1-\beta}}\\
\lesssim&|\theta_1-\theta_2|^\alpha,
\end{align*}
uniformly in $\phi\in(0,\pi)$ and $\theta_1,\theta_2\in(0,2\pi)$ provided that $0<\beta<\alpha<1.$ Next we shall deal with the term $\mathcal{J}_5$. Then by virtue of 
\eqref{reg-est2}  combined with \eqref{platit1} we  may write
\begin{align*}
|\mathcal{J}_3|\lesssim& |\theta_1-\theta_2|^\alpha\frac{\sin(\varphi)+\sin^\alpha(\varphi)|\sin(\theta_2-\eta)|}{J_s(\phi,\theta_1,\varphi,\eta)^\frac12(J_s(\phi,\theta_2,\varphi,\eta)^\frac12
+\sin\varphi)}\nonumber\\
&\times (s^{1+\alpha}\phi^{\alpha^2}\Big[J_s^{\frac{1-\alpha}{2}}(\phi,\theta_1,\varphi,\eta)+J_s^{\frac{1-\alpha}{2}}(\phi,\theta_2,\varphi,\eta)\Big].
\end{align*}
It follows that 
\begin{align*}
|\mathcal{J}_3|\lesssim&|\theta_1-\theta_2|^\alpha\frac{\big(\sin(\varphi)+\sin^\alpha(\varphi)|\sin(\theta_2-\eta)|\big)s^{1+\alpha}\phi^{\alpha^2}}{J_s^\frac12(\phi,\theta_1,\varphi,\eta)\big(J_s^\frac12(\phi,\theta_2,\varphi,\eta)+\sin\varphi\big)^\alpha}\nonumber\\
&+|\theta_1-\theta_2|^\alpha\frac{\big(\sin(\varphi)+\sin^\alpha(\varphi)|\sin(\theta_2-\eta)|\big)s^{1+\alpha}\phi^{\alpha^2}}{J_s^\frac{\alpha}{2}(\phi,\theta_1,\varphi,\eta) \big(J_s^\frac12(\phi,\theta_2,\varphi,\eta)+\sin\varphi\big)}\\
\lesssim &|\theta_1-\theta_2|^\alpha \mathcal{J}_{3,1}+ |\theta_1-\theta_2|^\alpha \mathcal{J}_{3,2}.
\end{align*}
According to  \eqref{lil1} and since $\alpha\in(0,1)$ we find that
\begin{align*}
|\mathcal{J}_{3,1}|\lesssim&\frac{\sin(\varphi)+\sin^\alpha(\varphi)|\sin(\theta_2-\eta)|^{\alpha^2} s^{\alpha^2}\phi^{\alpha^2} }{J_s^\frac12(\phi,\theta_1,\varphi,\eta)\big(J_s^\frac12(\phi,\theta_2,\varphi,\eta)+\sin\varphi\big)^\alpha}\\\lesssim&\frac{\sin(\varphi)+\sin^\alpha(\varphi)J_s^\frac{\alpha^2}{2}(\phi,\theta_2,\varphi,\eta) }{J_s^\frac12(\phi,\theta_1,\varphi,\eta)\big(J_s^\frac12(\phi,\theta_2,\varphi,\eta)+\sin\varphi\big)^\alpha}
\end{align*}
and similarly we obtain
\begin{align*}
|\mathcal{J}_{3,2}|\lesssim&\frac{\sin(\varphi)+\sin^\alpha(\varphi)J_s^\frac{\alpha^2}{2}(\phi,\theta_2,\varphi,\eta)}{J_s^\frac{\alpha}{2}(\phi,\theta_1,\varphi,\eta) \big(J_s^\frac12(\phi,\theta_2,\varphi,\eta)+\sin\varphi\big)}\cdot
\end{align*}
Consequently, we get from \eqref{srr-1} 
\begin{align*}
|\mathcal{J}_{3,1}|
\lesssim& \frac{\sin^{-\alpha}(\varphi)+\sin^{\alpha^2-1}(\varphi)}{\big((\phi-\varphi)^2+\sin^2((\theta_1-\eta)/2)\big)^\frac12}\\
\lesssim&\frac{\sin^{-\alpha}(\varphi)+\sin^{\alpha^2-1}(\varphi)}{|\phi-\varphi|^\gamma|\sin((\theta_1-\eta)/2)|^{1-\gamma}}\cdot\end{align*}
By integration, we obtain
\begin{align*}
\int_0^{\pi}\int_0^{2\pi}|\mathcal{J}_{3,1}|d\eta d\varphi\lesssim&\bigintsss_0^\pi\bigintsss_0^{2\pi} \frac{\sin^{-\alpha}(\varphi)+\sin^{\alpha^2-1}(\varphi)}{|\phi-\varphi|^\gamma|\sin((\theta_1-\eta)/2)|^{1-\gamma}}d\eta d\varphi\\
\lesssim&1,
\end{align*}
uniformly in $\phi\in(0,\pi)$ and $\theta_1\in(0,2\pi)$ provided that $0<\gamma<\min(\alpha^2,1-\alpha).$

Following the same ideas as before and using  \eqref{srr-1} we get
\begin{align*}
|\mathcal{J}_{3,2}|\lesssim&\left(\frac{\sin\varphi}{J_s^{\frac12}(\phi,\theta_1,\varphi,\eta)}\right)^\alpha\left(\sin^{-\alpha}(\varphi)+ \big(J_s^\frac12(\phi,\theta_2,\varphi,\eta)+\sin\varphi\big)^{\alpha^2-1}\right)\\
\lesssim& \frac{\sin^{-\alpha}(\varphi)+\sin^{\alpha^2-1}(\varphi)}{\big((\phi-\varphi)^2+\sin^2((\theta_1-\eta)/2)\big)^\frac\alpha2}\cdot
\end{align*}
It follows that 
\begin{align*}
|\mathcal{J}_{3,2}|
\lesssim&\frac{\sin^{-\alpha}(\varphi)+\sin^{\alpha^2-1}(\varphi)}{|\sin((\theta_1-\eta)/2)|^{\alpha}}\cdot
\end{align*}
By integration, we deduce that
\begin{align*}
\int_0^{\pi}\int_0^{2\pi}|\mathcal{J}_{3,2}|d\eta d\varphi\lesssim&\bigintsss_0^\pi\bigintsss_0^{2\pi} \frac{\sin^{-\alpha}(\varphi)+\sin^{\alpha^2-1}(\varphi)}{|\sin((\theta_1-\eta)/2)|^{\alpha}}d\eta d\varphi.\\
\lesssim&1,
\end{align*}
uniformly in $\phi\in(0,\pi)$ and $\theta_1\in(0,2\pi)$ provided that $0<\alpha<1.$ Putting together the preceding estimates allows to get
\begin{align*}
\int_0^{\pi}\int_0^{2\pi}|\mathcal{I}_{5}|d\eta d\varphi\lesssim&|\theta_1-\theta_2|^\alpha.
\end{align*}

{
Let us finish estimating the term $\mathcal{I}_6$, which is quite similar to $\mathcal{I}_4$. Applying \eqref{Jtheta}, Lemma \ref{lemma-estimdenominator} and the ideas done for $\mathcal{I}_4$ we obtain
\begin{align*}
|\mathcal{I}_6|\lesssim& |\theta_1-\theta_2|^\alpha\frac{\sin(\varphi)s^\alpha\phi^{\alpha^2}(\sin(\varphi)+\sin(\varphi)^\alpha|\sin(\eta-\theta_2)|)(s\phi+\sin(\varphi))}{\sin(\varphi)^2 J_s^\frac12(\phi,\theta_1,\varphi,\eta)J_s^\frac12(\phi,\theta_2,\varphi,\eta)}\\
&\times (J_s^\frac{1-\alpha}{2}(\phi,\theta_1,\varphi,\eta)+J_s^\frac{1-\alpha}{2}(\phi,\theta_2,\varphi,\eta))\\
\lesssim& |\theta_1-\theta_2|^\alpha\frac{s^\alpha\phi^{\alpha^2}(\sin(\varphi)+\sin(\varphi)^\alpha|\sin(\eta-\theta_2)|)(s\phi+\sin(\varphi))}{\sin(\varphi) J_s^\frac{\alpha}{2}(\phi,\theta_1,\varphi,\eta)J_s^\frac12(\phi,\theta_2,\varphi,\eta)}
\\
&+ |\theta_1-\theta_2|^\alpha\frac{s^\alpha\phi^{\alpha^2}(\sin(\varphi)+\sin(\varphi)^\alpha|\sin(\eta-\theta_2)|)(s\phi+\sin(\varphi))}{\sin(\varphi) J_s^\frac{1}{2}(\phi,\theta_1,\varphi,\eta)J_s^\frac{\alpha}{2}(\phi,\theta_2,\varphi,\eta)}.
\end{align*}
Using again Lemma \ref{lemma-estimdenominator} we achieve
\begin{align*}
|\mathcal{I}_6|\lesssim&|\theta_1-\theta_2|^\alpha\frac{s^\alpha\phi^{\alpha^2}(\sin(\varphi)+\sin(\varphi)^\alpha|\sin(\eta-\theta_2)|)(s\phi+\sin(\varphi))}{\sin(\varphi) (\sin(\varphi)+s\phi)^{1+\alpha}(|\phi-\varphi|+|\sin((\theta_1-\eta)/2)|)^\alpha(|\phi-\varphi|+|\sin((\theta_2-\eta)/2)|)}
\\
&+ |\theta_1-\theta_2|^\alpha\frac{s^\alpha\phi^{\alpha^2}(\sin(\varphi)+\sin(\varphi)^\alpha|\sin(\eta-\theta_2)|)(s\phi+\sin(\varphi))}{\sin(\varphi) (\sin(\varphi)+s\phi)^{1+\alpha}(|\phi-\varphi|+|\sin((\theta_2-\eta)/2)|)^\alpha(|\phi-\varphi|+|\sin((\theta_1-\eta)/2)|)}\\
\lesssim&|\theta_1-\theta_2|^\alpha\frac{1}{\sin(\varphi)^{\alpha-\alpha^2} (|\phi-\varphi|+|\sin((\theta_1-\eta)/2)|)^\alpha(|\phi-\varphi|+|\sin((\theta_2-\eta)/2)|)}\\
&+|\theta_1-\theta_2|^\alpha\frac{1}{\sin(\varphi)^{1-\alpha^2}(|\phi-\varphi|+|\sin((\theta_1-\eta)/2)|)^\alpha}\\
&+|\theta_1-\theta_2|^\alpha\frac{1}{\sin(\varphi)^{\alpha-\alpha^2} (|\phi-\varphi|+|\sin((\theta_2-\eta)/2)|)^\alpha(|\phi-\varphi|+|\sin((\theta_1-\eta)/2)|)}\\
&+|\theta_1-\theta_2|^\alpha\frac{1}{\sin(\varphi)^{1-\alpha^2}(|\phi-\varphi|+|\sin((\theta_2-\eta)/2)|)}\\
\lesssim& |\theta_1-\theta_2|^\alpha\frac{1}{\sin(\varphi)^{\gamma_1}|\phi-\varphi|^{\gamma_2}|\sin((\theta_2-\eta)/2)|^{\gamma_3}|\sin((\theta_2-\eta)/2)|^{\gamma_4}},
\end{align*}
for some $\gamma_1,\gamma_2,\gamma_3,\gamma_4\in(0,1)$ with $\gamma_1+\gamma_2<1$ and $\gamma_3+\gamma_4<1$. Notice that this is possible since $1+2\alpha-\alpha^2<2$ for any $\alpha\in(0,1)$.

}

Therefore, we obtain 
$$
\left|\int_0^{\pi}\int_0^{2\pi}(\mathscr{K}_2(s,\phi,\theta_1,\varphi,\eta)-\mathscr{K}_2(s,\phi,\theta_2,\varphi,\eta))d\eta d\varphi\right|\leq C|\theta_1-\theta_2|^\alpha,
$$
uniformly in $\phi\in(0,\pi)$. This concludes the proof of the stability of the function spaces by $\tilde{F}.$

\medskip
\noindent
{{{\bf Step 3:} $F_1$ {\it is} $\mathscr{C}^1$.}} In this last step, we check that $F_1$ is $\mathscr{C}^1$.

More precisely, we intend to prove  the following
\begin{align}
\|\partial_f F_1(f_1)h-\partial_f F_1(f_2)h\|_{L^\infty}\lesssim \|h\|_{\mathscr{C}^{1,\alpha}}\|f_1-f_2\|^\gamma_{\mathscr{C}^{1,\alpha}},\label{F2C1-1}\\
\|\partial_\theta\big(\partial_f F_1(f_1)h-\partial_f F_1(f_2)h\big)\|_{\mathscr{C}^\alpha}\lesssim \|h\|_{\mathscr{C}^{1,\alpha}}\|f_1-f_2\|^\gamma_{\mathscr{C}^{1,\alpha}},\label{F2C1-2}
\end{align}
and
\begin{align}
\|\partial_\phi\big(\partial_f F_1(f_1)h-\partial_f F_1(f_2)h\big)\|_{\mathscr{C}^\alpha}\lesssim& \|h|\|_{\mathscr{C}^{1,\alpha}}\|f_1-f_2\|^\gamma_{\mathscr{C}^{1,\alpha}},\label{F2C1-3}
\end{align}
for some $\gamma>0$,{ and where $f_1,f_2\in B_{X_m^\alpha}(\varepsilon)$, for some $\varepsilon<1$.

 Notice that \eqref{F2C1-1}--\eqref{F2C1-2}--\eqref{F2C1-3} will imply the $\mathscr{C}^1$-regularity of $F_1$.}
Denote $r_i(\phi,\theta)=r_0(\phi)+f_i(\phi,\theta)$, for $i=1,2$. We will check directly the estimates for the derivatives, i.e., \eqref{F2C1-2}--\eqref{F2C1-3} and leave the first estimate which is less delicate. From the expressions \eqref{F1theta} and \eqref{F1phi},
it is enough to check the estimates  for the terms: $U\cdot e^{i\theta}$ and $\frac{U}{r_0}\cdot ie^{i\theta}$.  

{
As we can guess the computations are very long,  tedious and share lot of similarities. For this reason we shall focus only on one significant term given by  \eqref{veitheta} to illustrate how the estimates work, and restrict the discussion to some terms of $\mathcal{I}_2$. Notice that the Frechet derivative of each of the previous terms will correspond again to a singular integral where the kernel has the same order of singularity and then the estimates work similarly, even if the computations are longer.

Then, let us show the idea for $\mathcal{I}_2$ and note that}
\begin{align*}
\partial_f \mathcal{I}_2(f)h(\phi,\theta)=&\bigintsss_0^\pi\bigintsss_0^{2\pi}\frac{\sin(\varphi) h(\varphi,\eta)\sin(\eta-\theta)d\eta d\varphi}{J_1^\frac12(f)(\phi,\theta,\varphi,\eta)}\\
&-\frac12\bigintsss_0^\pi\bigintsss_0^{2\pi}\frac{\sin(\varphi)r(\varphi,\eta)\sin(\eta-\theta)}{J_1^\frac32(f)(\phi,\theta,\varphi,\eta)}\partial_f J_1(f)h(\phi,\theta,\varphi,\eta)d\eta d\varphi\\
=:&\mathcal{T}_1(f)h(\phi,\theta)-\mathcal{T}_2(f)h(\phi,\theta),
\end{align*}
where 
$$
|(r(\phi,
\theta)e^{i\theta},\cos(\phi))-(r(\varphi,\eta)e^{i\eta},\cos(\varphi))|^2=J_1(f)(\phi,\theta,\varphi,\eta),\quad r=r_0+f,
$$
and
\begin{align}\label{DfJ1}
\frac12\partial_f J_1(f)h(\phi,\theta,\varphi,\eta)=&(r(\varphi,\eta)-r(\phi,\theta))(h(\varphi,\eta)-h(\phi,\theta))\\
&+(r(\varphi,\eta)h(\phi,\theta)+h(\varphi,\eta)r(\phi,\theta))(1-\cos(\eta-\theta)).\nonumber
\end{align}
{Let us exhibit the main ideas of the term $\mathcal{T}_1$}. The goal is to check 
\begin{align*}
\|\mathcal{T}_1(f_1)h-\mathcal{T}_1(f_2)h\|_{{\mathscr{C}^{\alpha}}}\lesssim &\|h\|_{\mathscr{C}^{1,\alpha}}\|f_1-f_2\|^\gamma_{\mathscr{C}^{1,\alpha}},
\end{align*}
for some $\gamma>0$. We observe  that
\begin{align}\label{J1f2f2-1}
(J_1(f_1)-J_1(f_2))(\phi,\theta,\varphi,\eta)=&(r_1(\varphi,\eta)-r_1(\phi,\theta))^2-(r_2(\varphi,\eta)-r_2(\phi,\theta))^2\nonumber\\
&+2(r_1-r_2)(\phi,\theta)r_1(\varphi,\eta)(1-\cos(\theta-\eta))\nonumber\\
&+2r_2(\phi,\theta)(r_1-r_2)(\varphi,\eta)(1-\cos(\theta-\eta)).
\end{align}
Now we write for any   $\phi,\varphi\in(0,\pi)$ and $\theta,\eta\in(0,2\pi)$
$$
|r(\phi,\theta)-r(\varphi,\eta)|\leq |r(\phi,\theta)-r(\varphi,\theta)|+|r(\varphi,\theta)-r(\varphi,\eta)|,
$$
By the $\mathscr{C}^{1,\alpha}$ regularity of $r$ one has
$$
|r(\phi,\theta)-r(\varphi,\theta)|\lesssim |\phi-\varphi| \|r\|_{\textnormal{Lip}}.
$$
In addition, we claim that 
\begin{equation}\label{claim-0}
|r(\varphi,\theta)-r(\varphi,\eta)|\lesssim |\sin((\theta-\eta)/2)| \|r\|_{\textnormal{Lip}}.
\end{equation}
Indeed, and without any restriction to the generality we can  impose that $0\leq\eta\leq\theta \le 2\pi$. We shall discuss two cases: $0\leq{\theta-\eta}\leq\pi$ and $\pi\leq{\theta-\eta}\leq2\pi$. In the first case, we simply write
$$
\frac{|r(\varphi,\theta)-r(\varphi,\eta)|}{|\sin((\theta-\eta)/2)|}= \frac{|r(\varphi,\theta)-r(\varphi,\eta)|}{|\theta-\eta|}\frac{|\theta-\eta|}{|\sin((\theta-\eta)/2)|} \leq C\|r\|_{\textnormal{Lip}},
$$
with $C$ a constant.
As to the second case $\pi\leq{\theta-\eta}\leq2\pi$, by setting  $\widehat{\eta}=\eta+2\pi$ we get
$$
\widehat{\eta}-\theta\in[0,\pi],\quad  \sin((\theta-\eta)/2)=-\sin((\theta-\widehat{\eta})/2).
$$
Since $\eta\mapsto r(\varphi,\eta)$ is $2\pi$-periodic then using the result of the first case yields
{\begin{align*}
\frac{|r(\varphi,\theta)-r(\varphi,\eta)|}{|\sin((\theta-\eta)/2)|}=&\frac{|r(\varphi,\theta)-r(\varphi,\widehat\eta)|}{|\sin((\theta-\widehat\eta)/2)|}\\
 \leq& C\|r\|_{\textnormal{Lip}}.
\end{align*}}
{This achieves the proof of the \eqref{claim-0}}. Consequently we find
\begin{equation}\label{r-esti}
|r(\phi,\theta)-r(\varphi,\eta)|\lesssim \|r\|_{\textnormal{Lip}}\big(|\phi-\varphi|+|\sin((\theta-\eta)/2)|\big).
\end{equation}
From algebraic calculus we easily get
\begin{align*}
|(r_1(\varphi,\eta)-r_1(\phi,\theta))^2-(r_2(\varphi,\eta)-r_2(\phi,\theta))^2|
=&\big|((r_1-r_2)(\varphi,\eta)-(r_1-r_2)(\phi,\theta)\big|\\
&\times \big|((r_1+r_2)(\varphi,\eta)-(r_1+r_2)(\phi,\theta))\big|.
\end{align*}
Therefore  we deduce successively  from \eqref{r-esti}
\begin{align*}
|(r_1(\varphi,\eta)-r_1(\phi,\theta))^2-(r_2(\varphi,\eta)-r_2(\phi,\theta))^2|
\lesssim &\|r_1-r_2\|_{\textnormal{Lip}}\big(|\phi-\varphi|+|\sin((\theta-\eta)/2)|\big)\\
&\times \Big(|r_1(\varphi,\eta)-r_1(\phi,\theta)|+|r_2(\varphi,\eta)-r_2(\phi,\theta)|\Big),
\end{align*}
and
\begin{align*}
|(r_1(\varphi,\eta)-r_1(\phi,\theta))^2-(r_2(\varphi,\eta)-r_2(\phi,\theta))^2|
&\lesssim |r_1(\varphi,\eta)-r_1(\phi,\theta)|^2+|r_2(\varphi,\eta)-r_2(\phi,\theta)|^2.
\end{align*}
By interpolation, we infer for any $\gamma\in[0,1]$,
\begin{align}\label{J1f2f2-2}
\Big|(r_1(\varphi,\eta)-r_1(\phi,\theta))^2-&(r_2(\varphi,\eta)-r_2(\phi,\theta))^2\Big|\lesssim \|r_1-r_2\|_{\textnormal{Lip}}^{\gamma}\Big(|\varphi-\phi|^\gamma+|\sin((\theta-{\eta})/2)|^\gamma|\Big)\nonumber \\
\times&\Big(|r_1(\varphi,\eta)-r_1(\phi,\theta)|^{2-\gamma}+|r_2(\varphi,\eta)-r_2(\phi,\theta)|^{2-\gamma}\Big),
\end{align}
On the other hand, coming back to the definition of $J_1$ we get
$$
J_1(f_1)(\phi,\theta,\varphi,\eta)\geq|r_1(\phi,\theta)-r_1(\varphi,\eta)|^2.
$$
Thus, putting together this inequality with \eqref{J1f2f2-2} and  \eqref{J1f2f2-1} yield
\begin{align}\label{J2f1f2}
&\frac{|(J_1(f_1)-J_1(f_2))(\phi,\theta,\varphi,\eta)|}{J_1^\frac12(f_1)(\phi,\theta,\varphi,\eta)+J_1^\frac12(f_2)(\phi,\theta,\varphi,\eta)}
\lesssim\|r_1-r_2\|^\gamma_{\mathscr{C}^1}\Big\{\big(|\varphi-\phi|^\gamma+|\sin((\theta-{\eta})/2)|^\gamma\big)\nonumber\\
&\qquad\qquad \times\big[J_1^{\frac{1-\gamma}{2}}(f_1)(\phi,\theta,\varphi,\eta)+J_1^{\frac{1-\gamma}{2}}(f_2)(\phi,\theta,\varphi,\eta)\big]+\phi|\sin((\theta-\eta)/2)|\Big\}.
\end{align}
Now, we shall  give an estimate of $\mathcal{T}_1(f_1)-\mathcal{T}_1(f_2)$ in $ L^\infty$.  For this purpose, define the quantity
$$
\mathscr{K}_3(f)(\phi,\theta,\varphi,\eta)=\frac{\sin(\varphi)h(\varphi,\eta)\sin(\eta-\theta)}{J_1^\frac12(f)(\phi,\theta,\varphi,\eta)},
$$
then one can easily check that 
\begin{align}\label{K3}
\mathcal{I}_7(\phi,\theta,\varphi,\eta):=&\mathscr{K}_3(f_1)(\phi,\theta,\varphi,\eta)-\mathscr{K}_3(f_2)(\phi,\theta,\varphi,\eta)\nonumber\\
=&\frac{\sin(\varphi) h(\varphi,\eta)\sin(\eta-\theta)}{J_1^\frac12(f_1)(\phi,\theta,\varphi,\eta)J_1^\frac12(f_2)(\phi,\theta,\varphi,\eta)} \frac{J_1(f_2)(\phi,\theta,\varphi,\eta)-J_1(f_1)(\phi,\theta,\varphi,\eta)}{J_1^\frac12(f_1)(\phi,\theta,\varphi,\eta)+J_1^\frac12(f_2)(\phi,\theta,\varphi,\eta)}\cdot
\end{align}
From this definition, it follows that
$$
\big(\mathcal{T}_1(f_1)-\mathcal{T}_1(f_2)\big)(\phi,\theta)=\int_0^\pi\int_0^{2\pi}\mathcal{I}_7(\phi,\theta,\varphi,\eta)d\varphi d\eta.
$$
According to \eqref{den3} we get
\begin{align*}
\phi|\sin((\theta-\eta)/2)|\lesssim&|\sin((\theta-\eta)/2)|^{\gamma} \phi^{1-\gamma}|\sin((\theta-\eta)/2)|^{1-\gamma}\\
\lesssim&|\sin((\theta-\eta)/2)|^{\gamma}J_1^{\frac{1-\gamma}{2}}(f_1)(\phi,\theta,\varphi,\eta).
\end{align*}
Combining this inequality with \eqref{J2f1f2} and  \eqref{K3} leads to
\begin{align}\label{I7}
|\mathcal{I}_7(\phi,\theta,\varphi,\eta)|\lesssim \|r_1-r_2\|_{\mathscr{C}^{1,\alpha}}^\gamma &\frac{\sin(\varphi)|h(\varphi,\eta)|\big(|\varphi-\phi)|^\gamma+|\sin((\theta-\eta)/2)|^\gamma\big)}{J_1^\frac12(f_1)(\phi,\theta,\varphi,\eta)J_1^\frac12(f_2)(\phi,\theta,\varphi,\eta)}\nonumber\\
&\times \Big(J_1^{\frac{1-\gamma}{2}}(f_1)(\phi,\theta,\varphi,\eta)+J_1^{\frac{1-\gamma}{2}}(f_2)(\phi,\theta,\varphi,\eta)\Big).
\end{align}
Applying  Lemma \ref{lemma-estimdenominator}, we infer
\begin{align*}
|\mathcal{I}_7(\phi,\theta,\varphi,\eta)|\lesssim &\|r_1-r_2\|_{\mathscr{C}^{1,\alpha}}^\gamma\frac{\sin(\varphi)|h(\varphi,\eta)|(|\varphi-\phi)|^\gamma+|\sin((\theta-\eta)/2)|^\gamma)}{J_1^\frac12(f_1)(\phi,\theta,\varphi,\eta)J_1^\frac{\gamma}{2}(f_2)(\phi,\theta,\varphi,\eta)}\\
&+\|r_1-r_2\|_{\mathscr{C}^{1,\alpha}}^\gamma\frac{\sin(\varphi)| h(\varphi,\eta)|(|\varphi-\phi)|^\gamma+|\sin((\theta-\eta)/2)|^\gamma)}{J_1^\frac{\gamma}{2}(f_1)(\phi,\theta,\varphi,\eta)J_1^\frac12(f_2)(\phi,\theta,\varphi,\eta)}\\
\lesssim&\|r_1-r_2\|_{\mathscr{C}^{1,\alpha}}^\gamma\frac{\sin(\varphi)|h(\varphi,\eta)|(|\varphi-\phi)|^\gamma+|\sin((\theta-\eta)/2)|^\gamma)}{\left\{(\varphi+\phi)^2(\phi-\varphi)^2+(\sin^2(\varphi)+\phi^2)\sin^2((\theta-\eta)/2)\right\}^\frac{1+\gamma}{2}}\cdot
\end{align*}
Using the inequality $\varphi^2\geq \sin^2(\varphi)$ for any $\varphi\in\R$, one achieves
\begin{align*}
|\mathcal{I}_7(\phi,\theta,\varphi,\eta)|\lesssim&\|r_1-r_2\|_{\mathscr{C}^{1,\alpha}}^\gamma\frac{\sin(\varphi)|h(\varphi,\eta)|(|\varphi-\phi)|^\gamma+|\sin((\theta-\eta)/2)|^\gamma)}{(\sin(\varphi)+\phi)^{1+\gamma}\left\{(\phi-\varphi)^2+\sin^2((\theta-\eta)/2)\right\}^\frac{1+\gamma}{2}}\\
\lesssim&\frac{|h(\varphi,\eta)|\|r_1-r_2\|_{\mathscr{C}^{1,\alpha}}^\gamma}{\sin^\gamma(\varphi)\left\{(\phi-\varphi)^2+\sin^2((\theta-\eta)/2)\right\}^\frac{1}{2}}\cdot
\end{align*}
The boundary conditions $h(0,\eta)=h(\pi,\eta)=0$  allow to cancel the singularity and one gets 
\begin{align*}
|\mathcal{I}_7(\phi,\theta,\varphi,\eta)|\lesssim&\frac{\|h\|_{\mathscr{C}^{1,\alpha}}\|r_1-r_2\|_{\mathscr{C}^{1,\alpha}}^\gamma}{\left\{(\phi-\varphi)^2+\sin^2((\theta-\eta)/2)\right\}^\frac{1}{2}}\cdot
\end{align*}
Interpolating we find that for any $\beta\in(0,1)$,
\begin{align*}
|\mathcal{I}_7(\phi,\theta,\varphi,\eta)|\lesssim&\frac{\|h\|_{\mathscr{C}^{1,\alpha}}\|r_1-r_2\|_{\mathscr{C}^{1,\alpha}}^\gamma}{|\phi-\varphi|^{1-\beta}|\sin((\eta-\theta)/2)|^\beta}\cdot
\end{align*}
Thus, we have that $\mathcal{I}_7$ is integrable in the variable $(\varphi,\eta)$ uniformly in $(\phi,\theta)$, and then 
\begin{align*}
\|\mathcal{T}_1(f_1)h-\mathcal{T}_1(f_2)h\|_{L^\infty}\lesssim &\|h\|_{\mathscr{C}^{1,\alpha}}\|f_1-f_2\|^\gamma_{\mathscr{C}^{1,\alpha}}.
\end{align*}
The next purpose is to establish  the partial $\mathscr{C}^\alpha$-regularity in $\phi$, and the partial regularity in $\theta$ can be done similarly. We want to prove the following 
\begin{align}\label{T1holder}
|(\mathcal{T}_1(f_1)-\mathcal{T}_1(f_2))h(\phi_1,\theta)-(T_1(f_1)-T_1(f_2))h(\phi_2,\theta)|\lesssim &\|h\|_{\mathscr{C}^{1,\alpha}}\|f_1-f_2\|^\gamma_{\mathscr{C}^{1,\alpha}}|\phi_1-\phi_2|^\alpha.
\end{align}
 For this goal  we need to study the kernel 
$
|\mathcal{I}_7(\phi_1)-\mathcal{I}_7(\phi_2)|.
$
To alleviate the notation we simply denote  $\mathcal{I}_7(\phi, \theta,\varphi,\eta)$ by  $\mathcal{I}_7(\phi)$ and  $J_1(f_i)(\phi_i,\theta,\varphi,\eta)$ by $J_1(f_i)(\phi_i)$. Adding and subtracting some appropriate terms, one finds
$$
|\mathcal{I}_7(\phi_1)-\mathcal{I}_7(\phi_2)|\lesssim \mathcal{I}_8+\mathcal{I}_9+\mathcal{I}_{10}+\mathcal{I}_{11}+\mathcal{I}_{12}
$$
with 
\begin{align*}
 \mathcal{I}_8=& \frac{\sin(\varphi)|h(\varphi,\eta)|}{J_1^\frac12(f_1)(\phi_1)J_1^\frac12(f_1)(\phi_2)J_1^\frac12(f_2)(\phi_1)}\frac{|J_1(f_2)(\phi_1)-J_1(f_1)(\phi_1)|}{J_1^\frac12(f_1)(\phi_1)+J_1^\frac12(f_2)(\phi_1)}\\
&\times\frac{|J_1(f_1)(\phi_1)-J_1(f_1)(\phi_2)|}{J_1^\frac12(f_1)(\phi_1)+J_1^\frac12(f_1)(\phi_2)},
\end{align*}

\begin{align*}
 \mathcal{I}_9=&
\frac{\sin(\varphi)|h(\varphi,\eta)|}{J_1(f_1)(\phi_2)^\frac12J_1(f_2)(\phi_1)^\frac12}\frac{|J_1(f_2)(\phi_1)-J_1(f_1)(\phi_1)|}{(J_1^\frac12(f_1)(\phi_1)+J_1^\frac12(f_2)(\phi_1))(J_1(f_1)(\phi_1)^\frac12+J_1(f_2)(\phi_2)^\frac12)}\\
&\times\frac{|J_1(f_2)(\phi_1)-J_1(f_2)(\phi_2)|}{J_1^\frac12(f_2)(\phi_1)+J_1^\frac12(f_2)(\phi_2)},\end{align*}
\begin{align*}
\mathcal{I}_{10}=&\frac{\sin(\varphi)|h(\varphi,\eta)|}{J_1^\frac12(f_1)(\phi_2)J_1^\frac12(f_2)(\phi_1)}\frac{|(J_1(f_2)-J_1(f_1))(\phi_1)-(J_1(f_2)-J_1(f_1))(\phi_2)|}{J_1^\frac12(f_1)(\phi_1)+J_1^\frac12(f_2)(\phi_2)},
\end{align*}
\begin{align*}
\mathcal{I}_{11}=&\frac{\sin(\varphi)|h(\varphi,\eta)|}{J_1^\frac12(f_1)(\phi_2)J_1^\frac12(f_2)(\phi_1)}\frac{|J_1(f_2)(\phi_2)-J_1(f_1)(\phi_2)|}{(J_1^\frac12(f_1)(\phi_2)+J_1^\frac12(f_2)(\phi_2))(J_1^\frac12(f_1)(\phi_1)+J_1^\frac12(f_2)(\phi_2))}\\
&\times\frac{|J_1(f_1)(\phi_1)-J_1(f_1)(\phi_2)|}{J_1^\frac12(f_1)(\phi_1)+J_1^\frac12(f_1)(\phi_2)}\\
\end{align*}
and
\begin{align*}
\mathcal{I}_{12}=&\frac{\sin(\varphi)|h(\varphi,\eta)|}{J_1^\frac12(f_1)(\phi_2)J_1^\frac12(f_2)(\phi_1)J_1^\frac12(f_2)(\phi_2)}\frac{|J_1(f_2)(\phi_2)-J_1(f_1)(\phi_2)|}{J_1^\frac12(f_1)(\phi_2)+J_1^\frac12(f_2)(\phi_2)}\\
&\times\frac{|J_1(f_2)(\phi_1)-J_1(f_2)(\phi_2)|}{J_1^\frac12(f_2)(\phi_1)+J_1^\frac12(f_2)(\phi_2)}.
\end{align*}
The estimate of those terms are quite similar and we shall restrict the discussion to the term $\mathcal{I}_{10}$ which involves more computations. The analysis is straightforward and we will just give the basic ideas.  First one should give a suitable  estimate for the quantity
$$
|(J_1(f_2)-J_1(f_1))(\phi_1)-(J_1(f_2)-J_1(f_1))(\phi_2)|.
$$
By using \eqref{J1f2f2-1}--\eqref{J1f2f2-2}, one finds
\begin{align*}
&|(J_1(f_2)-J_1(f_1))(\phi_1)-(J_1(f_2)-J_1(f_1))(\phi_2)|\\
\lesssim&|(r_1-r_2)(\phi_1,\theta)-(r_1-r_2)(\phi_2,\theta)||(r_1+r_2)(\phi_1,\theta)-(r_1+r_2)(\varphi,\eta)|\\
&+|(r_1-r_2)(\phi_2,\theta)-(r_1-r_2)(\varphi,\eta)||(r_1+r_2)(\phi_1,\theta)-(r_1+r_2)(\phi_2,\theta)|\\
&+|(r_1-r_2)(\phi_1,\theta)-(r_1-r_2)(\phi_2,\theta)|r_1(\varphi,\eta)\sin^2((\theta-\eta)/2)\\
&+|r_2(\phi_1,\theta)-r_2(\phi_2,\theta)||(r_1-r_2)(\varphi,\eta)|\sin^2((\theta-\eta)/2).
\end{align*}
Moreover,
\begin{align*}
&|(r_1-r_2)(\phi_1,\theta)-(r_1-r_2)(\phi_2,\theta)|\lesssim\|r_1-r_2\|^\alpha |\phi_1-\phi_2|^\alpha\left(|r_1(\phi_1,\theta)-r_1(\varphi,\eta)|^{1-\alpha}\right.\\
&\left.+|r_2(\phi_1,\theta)-r_2(\varphi,\eta)|^{1-\alpha}+|r_1(\phi_2,\theta)-r_1(\varphi,\eta)|^{1-\alpha}+|r_2(\phi_2,\theta)-r_2(\varphi,\eta)|^{1-\alpha}\right),
\end{align*}
and
\begin{align*}
|(r_1+r_2)(\phi_1,\theta)-&(r_1+r_2)(\phi_2,\theta)|\lesssim |\phi_1-\phi_2|^\alpha (|r_1(\phi_1,\theta)-r_1(\varphi,\eta)|^{1-\alpha}\\
&+|r_2(\phi_1,\theta)-r_2(\varphi,\eta)|^{1-\alpha}+|r_1(\phi_2,\theta)-r_1(\varphi,\eta)|^{1-\alpha}\\
&+|r_2(\phi_2,\theta)-r_2(\varphi,\eta)|^{1-\alpha}).
\end{align*}
In a similar way, we deduce first by triangular inequality 
\begin{align*}
|(r_1+r_2)(\phi_1,\theta)-(r_1+r_2)(\varphi,\eta)|\leq&|r_1(\phi_1,\theta)-r_1(\varphi,\eta)|+|r_2(\phi_1,\theta)-r_1(\varphi,\eta)|
\end{align*}
and second from \eqref{J1f2f2-2}
\begin{align*}
|(r_1-r_2)(\phi_2,\theta)-(r_1-r_2)(\varphi,\eta)|\lesssim&\|r_1-r_2|\|^\gamma(|\phi_2-\varphi|^\gamma+|\sin((\theta-\eta)/2)|^\gamma)\\
&\times\left(|r_1(\phi_2,\theta)-r_1(\varphi,\eta)|^{1-\gamma}+|r_2(\phi_2,\theta)-r_2(\varphi,\eta)|^{1-\gamma}\right).
\end{align*}
Combining the preceding estimate we achieve
{\begin{align*}
&|(J_1(f_2)-J_1(f_1))(\phi_1)-(J_1(f_2)-J_1(f_1))(\phi_2)|\\
\lesssim&|\phi_1-\phi_2|^\alpha\|f_1-f_2\|^\gamma\left(|r_1(\phi_1,\theta)-r_1(\varphi,\eta)|^{2-\alpha}+|r_2(\phi_1,\theta)-r_2(\varphi,\eta)|^{2-\alpha}\right.\\
&\left.+|r_1(\phi_2,\theta)-r_1(\varphi,\eta)|^{2-\alpha}+|r_2(\phi_2,\theta)-r_2(\varphi,\eta)|^{2-\alpha}\right)\\
\lesssim& |\phi_1-\phi_2|^\alpha\|f_1-f_2\|^\gamma(\mathscr{E}_{1,1}^{2-\alpha}+\mathscr{E}_{2,1}^{2-\alpha}+\mathscr{E}_{1,2}^{2-\alpha}+\mathscr{E}_{2,2}^{2-\alpha}).
\end{align*}}
where we use the notation
$$
\mathscr{E}_{i,j}=|r_i(\phi_j,\theta)-r_i(\varphi,\eta)|; \, i,j\in\{1,2\}
$$
Hence,
\begin{align*}
|\mathcal{I}_{10}|\lesssim &|\phi_1-\phi_2|^\alpha\|f_1-f_2\|^\gamma\frac{\sin(\varphi)| h(\varphi,\eta)|}{J_1^\frac12(f_1)(\phi_2)J_1^\frac12(f_2)(\phi_1)}\frac{\sum_{i,j=1}^2\mathscr{E}_{i,j}^{2-\alpha}}{J_1^\frac12(f_1)(\phi_1)+J_1^\frac12(f_2)(\phi_2)}\cdot
\end{align*}
By using the definition of $J_1$ in Lemma \ref{lemma-estimdenominator}, we immediately get
$$
\mathscr{E}_{i,j}\leq J_1^\frac12(f_i)(\phi_j),
$$
that we combine with  \eqref{r-esti} in order to get
$$
\mathscr{E}_{i,j}\lesssim |\phi_j-\varphi|+|\sin((\theta-\eta)/2)|.
$$
We shall analyze the term associated to $\mathscr{E}_{1,1}$ and the treatment of the other ones are quite similar. 
First we note 
\begin{align*}
\frac{\mathscr{E}_{1,1}^{2-\alpha}}{J_1^\frac12(f_1)(\phi_2)J_1^\frac12(f_2)(\phi_1)\big(J_1^\frac12(f_1)(\phi_1)+J_1^\frac12(f_2)(\phi_2)\big)}\lesssim \frac{|\phi_1-\varphi|^{1-\alpha}+|\sin((\theta-\eta)/2)|^{1-\alpha}}{J_1^\frac12(f_1)(\phi_2)J_1^\frac12(f_2)(\phi_1)}\cdot
\end{align*}
Making appeal to \eqref{srr-1} and \eqref{platit1}, we infer
$$
\frac{\sin(\varphi)|h(\varphi)|}{J_1^\frac12(f_1)(\phi_2)J_1^\frac12(f_2)(\phi_1)}\lesssim \frac{\|h\|_{\textnormal{Lip}}}{\left\{(\phi_1-\varphi)^2+\sin^2((\theta-\eta)/2)\right\}^\frac{1}{2}\left\{(\phi_2-\varphi)^2+\sin^2((\theta-\eta)/2)\right\}^\frac12}\cdot
$$
By interpolation we obtain for any $\gamma, \beta\in[0,1]$,
$$
\frac{\sin(\varphi)|h(\varphi)|}{J_1^\frac12(f_1)(\phi_2)J_1^\frac12(f_2)(\phi_1)}\lesssim \|h\|_{\textnormal{Lip}}\frac{|\phi_1-\varphi|^{-\gamma}|\phi_2-\varphi|^{-\beta}}{|\sin((\theta-\eta)/2)|^{2-\gamma-\beta}}\cdot
$$
Combining the preceding inequalities gives for any $\gamma_1, \gamma_2, \beta_1,\beta_2\in[0,1]$
\begin{align*}
\frac{\sin(\varphi)|h(\varphi,\eta)|}{J_1(f_1)(\phi_2)^\frac12J_1(f_2)(\phi_1)^\frac12}\frac{\mathscr{E}_{1,1}^{2-\alpha}}{J_1(f_1)(\phi_1)^\frac12+J_1(f_2)(\phi_2)^\frac12}
\lesssim &\|h\|_{\textnormal{Lip}}\frac{|\phi_1-\varphi|^{1-\alpha-\gamma_1}|\phi_2-\varphi|^{-\beta_1}}{|\sin((\theta-\eta)/2)|^{2-\gamma_1-\beta_1}}\\
&+ \|h\|_{\textnormal{Lip}}\frac{|\phi_1-\varphi|^{-\gamma_2}|\phi_2-\varphi|^{-\beta_2}}{|\sin((\theta-\eta)/2)|^{1+\alpha-\gamma_2-\beta_2}}\cdot\end{align*}
The majorant functions are integrable in the variable $(\varphi,\eta)$ uniformly in $\phi_1,\phi_2,\theta$ provided that
$$
1<\gamma_1+\beta_1<2-\alpha\quad\hbox{and}\quad \alpha<\gamma_2+\beta_2<1,
$$
and under these constraints  one can find admissible parameters. Consequently,
$$
\int_0^\pi\int_{0}^{2\pi} \mathcal{I}_{10} d\varphi d\eta\lesssim \|h\|_{\mathscr{C}^{1,\alpha}}\|f_1-f_2\|^\gamma_{\mathscr{C}^{1,\alpha}}|\phi_1-\phi_2|^\alpha.
$$
{

They, we are able to find that
$$
|\mathcal{I}_7(\phi_1)-\mathcal{I}_7(\phi_2)|\lesssim ||h||_{\mathscr{C}^{1,\alpha}}||f_1-f_2||^\gamma_{\mathscr{C}^{1,\alpha}}|\phi_1-\phi_2|^\alpha,
$$
for some $\gamma\in(0,1)$.

Let us now move on $\mathcal{T}_2(f)h(\phi,\theta)$ and show the main ideas. Define
$$
\mathcal{K}_4(f)(\phi,\theta,\varphi,\eta):=\frac{\sin(\varphi)r(\varphi,\eta)\sin(\eta-\theta)}{J_1^\frac32(\phi,\theta,\varphi,\eta)}\partial_f J_1(f)h(\phi,\theta,\varphi,\eta),
$$
and then
\begin{align}\label{T2-dec}
\mathcal{K}_4(f_1)(\phi,\theta,\varphi,\eta)-&\mathcal{K}_4(f_2)(\phi,\theta,\varphi,\eta)=\frac{\sin(\varphi)(r_1-r_2)(\varphi,\eta)\sin(\eta-\theta)}{J_1^\frac32(f_1)(\phi,\theta,\varphi,\eta)}\partial_f J_1(f_1)h(\phi,\theta,\varphi,\eta)\nonumber\\
&+\sin(\varphi)r_2(\varphi,\eta)\sin(\eta-\theta)\left(\frac{\partial_f J_1(f_1)h(\phi,\theta,\varphi,\eta)}{J_1(f_1)^\frac32(\phi,\theta,\varphi,\eta)}-\frac{\partial_f J_1(f_2)h(\phi,\theta,\varphi,\eta)}{J_2(f_1)^\frac32(\phi,\theta,\varphi,\eta)}\right)\nonumber\\
=:&(\mathcal{I}_{13}+\mathcal{I}_{14})(\phi,\theta,\varphi,\vartheta).
\end{align}
Let us analyze $\mathcal{I}_{13}$ and note that $\mathcal{I}_{14}$ is more involved (it includes more computations) but has same order of singularity. Moreover, recall the expression of $\partial_f J_1$ in \eqref{DfJ1}.
Define also 
$$
\mathcal{T}_{2,1}(\phi,\theta):=\int_0^\pi\int_0^{2\pi}\mathcal{I}_{13}(\phi,\theta,\varphi,\eta)d\varphi d\eta.
$$
Let us begin studying the $L^\infty$ norm of $\mathcal{T}_{2,1}$. Indeed, by \eqref{DfJ1} and \eqref{r-esti} we are able to find that
\begin{align}\label{DfJ1-2}
&\frac{|\partial_f J_1(f)h(\phi,\theta,\varphi,\eta)|}{J_1^\frac32(f)(\phi,\theta,\varphi,\eta)}\leq C ||h||_{\textnormal{Lip}}\nonumber\\
&\times\frac{|r_1(\varphi,\eta)-r_1(\phi,\theta)|(|\phi-\varphi|+|\sin((\theta-\eta)/2)|)+\sin(\phi)\sin(\varphi)\sin^2((\theta-\eta)/2))}{J_1^\frac32(f)(\phi,\theta,\varphi,\eta)}\nonumber\\
\leq&  C\frac{ ||h||_{\textnormal{Lip}}}{|\sin\varphi+\phi|J_1^\frac12(f)(\phi,\theta,\varphi,\eta)}.
\end{align}
Then, using Lemma \ref{lemma-estimdenominator} we achieve
\begin{align}\label{I13-bound}
|\mathcal{I}_{13}(\phi,\theta,\varphi,\eta)|\leq & C||h||_{\textnormal{Lip}}||r_1-r_2||_{\textnormal{Lip}}\frac{\sin^2(\varphi)|\sin(\eta-\theta)|}{(\sin\varphi+\phi)J_1^\frac12(\phi,\theta,\varphi,\eta)\nonumber}\\
\leq & C||h||_{\textnormal{Lip}}||r_1-r_2||_{\textnormal{Lip}}\frac{\sin^2(\varphi)|\sin(\eta-\theta)|}{(\sin\varphi+\phi)^2(|\phi-\varphi|+|\sin((\theta-\eta)/2)|)}\nonumber\\
\leq & C ||h||_{\textnormal{Lip}}||r_1-r_2||_{\textnormal{Lip}},
\end{align}
finding that $\mathcal{T}_{2,1}$ is bounded. 

Since we showed the estimates in $\phi$ of $\mathcal{T}_1$ (see \eqref{T1holder}), let us work here with the variable $\theta$. Indeed, our purpose will be showing
\begin{align}\label{T2holder}
|(\mathcal{T}_2(f_1)-\mathcal{T}_2(f_2))h(\phi,\theta_1)-(T_2(f_1)-T_2(f_2))h(\phi,\theta_2)|\lesssim &\|h\|_{\mathscr{C}^{1,\alpha}}\|f_1-f_2\|^\gamma_{\mathscr{C}^{1,\alpha}}|\theta_1-\theta_2|^\alpha,
\end{align}
for any $\theta_1,\theta_2\in[0,2\pi]$ with $\theta_1<\theta_2$. Since we have decomposed $\mathcal{T}_2(f_1)-\mathcal{T}_2(f_2)$ in two terms in \eqref{T2-dec}, let us work with $\mathcal{T}_{2,1}$ and show 
\begin{align}\label{T21holder}
|\mathcal{T}_{2,1}(\phi,\theta_1)-T_{2,1}(\phi,\theta_2)|\lesssim &\|h\|_{\mathscr{C}^{1,\alpha}}\|f_1-f_2\|^\gamma_{\mathscr{C}^{1,\alpha}}|\theta_1-\theta_2|^\alpha.
\end{align}
Here, we will use Proposition \ref{prop-potentialtheory} by fixing $\phi$. The kernel of $\mathcal{T}_{2,1}$, i.e. $\mathcal{I}_{13}$, has been already bounded in \eqref{I13-bound}. That gives us hypothesis \eqref{prop-potentialtheory-h0} of such proposition and it remains to estimate $\partial_\theta \mathcal{I}_{13}$ (see hypothesis \eqref{prop-potentialtheory-h2}). By using the expression of $\mathcal{I}_{13}$ we get
\begin{align}\label{I13-theta-2}
|\partial_\theta\mathcal{I}_{13}(\phi,\theta,\varphi,\eta)|\leq& C||r_1-r_2||_{\textnormal{Lip}}\frac{\sin^2(\varphi)|\partial_f J_1(f_1)h(\phi,\theta,\varphi,\eta)|}{J_1^\frac32(f_1)(\phi,\theta,\varphi,\eta)}\nonumber\\
&+C||r_1-r_2||_{\textnormal{Lip}}\frac{\sin^2(\varphi)|\sin(\eta-\theta)||\partial_\theta \partial_f J_1(f_1)h(\phi,\theta,\varphi,\eta)|}{J_1^\frac32(f_1)(\phi,\theta,\varphi,\eta)}\nonumber\\
&+C||r_1-r_2||_{\textnormal{Lip}}\frac{\sin^2(\varphi)|\sin(\eta-\theta)||\partial_f J_1(f_1)h(\phi,\theta,\varphi,\eta)||\partial_\theta J_1(f_1)h(\phi,\theta,\varphi,\eta)|}{J_1^\frac52(f_1)(\phi,\theta,\varphi,\eta)}\nonumber\\
=:&C||r_1-r_2||_{\textnormal{Lip}}(\mathcal{I}_{13,1}+\mathcal{I}_{13,2}+\mathcal{I}_{13,3}).
\end{align}
For $\mathcal{I}_{13,1}$ we use \eqref{DfJ1-2} and Lemma \ref{lemma-estimdenominator} finding
\begin{align*}
\mathcal{I}_{13,1}\leq &C||h||_{\textnormal{Lip}}\frac{\sin^2(\varphi)}{|\sin\varphi+\phi|J_1^\frac12(f)(\phi,\theta,\varphi,\eta)}\\
\leq &C ||h||_{\textnormal{Lip}}\frac{1}{(|\phi-\varphi|+|\sin((\theta-\eta)/2)|)}\\
\leq  &C ||h||_{\textnormal{Lip}}\frac{1}{|\sin((\theta-\eta)/2)|}.
\end{align*}
That gives us hypothesis \eqref{prop-potentialtheory-h2} for the first term $\mathcal{I}_{13,1}$. In order to work with $\mathcal{I}_{13,2}$, using \eqref{r-esti} note that
\begin{align*}
&\frac{|\partial_\theta \partial_f J_1(f_1)h(\phi,\theta,\varphi,\eta)|}{J_1^\frac32(f_1)(\phi,\theta,\varphi,\eta)}\leq C||h||_{\textnormal{Lip}}\\
&\times \frac{|\sin(\phi)|^\alpha(|\phi-\varphi|+|\sin((\theta-\eta)/2)|))+\sin(\varphi)\phi^\alpha\sin^2((\theta-\eta)/2)+\sin(\varphi)\phi|\sin((\theta-\eta)/2)|}{
J_1^\frac32(f_1)(\phi,\theta,\varphi,\eta)}\\
\leq &C\frac{||h||_{\textnormal{Lip}}}{(\phi+\sin\varphi)^{1-\alpha}J_1(f_1)(\phi,\theta,\varphi,\eta)},
\end{align*}
and then $\mathcal{I}_{13,2}$ follows as
\begin{align*}
\mathcal{I}_{13,2}\leq &C ||h||_{\textnormal{Lip}}||r_1-r_2||_{\textnormal{Lip}}\frac{\sin^2(\varphi)|\sin(\eta-\theta)|}{(\phi+\sin\varphi)^{3-\alpha}(|\phi-\varphi|+|\sin((\theta-\eta)/2)|)^2}\\
\leq &C ||h||_{\textnormal{Lip}}||r_1-r_2||_{\textnormal{Lip}}\frac{1}{(\phi+\sin\varphi)^{1-\alpha}(|\phi-\varphi|+|\sin((\theta-\eta)/2)|)}\\
\leq &C ||h||_{\textnormal{Lip}}||r_1-r_2||_{\textnormal{Lip}}\frac{1}{(\phi+\sin\varphi)^{1-\alpha}|\sin((\theta-\eta)/2)|}.
\end{align*}
Similarly, we can work with $\mathcal{I}_{13,3}$. First note that
\begin{align*}
\frac{|\partial_\theta J_1(f_1)h(\phi,\theta,\varphi,\eta)|}{J_1(f_1)(\phi,\theta,\varphi,\eta)}\leq C\frac{\phi^\alpha}{J_1^\frac12(f_1)(\phi,\theta,\varphi,\eta)},
\end{align*}
which, together with \eqref{DfJ1-2} implies
\begin{align*}
\mathcal{I}_{13,3}\leq& C ||h||_{\textnormal{Lip}}||r_1-r_2||_{\textnormal{Lip}}\frac{\sin^2(\varphi)|\sin(\eta-\theta)|\phi^\alpha}{|\sin(\varphi)+\phi|J_1(f_1)(\phi,\theta,\varphi,\eta)}\\
\leq & C ||h||_{\textnormal{Lip}}||r_1-r_2||_{\textnormal{Lip}}\frac{1}{|\sin((\theta-\eta)/2)|}.
\end{align*}
Putting everything together we achieves that $\partial_\theta \mathcal{I}_{13}$ satisfies hypothesis \eqref{prop-potentialtheory-h2} of Proposition \ref{prop-potentialtheory}. Then, such proposition can be applied to find \eqref{T21holder} concluding the proof.

}

}
\end{proof}

}

\section{Main result}
In this section we shall  provide a general statement that precise  Theorem \ref{theo-introduction} and give its  proof using all the previous results. Recall that the search of rotating solutions in the patch form to the equation \eqref{equation}, that is, solutions in the form 
$$
q(t,x)=q_0(e^{-i\Omega t}(x_1,x_2), x_3), \quad q_0={\bf 1}_{D},
$$
where $D$ is a bounded simply--connected domain surrounded by a surface parametrized by
$$
(\phi, \theta)\in[0,\pi]\times[0,2\pi]\mapsto ((r_0(\phi)+f(\phi,\theta))e^{i\theta}, \cos(\phi)), 
$$
 reduces to solving  the following infinite-dimensional equation
$$\tilde{F}(\Omega, f)=0$$
 with $f$ in a small neighborhood of the origin in the Banach space $X_m^\alpha$ and  $\tilde{F}$ is introduced in \eqref{Ftilde}. Notice that a solution is nontrivial means that the associated shape is not invariant by rotation along the vertical axis. Looking to the structure of the elements of space $X_m^\alpha$ one can easily see that a nonzero element guarantees a nontrivial shape. Our result stated below  asserts  that solutions to this functional equation do exist and are organized in a countable family of one-dimensional curves bifurcating  from the trivial solution at the largest eigenvalues of the linearized operator at the origin. More precisely, we have the following.
\begin{theo}\label{theorem}
 Let $m\geq 2$ be a fixed integer and $r_0:[0,\pi]\rightarrow\R$ satisfies the conditions:
{\begin{itemize}
\item[(H1)] $r_0\in \mathscr{C}^{2}([0,\pi])$, with $r_0(0)=r_0(\pi)=0$ and $r_0(\phi)>0$ for $\phi\in(0,\pi)$.
\item[(H2)] There exists $C>0$ such that
$$
\forall\, \phi\in[0,\pi],\quad C^{-1}\sin\phi\leq r_0(\phi)\leq C\sin(\phi).
$$
\item[(H3)] $r_0$ is symmetric with respect to $\phi=\frac{\pi}{2}$, i.e., $r_0\left(\frac{\pi}{2}-\phi\right)=r_0\left(\frac{\pi}{2}+\phi\right)$, for any $\phi\in[0,\frac{\pi}{2}]$.
\end{itemize}}
Then  there exist  $\delta>0$ and two one--dimensonal $\mathscr{C}^1$-curves 
$s\in(-\delta,\delta)\mapsto f_m(s)\in X_m^\alpha$ and $ s\in(-\delta,\delta)\mapsto \Omega_m(s)\in\R,
$  with 
$$f_m(0)=0, \quad f_m(s)\neq0, \, \forall \, s\neq 0\quad \hbox{and}\quad \Omega_m(0)=\Omega_m,
$$
where $\Omega_m$ is defined in Proposition $\ref{prop-kernel-onedim}$, such that 
$$
\forall\, s\in(-\delta,\delta),\quad \tilde{F}\big(\Omega_m(s),f_m(s)\big)=0.
$$
\end{theo}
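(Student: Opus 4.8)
The plan is to apply the Crandall--Rabinowitz bifurcation theorem (Theorem \ref{CR} in the appendix) to the nonlinear functional $\tilde{F}:\R\times B_{X_m^\alpha}(\varepsilon)\to X_m^\alpha$ introduced in \eqref{Ftilde}, at the point $(\Omega_m,0)$. By construction, $\tilde{F}(\Omega,0)=0$ for every $\Omega\in\R$, so the trivial curve is present, and the whole proof amounts to verifying the four structural hypotheses of the theorem and then quoting its conclusion.

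First I would collect the regularity and symmetry facts established earlier. By Proposition \ref{prop-SymX1} the functional $\tilde{F}$ preserves the defining properties of $X_m^\alpha$ (equatorial symmetry, $m$-fold symmetry, absence of the zero frequency), and by Proposition \ref{prop-wellpos} there exists $\varepsilon\in(0,1)$ such that $\tilde{F}:\R\times B_{X_m^\alpha}(\varepsilon)\to X_m^\alpha$ is well-defined and of class $\mathscr{C}^1$; this gives the ambient smoothness needed to apply the theorem. Second, the linearized operator $\partial_f\tilde{F}(\Omega_m,0):X_m^\alpha\to X_m^\alpha$ is Fredholm of zero index, with one-dimensional kernel spanned by $f_m^\star(\phi,\theta)=h_m^\star(\phi)\cos(m\theta)$ and range of codimension one: this is exactly the content of Proposition \ref{cor-fredholm}, which in turn rests on the spectral analysis of $\mathcal{K}_m^\Omega$ (Proposition \ref{prop-operator}), the one-point structure and strict monotonicity of the set $\mathscr{S}_m$ (Proposition \ref{prop-kernel-onedim}), and the $\mathscr{C}^{1,\alpha}$-regularity and Dirichlet boundary behaviour of the eigenfunctions for $m\ge2$ (Propositions \ref{prop-higher-reg} and \ref{HoldX1}). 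Third, the transversality (non-degeneracy) condition
$$
\partial^2_{\Omega,f}\tilde{F}(\Omega_m,0)f_m^\star\notin \operatorname{Im}\big(\partial_f\tilde{F}(\Omega_m,0)\big)
$$
is precisely Proposition \ref{prop-transversal}, whose proof uses $\partial^2_{\Omega,f}\tilde{F}(\Omega_m,0)h=-h$ together with the self-adjointness of $\mathcal{K}_m^{\Omega_m}$ on $L^2_{\mu_{\Omega_m}}$ and the strict positivity of $r_0$ on $(0,\pi)$.

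With these four ingredients in hand, the Crandall--Rabinowitz theorem yields $\delta>0$ and $\mathscr{C}^1$ curves $s\in(-\delta,\delta)\mapsto(\Omega_m(s),f_m(s))\in\R\times X_m^\alpha$ with $\Omega_m(0)=\Omega_m$, $f_m(0)=0$, $f_m(s)=s\,f_m^\star+o(s)$ in $X_m^\alpha$ (so $f_m(s)\neq0$ for $s\neq0$ after shrinking $\delta$), and $\tilde{F}(\Omega_m(s),f_m(s))=0$ for all $s$. Since $f_m(s)\in X_m^\alpha$ is a nonzero function with $m$-fold angular dependence, the associated patch $\mathbf{1}_{D_s}$ is a genuinely non-trivial rotating solution with $m$-fold symmetry, which is the assertion of the theorem; the equivalence between $\tilde{F}(\Omega,f)=0$ and the existence of a uniformly rotating patch solution of \eqref{equation} was recorded in Section 2. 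I do not expect any of these final steps to be the main obstacle --- they are bookkeeping; the genuine difficulty of the paper, already resolved in the propositions quoted above, lies in the spectral study of $\mathcal{K}_n^\Omega$ and, above all, in the delicate potential-theoretic estimates establishing the $\mathscr{C}^{1,\alpha}$-regularity of $\nu_\Omega$ (Proposition \ref{Lem-meas}), of the eigenfunctions (Proposition \ref{HoldX1}), and of the nonlinear action $\tilde{F}$ on $X_m^\alpha$ (Proposition \ref{prop-wellpos}), where the degeneracy of the horizontal sections at the poles forces the anisotropic-kernel analysis based on Lemma \ref{lemma-estimdenominator}.
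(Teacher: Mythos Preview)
Your proposal is correct and follows essentially the same approach as the paper: apply the Crandall--Rabinowitz theorem by invoking Proposition \ref{prop-wellpos} for the $\mathscr{C}^1$ well-posedness of $\tilde{F}$, Proposition \ref{cor-fredholm} for the Fredholm structure and one-dimensional kernel at $\Omega_m$, and Proposition \ref{prop-transversal} for transversality. Your write-up is in fact more detailed than the paper's own proof, which simply cites these three propositions in a single paragraph.
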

\begin{proof}
The main material to prove this result is Crandall--Rabinowitz theorem, recalled  in Theorem \ref{CR}. First the well--possednes and the regularity of $\tilde{F}:X_m^\alpha\rightarrow X_m^\alpha$ were discussed  in Proposition \ref{prop-wellpos}. Thus it remains to check the  suitable  spectral properties of the linearized operator at the origin. The expression of this operator is detailed in  Proposition \ref{Prop-lin2} and it is a of  Fredholm type of zero index  according to Proposition \ref{cor-fredholm}. In addition for   $\Omega=\Omega_m$ the kernel is a one-dimensional vector space. Finally, the transversal condition is satisfied by virtue of Proposition \ref{prop-transversal}.
\end{proof}

\subsection{Special case: sphere and ellipsoid}\label{Sec-sphere}

In this section we aim to show the particular case of bifurcating from spherical or ellipsoidal shapes. The main particularity of these shapes is that their associated stream function is well--known in the literature, see \cite{Kellog}.
More specifically, let $\mathscr{E}$ be an ellipsoid inside the region
$$
\frac{x_1^2}{a^2}+\frac{x_2^2}{b^2}+\frac{x_3^2}{c^2}=1.
$$
The associated stream function given by
$$
\psi_0(x)=-\frac{1}{4\pi}\int_{\mathscr{E}}\frac{dA(y)}{|x-y|},
$$
can be computed inside the ellipsoid as
$$
\psi_0(x)=\frac{abc}{4}\int_0^\infty\left\{\frac{x_1^2}{a^2+s}+\frac{x_2^2}{b^2+s}+\frac{x_3^2}{c^2+s}-1\right\}\frac{ds}{\sqrt{(a^2+s)(b^2+s)(c^2+s)}}\cdot
$$
In the case that $a=b$ we have that the ellipsoid is invariant under rotations about the $z$--axis and then it defines a stationary patch, see Lemma \ref{Prop-trivial}. Moreover and without loss of generality we can take $c=1$. Note that in this case
$$
\psi_0(x)=\alpha_1(a) (x_1^2+x_2^3)+\alpha_2 (a)x_3^2+\alpha_3(a),
$$
where
$$
\alpha_1(a):=\frac{a^2}{4}\int_0^\infty\frac{ds}{(a^2+s)^2\sqrt{1+s}},
$$
$$
\alpha_2(a):=\frac{a^2}{4}\int_0^\infty\frac{ds}{(a^2+s)\sqrt{(1+s)^3}},
$$
and
$$
\alpha_3(a):=-\frac{a^2}{4}\int_0^\infty\frac{ds}{(a^2+s)^2\sqrt{1+s}}.
$$
The sphere coincides with the case $a=1$ having $\alpha_1(1)=\alpha_2(1)=\frac16$ and $\alpha_3(1)=\frac12$.\\
The above expression of the stream function together with Remark \ref{rem-streamfunction} gives us that
\begin{equation*}
\int_0^\pi H_1(\phi,\varphi)d\varphi=2\alpha_1(a),
\end{equation*}
for any $\phi\in[0,\pi]$. Recall  that $H_n$ is defined in \eqref{H-1}. Now, by virtue of Proposition \ref{Prop-lin2} one has
\begin{equation*}
\partial_{f} \tilde{F}(\Omega,0)h(\phi,\theta)=\sum_{n\geq 1}\cos(n\theta)\mathcal{L}_n^\Omega(h_n)(\phi),
\end{equation*}
where
\begin{align*}
\mathcal{L}_n^\Omega (h_n)(\phi)=&h_n(\phi)\left[2\alpha_1(a)-\Omega\right]-\bigintsss_0^\pi H_n(\phi,\varphi)h_n(\varphi)d\varphi, \quad  \phi\in(0,\pi).
\end{align*}
Moreover, the function $\nu_\Omega$ used in the spectral study and defined in \eqref{nu-function} agrees with
$$
\nu_\Omega(\phi)=2\alpha_1(a)-\Omega,
$$
which now is constant on $\phi$. Also the constant $\kappa$ in \eqref{kappa} equals now to $2\alpha_1(a)$. Hence, the key point in Section \ref{Symmetriz} is the symmetrization of the above operator. For that reason, we have defined the signed measure $d{\mu_\Omega}$ as
$$
d\mu_\Omega(\varphi)=\sin(\varphi)r_0^2(\varphi)\nu_\Omega(\varphi)d\varphi,
$$in \eqref{signed-meas} and the operator $\mathcal{K}_n^\Omega$ in \eqref{kerneleq}. However, since in this case $\nu_\Omega(\varphi)$ is constant on $\varphi$, there is no need to introduce it in the measure with the goal of symmetryzing the operator. Following the ideas developed above, we deduce that the kernel study of the linearized operator agrees in this case with the following eigenvalue problem
$$
\tilde{\mathcal{K}}_n(\phi)=(2\alpha_1(a)-\Omega)h(\phi).
$$
Here, we define
$$
\tilde{\mathcal{K}}_n(\phi):=(2\alpha_1(a)-\Omega)\mathcal{K}_n^\Omega(\phi),
$$
which does not depend now on $\Omega$ by definition of $\mathcal{K}_n^\Omega$. Note that both operators have similar properties. Hence $\tilde{\mathcal{K}}_n$ sets the properties given in Proposition \ref{prop-operator} taking the Lebesgue space $L^2_{\tilde{\mu}_\Omega}$ with 
$$d\tilde{\mu}_\Omega(\varphi)=\sin(\varphi)r_0^2(\varphi)d\varphi.$$
\\
Denote by $\beta_{n,i}$ the eigenvalues of $\tilde{\mathcal{K}}_n$ (for each $n$ we have a family of eigenvalues). Then, we have necessary that
$$
\Omega_n=2\alpha_1(a)-\beta_{n,i}.
$$
In Theorem \ref{theorem}, bifurcation occurs from $\Omega_n^\star$ given by
$$
\Omega_n^\star=2\alpha_1(a)-\beta_{n}^\star,
$$
with
$$
\beta_n^\star=\max_{i}\beta_{n,i}.
$$
Moreover, we know that $\beta_n^\star$ is positive and then $\Omega_n^\star<2\alpha_1(a)$. In particular, by Proposition \ref{prop-kernel-onedim}, we have that $\Omega_n^\star$ tends to $\kappa=2\alpha_1(a)$. Furthermore, $\Omega_n^\star$ increases in $n$ and then we can bound it below by $\Omega_1^\star$. Using the equation for $\beta_1^\star$, that is
$$
\int_0^\pi H_1(\varphi,\phi)h(\varphi)d\varphi=\beta_1^\star h(\phi),
$$
one finds that $\beta_1^\star\leq 2\alpha_1(a)$ and then $\Omega_1^\star$ is positive. This implies that $\Omega_n^\star$ is positive for any $n$. Then, in Theorem \ref{theorem} bifurcation holds  at  some $\Omega_n^\star\in(0,2\alpha_1(a))$. Let us remark that in the case of the sphere, meaning $a=1$, one has $2\alpha_1(a)=\frac13$.

There is an interesting open problem  concerning, first the spectral distribution of the eigenvalues $\beta_{n,i}$ (whether or not they are finite, simple or multiple), and second if   bifurcation occurs at the eigenvalues  $\Omega_n=2\alpha_1(a)-\beta_{n,i}$ (which is shown to  happen only  for the largest eigenvalue $\beta_n^\star$). Notice that the simplicity and the monotonicity  of the eigenvalues is a delicate problem and could be related to the geometry of the revolution shape. 
Finally we observe that  since $\beta_{n,i}<\beta_n^\star$ then $\Omega_n=2\alpha_1(a)-\beta_{n,i}>\frac13-\beta_n^\star>0$.

\appendix

\section{Gauss Hypergeometric function}\label{Ap-spfunctions}
We give a short  discussion on Gauss hypergeometric functions and illustrate some  of their basic properties. The formulas listed below were crucial in the computations of the linearized operator associated to the V--states equation and  in the analysis of the main feature of its spectral properties.  Recall that for any real numbers $a,b\in \mathbb{R},\, c\in \mathbb{R}\backslash(-\mathbb{N})$ the hypergeometric function $z\mapsto F(a,b;c;z)$ is defined on the open unit disc $\mathbb{D}$ by the power series
\begin{equation}\label{GaussF}
F(a,b;c;z)=\sum_{n=0}^{\infty}\frac{(a)_n(b)_n}{(c)_n}\frac{z^n}{n!}, \quad \forall z\in \mathbb{D}.
\end{equation}
The  Pochhammer's  symbol $(x)_n$ is defined by
$$
(x)_n = \begin{cases}   1,   & n = 0, \\
 x(x+1) \cdots (x+n-1), & n \geq1,
\end{cases}
$$
and verifies
\begin{equation*}
(x)_n=x\,(1+x)_{n-1},\quad (x)_{n+1}=(x+n)\,(x)_n.
\end{equation*}
The series converges absolutely for all values of $|z|<1.$ For $|z|=1$ we have  the absolute convergence  if $\textnormal{Re} (a+b-c)<0$ and it diverges if $1\leq \textnormal{Re}(a+b-c)$. See \cite{Erdelyi} for more details.

We recall the integral representation of the hypergeometric function, see for instance  \cite[p. 47]{Rainville}. Assume that  $ \textnormal{Re}(c) > \textnormal{Re}(b) > 0,$ then 
\begin{equation}\label{Ap-spfunctions-integ}
\hspace{1cm}F(a,b;c;z)=\frac{\Gamma(c)}{\Gamma(b)\Gamma(c-b)}\int_0^1 x^{b-1} (1-x)^{c-b-1}(1-zx)^{-a}~ dx,\quad \forall{z\in \C\backslash[1,+\infty)}.
\end{equation}
Notice that this representation shows that the hypergeometric function initially defined in the unit disc admits an analytic continuation to the complex plane cut along  $[1,+\infty)$.
 Another useful identity is the following:  
\begin{equation}\label{Ap-spfunctions-hyp}
F(a,b;c;z)=(1-z)^{-a}F\left(a,c-b;c;\frac{z}{z-1}\right),
\end{equation}
for $\textnormal{Re} (c)>\textnormal{Re}(b)>0$.
The function $\Gamma: \C\backslash\{-\N\} \to \C$ refers to the gamma function, which is the analytic continuation to the negative half plane of the usual gamma function defined on the positive half-plane $\{\textnormal{Re} z > 0\}$, and given by
$$
\Gamma(z)=\int_0^{+\infty}\tau^{z-1}e^{-\tau} d\tau,
$$
and satisfies the relation
$
\Gamma(z+1)=z\,\Gamma(z), \ \forall z\in \C \backslash(-\N).
$
From this we deduce the identities
\begin{equation*}
(x)_n=\frac{\Gamma(x+n)}{\Gamma(x)},\quad (x)_n=(-1)^n\frac{\Gamma(1-x)}{\Gamma(1-x-n)},
\end{equation*}
provided that all the quantities in the right terms are well-defined.

We can differentiate the hypergeometric function  obtaining
\begin{equation}\label{Diff41}
\frac{d^kF(a,b;c;z)}{dz^k}=\frac{(a)_k(b)_k}{(c)_k}F(a+k,b+k;c+k;z),
\end{equation}
for $k\in\N$. Depending on the parameters, the hypergeometric function behaves differently at $1$. When {$\textnormal{Re}(c)>\textnormal{Re}(b)>0$ and $\textnormal{Re} (c-a-b)>0 $}, it can be shown that it is absolutely convergent on the closed unit disc  and one finds the expression
\begin{equation}\label{id1}
F (a,b;c;1)= \frac{\Gamma(c)\Gamma(c-a-b)}{\Gamma(c-a)\Gamma(c-b)},
\end{equation}
see for example \cite{Rainville} for the proof. However, in the case $a+b=c$, the hypergeometric function exhibits a logarithmic singularity as follows
 \begin{align}\label{log1}
\lim_{z\rightarrow 1-}\frac{F(a,b;c;z)}{-\ln(1-z)}=\frac{\Gamma(a+b)}{\Gamma(a)\Gamma(b)},
\end{align}
see for instance \cite{Andrews} for more details. Next, we shall give a proof of the following classical result. 
\begin{lem}\label{Lem-integral}
Let $n\in\N$, $\beta\geq0$ and $A>1$, then
$$
\bigintsss_0^{2\pi}\frac{\cos(n\theta)}{(A-\cos(\theta))^{\frac{\beta}{2}}}d\theta=\frac{2\pi}{(1+A)^{\frac{\beta}{2}+n}}\frac{\left(\frac{\beta}{2}\right)_n 2^n\left(\frac12\right)_n}{(2n)!}F\left(n+\frac{\beta}{2}, n+\frac12; 2n+1; \frac{2}{1+A}\right ).
$$

\end{lem}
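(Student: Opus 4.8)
The plan is to reduce this integral to a standard hypergeometric representation by expanding the $\cos(n\theta)$ factor and the denominator into trigonometric series and matching coefficients. First I would use the Chebyshev-type expansion: write $\cos(n\theta)$ in terms of $e^{in\theta}$ and exploit the evenness of the integrand to replace the range $[0,2\pi]$ by a contour integral, or equivalently expand $(A-\cos\theta)^{-\beta/2}$ as a Fourier series in $\theta$. The key algebraic observation is that the generating function
$$
(A-\cos\theta)^{-\beta/2} = (1+A)^{-\beta/2}\Bigl(1 - \tfrac{2}{1+A}\cos^2(\theta/2)\Bigr)^{-\beta/2}
$$
after the half-angle substitution, or more directly one writes $A-\cos\theta = (1+A)\bigl(1 - \tfrac{1}{1+A}(1+\cos\theta)\bigr)$ and uses the binomial series $(1-x)^{-\beta/2}=\sum_k \frac{(\beta/2)_k}{k!}x^k$ with $x=\frac{1+\cos\theta}{1+A}$. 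Then $(1+\cos\theta)^k = 2^k\cos^{2k}(\theta/2)$, and one must integrate $\cos^{2k}(\theta/2)\cos(n\theta)$ against $d\theta$.

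The core computation is the elementary integral
$$
\int_0^{2\pi}\cos^{2k}(\theta/2)\cos(n\theta)\,d\theta,
$$
which vanishes unless $k\geq n$ and otherwise equals a ratio of factorials: expanding $\cos^{2k}(\theta/2)=2^{-2k}\sum_j \binom{2k}{j}e^{i(k-j)\theta}$, only the terms with $k-j=\pm n$ survive, giving $2\pi\cdot 2^{-2k}\binom{2k}{k-n}$. Substituting this back into the binomial series, one obtains
$$
\int_0^{2\pi}\frac{\cos(n\theta)}{(A-\cos\theta)^{\beta/2}}\,d\theta = \frac{2\pi}{(1+A)^{\beta/2}}\sum_{k\geq n}\frac{(\beta/2)_k}{k!}\frac{2^k}{(1+A)^k}\,2^{-2k}\binom{2k}{k-n}.
$$
The remaining task is purely formal: shift the summation index $k = n+m$, use the identities $(x)_{n+m}=(x)_n(x+n)_m$, $\binom{2(n+m)}{m}=\frac{(2n+2m)!}{m!(2n+m)!}$, and the duplication formula $\frac{(2n+2m)!}{(2n)!}=4^m(n+\tfrac12)_m(n+1)_m$ (together with $(2n+m)!/(2n)! = (2n+1)_m$) to recognize the series as $F(n+\beta/2,\,n+\tfrac12;\,2n+1;\,\tfrac{2}{1+A})$ up to the prefactor $\frac{(\beta/2)_n 2^n (\tfrac12)_n}{(2n)!}(1+A)^{-n}$ claimed in the statement. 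Convergence of all series is guaranteed by $A>1$, which forces $\frac{2}{1+A}<1$, so the hypergeometric argument lies strictly inside the unit disc and every interchange of sum and integral is justified by absolute convergence (dominated convergence for the finite measure $d\theta$ on $[0,2\pi]$).

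I do not expect a serious obstacle here; this is a classical identity and the only mildly delicate point is bookkeeping the Pochhammer manipulations so that the constant $\frac{2^n(\tfrac12)_n(\beta/2)_n}{(2n)!}$ comes out exactly as stated — in particular correctly handling the duplication-formula rewriting of $(2n+2m)!/(2n)!$. One should also double-check the edge case $\beta=0$ (where the left side is $2\pi\delta_{n,0}$ and $(\beta/2)_n$ kills all $n\geq1$ terms) and $n=0$ as sanity checks. Alternatively, if the direct expansion proves cumbersome, one can instead start from the integral representation \eqref{Ap-spfunctions-integ} of $F$ with the substitution linking the Beta-type integral to $\int_0^{2\pi}$, but the series-matching route above is the most transparent and self-contained.
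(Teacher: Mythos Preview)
Your proposal is correct and follows essentially the same route as the paper: factor out $(1+A)^{-\beta/2}$, expand by the binomial series, evaluate the resulting trigonometric integrals, and reorganize the Pochhammer symbols into the hypergeometric series. The only cosmetic difference is that the paper first substitutes $\theta\mapsto 2\theta$ and then quotes Watson's formula $\int_0^\pi \cos^{2m}(\theta)\cos(2n\theta)\,d\theta = \pi\,\Gamma(2m+1)/\bigl(2^{2m}\Gamma(1+m+n)\Gamma(1+m-n)\bigr)$, whereas you keep the half-angle $\cos^{2k}(\theta/2)$ and derive the same integral directly from the binomial expansion of $(e^{i\theta/2}+e^{-i\theta/2})^{2k}$.
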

\begin{proof}
By a change of variables and using $\cos(2\theta)=2\cos^2(\theta)-1$, we arrive at
$$
\bigintsss_0^{2\pi}\frac{\cos(n\theta)}{(A-\cos(\theta))^{\frac{\beta}{2}}}d\theta=\frac{2}{(1+A)^{\frac{\beta}{2}}}\bigintsss_0^\pi \frac{\cos(2n\theta)}{(1-\frac{2}{1+A}\cos^2(\theta))^{\frac{\beta}{2}}}d\theta.
$$
Since $\frac{2}{1+A}<1$, we can use Taylor series in the following way,
$$
\left(1-\frac{2}{1+A}\cos^2(\theta)\right)^{-\frac{\beta}{2}}=\sum_{m\geq 0}\frac{\left(\frac{\beta}{2}\right)_m}{m!}\frac{2^m}{(1+A)^m}\cos^{2m}(\theta). 
$$
Then,
$$
\bigintsss_0^{2\pi}\frac{\cos(n\theta)}{(A-\cos(\theta))^{\frac{\beta}{2}}}d\theta=\frac{2}{(1+A)^{\frac{\beta}{2}}}\sum_{m\geq 0}\frac{\left(\frac{\beta}{2}\right)_m}{m!}\frac{2^m}{(1+A)^m}\bigintsss_0^\pi \cos(2n\theta)\cos^{2m}(\theta)d\theta.
$$
At this stage we use the identity, see \cite[p. 449]{Watson},
$$
\int_0^{\pi}\cos^x(\theta)\cos(y\theta)d\theta=\frac{\pi\Gamma(x+1)}{2^x\Gamma(1+\frac{x+y}{2})\Gamma(1+\frac{x-y}{2})}, 
$$
for $x>-1$ and $y\in\R$. That identity  For $x=2m$ and $y=2n$, we obtain
\begin{align*}
\bigintsss_0^{2\pi}\frac{\cos(n\theta)}{(A-\cos(\theta))^{\frac{\beta}{2}}}d\theta=&\frac{2\pi}{(1+A)^{\frac{\beta}{2}}}\sum_{m\geq n}\frac{\left(\frac{\beta}{2}\right)_m}{m!}\frac{2^m}{(1+A)^m}\frac{\Gamma(2m+1)}{2^{2m}\Gamma(1+m+n)\Gamma(1+m-n)}\\
=&\frac{2\pi}{(1+A)^{\frac{\beta}{2}}}\sum_{m\geq 0}\frac{\left(\frac{\beta}{2}\right)_{m+n}}{(m+n)!}\frac{1}{(1+A)^{m+n}}\frac{\Gamma(2m+2n+1)}{2^{m+n}\Gamma(1+m+2n)\Gamma(1+m)}.
\end{align*}
We can use some properties of Gamma functions in order to find
\begin{align*}
\Gamma(m+1+2n)\Gamma(m+1)=&(2n)!m!(2n+1)_m,\\
\frac{\Gamma(2m+2n+1)}{(m+n)!}=&2^{2m+2n}\left(\frac12\right)_{{m+n}},\\
\left(\frac{\beta}{2}\right)_{m+n}=&\left(\frac{\beta}{2}\right)_{n}\left(n+\frac{\beta}{2}\right)_{m},
\end{align*}
which implies
\begin{align*}
\frac{2\pi}{(1+A)^{\frac{\beta}{2}}}&\sum_{m\geq 0}\frac{\left(\frac{\beta}{2}\right)_{m+n}}{(m+n)!}\frac{1}{(1+A)^{m+n}}\frac{\Gamma(2m+2n+1)}{2^{m+n}\Gamma(1+m+2n)\Gamma(1+m)}\\
=&\frac{2\pi}{(1+A)^{\frac{\beta}{2}+n}}\frac{\left(\frac{\beta}{2}\right)_n2^n\left(\frac12\right)_n}{(2n)!}\sum_{m\geq 0}\frac{\left(n+\frac{\beta}{2}\right)_{m}\left(n+\frac{1}{2}\right)_{m}}{m!(2n+1)_m}\left(\frac{2}{1+A}\right)^m\\
=&\frac{2\pi}{(1+A)^{\frac{\beta}{2}+n}}\frac{\left(\frac{\beta}{2}\right)_n2^n\left(\frac12\right)_n}{(2n)!}F\left(n+\frac{\beta}{2}, n+\frac12; 2n+1; \frac{2}{1+A}\right ).
\end{align*}
\end{proof}
Now we propose to describe the boundary behavior of the  Hypergeometric functions in some suitable cases that were very useful in the preceding sections.
\begin{pro}\label{Prop-behav} The following assertions hold true.
{\begin{enumerate}
\item Bound for $F(a,a;2a;x):$  for $a>1$, there exists $C>0$ such that 
\begin{equation}\label{estimat-1}
\forall x\in[0,1),\quad F\left(a,a;2a; x\right)\le C\frac{|\ln(1-x)|}{x}\leq C+C|\ln(1-x)|.
\end{equation}
\item Bound for $F(a,a;2a-1,x):$ for $a>2$, there exists $C>0$ such that
\begin{equation}\label{estimat-2}
\forall x\in[0,1),\quad F\left(a,a;2a-1; x\right)\le C\frac{1}{|1-x|}\cdot
\end{equation}
\item Bound for $F(a,a;2a-2,x):$  for $a>3$, there exists $C>0$ such that
\begin{equation}\label{estimat-7}
\forall x\in[0,1),\quad F\left(a,a;2a-2; x\right)\le C\frac{1}{|1-x|^2}\cdot
\end{equation}
\item For $a>1,$ there exists $C>0$ such that
\begin{equation}\label{estimat-8}
\forall x\in[0,1),\quad 0\leq F(a,a;2a;x)-1\leq C x\big(1+|\ln(1-x)|\big).
\end{equation}

\item For $a>2$, there exists $C>0$ such that
$$
\forall x\in[0,1),\quad  | F(a,a;2a-1;x)-1|\leq C \frac{x}{1-x}\cdot
$$
\item For $a>1$, there exists $C>0$ such that  any $ \alpha\in[0,1]$
\begin{equation}\label{estimat-3}
 \forall \, x_2\le x_1\in[0,1),\quad |F(a,a;2a;x_1)-F(a,a;2a;x_2)|\leq C\frac{|x_1-x_2|^\alpha}{|1-x_1|^\alpha}\cdot
\end{equation}
\item For $a>2$, there exists $C>0$ such that  any $ \alpha\in[0,1]$
\begin{equation}\label{estimat-4}
 \forall \, x_2\le x_1\in[0,1), \quad |F(a,a;2a-1;x_1)-F(a,a;2a-1;x_2)|\leq C\frac{|x_1-x_2|^\alpha}{|1-x_1|^{1+\alpha}}\cdot
\end{equation}
\end{enumerate}}
\end{pro}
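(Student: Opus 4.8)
The plan is to treat the seven assertions through two complementary tools already recorded in this appendix: the Euler integral representation \eqref{Ap-spfunctions-integ} for items (1)--(5), and the differentiation formula \eqref{Diff41} combined with the mean value theorem for items (6)--(7).

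For (1), (2), (3) I would start from \eqref{Ap-spfunctions-integ} with $b=a$, writing
$$
F(a,a;c;x)=\frac{\Gamma(c)}{\Gamma(a)\Gamma(c-a)}\bigintsss_0^1 t^{a-1}(1-t)^{c-a-1}(1-xt)^{-a}\,dt,\qquad c\in\{2a,2a-1,2a-2\}.
$$
The key elementary observation is that $1-xt\ge 1-t$ for $x\in[0,1)$ and $t\in[0,1]$; hence whenever the exponent $c-a-1=a-k$ (with $k\in\{1,2,3\}$) is nonnegative --- which is exactly where the hypotheses $a>1$, $a>2$, $a>3$ come from --- one has $t^{a-1}\big((1-t)/(1-xt)\big)^{a-k}\le 1$, so the integral is bounded by $\int_0^1(1-xt)^{-k}\,dt$. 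These last integrals are elementary, equal to $-x^{-1}\ln(1-x)$, $(1-x)^{-1}$, and $\tfrac{1}{2x}\big((1-x)^{-2}-1\big)$ respectively, which gives \eqref{estimat-1}, \eqref{estimat-2}, \eqref{estimat-7}; the second half of \eqref{estimat-1} follows by splitting at $x=\tfrac12$.

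For (4) and (5) I would again use \eqref{Ap-spfunctions-integ}, now in the form $F(a,a;c;x)-1=\frac{\Gamma(c)}{\Gamma(a)\Gamma(c-a)}\int_0^1 t^{a-1}(1-t)^{c-a-1}\big[(1-xt)^{-a}-1\big]\,dt$, the subtracted $1$ being the value at $x=0$. The lower bound $F-1\ge0$ is immediate from positivity of the power-series coefficients in \eqref{GaussF}. For the upper bound I would use Bernoulli's inequality in the shape $(1-xt)^{-a}-1\le axt\,(1-xt)^{-a}$, then absorb $(1-t)^{c-a-1}(1-xt)^{-(c-a-1)}\le1$ exactly as above, reducing matters to $ax\int_0^1(1-xt)^{-(a-(c-a-1))}dt$: for $c=2a$ this is $ax\int_0^1(1-xt)^{-1}dt=a\,|\ln(1-x)|$, whence \eqref{estimat-8} follows from the elementary bound $|\ln(1-x)|\le Cx(1+|\ln(1-x)|)$ on $[0,1)$; for $c=2a-1$ it is $\le ax\int_0^1(1-xt)^{-2}dt=ax(1-x)^{-1}$, which is (5).

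Finally, for (6) and (7) the plan is to differentiate: by \eqref{Diff41}, $\frac{d}{dx}F(a,a;c;x)=\frac{a^2}{c}F(a+1,a+1;c+1;x)$. When $c=2a$ this is $\tfrac a2 F(a+1,a+1;2a+1;x)$ and $2a+1=2(a+1)-1$, so item (2) with parameter $a+1>2$ (i.e.\ $a>1$) gives $|F'(x)|\le C(1-x)^{-1}$; when $c=2a-1$ it is a multiple of $F(a+1,a+1;2a;x)$ with $2a=2(a+1)-2$, so item (3) with $a+1>3$ (i.e.\ $a>2$) gives $|F'(x)|\le C(1-x)^{-2}$. Integrating over $[x_2,x_1]$ then yields $|F(x_1)-F(x_2)|\le C\ln\!\big(1+\tfrac{x_1-x_2}{1-x_1}\big)$ and $|F(x_1)-F(x_2)|\le \tfrac{C}{1-x_1}\cdot\tfrac{x_1-x_2}{1-x_2}$ respectively. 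For \eqref{estimat-3} one concludes with $\ln(1+y)\le C_\alpha y^\alpha$ for all $y\ge0$ (finite $C_\alpha$ for $\alpha\in(0,1]$, blowing up as $\alpha\to0^+$); for \eqref{estimat-4} one uses $\tfrac{x_1-x_2}{1-x_2}\le\min\!\big(1,\tfrac{x_1-x_2}{1-x_1}\big)\le\big(\tfrac{x_1-x_2}{1-x_1}\big)^\alpha$. The only real obstacle here is organizational: tracking, for each item, which exponent must stay nonnegative for the absorption step to run (this pins down the hypotheses $a>1,2,3$) and noting that the Hölder constant in (6) must be $\alpha$-dependent while the one in (7) need not be; no delicate analysis is needed beyond the two special-function identities already in hand.
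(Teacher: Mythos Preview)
Your argument is correct. Items (1)--(3) coincide with the paper's proof (your inequality $1-xt\ge 1-t$ is the simpler half of the paper's $t(1-t)\le 1-xt$, and either suffices). For the remaining items you and the paper essentially \emph{swap} strategies: the paper proves (4)--(5) via the differentiation formula \eqref{Diff41} plus the already-established items (2)--(3) (writing $F(a,a;2a;x)-1=\tfrac{a}{2}x\int_0^1 F(a{+}1,a{+}1;2a{+}1;\tau x)\,d\tau$ and invoking (2) with parameter $a{+}1$, and similarly for (5)), while you treat (4)--(5) directly on the Euler integral via Bernoulli; conversely, the paper proves (6)--(7) by pointwise interpolation on the integrand $g_t(x)=(1-xt)^{-a}$ between the bounds $|g_t(x_1)-g_t(x_2)|\le 2(1-tx_1)^{-a}$ and $|g_t(x_1)-g_t(x_2)|\le C(1-tx_1)^{-a-1}|x_1-x_2|$, whereas you obtain (6)--(7) from the differentiation formula plus (2)--(3). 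Both routes are equally short; your observation that the constant in (6) must depend on $\alpha$ (indeed the case $\alpha=0$ is vacuous since $F$ is unbounded near $1$) applies to the paper's proof as well, where the final integral $\int_0^1(1-x_1t)^{-1-\alpha}dt$ carries a hidden $1/\alpha$ factor.
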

\begin{proof}
The main tool is  the integral representation of the Hypergeometric functions \eqref{Ap-spfunctions-integ}.

\medskip
\noindent
{\bf (1)} 
From  the integral representation  \eqref{Ap-spfunctions-integ}, it is easy to get 
\begin{align*}
|F(a,a,2a,x)|\leq & C\bigintsss_0^1 \frac{t^{a-1}(1-t)^{a-1}}{(1-xt)^a}dt\\
\leq & C\bigintsss_0^1 \left(\frac{t(1-t)}{1-xt}\right)^{a-1}\frac{1}{1-xt}dt\cdot
\end{align*}
Because $ t(1-t)\leq 1-tx$ for any $t,x\in[0,1]$, then we deduce
\begin{align*}
|F(a,a,2a,x)|\leq & C\int_0^1\frac{dt}{1-xt}\\
\leq &C\frac{|\ln(1-x)|}{x}\cdot
\end{align*}

\medskip
\noindent
{\bf (2)} 
As for (1), we find
\begin{align*}
|F(a,a,2a-1,x)|\leq & C\bigintsss_0^1 \frac{t^{a-1}(1-t)^{a-2}}{(1-xt)^a}dt\\
\leq & C\bigintsss_0^1 \left(\frac{t(1-t)}{1-xt}\right)^{a-2}\frac{dt}{(1-xt)^2}.\end{align*}
Consequently, we infer from direction calculation
\begin{align*}
|F(a,a,2a-1,x)|
\leq & C\int_0^1\frac{1}{(1-xt)^2}dt\\
\leq &\frac{C}{|1-x|}\cdot
\end{align*}
\medskip
\noindent
{\bf (3)} We omit here the details of the proof by similarity with (1) and (2).

{
\medskip
\noindent
{\bf (4)}
First note from the integral representation  that $F(a,a;2a;x)>0$ provided that $a>0$ and $x\in[0,1)$. Moreover, it is strictly increasing function  since from \eqref{Diff41}
$$
F^\prime(a,a;2a;x)=\frac{a}{2}F(a+1,a+1;2a+1;x)>0, \,\forall x\in[0,1).
$$
According to \eqref{GaussF} one may check by construction  that $F(a,a;2a;0)=1$ and therefore 
$$
F(a,a;2a;x)-1\geq 0.
$$
By the mean value theorem, we achieve
$$
F(a,a;2a;x)-1=\frac{a}{2} x\int_0^1 F(a+1,a+1,2a+1,\tau x)d\tau.
$$
Combining this representation with \eqref{estimat-2}, where we replace $a$ by $a+1$, we achieve
$$
F(a,a;2a;x)-1\leq C x\bigintsss_0^1\frac{d\tau}{1-\tau x}\leq Cx(1+|\ln(1-x)|).
$$

\medskip
\noindent
{\bf (5)}
By using similar arguments as the previous point, we obtain
$$
0\le F(a,a;2a-1;x)-1\leq Cx\int_0^1 F(a+1,a+1;2a;\tau x)d\tau.
$$
Applying \eqref{estimat-7} by changing $a$ with $a+1$ allows to get
$$
|F(a+1,a+1;2a;x)|\leq \frac{C}{(1-x)^2}, \forall \, x\in[0,1).
$$
Then,
$$
F(a,a;2a-1;x)-1\leq Cx\int_0^1 \frac{d\tau}{(1-\tau x)^2}\leq C\frac{x}{1-x}\cdot
$$
}
\medskip
\noindent
{\bf (6)}
Let $t\in[0,1)$ and set ${g_t}(x)=(1-tx)^{-a}.$ Take $0\le x_2<
 x_1<1$, then direct computations, using in particular the mean value theorem, show that
 \begin{align*}
 |g_t(x_1)-g_t(x_2)|\le&2(1-tx_1)^{-a}\\
 |g_t(x_1)-g_t(x_2)|\le&C(1-tx_1)^{-a-1}|x_1-x_2|.
 \end{align*}
 Let $\alpha\in[0,1]$ then by interpolation between the preceding inequalities  we deduce that
 \begin{align*}
 |g_t(x_1)-g_t(x_2)|=&|g_t(x_1)-g_t(x_2)|^{1-\alpha}|g_t(x_1)-g_t(x_2)|^\alpha\\
\le&C(1-tx_1)^{-a-\alpha}|x_1-x_2|^\alpha.
 \end{align*}
 It follows that
\begin{align*}
|F(a,a,2a,x_1)-F(a,a,2a,x_2)|\leq& C\left|\int_0^1 t^{a-1}(1-t)^{a-1}\left(g_t(x_1)-g_t(x_2)\right)dt\right|\\
\leq& C|x_1-x_2|^\alpha\bigintsss_0^1 \frac{t^{a-1}(1-t)^{a-1}}{(1-x_1t)^{a+\alpha}} dt.
\end{align*}
Since $a>1$ and for any $t,x_1\in[0,1),$
$$
0\leq \frac{ t^{a-1}(1-t)^{a-1}}{(1-x_1t)^{a+\alpha}}\le (1-x_1t)^{-1-\alpha},
$$
then 
\begin{align*}
|F(a,a,2a,x_1)-F(a,a,2a,x_2)|
\leq& C|x_1-x_2|^\alpha\left|\int_0^1{(1-x_1t)^{-1-\alpha}} dt\right|\\
\leq& C\frac{|x_1-x_2|^\alpha}{|1-x_1|^\alpha}. \, \, 
\end{align*}
\medskip
\noindent
{\bf (7)} This is quite similar to the proof of the preceding one. Indeed, 
\begin{align*}
|F(a,a,2a-1,x_1)-F(a,a,2a-1,x_2)|\leq& C\left|\int_0^1 t^{a-1}(1-t)^{a-2}\left(g_t(x_1)-g_t(x_2)\right)dt\right|\\
\leq& C|x_1-x_2|^\alpha\left|\int_0^1 {(1-x_1t)^{-2-\alpha}}dt\right|\\
\leq& C\frac{|x_1-x_2|^\alpha}{|1-x_1|^{1+\alpha}}\cdot
\end{align*}
\end{proof}

\section{Bifurcation theory}\label{Ap-bif}
We  shall briefly recall   some basic facts around  bifurcation theory  which mainly focuses on  the topological transitions of the phase portrait through the variation of some parameters. 
A particular case is to understand this transition in  the equilibria set for   the stationary problem    $F(\lambda,x)=0,$ where $F:\R\times X\rightarrow Y$ is a  smooth  function between Banach spaces  $X$ and $Y$. 
Assuming that one has a trivial  solution,  $F(\lambda,0)=0$ for any $\lambda\in\R$, we would like to explore the bifurcation diagram in the neighborhood  of this elementary solution,  and see whether multiple branches of solutions may bifurcate  
from  a given point  $(\lambda_0,0)$, called  a bifurcation point. When the linearized operator around this point generates a Fredholm operator, then one  may  use Lyapunov--Schmidt 
reduction in order to reduce the infinite-dimensional problem to a finite-dimensional one, known as  the bifurcation equation. For this latter problem we need some specific  transversal conditions so that the    Implicit Function Theorem can be applied.  For more discussion in this subject, 
we refer to see \cite{Kato, Kielhofer}. Notice that Theorem  \ref{CR} below  is one of those interesting results that can cover  various configurations and it is used in  this paper to prove our main result.
Before giving its precise statement, we need to recall some basic results on Fredholm operators.
\begin{defi}
Let $X$ and $Y$ be  two Banach spaces. A continuous linear mapping $T:X\rightarrow Y,$  is a  Fredholm operator if it fulfills the following properties,
\begin{enumerate}
\item $\textnormal{dim Ker}\,  T<\infty$,
\item $\textnormal{Im}\, T$ is closed in $Y$,
\item $\textnormal{codim Im}\,  T<\infty$.
\end{enumerate}
The integer $\textnormal{dim Ker}\, T-\textnormal{codim Im}\, T$ is called the Fredholm index of $T$.
\end{defi}
Next, we shall discuss  the index persistence through compact perturbations, see  \cite{Kato, Kielhofer}.
\begin{pro}
The index of a Fredholm operator remains unchanged under compact perturbations.
\end{pro}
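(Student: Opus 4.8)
The plan is to reduce the statement to the classical Riesz--Schauder theory through a parametrix construction, in the spirit of Atkinson's characterization of Fredholm operators. Let $T:X\to Y$ be Fredholm. First I would build a pseudo--inverse of $T$: since $\Ker T$ is finite dimensional it has a closed topological complement $X_1$ in $X$, and since $\Img T$ is closed and of finite codimension it has a (necessarily closed, finite dimensional) complement $Y_0$ in $Y$. The restriction $T|_{X_1}:X_1\to \Img T$ is a continuous bijection, hence a topological isomorphism by the open mapping theorem. Defining $S:Y\to X$ to equal $(T|_{X_1})^{-1}$ on $\Img T$ and $0$ on $Y_0$, one gets
$$
ST=\textnormal{Id}_X-P,\qquad TS=\textnormal{Id}_Y-Q,
$$
where $P$ is the finite rank projection of $X$ onto $\Ker T$ along $X_1$, and $Q$ the finite rank projection of $Y$ onto $Y_0$ along $\Img T$.

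Next, given a compact operator $K:X\to Y$, I would write
$$
S(T+K)=\textnormal{Id}_X-P+SK=:\textnormal{Id}_X+L_1,\qquad (T+K)S=\textnormal{Id}_Y-Q+KS=:\textnormal{Id}_Y+L_2,
$$
with $L_1$ and $L_2$ compact (each is the sum of a finite rank and a compact operator). By Riesz--Schauder theory, $\textnormal{Id}_X+L_1$ and $\textnormal{Id}_Y+L_2$ are Fredholm of index zero. From this one deduces that $T+K$ is itself Fredholm by standard sandwich arguments: $\Ker(T+K)\subset\Ker\big(S(T+K)\big)$ is finite dimensional; $\Img(T+K)$ contains the closed, finite codimensional subspace $\Img\big((T+K)S\big)$, hence $\Img(T+K)$ is itself closed and of finite codimension. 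Thus $T+K$ is Fredholm and $\textnormal{ind}(T+K)$ is well defined.

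For the equality of the indices I would use the multiplicativity (logarithmic property) of the Fredholm index, $\textnormal{ind}(AB)=\textnormal{ind}(A)+\textnormal{ind}(B)$ for Fredholm operators $A,B$. Applied to $ST=\textnormal{Id}_X-P$ it gives $\textnormal{ind}(S)+\textnormal{ind}(T)=\textnormal{ind}(\textnormal{Id}_X-P)=0$, hence $\textnormal{ind}(S)=-\textnormal{ind}(T)$; applied to $S(T+K)=\textnormal{Id}_X+L_1$ it gives $\textnormal{ind}(S)+\textnormal{ind}(T+K)=0$, whence $\textnormal{ind}(T+K)=-\textnormal{ind}(S)=\textnormal{ind}(T)$. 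Alternatively, running the Fredholm argument above with $tK$ in place of $K$ shows that $T+tK$ is Fredholm for every $t\in[0,1]$; the path $t\mapsto T+tK$ is norm continuous, the index is continuous on the open set of Fredholm operators, hence locally constant, and being integer valued it is constant on $[0,1]$, so that $\textnormal{ind}(T+K)=\textnormal{ind}(T)$.

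The only nontrivial inputs are two classical facts used as black boxes --- the Riesz--Schauder / Fredholm alternative theory for $\textnormal{Id}+(\text{compact})$, and the multiplicativity of the Fredholm index --- together with the elementary observation that a subspace of $Y$ containing a closed finite codimensional subspace is closed. All of these are standard and can be quoted from, e.g., Kato or Kielh\"ofer; in this appendix I would simply assemble them rather than reprove them, so the main (and only modest) obstacle is bookkeeping the parametrix identities correctly.
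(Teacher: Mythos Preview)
Your argument is correct and is essentially the standard parametrix/Atkinson proof of this classical result. However, the paper does not actually prove this proposition: it is stated in the appendix as a known fact, with a reference to Kato and Kielh\"ofer, and no proof is given. So there is nothing to compare against; your write-up simply supplies a full proof where the paper quotes the literature.
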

Now, we recall the classical Crandall-Rabinowitz Theorem whose proof can be found  in \cite{CrandallRabinowitz}.

\begin{theo}[Crandall-Rabinowitz Theorem]\label{CR}
    Let $X, Y$ be two Banach spaces, $V$ be a neighborhood of $0$ in $X$ and $F:\mathbb{R}\times V\rightarrow Y$ be a function with the properties,
    \begin{enumerate}
        \item $F(\lambda,0)=0$ for all $\lambda\in\mathbb{R}$.
        \item The partial derivatives  $\partial_\lambda F_{\lambda}$, $\partial_fF$ and  $\partial_{\lambda}\partial_fF$ exist and are continuous.
        \item The operator $\partial_f F(0,0)$ is Fredholm of zero index and $\textnormal{Ker}(F_f(0,0))=\langle f_0\rangle$ is one-dimensional. 
                \item  Transversality assumption: $\partial_{\lambda}\partial_fF(0,0)f_0 \notin \textnormal{Im}(\partial_fF(0,0))$.
    \end{enumerate}
    If $Z$ is any complement of  $\textnormal{Ker}(\partial_fF(0,0))$ in $X$, then there is a neighborhood  $U$ of $(0,0)$ in $\mathbb{R}\times X$, an interval  $(-a,a)$, and two continuous functions $\Phi:(-a,a)\rightarrow\mathbb{R}$, $\beta:(-a,a)\rightarrow Z$ such that $\Phi(0)=\beta(0)=0$ and
    $$F^{-1}(0)\cap U=\{(\Phi(s), s f_0+s\beta(s)) : |s|<a\}\cup\{(\lambda,0): (\lambda,0)\in U\}.$$
\end{theo}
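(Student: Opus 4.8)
The plan is to reduce the equation $F(\lambda,x)=0$ near $(0,0)$ to a scalar bifurcation equation via a projection splitting, then to solve it in two steps with the implicit function theorem, the second step being where the transversality hypothesis enters. Set $T:=\partial_fF(0,0)$. Since $T$ is Fredholm of index zero, $\textnormal{codim}\,\textnormal{Im}\,T=\dim\ker T=1$; choose a one--dimensional complement $W$ of $\textnormal{Im}\,T$ in $Y$ with continuous projection $Q\colon Y\to W$ (so $I-Q$ projects onto $\textnormal{Im}\,T=\ker Q$), and fix a closed complement $Z$ of $\ker T=\langle f_0\rangle$ in $X$, which exists because $\ker T$ is finite dimensional. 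Write $x=sf_0+z$ with $s\in\mathbb{R}$, $z\in Z$. Then $F(\lambda,x)=0$ is equivalent to the pair $(I-Q)F(\lambda,sf_0+z)=0$ and $QF(\lambda,sf_0+z)=0$.

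\textbf{Step 1: eliminating $z$.} I would first solve the ``regular'' equation. Put $G(\lambda,s,z):=(I-Q)F(\lambda,sf_0+z)$, so $G(0,0,0)=0$ and $\partial_zG(0,0,0)=(I-Q)T|_Z=T|_Z$, since $T$ already maps into $\textnormal{Im}\,T$. The map $T|_Z\colon Z\to\textnormal{Im}\,T$ is a continuous bijection (injective because $Z\cap\ker T=\{0\}$, surjective because $X=\langle f_0\rangle\oplus Z$ and $Tf_0=0$), hence an isomorphism by the open mapping theorem. As $\partial_\lambda F$ and $\partial_fF$ are continuous, $F$ is $\mathscr{C}^1$, so $G$ is $\mathscr{C}^1$ and the implicit function theorem yields a $\mathscr{C}^1$ map $(\lambda,s)\mapsto z=\psi(\lambda,s)\in Z$, defined near $(0,0)$, with $\psi(0,0)=0$, giving all small solutions of $(I-Q)F=0$. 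Because $F(\lambda,0)=0$ for all $\lambda$, uniqueness forces $\psi(\lambda,0)\equiv0$, hence $\partial_\lambda\psi(\lambda,0)\equiv0$; differentiating $G(\lambda,s,\psi(\lambda,s))=0$ in $s$ at the origin and using that $\textnormal{Im}\,T\cap W=\{0\}$ gives $T\partial_s\psi(0,0)=0$, hence $\partial_s\psi(0,0)=0$.

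\textbf{Step 2: the scalar bifurcation equation and transversality.} It remains to study $\Psi(\lambda,s):=QF(\lambda,sf_0+\psi(\lambda,s))=0$, a $\mathscr{C}^1$ map into the one--dimensional space $W$. Since $\psi(\lambda,0)\equiv0$ and $F(\lambda,0)\equiv0$ we have $\Psi(\lambda,0)\equiv0$, so by Hadamard's lemma $\Psi(\lambda,s)=s\,\widetilde{\Psi}(\lambda,s)$ with $\widetilde{\Psi}$ continuous and $\widetilde{\Psi}(0,0)=\partial_s\Psi(0,0)=Q\,T(f_0+\partial_s\psi(0,0))=0$. Differentiating $\Psi$, using $\partial_s\psi(0,0)=0$, $\partial_\lambda\psi(0,0)=0$, and that $\partial_fF(0,0)=T$ maps into $\ker Q$, one obtains $\partial_\lambda\widetilde{\Psi}(0,0)=\partial_\lambda\partial_s\Psi(0,0)=Q\,\partial_\lambda\partial_fF(0,0)f_0$, which is nonzero \emph{precisely} by the transversality assumption $\partial_\lambda\partial_fF(0,0)f_0\notin\textnormal{Im}\,T=\ker Q$. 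Identifying $W\cong\mathbb{R}$, the implicit function theorem applied to $\widetilde{\Psi}$ near $(0,0)$ furnishes $a>0$ and a continuous function $\Phi\colon(-a,a)\to\mathbb{R}$ with $\Phi(0)=0$ such that, near the origin, $\widetilde{\Psi}(\lambda,s)=0$ iff $\lambda=\Phi(s)$. Collecting everything: near $(0,0)$, $F(\lambda,x)=0$ iff $x=sf_0+\psi(\lambda,s)$ and either $s=0$ (giving the trivial line $(\lambda,0)$, since $\psi(\lambda,0)=0$) or $\lambda=\Phi(s)$. Setting $\beta(s):=s^{-1}\psi(\Phi(s),s)$ for $s\neq0$ and $\beta(0):=0$ — this is continuous because $\psi(\Phi(s),s)=\psi(\Phi(s),s)-\psi(\Phi(s),0)=s\int_0^1\partial_s\psi(\Phi(s),ts)\,dt$ and $\partial_s\psi(0,0)=0$ — we land exactly on the claimed description $F^{-1}(0)\cap U=\{(\Phi(s),sf_0+s\beta(s)):|s|<a\}\cup\{(\lambda,0):(\lambda,0)\in U\}$, with $\beta$ valued in $Z$.

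\textbf{Main obstacle.} The genuinely delicate point is the regularity bookkeeping for the reduced function in Step 2: the Hadamard factorization $\Psi=s\widetilde{\Psi}$ only produces a continuous $\widetilde{\Psi}$ a priori, whereas the final implicit function theorem needs $\widetilde{\Psi}$ to be $\mathscr{C}^1$ in $\lambda$ with a reliable value for $\partial_\lambda\widetilde{\Psi}(0,0)$, and naively differentiating through the implicitly defined $\psi$ threatens to invoke a second $f$-derivative of $F$. This is exactly why the hypotheses isolate the existence and continuity of $\partial_\lambda\partial_fF$ (rather than assuming $F\in\mathscr{C}^2$): one must verify, by differentiating only along directions where $F$ is twice differentiable by assumption and exploiting the vanishing of $\partial_s\psi(0,0)$ and $\partial_\lambda\psi(\cdot,0)$ together with $Q|_{\textnormal{Im}\,T}=0$, that all would-be $\partial_f^2F$ contributions to $\partial_\lambda\widetilde{\Psi}$ near the origin are killed. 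Once this regularity is secured, the remaining ingredients — the existence of the complements $Z$ and $W$, the isomorphism property of $T|_Z$, and the two applications of the implicit function theorem — are routine.
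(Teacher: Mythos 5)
You should note at the outset that the paper does not prove this statement at all: it is recalled as the classical Crandall--Rabinowitz theorem with a citation to the original article, so there is no in-paper argument to match against. Judged on its own, your two-step Lyapunov--Schmidt reduction is the standard route, and the skeleton is sound: the splitting $Y=\textnormal{Im}\,T\oplus W$ with projection $Q$, the solvability of the range equation for $z=\psi(\lambda,s)$ with $\psi(\lambda,0)\equiv0$ and $\partial_s\psi(0,0)=0$, the factorization $\Psi=s\widetilde\Psi$, the computation $\partial_\lambda\partial_s\Psi(0,0)=Q\,\partial_\lambda\partial_fF(0,0)f_0\neq0$ from transversality, and the continuity bookkeeping for $\beta$. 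The genuine gap is exactly the point you yourself flag and then defer: under these minimal hypotheses (no $\partial_f^2F$), you must prove that $\partial_\lambda\widetilde\Psi$ exists and is \emph{continuous on a full neighborhood of} $(0,0)$, and your proposed mechanism (``the would-be $\partial_f^2F$ contributions are killed by $\partial_s\psi(0,0)=0$, $\partial_\lambda\psi(\cdot,0)=0$ and $Q|_{\textnormal{Im}\,T}=0$'') does not close it. Indeed, for $s\neq0$ no second derivative is needed at all, since $\partial_\lambda\widetilde\Psi(\lambda,s)=s^{-1}\partial_\lambda\Psi(\lambda,s)$; the real difficulty is that this quotient must converge, as $s\to0$, to $\partial_\lambda\big[Q\,\partial_fF(\lambda,0)(f_0+\partial_s\psi(\lambda,0))\big]$, and any naive mean-value argument in $s$ reintroduces $\partial_f^2F$ and second derivatives of $\psi$, which you do not have.

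The missing device is the identity $\partial_\lambda F(\lambda,x)=\int_0^1\partial_\lambda\partial_fF(\lambda,tx)\,x\,dt$, which follows from $F(\lambda,0)\equiv0$ (hence $F(\lambda,x)=\int_0^1\partial_fF(\lambda,tx)x\,dt$) and differentiation under the integral justified by the continuity of $\partial_\lambda\partial_fF$. With it, writing $x_{\lambda,s}=sf_0+\psi(\lambda,s)$ and $A(\lambda,s)=(I-Q)\partial_fF(\lambda,x_{\lambda,s})|_Z$, one gets
$$
s^{-1}\partial_\lambda F(\lambda,x_{\lambda,s})=\int_0^1\partial_\lambda\partial_fF(\lambda,tx_{\lambda,s})\big(f_0+s^{-1}\psi(\lambda,s)\big)\,dt,\qquad s^{-1}\partial_\lambda\psi(\lambda,s)=-A(\lambda,s)^{-1}(I-Q)\,s^{-1}\partial_\lambda F(\lambda,x_{\lambda,s}),
$$
and both limits as $s\to0$ exist and match the $\lambda$-derivative of $\widetilde\Psi(\cdot,0)$ computed from the explicit formula $\partial_s\psi(\lambda,0)=-A(\lambda,0)^{-1}(I-Q)\partial_fF(\lambda,0)f_0$; only then is your final implicit-function step (in the version requiring continuity of the map and of the partial derivative in the solved-for variable $\lambda$) legitimate. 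Alternatively --- and this is how the cited original proof proceeds --- one avoids the two-step reduction entirely by applying the implicit function theorem once to the blown-up map $g(s,\lambda,z):=s^{-1}F(\lambda,s(f_0+z))$ for $s\neq0$, $g(0,\lambda,z):=\partial_fF(\lambda,0)(f_0+z)$, $z\in Z$: the same integral identity shows $g$, $\partial_\lambda g$, $\partial_zg$ are continuous, and $\partial_{(\lambda,z)}g(0,0,0)(\mu,w)=\mu\,\partial_\lambda\partial_fF(0,0)f_0+Tw$ is an isomorphism of $\R\times Z$ onto $Y$ precisely by hypotheses (3)--(4), yielding $(\Phi(s),\beta(s))$ directly. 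As written, your proposal stops at the acknowledged obstacle, so it is not yet a complete proof.
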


\section{Potential theory}
This last section is devoted to some results on  the continuity of specific  operators with singular kernels, taking the form 
\begin{equation}\label{int-op}
\mathcal{K}(f)(x_1,x_2)=\int_0^1\int_0^1 K(x_1,x_2,y_1,y_2)f(y_1,y_2)dy_1dy_2,
\end{equation}
with $(x_1,x_2)\in[0,1]^2$ and the kernel  $K:[0,1]^2\times[0,1]^2\rightarrow \R$ is smooth out the diagonal.

\begin{pro}\label{prop-potentialtheory}
Let $K:[0,1]^2\times[0,1]^2\rightarrow \R$ be smooth out the diagonal, satisfying
\begin{align}
|K(x_1,x_2,y_1,y_2)|&\leq \frac{C_0 }{|x_1-y_1|^{1-\alpha}}{g_1(x_2,y_2)},\label{prop-potentialtheory-h0}\\ 
|K(x_1,x_2,y_1,y_2)|&\leq \frac{C_0 }{|x_2-y_2|^{1-\alpha}}{g_2(x_1,y_1)},\label{prop-potentialtheory-h1}\\
|\partial_{x_1} K(x,y)| &\leq \frac{C_0 }{|x_1-y_1|^{2-\alpha}}{g_3(x_2,y_2)}\label{prop-potentialtheory-h2}\\
|\partial_{x_2} K(x,y)| &\leq \frac{C_0 }{|x_2-y_2|^{2-\alpha}}{g_4(x_1,y_1)}\label{prop-potentialtheory-h3},
\end{align}
with $\alpha\in(0,1)$ and {$g_1,g_2,g_3,g_4\in L^\infty([0,1],L^1([0,1]))$}. Then $\mathcal{K}:L^\infty([0,1]\times[0,1])\rightarrow \mathscr{C}^\alpha([0,1]\times[0,1])$ is well-defined and
$$
\|\mathcal{K}(f)\|_{\mathscr{C}^\alpha}\leq C C_0 \|f\|_{L^\infty},
$$
with $C$ an absolute constant.
\end{pro}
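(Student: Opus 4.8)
The plan is to follow the classical splitting used for Calderón--Zygmund type operators with a scalar singular kernel, adapted here to a two-dimensional domain where the singularity is anisotropic (only of order $1-\alpha$ in each partial variable separately, with a harmless $L^\infty_xL^1_y$ weight in the other variable). First I would establish that $\mathcal{K}(f)$ is bounded: using \eqref{prop-potentialtheory-h0} one integrates in $y_1$ the integrable singularity $|x_1-y_1|^{-(1-\alpha)}$ and then uses that $g_1\in L^\infty([0,1],L^1([0,1]))$ to integrate in $y_2$, so that $\|\mathcal{K}(f)\|_{L^\infty}\le CC_0\|f\|_{L^\infty}$. This already uses only \eqref{prop-potentialtheory-h0}; the symmetric hypothesis \eqref{prop-potentialtheory-h1} will play the analogous role when estimating increments in the $x_2$ direction.

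Next, since the $\mathscr{C}^\alpha$ semnorm on $[0,1]^2$ is controlled by the two partial $\mathscr{C}^\alpha$ semi-norms (in $x_1$ uniformly in $x_2$, and in $x_2$ uniformly in $x_1$), it suffices to estimate $|\mathcal{K}(f)(x_1,x_2)-\mathcal{K}(f)(x_1',x_2)|$ for $x_1<x_1'$, the $x_2$-increment being symmetric. Set $d=|x_1-x_1'|$ and split the $y_1$-integral into the near region $B=\{|y_1-x_1|\le 2d\}$ and the far region $B^c$. On $B$ one does not use cancellation: one bounds each of the two terms $\mathcal{K}(f)(x_1,x_2)$, $\mathcal{K}(f)(x_1',x_2)$ restricted to $B$ directly by \eqref{prop-potentialtheory-h0}, getting $\int_{B}|x_1-y_1|^{-(1-\alpha)}dy_1\lesssim d^\alpha$ (and likewise with $x_1'$, since on $B$ one has $|y_1-x_1'|\le 3d$), then integrating the $g_1$-weight in $y_2$; this contributes $\lesssim C_0\|f\|_{L^\infty}d^\alpha$. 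On the far region $B^c$ one uses the mean value theorem: $K(x_1,x_2,y_1,y_2)-K(x_1',x_2,y_1,y_2)=(x_1-x_1')\partial_{x_1}K(\xi,x_2,y_1,y_2)$ for some $\xi$ between $x_1$ and $x_1'$, and by \eqref{prop-potentialtheory-h2} together with the fact that on $B^c$ one has $|\xi-y_1|\sim|x_1-y_1|$, this is bounded by $d\,C_0|x_1-y_1|^{-(2-\alpha)}g_3(x_2,y_2)$. Integrating $\int_{B^c}d\,|x_1-y_1|^{-(2-\alpha)}dy_1\lesssim d\cdot d^{-(1-\alpha)}=d^\alpha$ and then integrating the $g_3$-weight in $y_2$ gives again $\lesssim C_0\|f\|_{L^\infty}d^\alpha$. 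Summing the two regions yields the partial $\mathscr{C}^\alpha$ bound in $x_1$, uniformly in $x_2$; the $x_2$-direction estimate is identical using \eqref{prop-potentialtheory-h1} and \eqref{prop-potentialtheory-h3}.

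The main (and only mildly delicate) obstacle is the bookkeeping in the near/far splitting on $B^c$: one must verify that for $y_1\in B^c$ and $\xi$ between $x_1$ and $x_1'$ one genuinely has $|\xi-y_1|\ge c|x_1-y_1|$, so that the bound \eqref{prop-potentialtheory-h2} applied at the intermediate point $\xi$ can be converted into a bound in terms of $|x_1-y_1|$; this is where the choice of the cutoff radius $2d$ matters. Beyond that, one should also take care that the integrals $\int_0^1 g_i(x_2,y_2)\,dy_2$ and $\int_0^1 g_i(x_1,y_1)\,dy_1$ are uniformly bounded, which is exactly the hypothesis $g_i\in L^\infty([0,1],L^1([0,1]))$, and that all the one-dimensional integrals $\int_{|t|\le R}|t|^{-(1-\alpha)}dt$ and $\int_{|t|\ge R}|t|^{-(2-\alpha)}dt$ converge since $\alpha\in(0,1)$. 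Collecting the $L^\infty$ bound and the two partial $\mathscr{C}^\alpha$ bounds gives $\|\mathcal{K}(f)\|_{\mathscr{C}^\alpha}\le CC_0\|f\|_{L^\infty}$, as claimed.
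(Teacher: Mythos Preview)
Your proposal is correct and follows essentially the same approach as the paper: $L^\infty$ bound from \eqref{prop-potentialtheory-h0}, then for each partial $\mathscr{C}^\alpha$ estimate a near/far splitting in the corresponding $y$-variable, bounding the near piece directly via \eqref{prop-potentialtheory-h0} (resp.\ \eqref{prop-potentialtheory-h1}) and the far piece by the mean value theorem and \eqref{prop-potentialtheory-h2} (resp.\ \eqref{prop-potentialtheory-h3}). The only cosmetic difference is that the paper uses cutoff radius $3d$ (yielding $|\xi-y_1|\ge\tfrac{2}{3}|x_1-y_1|$ on the far region) while you use $2d$ (yielding $|\xi-y_1|\ge\tfrac{1}{2}|x_1-y_1|$); both work.
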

\begin{rem}\label{remark-potentialtheory}
{We give here this general proposition for a function of two variables, but let us remark that this can be also done for $\mathcal{K}$ depending only on one variable.
Moreover, note that condition \eqref{prop-potentialtheory-h2} (and also \eqref{prop-potentialtheory-h3}) can be replaced by
$$
|K(x_1,x_2, y_1,y_2)-K(\tilde{x_1},x_2,y_1,y_2)|\leq C|x_1-\tilde{x_1}|^\alpha g(x_1,\tilde{x_1},x_2,y_1,y_2),
$$
for $x_1<\tilde{x_1}$ and $3|x_1-\tilde{x_1}|\leq |y_1-x_1|$. The function $g$ must satisfy
$$
\left|\int_0^1\int_{{ |y_1-x_1|>3|x_1-\tilde{x_1}|}}^1g(x_1,\tilde{x_1},y_1,x_2,y_2)dy_1 dy_2\right|\leq C,
$$
uniformly in $x_1,\tilde{x_1},x_2$.}
\end{rem}
\begin{proof}
The $L^\infty$ norm of $\mathcal{K}(f)$ can be estimated as
\begin{align*}
\left|\mathcal{K}(f)(x)\right|\leq &C\|f\|_{L^\infty}\int_0^1\int_0^1 |K(x_1,x_2,y_1,y_2)|dy_1 dy_2\\
\leq & C C_0\|f\|_{L^\infty}\int_0^1 \frac{dy_1}{|x_1-y_1|^{1-\alpha}}{\int_0^1|g_1(x_2,y_2)|dy_2}\\
\leq & C C_0\|f\|_{L^\infty}.
\end{align*}
The convergence follows from the assumptions $\alpha, \gamma\in (0,1)$. Hence,
$$
\|\mathcal{K}(f)\|_{L^\infty}\leq C C_0\|f\|_{L^\infty}.
$$
For the H\"older regularity, take $x_1, \tilde{x_1}\in[0,1]$ with $x_1<\tilde{x_1}$. Define $d=|x_1-\tilde{x_1}|$, $B_{x_1}(r)=\left\{y_1\in[0,1] :  |y_1-x_1|<r\right\}$ and $B^c_{x_1}(r)$ its complement set.  Hence
\begin{align*}
\mathcal{K}(f)(x_1,x_2)&-\mathcal{K}(f)(\tilde{x_1},x_2)\\
=&\int_0^1\int_0^1K(x_1,x_2,y_1,y_2)f(y_1,y_2)dy_1dy_2-\int_0^1\int_0^1K(\tilde{x_1},x_2,y_1,y_2)f(y_1,y_2)dy_1dy_2\\
=&\int_0^1\int_{[0,1]\cap B_{x_1}(3d)}K(x_1,x_2,y_1,y_2)f(y_1,y_2)dy_1dy_2\\
&-\int_0^1\int_{[0,1]\cap B_{x_1}(3d)}K(\tilde{x_1},x_2,y_1,y_2)f(y_1,y_2)dy_1dy_2\\
&+\int_0^1\int_{[0,1]\cap B^c_{x_1}(3d)}(K(x_1,x_2,y_1,y_2)-K(\tilde{x_1},x_2,y_1,y_2))f(y_1,y_2)dy_1dy_2\\
=:& I_1+I_2+I_3.
\end{align*}
Using \eqref{prop-potentialtheory-h0}, we arrive at
\begin{align*}
|I_1|\leq& CC_0\|f\|_{L^\infty}\int_{[0,1]\cap B_{x_1}(3d)}\frac{1}{|x_1-y_1|^{1-\alpha}}dy_1\int_0^1{|g_1(x_2,y_2)|}dy_2\\
\leq& CC_0\|f\|_{L^\infty}\int_{B_{x_1}(3d)}\frac{1}{|x_1-y_1|^{1-\alpha}}dy_1\\
\leq & CC_0\|f\|_{L^\infty} d^\alpha\\
=&CC_0\|f\|_{L^\infty} |x_1-\tilde{x_1}|^\alpha.
\end{align*}
In order to work with $I_2$, note that $B_{x_1}(3d)\subset B_{\tilde{x_1}}(4d)$. Thus,
\begin{align*}
|I_2|\leq& CC_0\|f\|_{L^\infty}\int_{[0,1]\cap B_{x_1}(3d)}\frac{1}{|\tilde{x_1}-y_1|^{1-\alpha}}dy_1\int_0^1{|g_1(x_2,y_2)|}dy_2\\
\leq& CC_0\|f\|_{L^\infty}\int_{B_{x_1}(4d)}\frac{1}{|\tilde{x_1}-y_1|^{1-\alpha}}dy_1\\
\leq&CC_0\|f\|_{L^\infty} |x_1-\tilde{x_1}|^\alpha.
\end{align*}
For the last term $I_3$ we use the mean value theorem and \eqref{prop-potentialtheory-h2} achieving
\begin{align*}
|I_3|\leq &C \left|(x_1-\tilde{x_1})\int_0^1\int_0^1\int_{[0,1]\cap B^c_{x_1}(3d)}(\partial_{x_1}K)(x_1+(1-s)(\tilde{x_1}-x_1),x_2,y_1,y_2)f(y_1,y_2)dy_1dy_2ds\right|\\
\leq &C C_0 \|f\|_{L^\infty}|x_1-\tilde{x_1}| \int_0^1\int_{[0,1]\cap B^c_{x_1}(3d)}\frac{dy_1ds}{|x_1+(1-s)(\tilde{x_1}-x_1)-y_1|^{2-\alpha}}\int_0^1{|g_3(x_2,y_2)|dy_2}.
\end{align*}
Note that if $y_1\in B^c_{x_1}(3d)$, then
$$
|x_1+(1-s)(\tilde{x_1}-x_1)-y_1|\geq |x_1-y_1|-(1-s)d\geq |x_1-y_1|-\frac{(1-s)}{3}|x_1-y_1|\geq \frac23|x_1-{y_1}|,
$$
which implies
\begin{align*}
|I_3|\leq &C C_0 \|f\|_{L^\infty} |x_1-\tilde{x_1}|\int_{[0,1]\cap B^c_{x_1}(3d)}\frac{dy_1}{|x_1-y_1|^{2-\alpha}}\\
\leq &C C_0 \|f\|_{L^\infty} |x_1-\tilde{x_1}|\frac{1}{|x_1-\tilde{x_1}|^{1-\alpha}}\\
\leq &C C_0 \|f\|_{L^\infty} |x_1-\tilde{x_1}|^\alpha.
\end{align*}
Putting together the preceding estimates yields
$$
|\mathcal{K}(f)(\tilde{x}_1,x_2)-\mathcal{K}(f)(x_1,{x_2})|\leq C C_0 \|f\|_{L^\infty} |x_1-\tilde{x_1}|^\alpha.
$$
The same arguments enables to obtain 
$$
|\mathcal{K}(f)(x_1,x_2)-\mathcal{K}(f)(x_1,\tilde{x_2})|\leq C C_0 \|f\|_{L^\infty} |x_2-\tilde{x_2}|^\alpha.
$$
Then, we conclude that
$$
||\mathcal{K}(f)||_{\mathscr{C}^\alpha}\leq C C_0 \|f\|_{L^\infty}.
$$
\end{proof}


\begin{thebibliography}{99}


\bibitem{Andrews} G. R. Andrews, R. Askey, R. Roy, {\it Special Functions.} Cambridge University Press, 1999.

\bibitem{Erdelyi}{H. Bateman, } {\it Higher Transcendental Functions Vol. I--III.} McGraw-Hill Book Company, New York, 1953.

\bibitem{B-B} J. T. Beale, A. J. Bourgeois, {\it Validity of the quasi-geostrophic model for large-scale flow in the atmosphere and ocean,} SIAM J. Math. Anal. 25 (1994), no. 4, 1023–1068

\bibitem{B-C} A. L. Bertozzi, P. Constantin, {\it Global regularity for vortex patches.} Comm. Math. Phys.
{\bf 152}(1) (1993), 19--28.

\bibitem{Burbea} { J. Burbea,} {\it Motions of vortex patches.} Lett. Math. Phys. {\bf 6} (1982), 1--16.

\bibitem{Cas0-Cor0-Gom} {A. Castro, D. C\'ordoba, J. G\'omez-Serrano, }{\it Existence and regularity of rotating global solutions for the generalized surface quasi-geostrophic equations.} Duke Math. J. {\bf 165}(5) (2016), 935--984.
 
\bibitem{Cas-Cor-Gom} {A. Castro, D. C\'ordoba, J. G\'omez-Serrano, }{\it  Uniformly rotating analytic global patch solutions for active scalars}. J. Ann. PDE {\bf 2}(1) (2016),  Art. 1, 34.

\bibitem{CastroCordobaGomezSerrano} {A. Castro, D. C\'ordoba, J. G\'omez-Serrano, } {{\it Uniformly rotating smooth solutions for the incompressible 2D Euler equations.}} Arch. Ration. Mech. Anal. {bf 231}(2) (2019), 719--785.

{\bibitem{C-C-GS-2} A. Castro, D. C\'ordoba,  J. G\'omez-Serrano, {\it Global smooth solutions for the inviscid SQG equation,} arXiv:1603.03325, 2016.}

\bibitem{Charv}  F. Charve, {\it Convergence of weak solutions for the primitive system of the quasi-geostrophic equations.}  Asymptot. Anal. 42 (2005), no. 3-4, 173–209.
\bibitem{Chemin} {J.-Y. Chemin,} {{\it Persistance de structures g\'eometriques dans les  fluides incompressibles bidimensionnels. }} Ann. Sci. Ec. Norm. Sup. {\bf 26} (1993), 1--26.

\bibitem{CrandallRabinowitz} {M. G. Crandall, P. H. Rabinowitz, } {\it Bifurcation from simple eigenvalues.} J. Funct. Anal. {\bf 8} (1971), 321--340.




\bibitem{DeemZabusky} {G. S. Deem, N. J. Zabusky, } {\it Vortex waves: Stationary ``V-states'', Interactions, Recurrence, and Breaking.} Phys. Rev. Lett. {\bf 40} (1978), 859--862.


\bibitem{DelaHoz-Hassainia-Hmidi}{F. De la Hoz, Z. Hassainia, T. Hmidi, }{\it Doubly Connected V-States for the Generalized Surface Quasi-Geostrophic Equations.} Arch. Ration. Mech. Anal {\bf 220} (2016), 1209--1281. 

\bibitem{DelaHozHmidiMateuVerdera} {F. De la Hoz, T. Hmidi, J. Mateu, J. Verdera, } {\it Doubly connected V-states for the planar Euler equations. } SIAM J. Math. Anal. {\bf 48} (2016), 1892--1928.
 
{\bibitem{DHHM} {F. De la Hoz, Z. Hassainia, T. Hmidi, J. Mateu,} {\it An analytical and numerical study of steady patches in the disc.}  Anal. PDE {\bf 9}(7) (2016), 1609--1670.}
 
\bibitem{D-S-R} D. G. Dritschel, R. K. Scott, J. N. Reinaud,  {\it  The stability of quasi-geostrophic ellipsoidal vortices.} J. Fluid Mech. 536 (2005), 401--421.

\bibitem{Dristchel2}{D. G. Dritschel, }{\it An exact steadily rotating surface quasi--geostrophic elliptical vortex. } Geophysical and Astrophysical Fluid Dynamics {\bf 105} (2011), 368--376.

 \bibitem{D-H-R} {D. G. Dritschel, T. Hmidi, C. Renault, } {\it
Imperfect bifurcation for the quasi-geostrophic shallow-water equations.} Arch. Ration. Mech. Anal. {\bf 231}(3) (2019), 1853--1915.

\bibitem{Dristchel}{D. G. Dritschel, J. N. Reinaud, W. J. McKiver, }{\it The quasi--geostrophic ellipsoidal vortex model. } J. Fluid Mech. {\bf 505} (2004), 201--223.
 
\bibitem{G-KVS}{C. Garc\'ia, }{\it K\'arm\'an Vortex Street in incompressible fluid models}. Nonlinearity {\bf 33}(4) (2020), 1625--1676. 

{ \bibitem{GHS}{C. Garc\'ia, T. Hmidi, J. Soler, }{\it Non uniform rotating vortices and periodic orbits for the two--dimensional Euler equations. } Arch. Ration. Mech. Anal. {\bf 238} (2020), 929--1086.}

\bibitem{GPSY}{J.  G\'omez-Serrano, J. Park, J. Shi, Y.Yao, } {\it Symmetry in stationary and uniformly-rotating solutions of active scalar equations}, arXiv:1908.01722, 2019. 

\bibitem{Hassa-Hmi}  Z. Hassainia, T. Hmidi, {\it  On the V-states for the generalized quasi-geostrophic equations.} Comm. Math. Phys. {\bf 337}(1) (2015), 321--377.

\bibitem{HMW} Z. Hassainia, N. Masmoudi, M. H. Wheeler, {\it Global bifurcation of rotating vortex patches}. 	Comm. Pure Appl. Math.. doi:10.1002/cpa.21855, 2019.

\bibitem{Helms}{L. L. Helms, }{\it Potential theory, } Springer-Verlag London, 2009.

\bibitem{HmidiHozMateuVerdera} {T. Hmidi, F. De la Hoz, J. Mateu, J. Verdera, }{\it  Doubly connected V-states for the planar Euler equations. } SIAM J. Math. Anal. {\bf 48}(3), 1892--1928.

{\bibitem{HmidiMateu}{T. Hmidi, J. Mateu, } {\it Bifurcation of rotating patches from Kirchhoff vortices.} Discret. Contin. Dyn. Syst. {\bf 36} (2016), 5401--5422.}
 
 \bibitem{H-M} T. Hmidi, J. Mateu,  {\it Existence of corotating and counter-rotating vortex pairs for active
scalar equations.} Comm. Math. Phys. {\bf 350}(2) (2017), 699--747.

\bibitem{HmidiMateuVerdera} {T. Hmidi, J. Mateu, J. Verdera, } {\it Boundary regularity of rotating vortex patches. } Arch. Ration. Mech. Anal {\bf 209} (2013), 171--208. 

\bibitem{Iftimie2}{D. Iftimie, }{\it Approximation of the quasigeostrophic system with primitive systems. } Asymptotic Analysis {\bf 21} (1999), 89--97.

\bibitem{Kato}{ T. Kato, }{\it Perturbation Theory for Linear Operators. }  Springer-Verlag, Berlin-Heidelberg-New York, 1995.

 \bibitem{Kellog}{O.D. Kellog, }{\it Foundations of Potential Theory} Springer-Verlag, Berlin, 1967. 

\bibitem{Kielhofer}{ H. Kielh\"ofer, }{\it Bifurcation Theory: An Introduction with Applications to PDEs. }  Springer-Verlag, Berlin-Heidelberg-New York, 2004.


{\bibitem{MajdaBertozzi}{A. Majda, A. Bertozzi, }{\it Vorticity and Incompressible Flow.} Cambridge University Press, 2002.}

\bibitem{Meacham} S. P. Meacham, {\it  Quasi-geostrophic, ellipsoidal vortices in stratified fluid.}  Dyn. Armos. Oceans, 16 (1992) 189--223.

\bibitem{Ped} J. Pedlosky, {\it  Geophysical Fluid Dynamics,} second ed., Springer-Verlag, New York, 1987, pp. 1--710.

\bibitem{Rainville}{E. D. Rainville, }{\it Special Functions. } The Macmillan Co., 1973.

\bibitem {reed}{ M. Reed, B. Simon}, {{\it Methods of modern mathematical physics. IV. Analysis of operators.} }Academic Press, New York-London, 1978.
\bibitem{Rein-Drit} J. N. Reinaud, D. G. Dritschel, {\it The stability and nonlinear evolution of quasi-geostrophic toroidal vortices.} J. Fluid Mech. 863 (2019), 60--78.

\bibitem{Rein}  J. N. Reinaud, {\it Three-dimensional quasi-geostrophic vortex equilibria with m-fold symmetry}. J. Fluid Mech. 863 (2019), 32--59.

\bibitem{Serf} P. Serfati, {\it Une preuve directe d'existence globale des vortex patches 2D.}
C. R. Acad. Sci. Paris S\'er. I Math. {\bf 318}(6) (1994), 515--518.

\bibitem{Nagy} B. Sz\"okefalvi-Nagy, {{\it Perturbations des transformations lin\'eaires ferm\'ees.}} Acta Sci. Math. (Szeged) 14 (1951), 125--137.

\bibitem{Watson}  G. A. Watson, {\it A Treatise on the Theory of Bessel Functions. Cambridge University Press}, 1944.
Trans. Amer. Math. Soc. 299 (1987), no. 2, 581--599.

\bibitem{Yudovich}{Y. Yudovich, }{\it Nonstationary flow of an ideal incompressible liquid. } Zh. Vych. Mat. {\bf 3} (1963), 1032--1066.

\end{thebibliography}
\end{document}